\numberwithin{equation}{section}
\theoremstyle{plain}
\newtheorem{theorem}[equation]{Theorem}
\newtheorem{lemma}[equation]{Lemma}
\newtheorem{proposition}[equation]{Proposition}
\newtheorem{corollary}[equation]{Corollary}
\theoremstyle{definition}
\newtheorem{definition}[equation]{Definition}
\newtheorem{construction}[equation]{Construction}
\newtheorem{example}[equation]{Example}
\newtheorem{remark}[equation]{Remark}
\newtheorem{warning}[equation]{Warning}
\newtheoremstyle{TheoremNum} 
        {\topsep}{\topsep}              
        {\itshape}                      
        {}                              
        {\bfseries}                     
        {.}                             
        { }                             
        {\thmname{#1}\thmnote{ \bfseries #3}}
    \theoremstyle{TheoremNum}
\newenvironment{smitemize} 
{ \begin{itemize}
    \setlength{\itemsep}{0pt}
    \setlength{\parskip}{0pt}
    \setlength{\parsep}{0pt}     }
{ \end{itemize}                  } 
{ \begin{enumerate}
    \setlength{\itemsep}{0pt}
    \setlength{\parskip}{0pt}
    \setlength{\parsep}{0pt}     }
{ \end{enumerate}                  }
\newcommand{\field}[1]{\mathbb{#1}}
\newcommand{\mf}[1]{\mathfrak{#1}}
\newcommand{\mc}[1]{\mathcal{#1}}
\newcommand{\cat}[1]{
\StrLen{#1}[\mystrlen]
\ifnum\mystrlen=1 \mathscr{#1}
\else \mathrm{#1}
\fi}
\newcommand{\oocat}[1]{
\StrLen{#1}[\mystrleng]
\ifnum\mystrleng=1 \mathscr{#1}
\else \mathrm{#1}
\fi
}
\newcommand{\ope}[1]{
\StrLen{#1}[\mystrlengt]
\ifnum\mystrlengt=1 \mathscr{#1}
\else \mathrm{#1}
\fi
}
\newcommand{\Set}[0]{\oocat{Set}}
\newcommand{\sS}[0]{\cat{S}}
\newcommand{\colim}{\operatornamewithlimits{\mathrm{colim}}}
\newcommand{\dau}[0]{\partial}
\newcommand{\mm}[1]{\mathrm{#1}}
\newcommand{\Hom}[0]{\mm{Hom}}
\newcommand{\Map}[0]{\mm{Map}}
\renewcommand{\rto}[1]{\stackrel{#1}{\rt}}
\newcommand{\rt}[0]{\rightarrow}
\newcommand{\spec}[0]{\mm{Spec}}
\newcommand{\Fun}[0]{\cat{Fun}}
\newcommand{\op}[0]{\mm{op}}
\newcommand{\dR}[0]{\mm{dR}}
\newcommand{\QC}[0]{\mm{QC}}
\newcommand{\Mod}[0]{\mm{Mod}}
\newcommand{\dgMod}[0]{\Mod^{\dg}}
\newcommand{\Rep}[0]{\cat{Rep}}
\newcommand{\dgRep}[0]{\Rep^{\dg}}
\newcommand{\sgeq}[0]{{\scriptscriptstyle \geq}}
\newcommand{\sleq}[0]{{\scriptscriptstyle \leq}}
\newcommand{\dgCAlg}[0]{\CAlg^\mm{\dg}}
\newcommand{\dg}[0]{\mm{dg}}
\newcommand{\CAlg}[0]{\cat{CAlg}}
\newcommand{\LieAlgd}[0]{\cat{LieAlgd}}
\newcommand{\dgLieAlgd}[0]{\LieAlgd^{\dg}}
\DeclareMathSymbol{\widetildesym}{\mathord}{largesymbols}{"65}
\DeclareMathSymbol{\widehatsym}{\mathord}{largesymbols}{"62}
\title{Koszul duality for Lie algebroids}
\author{Joost Nuiten}
\email{j.j.nuiten@uu.nl}
\address{Mathematical Institute\\ Utrecht University\\ P.O. Box 80010\\ 3508 TA Utrecht\\ The
Netherlands.}
\begin{document}
\begin{abstract}
This paper studies the role of dg-Lie algebroids in derived deformation theory. More precisely, we provide an equivalence between the homotopy theories of formal moduli problems and dg-Lie algebroids over a commutative dg-algebra of characteristic zero. At the level of linear objects, we show that the category of representations of a dg-Lie algebroid is an extension of the category of quasi-coherent sheaves on the corresponding formal moduli problem. We describe this extension geometrically in terms of pro-coherent sheaves.
\end{abstract}

\maketitle


\section{Introduction}\label{sec:intro}
Lie algebroids appear throughout algebraic and differential geometry as natural objects describing infinitesimal geometric structures, like foliations and actions of Lie algebras \cite{rin63, mac87}. In this paper, we study the relation between Lie algebroids and deformation theory, which can informally be described as follows: suppose that $k$ has characteristic zero and that
$$\xymatrix{
x\colon \spec(A)\ar[r] & X
}$$
is a map from an affine (derived) scheme to a moduli space over $k$. Then a formal neighbourhood of $X$ around $x$ is controlled by a Lie algebroid over $\spec(A)$. This Lie algebroid is given by the vector fields on $\spec(A)$ that are (derived) tangent to the fibers of $x$. 

When $\spec(A)$ is a point, this recovers the well-known relation between deformation problems and dg-Lie algebras, which has been extensively studied and applied \cite{gol88, kon03, man02}, and has been given a precise mathematical formulation in the work of Hinich \cite{hin98}, Pridham \cite{pri10} and Lurie \cite{lur11X}. An important idea in these works is to describe the formal neighbourhood of a moduli space in terms of derived geometry, using its functor of points. More precisely, one can describe a formal neighbourhood of $X$ around $x$ by a functor
$$\xymatrix{
X^\wedge\colon \CAlg_k^\mm{sm}/A\ar[r] & \sS; \hspace{4pt} B\ar@{|->}[r] & X(B)\times_{X(A)} \{x\}
}$$
sending a \emph{derived infinitesimal thickening} $\spec(A)\rt \spec(B)$ to the space of extensions of $x$ to a map $\tilde{x}\colon \spec(B)\rt X$. For any reasonable moduli space $X$, the functor $X^\wedge$ satisfies a derived version of the Schlessinger conditions, which encodes the usual obstruction theory for existence and uniqueness of deformations \cite{sch68}. The works mentioned above provide an equivalence between the homotopy theory of such functors satisfying the Schlessinger conditions and the homotopy theory of dg-Lie algebras, at least when $A$ is a field (see also \cite{hen15} for an extension to dg-Lie algebras over more general dg-algebras $A$).

The purpose of this paper is to provide a similar identification of the homotopy theory of dg-Lie algebroids with the homotopy theory of \emph{formal moduli problems}, in the sense of \cite{lur11X, gai16}:
\begin{definition}\label{def:formalmoduli}
Let $A$ be a connective commutative $k$-algebra, where $k$ has characteristic zero. The $\infty$-category $\cat{CAlg}_k^\mm{sm}/A$ of \emph{small extensions} of $A$ is the smallest subcategory of commutative $k$-algebras over $A$ that contains $A$ and that is closed under square zero extensions by $A[n]$ with $n\geq 0$. 

A \emph{formal moduli problem} over $A$ is a functor
$$\xymatrix{
X\colon \CAlg_k^\mm{sm}/A\ar[r] & \sS
}$$
satisfying the following two conditions:
\begin{itemize}
\item[(a)] $X(A)\simeq \ast$ is contractible.
\item[(b)] $X$ preserves any pullback diagram of small extensions of $A$ of the form
\begin{equation}\label{diag:squarezerointro}\vcenter{\xymatrix{
B_\eta\ar[d]\ar[r] & A\ar[d]^{(\mm{id}, 0)}\\
B\ar[r]_-\eta & A\oplus A[n+1].
}}\end{equation}
Such a pullback square realizes $B_\eta$ as a square zero extension of $B$ by $A[n]$ (for $n\geq 0$).
\end{itemize}
\end{definition}
Geometrically, a formal moduli problem can be thought of as a map of stacks $x\colon \spec(A)\rt X$ that realizes $X$ as an infinitesimal thickening of $\spec(A)$. We show that such a stack determines a Lie algebroid $T_{A/X}$, which can be thought of as the fiberwise vector fields on $\spec(A)$ over $X$. This construction is part of an equivalence, with inverse sending a Lie algebroid $\mf{g}$ to the `quotient' of $\spec(A)$ by the infinitesimal $\mf{g}$-action:
\begin{theorem}\label{thm:intro1}
Suppose that $A$ is eventually coconnective. Then there is an equivalence of $\infty$-categories
$$\xymatrix{
\mm{MC}\colon \LieAlgd_A\ar@<1ex>[r] & \cat{FormMod}_A\ar@<1ex>[l] \colon T_{A/}
}$$
between the $\infty$-category of Lie algebroids over $A$ and the $\infty$-category of formal moduli problems over $A$.
\end{theorem}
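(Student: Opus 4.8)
The plan is to realize $\mm{MC}$ and $T_{A/}$ as an adjoint pair---say with $\mm{MC}$ left adjoint to $T_{A/}$, the latter constructed via the Chevalley--Eilenberg/Koszul-duality machinery---and then to prove the equivalence by showing that both the unit and the counit of this adjunction are equivalences, reducing each verification to the level of tangent complexes.

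First I would pin down $\mm{MC}$: for a Lie algebroid $\mathfrak{g}$ the value $\mm{MC}(\mathfrak{g})(B)$ is the space of Maurer--Cartan elements of $\mathfrak{g}$ with coefficients in the small extension $B$, equivalently the space of $A$-algebra maps out of the Chevalley--Eilenberg cochains $C^*(\mathfrak{g})$ into $B$. I would then verify that $\mm{MC}(\mathfrak{g})$ is a formal moduli problem. Condition (a) is immediate, since over $A$ itself there are no nonzero Maurer--Cartan elements. For condition (b) I would show that the Maurer--Cartan functor carries a square-zero extension $B_\eta\to B$ by $A[n]$ to a pullback of spaces: a Maurer--Cartan element over $B_\eta$ is a Maurer--Cartan element over $B$ together with a nullhomotopy of its obstruction class in the linear part indexed by $A[n+1]$, which is exactly the universal property of the pullback against the split extension $A\to A\oplus A[n+1]$. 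This is the Lie-algebroid analogue of the classical fact that the deformation functor of a dg-Lie algebra satisfies the Schlessinger conditions.

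The heart of the argument is a conservativity statement on each side. For a formal moduli problem $X$, condition (b) lets one assemble a tangent complex: the $A$-module spectrum $T_X$ whose $n$-th space is $X(A\oplus A[n])$. A morphism $X\to Y$ is then an equivalence if and only if it induces an equivalence $T_X\to T_Y$, because every formal moduli problem is built as an iterated (sifted) colimit of the defining square-zero cells and the tangent complex detects each attaching map. On the Lie-algebroid side, a morphism is an equivalence if and only if it is one on underlying $A$-modules, and the anchor fiber sequence $\mathfrak{g}\to T_A\to T_X$ ties the underlying $A$-module of $T_{A/}(X)$ to the tangent complex of the associated moduli problem (for $A=k$ a field this is the familiar shift $T_X\simeq\mathfrak{g}[1]$). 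The two computations that remain are then: (i) the linearization of the Maurer--Cartan functor recovers the underlying complex of $\mathfrak{g}$, giving $T_{A/}\,\mm{MC}(\mathfrak{g})\simeq\mathfrak{g}$; and (ii) the anchor sequence identifies the tangent complex of $\mm{MC}\,T_{A/}(X)$ with $T_X$. Running these through the conservative tangent-complex functors shows that the unit $\mathfrak{g}\to T_{A/}\,\mm{MC}(\mathfrak{g})$ and the counit $\mm{MC}\,T_{A/}(X)\to X$ are equivalences, yielding the adjoint equivalence.

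The main obstacle is twofold, and both parts are where the derived, relative setting goes beyond the field case. First, establishing conservativity of the tangent complex on $\cat{FormMod}_A$ requires a cellular presentation of formal moduli problems over a general base $A$, which is more delicate than over a field because $A$-modules are far less rigid; one must control the attaching maps built from square-zero extensions by shifts of $A$. Second---and this is exactly where the hypothesis that $A$ is eventually coconnective enters---one must ensure that the Chevalley--Eilenberg construction computes the correct Koszul dual, i.e. that the relevant pro-coherent completion of the symmetric coalgebra on $\mathfrak{g}[1]$ converges, so that Koszul duality is an honest equivalence rather than merely a completion map. Eventual coconnectivity supplies the finiteness of the cotangent complex $L_A$ and the good behaviour of symmetric powers of shifts that make the unit and counit invertible; without it one should only expect the statement after a suitable completion.
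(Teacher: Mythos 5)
Your proposal is correct in substance and rests on the same two pillars as the paper's proof---Koszul duality for free/cellular Lie algebroids and a generation-plus-conservativity statement---but it assembles them along a genuinely different route. The paper never analyses unit and counit maps or the tangent complex of a formal moduli problem directly: it applies Lurie's recognition theorem (Proposition \ref{prop:luriedefthy}) to $\Xi=\LieAlgd_A$, using that the forgetful functor to spectra detects equivalences and preserves sifted colimits (Proposition \ref{prop:monadicity}), to identify $\LieAlgd_A$ with presheaves on its good objects satisfying conditions (a) and (b); Corollary \ref{cor:koszuldualityperfect} then shows that $C^*$ restricts to an equivalence from good Lie algebroids to $(\CAlg_k^\mm{sm}/A)^\op$, and transporting the presheaf description along this equivalence yields $\cat{FormMod}_A$ on the nose. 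Your route instead works on the formal-moduli side and checks the unit and counit via tangent complexes; this buys a more geometric, self-contained argument but obliges you to prove conservativity of the tangent complex on $\cat{FormMod}_A$ and the Schlessinger condition for $\mm{MC}(\mf{g})$ by hand. Two cautions there. First, your obstruction-class verification of condition (b) is not independent of Koszul duality: $\mf{D}$ is a \emph{right} adjoint, so it does not automatically carry the pullback square \eqref{diag:squarezerointro} to a pushout of Lie algebroids; one needs that this square is the image under $C^*$ of a pushout of good Lie algebroids, which is exactly the full faithfulness supplied by Proposition \ref{fm:prop:koszulduality}. Second, the role of eventual coconnectivity is not the finiteness of $L_A$ but the convergence of biduality on good objects: a good Lie algebroid is $\bigoplus_{n<0}A^{\oplus k_n}[n]$ as a graded module (Lemma \ref{fm:lem:verygoodlralgebroids}), and the unit $\mf{g}\rt\mf{D}C^*(\mf{g})$ is the map into the product $\prod_{n<0}A^{\oplus k_n}[n]$, which is an equivalence precisely because $A$ is eventually coconnective. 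With these two points supplied, your argument closes.
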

The $\infty$-category of Lie algebroids has an explicit description in terms of homological algebra, as the localization of the category of dg-Lie algebroids over $A$ at the quasi-isomorphisms. One can therefore think of the above result as a rectification result, showing that any formal moduli problem admits a rigid description in terms of chain complexes endowed with algebraic structure. For example, the formal moduli problem describing the deformations of a connective commutative $A$-algebra $R$ admits a simple chain-level description, as the dg-Lie algebroid of compatible derivations on both $A$ and $R$ (Proposition \ref{prop:defofalgebras}, cf.\ \cite{hin04, toe16}). 

The proof of Theorem \ref{thm:intro1} follows the lines of \cite{lur11X}, and relies on a version of Koszul duality: the small extensions of $A$ are Koszul dual to certain free Lie algebroids over $A$, by means of the functor sending a dg-Lie algebroid to its cohomology (as studied already in \cite{rin63}). To make efficient use of this result, it is useful to study Koszul duality at the level of \emph{linear} objects as well. More precisely, given a quasi-coherent sheaf $\mc{F}$ on $X$, one expects the restriction $x^*\mc{F}$ along $x\colon \spec(A)\rt X$ to carry a natural representation of the Lie algebroid $T_{A/X}$. Indeed, we prove the following result:
\begin{theorem}\label{thm:intro2}
Let $A$ be eventually coconnective and let $X$ be a formal moduli problem over $A$ with associated Lie algebroid $T_{A/X}$. Then there is a fully faithful, symmetric monoidal left adjoint functor
$$\xymatrix{
\Psi_X\colon \QC(X)\ar[r] & \Rep_{T_{A/X}}
}$$
which induces an equivalence on connective objects.
\end{theorem}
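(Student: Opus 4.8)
The plan is to construct $\Psi_X$ as the pullback functor along the base point $x\colon\spec(A)\rt X$, refined by the infinitesimal action. Writing $X$ as the formal quotient of $\spec(A)$ by the formal groupoid $\spec(A)\times_X\spec(A)$, whose associated Lie algebroid is by definition $T_{A/X}$, quasi-coherent descent identifies $\QC(X)$ with the comodules over this groupoid. Restriction to the unit sends such a comodule to an $A$-module equipped with a compatible infinitesimal action, i.e. an object of $\Rep_{T_{A/X}}$, and this is the functor $\Psi_X$. Unwinding the construction, $\Psi_X$ is the functor of linear Koszul duality: on underlying $A$-modules it is the identity, and it records the $T_{A/X}$-action coming from the co-action, sending the structure sheaf to the trivial representation $A$.

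The formal properties come first and are essentially routine. The functor $\Psi_X$ preserves colimits, since the forgetful functor $\Rep_{T_{A/X}}\rt\Mod_A$ creates colimits and $x^*$ preserves them; as both $\QC(X)$ and $\Rep_{T_{A/X}}$ are presentable, $\Psi_X$ is then a left adjoint by the adjoint functor theorem. It is symmetric monoidal because $x^*$ is symmetric monoidal for the respective tensor products, the forgetful functor to $\Mod_A$ is symmetric monoidal and conservative, and the co-action is multiplicative, so the monoidal structure is inherited.

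The substance is full faithfulness together with the equivalence on connective objects, and here the strategy is d\'evissage. Both $X\mapsto\QC(X)$ and $\mf{g}\mapsto\Rep_{\mf{g}}$ send colimits (of formal moduli problems, resp.\ of Lie algebroids) to limits of $\infty$-categories, and $\Psi$ is natural under the equivalence of Theorem \ref{thm:intro1}; consequently the assertion that the unit of the adjunction $(\Psi_X,\Phi_X)$ is an equivalence, and the assertion that $\Psi_X$ restricts to an equivalence on connective objects, are both closed under these colimit presentations. Since every $T_{A/X}$ is a sifted colimit of free Lie algebroids, which are Koszul dual to the small extensions generating $\CAlg_k^\mm{sm}/A$, it suffices to treat $X=\mm{MC}(\mf{g})$ for $\mf{g}$ free. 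In that case $\QC(X)$ is modules over the Chevalley--Eilenberg cochain algebra $C^*(\mf{g})$, a small extension of $A$; $\Rep_{\mf{g}}$ is computed by $C^*(\mf{g};-)$; and $\Psi_X$ becomes the base-change/Koszul-duality functor, so that full faithfulness reduces to the statement that the Koszul complex resolves $A$ over $C^*(\mf{g})$ after the appropriate completion.

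The main obstacle is precisely this completion. Because $\Rep_{T_{A/X}}$ is the pro-coherent rather than coherent enlargement of $\QC(X)$, the two categories differ on objects that are unbounded below, and $\Psi_X$ can be an equivalence only where the Koszul, or weight, filtration converges. Concretely I expect to equip both sides with t-structures for which $\Psi_X$ is right t-exact, to reduce full faithfulness to the behaviour of mapping complexes between bounded-below objects, and then to prove convergence of the filtration by the lower central series of $\mf{g}$ on connective objects. The hypothesis that $A$ is eventually coconnective is exactly what guarantees this filtration is complete, and hence that $\Psi_X$ is fully faithful and induces an equivalence $\QC(X)^{\sgeq 0}\simeq\Rep_{T_{A/X}}^{\sgeq 0}$.
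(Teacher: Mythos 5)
Your global architecture matches the paper's: $\QC$ and $\Rep$ both turn sifted colimits of formal moduli problems into limits of $\infty$-categories, every $T_{A/X}$ is a sifted colimit of good Lie algebroids, and so the theorem reduces to the corepresentable case $X=\spec(B)^\wedge$ with $B$ a small extension, where $\Psi_X$ is the composite $\Mod_B\rt\Mod_{C^*\mf{D}(B)}\rt\Rep_{\mf{D}(B)}$ whose second functor is the Koszul functor $\Phi_\mf{g}=A\otimes_{C^*(\mf{g})}(-)$. (One correction to the reduction: the relevant class is the \emph{good} Lie algebroids, i.e.\ finite cell complexes on generators in degrees $\le -2$, not arbitrary free ones; only these are Koszul dual to small extensions of $A$.)

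The gap is in the local case, and your diagnosis of where the work lies is off in two respects. First, full faithfulness is asserted on all of $\Mod_{C^*(\mf{g})}$, including objects unbounded below, so a strategy resting on right $t$-exactness and convergence of the lower-central-series filtration on bounded-below objects cannot close the argument. The paper needs no completion statement at all: the unit of $(\Phi_\mf{g},C^*(\mf{g},-))$ is an equivalence on the thick subcategory generated by $C^*(\mf{g})$ for trivial reasons (since $\Phi_\mf{g}(C^*(\mf{g}))\simeq A$), and this propagates to all of $\Mod_{C^*(\mf{g})}=\mm{Ind}(\Mod^\omega_{C^*(\mf{g})})$ precisely because the right adjoint $C^*(\mf{g},-)$ preserves filtered colimits, which in turn holds because $A\simeq K(\mf{g})$ is a \emph{compact} object of $\Rep_\mf{g}$ --- via the cofiber sequence $L_\mf{g}\rt\mc{U}(\mf{g})\rt A$ and compactness of the cotangent complex of a good $\mf{g}$. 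This compactness step is the actual content and is absent from your outline. Second, the hypothesis that $A$ is eventually coconnective does not enter through completeness of a module-level filtration: it is used only to guarantee that the unit $B\rt C^*\mf{D}(B)$ is an equivalence (Theorem \ref{thm:intro1}, via Corollary \ref{cor:koszuldualityperfect}), i.e.\ that the first functor in the composite above is an equivalence. Finally, the equivalence on connective objects still requires a genuine argument: the paper shows that every connective $\mf{g}$-representation admits a $\pi_0$-surjection from a direct sum of copies of $A$ (because $L_\mf{g}$ is built from cells in degrees $\le -1$, so any map $L_\mf{g}\rt E$ to a connective $E$ is nullhomotopic), and then runs a cell-attachment induction; an appeal to convergence of the weight filtration does not by itself produce this.
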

It is well-known (cf.\ \cite{sai89, kap91}) that there are convergence issues that prevent the functor $\Psi_X$ from being an equivalence. For example, for any map $f\colon X\rt Y$ of formal moduli problems, the restriction functor 
$$\xymatrix{
f^!\colon \Rep_{T_{A/Y}}\ar[r] & \Rep_{T_{A/X}}
}$$
preserves all colimits, while the restriction functor $f^*\colon \QC(Y)\rt \QC(X)$ does not. For this reason (among many others), a more refined geometric theory of sheaves has been introduced in \cite{gai13}, in which quasi-coherent sheaves are replaced by \emph{ind-coherent sheaves}. The work of Gaitsgory and Rozenblyum \cite{gai16} provides an extensive study of derived algebraic geometry in terms of ind-coherent sheaves. In particular, they \emph{define} and study Lie algebroids purely in terms of formal moduli problems. More precisely, they consider a slight enlargement of the $\infty$-category of small extension of $A$, which also contains small extensions of $A$ by \emph{coherent} connective $A$-modules. A Lie algebroid is then defined to be a functor
$$\xymatrix{
X\colon \CAlg^\mm{sm, coh}_k/A\ar[r] & \sS
}$$
on this category satisfying conditions (a) and (b) of Definition \ref{def:formalmoduli}. We will refer to such an object as a \emph{pro-coherent formal moduli problem}.

In certain (somewhat restricted) situations, we show that the rectification results from Theorem \ref{thm:intro1} and Theorem \ref{thm:intro2} can also be applied to such pro-coherent formal moduli problems. In particular, this shows that pro-coherent formal moduli problems can indeed be interpreted at the point-set level as Lie algebroids. To obtain a simple point-set model for Lie algebroids in the ind-coherent setting, it will be more convenient to work with the $\infty$-category $\mm{Ind}(\cat{Coh}_A^\op)$ of \emph{pro-coherent sheaves} on $A$. This $\infty$-category is often equivalent to the $\infty$-category of ind-coherent sheaves by Serre duality \cite[Section 9]{gai13}, but has the advantage of admitting an explicit presentation by a certain `tame' model structure on dg-$A$-modules \cite{bec14}.

This tame model structure can be transferred to a model structure on dg-Lie algebroids over $A$. Unfortunately, this only presents the correct $\infty$-category of Lie algebroids in $\mm{Ind}(\cat{Coh}_A^\op)$ when $A$ satisfies some rather strict technical assumptions at the chain level (see Warning \ref{war:boundecoh}). As an important example, these technical conditions are met when $k$ is a coherent and eventually coconnective and when
$$
\spec(A) = \field{A}_k^n\times_{\field{A}_k^m}\{0\}.
$$
In particular, for $A=k$ we obtain a simple point-set model for the $\infty$-category of pro-coherent Lie algebras over $A$.

In the cases where we have a good point-set model for Lie algebroids in $\mm{Ind}(\cat{Coh}_A^\op)$, we show (Theorem \ref{thm:formalmodulicoherent}) that there is an equivalence
$$\xymatrix{
T_{A/}\colon \cat{FormMod}_A^!\ar[r]^-\sim & \LieAlgd_A^!
}$$
between the $\infty$-category of pro-coherent formal moduli problems over $A$ and the $\infty$-category of pro-coherent Lie algebroids over $A$. Theorem \ref{thm:intro2} can then be refined (Theorem \ref{fm:thm:ind-coherentvsrepresentations}) to an \emph{equivalence}
$$\xymatrix{
\Psi_X\colon \QC^!(X)\ar[r]^-\sim & \Rep_{T_{A/X}}^!
}$$
between pro-coherent sheaves on $X$ and pro-coherent $T_{A/X}$-representations.

\subsection*{Outline}
The paper is outlined as follows. In Section \ref{sec:dglie}, we recall the basic homotopy theory of dg-Lie algebroids over a commutative dg-algebra $A$, in which the weak equivalences are the quasi-isomorphisms. In Section \ref{sec:procoherent} we provide model categorical descriptions of the homotopy theories of pro-coherent sheaves and pro-coherent Lie algebroids over $A$. Theorem \ref{thm:intro1} and its analogue in the coherent case are proven in Section \ref{sec:formmod}, based on results about Lie algebroid cohomology that are discussed in Section \ref{sec:ce} and \ref{sec:koszulduality}.

Section \ref{sec:representations} is devoted to a proof of Theorem \ref{thm:intro2}. As an application, we also show how Theorem \ref{thm:intro2} can be used to provide a simple point-set model for the Lie algebroid classifying the deformations of a (connective) commutative algebra over $A$ (Proposition \ref{prop:defofalgebras}). Finally, in Section \ref{fm:sec:indcoh} we prove the equivalence of pro-coherent sheaves on formal moduli problems and representations of their associated Lie algebroid.

\subsection*{Conventions}
Throughout, let $\field{Q}\subseteq k$ be a fixed connective commutative dg-algebra of characteristic zero and let $A$ be a connective cdga over $k$. 
All differential-graded objects are homologically graded, so that connective objects are concentrated in non-negative degrees. Given a chain complex $V$, we denote its suspension and cone by $V[1]$ and $V[0, 1]$. 

\subsection*{Acknowledgement}
The author was supported by NWO.

\section{Recollections on DG-Lie algebroids}\label{sec:dglie}
In this section we recall the homotopy theory of dg-Lie algebroids over a commutative dg-algebra, based on the discussion in \cite{nui17b}.

\subsection{DG-Lie algebroids}
Recall that the tangent module of a commutative dg-$k$-algebra $A$ is the dg-$A$-module of $k$-linear derivations of $A$
$$
T_A = \mm{Der}_k(A, A).
$$ 
The commutator bracket endows this complex with the structure of a dg-Lie-algebra over $k$.
\begin{definition}\label{la:def:lralgebra}
A \emph{dg-Lie algebroid} $\mf{g}$ over $A$ (relative to $k$) is an unbounded dg-$A$-module $\mf{g}$, equipped with a $k$-linear dg-Lie algebra structure and an \emph{anchor map} $\rho\colon \mf{g}\rt T_A$ such that
\begin{enumerate}
\item $\rho$ is both a map of dg-$A$-modules and dg-Lie algebras.
\item the failure of the Lie bracket to be $A$-bilinear is governed by the Leibniz rule
$$
[X, a\cdot Y] = (-1)^{Xa} a[X, Y] + \rho(X)(a)\cdot Y.
$$
\end{enumerate}
Let $\dgLieAlgd_A$ be the category of dg-Lie algebroids over $A$, with maps given by $A$-linear maps over $T_A$ that preserve the Lie bracket.
\end{definition}
\begin{example}\label{ex:atiyah}
Any dg-$A$-module $E$ gives rise to an \emph{Atiyah dg-Lie algebroid} $\mm{At}(E)$ over $A$, which can be described as follows: a degree $n$ element of $\mm{At}(E)$ is a tuple $(v, \nabla_v)$ consisting of a derivation $v\colon A\rt A$ (of degree $n$), together with a $k$-linear map $\nabla_v\colon E\rt E$ (of degree $n$) such that
$$
\nabla_v(a\cdot e) = v(a)\cdot e + (-1)^{|a|\cdot n} a\cdot \nabla_v(e)
$$
for all $a\in A$ and $e\in E$. This becomes a dg-$A$-module under pointwise multiplication and a dg-Lie algebra under the commutator bracket. The anchor map is the obvious projection $\mm{At}(E)\rt T_A$ sending $(v, \nabla_v)$ to $v$.
\end{example}
\begin{example}\label{ex:atiyah2}
Similarly, suppose that $E\in \dgMod_A$ has the structure of an algebra over a $k$-linear dg-operad $\ope{P}$. Then there is a sub dg-Lie algebroid $\mm{At}_{\ope{P}}(E)\subseteq \mm{At}(E)$ consisting of the tuples $(v, \nabla_v)$ where $\nabla_v$ is a $\ope{P}$-algebra derivation.
\end{example}
The following is the main result of \cite{nui17b}:
\begin{proposition}\label{prop:monadicity}
The category of dg-Lie algebroids over $A$ carries the \emph{projective} semi-model structure, in which a map is a weak equivalence (fibration) if and only if it is a quasi-isomorphism (degreewise surjective). Furthermore, the forgetful functor to the projective model structure on dg-$A$-modules
$$\smash{\xymatrix{
\dgLieAlgd_A\ar[r] & \dgMod_A/T_A
}}$$
is a right Quillen functor that preserves all sifted homotopy colimits.
\end{proposition}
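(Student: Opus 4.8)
The plan is to exhibit $\dgLieAlgd_A$ as the category of algebras over a monad on the base $\mc{M} := \dgMod_A/T_A$ and to obtain the semi-model structure by transfer along the associated free--forgetful adjunction. First I would construct the free dg-Lie algebroid functor. By Definition \ref{la:def:lralgebra}, a dg-Lie algebroid is an object $(\mf{g},\rho)$ of $\mc{M}$ together with a $k$-linear Lie bracket satisfying the $A$-linearity and Leibniz constraints; consequently the forgetful functor $U$ creates limits and filtered colimits, and since $\mc{M}$ is presentable the adjoint functor theorem produces a left adjoint $F$. One checks that the induced monad $\mathbb{T}=UF$ has exactly the dg-Lie algebroids as algebras (a Lie--Rinehart completion of the free Lie algebra on $\mf{a}$ along its anchor), and that its underlying endofunctor is \emph{analytic}, i.e.\ built out of finite $A$-linear tensor and symmetric powers of $\mf{a}$.

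Next I would invoke the transfer principle. The projective model structure on $\dgMod_A$ is cofibrantly generated, and $\mc{M}$ inherits a cofibrantly generated projective structure, with generating cofibrations $I$ and generating trivial cofibrations $J$ detected on underlying modules. Declaring a map of Lie algebroids to be a weak equivalence (fibration) precisely when $U$ sends it to a quasi-isomorphism (degreewise surjection), the recognition theorem yields the transferred structure provided two things hold: a smallness condition, which is automatic because $\mathbb{T}$ is finitary and every object of $\mc{M}$ is small; and the acyclicity condition, namely that pushouts of maps in $F(J)$ are quasi-isomorphisms. The first part of the statement then reduces to verifying acyclicity, and $(F,U)$ is a Quillen adjunction by construction.

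The acyclicity condition is the heart of the matter, and it is exactly what forces one to settle for a \emph{semi}-model rather than a model structure. I would establish it through a Poincar\'e--Birkhoff--Witt filtration on the universal enveloping algebra: pushing out a generating trivial cofibration $F(0\to D)$, with $D$ a contractible disk in $\mc{M}$, along a map out of a \emph{cofibrant} Lie algebroid $\mf{g}$ yields a coproduct whose underlying complex admits an exhaustive filtration with associated graded assembled from symmetric powers over $A$ of the underlying modules of $\mf{g}$ and of $D$. Because $\mathbb{Q}\subseteq k$, symmetric powers preserve quasi-isomorphisms between flat (in particular cofibrant) complexes -- coinvariants agree with invariants and one may average over $\Sigma_n$ -- so the acyclic factor $D$ keeps each graded piece acyclic and the pushout is a quasi-isomorphism. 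The restriction to cofibrant domains $\mf{g}$, needed to keep the underlying modules flat and the PBW filtration well behaved, is precisely what degrades the conclusion to a semi-model structure. Checking that this filtration is genuinely compatible with the anchor and Leibniz constraints is the step I expect to require the most care, and is the main obstacle.

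Finally, for the furthermore clause I would first show that $U$ preserves sifted colimits strictly and then deduce preservation of sifted \emph{homotopy} colimits. Since sifted colimits commute with finite products, each $n$-fold tensor power functor preserves them, and passing to $\Sigma_n$-coinvariants (itself a colimit) shows the same for symmetric powers over $A$; hence the analytic endofunctor $\mathbb{T}$ preserves sifted colimits, so sifted colimits of dg-Lie algebroids are created by $U$. To upgrade this to homotopy colimits, I would use that cofibrant dg-Lie algebroids have cofibrant underlying objects in $\mc{M}$ and that $U$ carries trivial cofibrations to quasi-isomorphisms: for a pointwise-cofibrant diagram the strict sifted colimit computes the homotopy colimit on both sides, and as $U$ preserves that strict colimit it preserves sifted homotopy colimits.
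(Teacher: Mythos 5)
First, a point of order: the paper does not prove this proposition. It is stated as ``the main result of \cite{nui17b}'' and simply imported, so your proposal can only be compared with the argument of that reference. Your overall strategy --- realize $\dgLieAlgd_A$ as algebras over the monad of the free--forgetful adjunction on $\dgMod_A/T_A$, transfer the projective structure, verify acyclicity via a PBW-type filtration on pushouts along free maps with cofibrant domain (whence only a semi-model structure), and deduce preservation of sifted homotopy colimits from the monad commuting with sifted colimits --- is exactly the strategy carried out there.

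The one claim that is wrong as stated, and which hides essentially all of the technical content, is that the monad $\mathbb{T}=UF$ is \emph{analytic}, i.e.\ built from finite $A$-linear tensor and symmetric powers of the generating object. This fails: because of the Leibniz rule, the bracket in the free Lie algebroid on $f\colon V\rt T_A$ produces terms of the form $\rho(v)(a)\cdot w$, so the underlying $A$-module of $F(V\rt T_A)$ genuinely depends on the map $f$ and is not an $A$-linear Schur functor of $V$ alone. (Already for $A=k[x]$ and $V=A\cdot v$ with $\rho(v)=\partial_x$ one has $[v,xv]=v$, so weight is a filtration, not a grading.) What is true --- and what \cite{nui17b} actually proves --- is that $F(V\rt T_A)$, and more generally any coproduct $\mf{g}\sqcup F(V\rt T_A)$ with $\mf{g}$ cofibrant, carries a natural exhaustive weight filtration whose associated graded is the corresponding construction for the free $A$-\emph{linear} Lie algebra on $V$ with zero anchor; the acyclicity and sifted-colimit claims are then obtained by running your characteristic-zero/flatness argument on this associated graded. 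You do flag the compatibility of the filtration with the anchor as ``the step requiring the most care,'' which is accurate for the acyclicity step, but your argument for the sifted-colimit clause leans on the literal analyticity of $\mathbb{T}$ and must likewise be rerouted through the filtration. With that substitution the proposal matches the cited proof.
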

\begin{remark}\label{rem:equivalenttoquillen}
Let $0\rt \mf{g}\rt T_A$ be a fibrant-cofibrant replacement of the initial dg-Lie algebroid over $A$. Then there is a Quillen equivalence
$$\xymatrix{
\dgLieAlgd_A\ar@<1ex>[r] & \mf{g}/\dgLieAlgd_A .\ar@<1ex>[l]
}$$
Using the simplicial structure from \cite{vez13} and the fact that every object in $\mf{g}/\dgLieAlgd_A$ is fibrant, one finds that $\mf{g}/\dgLieAlgd_A$ is a genuine (combinatorial) model category.
\end{remark}
\begin{definition}\label{def:liealgebroids}
Let $A$ be a cofibrant connective cdga over $k$. We define the \emph{$\infty$-category of Lie algebroids over $A$} to be the $\infty$-categorical localization
$$
\LieAlgd_A := \dgLieAlgd_A\Big[\{\text{quasi-iso}\}^{-1}\Big].
$$
This is a locally presentable $\infty$-category since $\dgLieAlgd_A$ is Quillen equivalent to a combinatorial model category (Remark \ref{rem:equivalenttoquillen}).
\end{definition}
\begin{remark}\label{rem:derivedtangent}
The condition that $A$ is cofibrant over $k$ guarantees that the tangent module $T_A$ has the correct homotopy type. 
\end{remark}
\begin{definition}\label{def:Q}
We will say that a dg-Lie algebroid $\mf{g}$ is \emph{$A$-cofibrant} when it is cofibrant as a dg-$A$-module. Every cofibrant dg-Lie algebroid is $A$-cofibrant. Conversely, if $\mf{g}$ is $A$-cofibrant, then it has an explicit cofibrant replacement $Q(\mf{g})\rto{\sim} \mf{g}$, described as follows \cite[Section 5]{nui17b}: without differential, $Q(\mf{g})$ is freely generated by the $A$-linear map
$$\xymatrix{
\Big(\mm{Sym}^{\sgeq 1}_A\mf{g}[1]\Big)[-1]\ar[r]^-{\pi} & \mf{g}\ar[r]^-\rho & T_A
}$$
where $\pi$ is the obvious projection. As a dg-Lie algebra, $Q(\mf{g})$ is a quotient of the $A$-linear extension usual \cite{lod12} operadic bar-cobar resolution $\Omega B\mf{g}$ of the dg-Lie algebra underlying $\mf{g}$; this determines the differential.
\end{definition}

\subsection{Representations}
Recall that a \emph{representation} of a dg-Lie algebroid $\mf{g}$ is a dg-$A$-module $E$, together with a Lie algebra representation $\nabla\colon \mf{g}\otimes_k E\rt E$ such that (without Koszul signs)
\begin{equation}\label{eq:reps}
\nabla_{a\cdot X}(e) = a\cdot \nabla_X(e) \qquad \qquad \nabla_X(a\cdot e) = a\cdot \nabla_X(e) + \rho(X)(a)\cdot e
\end{equation}
for all $a\in A, X\in\mf{g}$ and $e\in E$. Equivalently, a $\mf{g}$-representation on $E$ is a map
$$\xymatrix{
\mf{g}\ar[r] & \mm{At}(E)
}$$ 
to the Atiyah Lie algebroid of $E$ (Example \ref{ex:atiyah}). Such representations can be organized into a symmetric monoidal category $\dgRep_\mf{g}$, with tensor product given by $E\otimes_A F$, endowed with the $\mf{g}$-representation
$$
\nabla_X(e\otimes f) = \nabla_X(e)\otimes f + e\otimes \nabla_X(f).
$$
The internal hom is given by $\Hom_A(E, F)$, endowed with the $\mf{g}$-representation by conjugation.
\begin{example}\label{ex:algebrainrep}
Let $\ope{P}$ be a dg-operad. Then a $\ope{P}$-algebra in $\dgRep_\mf{g}$ is simply a $\ope{P}$-algebra in $\smash{\dgMod_A}$, equipped with a $\mf{g}$-representation acting by $\ope{P}$-algebra derivations. Equivalently, such a $\ope{P}$-algebra structure on a $\mf{g}$-representation $E$ is determined by a map of dg-Lie algebroids $\mf{g}\rt \mm{At}_{\ope{P}}(E)$ (Example \ref{ex:atiyah2}).
\end{example}
One can identify $\dgRep_\mf{g}$ with the category of left modules over the \emph{universal enveloping algebra} $\mc{U}(\mf{g})$ of $\mf{g}$. In particular, it carries a model structure transferred from the projective model structure on dg-$A$-modules.
\begin{lemma}\label{lem:pbw}
Any map between dg-Lie algebroids $f\colon \mf{g}\rt \mf{h}$ induces a Quillen adjunction
$$\xymatrix{
f_*\colon \dgRep_\mf{g}\ar@<1ex>[r] & \dgRep_\mf{h}\ar@<1ex>[l] \colon f^!
}$$
where $f^!$ restricts a representation along $f$. When $\mf{g}$ and $\mf{h}$ are $A$-cofibrant and $f$ is a weak equivalence, this Quillen adjunction is a Quillen equivalence.
\end{lemma}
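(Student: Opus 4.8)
The plan is to realize both functors as restriction and extension of scalars along the induced map of universal enveloping algebras $\mc{U}(f)\colon \mc{U}(\mf{g})\rt\mc{U}(\mf{h})$, so that $f^!$ is restriction of scalars and $f_*(E) = \mc{U}(\mf{h})\otimes_{\mc{U}(\mf{g})}E$. The Quillen adjunction is then nearly formal: since the model structures on $\dgRep_\mf{g}$ and $\dgRep_\mf{h}$ are both transferred from the projective model structure on $\dgMod_A$ along the forgetful functors, their weak equivalences and fibrations are exactly the maps that become quasi-isomorphisms, respectively surjections, of underlying dg-$A$-modules. Because $f^!$ acts as the identity on underlying dg-$A$-modules, it automatically preserves fibrations and trivial fibrations, so $(f_*, f^!)$ is a Quillen adjunction. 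The same observation shows that $f^!$ both preserves and reflects all weak equivalences, a fact I will use at the end.

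Now assume $\mf{g}$ and $\mf{h}$ are $A$-cofibrant and $f$ is a quasi-isomorphism. The key input is the Poincar\'e--Birkhoff--Witt theorem for Lie algebroids \cite{rin63}: since $\mf{g}$ is cofibrant as a dg-$A$-module, the PBW filtration on $\mc{U}(\mf{g})$ is exhaustive, starts at $A$, and has associated graded $\mm{gr}\,\mc{U}(\mf{g})\cong \mm{Sym}_A(\mf{g})$, and likewise for $\mf{h}$. The map $\mc{U}(f)$ is compatible with these filtrations and induces $\mm{Sym}_A(f)$ on associated gradeds. Because $\field{Q}\subseteq k$, each symmetric power $\mm{Sym}^n_A(-) = \big((-)^{\otimes_A n}\big)_{\Sigma_n}$ sends a quasi-isomorphism between cofibrant dg-$A$-modules to a quasi-isomorphism: the $n$-fold tensor power preserves quasi-isomorphisms between cofibrant objects, and taking $\Sigma_n$-coinvariants is exact since $|\Sigma_n|$ is invertible. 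Hence $\mm{Sym}_A(f)$ is a quasi-isomorphism, and a standard induction over the exhaustive, non-negatively indexed filtration (five lemma on successive subquotients), followed by passage to the filtered colimit, shows that $\mc{U}(f)$ itself is a quasi-isomorphism.

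Finally I would verify that the derived unit is a weak equivalence on cofibrant objects, which suffices for a Quillen equivalence since $f^!$ reflects weak equivalences and every object is fibrant in the projective model structure. As $f_*E$ is already fibrant, the derived unit is the ordinary unit $E\rt f^!f_*E$. The generating cofibrations of $\dgRep_\mf{g}$ are obtained by applying $\mc{U}(\mf{g})\otimes_A(-)$ to those of $\dgMod_A$, and on a free module $\mc{U}(\mf{g})\otimes_A M$ with $M$ a cofibrant dg-$A$-module the unit is identified with $\mc{U}(f)\otimes_A \mm{id}_M$; this is a quasi-isomorphism because $M$ is cofibrant (hence K-flat, so $-\otimes_A M$ preserves all quasi-isomorphisms) and $\mc{U}(f)$ is a quasi-isomorphism. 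Since $f_*$ and $f^!$ both preserve all colimits, $f^!$ preserves weak equivalences, and the unit is natural, an induction over the cellular filtration of a cofibrant object together with closure under retracts upgrades this to all cofibrant $E$.

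I expect the main obstacle to be the PBW step: one must use $A$-cofibrancy of $\mf{g}$ and $\mf{h}$ to identify the associated graded of the enveloping algebra with the honest symmetric algebra $\mm{Sym}_A$, so that the characteristic-zero exactness of symmetric powers can be invoked to propagate the quasi-isomorphism $f$ up to $\mc{U}(f)$; everything else is a formal manipulation of transferred model structures.
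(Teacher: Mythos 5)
Your proposal is correct and follows essentially the same route as the paper: the Quillen adjunction comes for free from the transferred model structures, the PBW filtration identifies $\mm{gr}\,\mc{U}(\mf{g})$ with $\mm{Sym}_A\mf{g}$ for $A$-cofibrant $\mf{g}$ so that $\mc{U}(f)$ is a quasi-isomorphism in characteristic zero, and the Quillen equivalence follows. The paper simply leaves the final step (that a quasi-isomorphism of enveloping algebras induces a Quillen equivalence of module categories) implicit, whereas you spell out the standard cell-induction argument for the derived unit.
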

\begin{proof}
The map $f$ induces a map of universal enveloping algebras $\mc{U}(\mf{g})\rt \mc{U}(\mf{h})$, which gives rise to the Quillen pair $(f_*, f^!)$. The left adjoint $f_*$ is given by the functor $E\mapsto \mc{U}(\mf{h})\otimes_{\mc{U}(\mf{g})} E$. 

For the second part, recall that the universal enveloping algebra $\mc{U}(\mf{g})$ has a natural PBW filtration, obtained by declaring generators from $A$ to have weight $0$ and generators from $\mf{g}$ to have weight $1$ \cite{rin63}. The associated graded is a graded cdga and comes equipped with a surjective map of graded cdgas
$$\xymatrix{
\mm{Sym}_A \mf{g}\ar[r] & \mm{gr}\big(\mc{U}(\mf{g})\big).
}$$
When $\mf{g}$ is projective as a graded $A$-module, this map (or rather, the underlying map of graded graded-commutative algebras) is an isomorphism by the PBW theorem of \cite{rin63}, which applies verbatim in the graded setting. 

Now suppose that $\mf{g}\rt \mf{h}$ is a weak equivalence between $A$-cofibrant dg-Lie algebroids. The map $\mc{U}(\mf{g})\rt \mc{U}(\mf{h})$ induces a quasi-isomorphism on the associated graded and is therefore a weak equivalence itself. This implies that $(f_*, f^!)$ is a Quillen equivalence.
\end{proof}
\begin{remark}
The right Quillen functor $f^!$ induces a right adjoint functor of $\infty$-categories $f^!\colon \Rep_\mf{h}\rt \Rep_\mf{g}$. This functor has a further right adjoint $f_!$, which sends $E$ to the derived coinduction $\Hom^h_{\mc{U}(\mf{g})}(\mc{U}(\mf{h}), E)$.
\end{remark}
\begin{remark}\label{rem:monoidalstructureonreps}
The tensor product $\otimes_A$ does not make $\dgRep_\mf{g}$ a monoidal model category. However, the tensor product $E\otimes_A (-)$ does preserve all weak equivalences whenever $E$ is cofibrant as a dg-$A$-module. When $\mf{g}$ is $A$-cofibrant, every cofibrant left $\mc{U}(\mf{g})$-module is cofibrant as an $A$-module by the PBW theorem, so that the tensor product $\otimes_A$ descends to a tensor product on the $\infty$-category $\Rep_\mf{g}$ \cite[Proposition 4.1.7.4]{lur16}. Because the forgetful functor $\Rep_\mf{g}\rt \Mod_A$ detects equivalences and preserves all (co)limits, the resulting symmetric monoidal structure on $\Rep_\mf{g}$ is closed.
\end{remark}
By Lemma \ref{lem:pbw} and Remark \ref{rem:monoidalstructureonreps}, any map of $f\colon \mf{g}\rt \mf{h}$ of dg-Lie algebroids induces a symmetric monoidal left and right adjoint $f^!\colon \Rep_\mf{h}\rt \Rep_\mf{g}$ between presentable (closed) symmetric monoidal $\infty$-categories. It follows from Lemma \ref{lem:pbw} that this determines a functor
$$\xymatrix{
\Rep\colon \LieAlgd_A^\op\ar[r] & \cat{Pr^\mm{L}_{sym. mon.}}\big/\Mod_A := \CAlg(\cat{Pr^L})/\Mod_A
}$$
to the $\infty$-category of presentable (closed) symmetric monoidal $\infty$-categories over $\Mod_A$, which sends $\mf{g}$ to the forgetful functor $\Rep_\mf{g}\rt \Mod_A$. 
\begin{lemma}\label{fm:lem:algrepspreservessiftedcolim}
The functor $\Rep$ preserves all limits.
\end{lemma}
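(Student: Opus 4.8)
The plan is to reduce the statement to a computation in $\cat{Cat}_\infty$ and then to recognize $\Rep_\mf{g}$ as modules over the enveloping algebra. First I would record that the forgetful functors
$$\CAlg(\cat{Pr^L})/\Mod_A\rt\CAlg(\cat{Pr^L})\rt\cat{Pr^L}\rt\cat{Cat}_\infty$$
each preserve limits and are jointly conservative: limits in an overcategory are created by the projection to the total $\infty$-category, the forgetful functor from commutative algebra objects preserves limits, and the forgetful functor $\cat{Pr^L}\rt\cat{Cat}_\infty$ preserves limits, since limits of presentable $\infty$-categories along left adjoint functors are computed in $\cat{Cat}_\infty$ \cite{lur16}. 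Conservativity holds because a symmetric monoidal functor of presentable $\infty$-categories over $\Mod_A$ is an equivalence as soon as its underlying functor of $\infty$-categories is. By the usual argument (a functor preserves limits once its composite with a conservative, limit-preserving functor does), it then suffices to show that the underlying functor $\mf{g}\mapsto\Rep_\mf{g}$, $f\mapsto f^!$, from $\LieAlgd_A^\op$ to $\cat{Cat}_\infty$ preserves limits; equivalently, that it sends a colimit $\mf{g}=\colim_i\mf{g}_i$ of Lie algebroids to the limit $\lim_i\Rep_{\mf{g}_i}$ taken along the restriction functors.

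Next I would factor this functor through enveloping algebras. As recalled above, $\Rep_\mf{g}$ is the category of left $\mc{U}(\mf{g})$-modules and $f^!$ is restriction along $\mc{U}(f)$; passing to the localization gives an equivalence $\Rep_\mf{g}\simeq\mm{LMod}_{\mc{U}(\mf{g})}(\Mod_k)$ of $\infty$-categories, natural in restriction. Thus the functor in question is the composite
$$\LieAlgd_A^\op\xrightarrow{\ \mc{U}^\op\ }\mm{Alg}_k^\op\xrightarrow{\ \mm{LMod}\ }\cat{Cat}_\infty,$$
and I would prove that each factor preserves limits. The universal enveloping algebra defines a left adjoint functor $\mc{U}\colon\LieAlgd_A\rt\mm{Alg}_k$ (this is visible from the universal property encoded in $\Rep_\mf{g}\simeq\mm{LMod}_{\mc{U}(\mf{g})}$, and at the chain level from the fact that $\mc{U}$ is left Quillen and carries weak equivalences between $A$-cofibrant Lie algebroids to weak equivalences, as in the proof of Lemma \ref{lem:pbw}); hence it preserves colimits and $\mc{U}^\op$ preserves limits. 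For the second factor I would invoke the fact that the module-category functor with \emph{restriction} functoriality, $\mm{Alg}_k^\op\rt\cat{Cat}_\infty$, $R\mapsto\mm{LMod}_R$, carries colimits of algebras to limits of $\infty$-categories \cite{lur16}.

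The step I expect to carry the real content is this last fact, together with the compatibility that makes it hold for \emph{all} colimits. The naive worry is that, for a pushout of algebras, modules do not form the pullback of module categories; the resolution is that the relevant coproduct in associative algebras is the amalgamated \emph{free} product $R_1\ast_{R_0}R_2$ rather than the tensor product, and a left $R_1\ast_{R_0}R_2$-module is exactly an $R_0$-module equipped with a (not necessarily commuting) pair of extensions to $R_1$ and $R_2$, i.e.\ an object of $\mm{LMod}_{R_1}\times_{\mm{LMod}_{R_0}}\mm{LMod}_{R_2}$. Since $\mc{U}$ preserves colimits and the coproduct in $\mm{Alg}_k$ is the free product, this is precisely the mechanism by which a colimit of Lie algebroids is matched with a limit of representation categories, and it is what rules out the apparent obstruction. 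I would therefore spend the bulk of the proof making the identification $\Rep_\mf{g}\simeq\mm{LMod}_{\mc{U}(\mf{g})}$ precise at the level of $\infty$-categories and checking that $\mc{U}$ is a left adjoint, after which the module-theoretic input from \cite{lur16} closes the argument.
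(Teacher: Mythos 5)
Your overall architecture matches the paper's: strip off the symmetric monoidal and over-$\Mod_A$ structure, factor $\Rep$ as $\mm{LMod}\circ\,\mc{U}$, and quote \cite[Theorem 4.8.5.11]{lur16} for the fact that $\mm{LMod}$ with restriction functoriality carries colimits of algebras to limits of module categories. The gap is in the step you dispose of in one parenthesis: that $\mc{U}$ preserves all $\infty$-categorical colimits ``because it is a left adjoint.'' Neither justification you offer establishes this. The equivalence $\Rep_\mf{g}\simeq\mm{LMod}_{\mc{U}(\mf{g})}$ determines $\mc{U}(\mf{g})$ only through its module category and does not produce a right adjoint to $\mc{U}$ as a functor to algebras. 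At the chain level, the candidate $1$-categorical right adjoint (sending an algebra $B$ under $A$ to the Lie algebroid of elements of $B$ that normalize $A$ and act on it by derivations) fails to preserve fibrations --- a lift along a surjection of an element normalizing $A$ need not itself normalize $A$ --- so $\mc{U}$ is not left Quillen in any evident way; and even a $1$-categorical colimit-preservation statement would not by itself give preservation of homotopy colimits. Since, by the adjoint functor theorem, being a left adjoint here is \emph{equivalent} to preserving all colimits, your argument is circular at exactly the point that carries the content.

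The paper closes this gap by splitting the colimits into two classes. For sifted colimits it invokes \cite[Theorem 4.19]{nui17b} (a PBW-filtration argument), which says that the underlying $A$-bimodule of $\mc{U}(\mf{g})$ commutes with sifted homotopy colimits; since the forgetful functor from algebras detects sifted colimits, $\mc{U}$ preserves them. For finite coproducts it uses that $\LieAlgd_A$ is generated under sifted colimits by free Lie algebroids $F(V)$ and identifies $\mc{U}(F(V))\cong T_A(V)$ with the free associative algebra on $V$ viewed as a symmetric $A$-bimodule, so that on free objects $\mc{U}$ agrees with a composite of colimit-preserving functors; together these two cases give all colimits. (The paper also works with algebras in $A$-bimodules rather than plain $k$-algebras, which is where the transferred model structure needed to present $\cat{Alg}(\cat{BiMod}_A)$ lives; that is a minor point compared to the one above.) If you keep your outline, you must replace ``$\mc{U}$ is a left adjoint'' by these two explicit checks.
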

\begin{proof}
We can forget about the symmetric monoidal structures, since the forgetful functor $\cat{Pr^\mm{L}_{sym. mon.}}=\CAlg(\cat{Pr^L})\rt \cat{Pr^\mm{L}}$ preserves limits and detects equivalences. For any map $f\colon \mf{g}\rt \mf{h}$, the restriction functor $f^!$ is both a left and a right adjoint. It therefore suffices to show that the functor $\Rep\colon \LieAlgd_A^\op\rt \cat{Pr^R}/\Mod_A$ preserves limits. 

To see this, consider $\mc{U}(\mf{g})$ as a unital associative algebra in the category of dg-$A$-\emph{bimodules} (over $\field{Q}$), using the map of algebras $A\rt \mc{U}(\mf{g})$. The category of dg-$A$-bimodules carries a monoidal model structure, whose weak equivalences and fibrations are transferred along the forgetful functor $\cat{BiMod}^\dg_A\rt \cat{LMod}^\dg_A$. Since we are working in characteristic zero, there exists a transferred model structure on the category of associative algebras in $\cat{BiMod}^\dg_A$ as well. By \cite[Theorem 4.1.8.4]{lur16}, this is a model for the $\infty$-category of algebras in the monoidal $\infty$-category $\cat{BiMod}_A$ of $A$-bimodules (in loc.\ cit., the monoid axiom and symmetry of the monoidal structure are only used for the existence of a model structure on algebras).

The functor $\Rep$ now factors over the $\infty$-category of algebras in $A$-bimodules
$$\xymatrix@C=3pc{
\LieAlgd_A^\op\ar[r]^-{\mc{U}} & \cat{Alg}\big(\cat{BiMod}_A\big)^\op \ar[r]^-{\mm{LMod}} & \cat{Pr^{R}}/\Mod_A.
}$$
The functor $\mm{LMod}$ preserves all limits by \cite[Theorem 4.8.5.11]{lur16} (see the remarks just above Corollary 4.8.5.13). It remains to verify that $\mc{U}$ preserves limits. To see that it preserves sifted limits, it suffices to check that the composite
$$\xymatrix{
\LieAlgd_A^\op\ar[r]^-{\mc{U}} & \cat{Alg}\big(\cat{BiMod}_A\big)^\op\ar[r]^-{\mm{forget}} & \cat{BiMod}^\op_A
}$$
preserves sifted limits. This was proven in \cite[Theorem 4.19]{nui17b}.

To see that $\mc{U}$ preserves finite products, note that $\LieAlgd_A$ is generated under sifted colimits by free Lie algebroids $F(V)$ on maps $0\colon V\rt T_A$ of $A$-modules. It therefore suffices to verify that the composite
$$\xymatrix{
\Mod_A\ar[r]^-{F} & \LieAlgd_A\ar[r]^-{\mc{U}} & \cat{Alg}\big(\cat{BiMod}_A\big)
}$$
preserves finite coproducts. Unraveling the definitions, one sees that for any cofibrant dg-$A$-module $F$, the dg-algebra $\mc{U}(F(V))$ is naturally equivalent to the $A$-linear tensor algebra $T_A(V)$. In other words, the above functor is naturally equivalent to the functor
$$\xymatrix{
\Mod_A\ar[r]^\Delta & \cat{BiMod}_A\ar[r]^-{\mm{Free}} & \cat{Alg}\big(\cat{BiMod}_A\big)
}$$
sending an $A$-module to the free algebra on $V$, considered as a symmetric $A$-bimodule. This functor clearly preserves colimits.
\end{proof}

\section{Pro-coherent sheaves and Lie algebroids}\label{sec:procoherent}
The $\infty$-category of Lie algebroids over $A$ defined in the previous section was obtained by formally inverting the quasi-isomorphisms between dg-Lie algebroids over $A$. In this section, we discuss the \emph{tame} homotopy theory of dg-Lie algebroids, which has \emph{less} weak equivalences and is transferred from the tame model structure on dg-$A$-modules. We will start by recalling the tame model structure on dg-$A$-modules in Section \ref{sec:tamemodules} and discuss the transferred homotopy theory on dg-Lie algebroids in Section \ref{sec:tameliealgebroids}. 

Our main aim will be to show that in suitable cases, the tame model structure on dg-$A$-modules presents the $\infty$-category $\mm{Ind}(\cat{Coh}_A^\op)$ of \emph{pro-coherent sheaves} on $A$ (or rather, its opposite). The tame homotopy theory theory on dg-Lie algebroids can then be considered as a model for the $\infty$-category of Lie algebroids in pro-coherent sheaves over $A$. In particular, the $\infty$-category of such pro-coherent Lie algebroids is \emph{compactly generated}, which plays an important technical role in our discussion of Koszul duality. To make sure that the tame model structure on dg-$A$-modules indeed presents pro-coherent sheaves, we need to put some restrictions on $A$:
\begin{definition}
A connective commutative dg-$k$-algebra $A$ is called
\begin{enumerate}
\item (strictly) \emph{bounded} if it is concentrated in degrees $[0, n]$, for some $n\geq 0$.
\item \emph{coherent} if $\pi_0(A)$ is a coherent ring and each $\pi_n(A)$ is a finitely presented $\pi_0(A)$-module.
\end{enumerate}
\end{definition}
\begin{warning}\label{war:boundecoh}
The tame homotopy theory of dg-Lie algebroids described below is only well-behaved when $A$ is both cofibrant over $k$ (Remark \ref{rem:derivedtangent}) and bounded coherent (Proposition \ref{prop:tamecompactlygenerated}). There is some conflict between cofibrancy and being bounded: for example, $A$ cannot have any generators of degree $2$.
\end{warning}
\begin{example}
Suppose that $k$ is bounded coherent and that $A$ is obtained by adding finitely many generators in degrees $0$ and $1$. Then $A$ is cofibrant over $k$ and bounded coherent. Geometrically, $A$ describes the derived zero locus of a map $\field{A}_k^n\rt \field{A}_k^m$ over $\spec(k)$.
\end{example}

\subsection{Tame dg-$A$-modules and pro-coherent sheaves}\label{sec:tamemodules}
Recall that for any commutative dg-algebra $A$, the category of dg-$A$-modules can be endowed with the \emph{tame}, or \emph{contraderived} model structure on dg-$A$-modules (see e.g.\ \cite{bec14}). In this model structure, a map of dg-$A$-modules $E\rt F$ is
\begin{itemize}[leftmargin=*]
\item[-] a fibration if it is degreewise surjective.
\item[-] a cofibration if it is a monomorphism, whose cokernel is projective as a graded $A$-module.
\item[-] a weak equivalence if for any graded-projective dg-$A$-module $P$, the map on hom-complexes $\Hom_A(P, E)\rt \Hom_A(P, F)$ is a quasi-isomorphism.
\end{itemize}
We will denote the associated $\infty$-category by $\Mod_A^!$ and refer to it as the $\infty$-category of \emph{tame dg-$A$-modules}. This terminology is supposed to emphasize that $\Mod_A^!$ depends on the explicit cdga $A$, i.e.\ quasi-isomorphic cdgas may have non-equivalent $\infty$-categories of tame dg-$A$-modules.

The tame model structure is stable and symmetric monoidal for the usual tensor product over $A$, and the usual projective model structure is a (symmetric monoidal) right Bousfield localization of the tame model structure. It follows that there is a fully faithful, symmetric monoidal left adjoint functor $$\xymatrix{
\Mod_A\ar[r] & \Mod_A^!.
}$$
\begin{lemma}\label{lem:tamecofgenerated}
Let $\cat{T}^{\sleq n}$ be the set of graded-free dg-$A$-modules $T$ satisfying the following two conditions:
\begin{itemize}
 \item[(i)] $T$ has no generators in (homological) degree $>n$
 \item[(ii)] $T$ has finitely many generators in each individual degree.
\end{itemize}
Let $\cat{T}=\bigcup_n \cat{T}^{\sleq n}$. Then the collection of cone inclusions $\big\{T\rt T[0,1] : T\in\cat{T}\big\}$ is a set of generating cofibrations for the tame model structure. In particular, the tame model structure is cofibrantly generated.
\end{lemma}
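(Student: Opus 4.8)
The plan is to verify the hypotheses of the standard recognition theorem for cofibrantly generated model structures (in the stable setting, it suffices to exhibit a set of generating cofibrations together with the small object argument, since the tame model structure is already known to exist). Concretely, writing $I = \{T \rt T[0,1] : T\in\cat{T}\}$, I will show that a map $f\colon E\rt F$ is a trivial fibration (i.e.\ a degreewise surjective weak equivalence) if and only if it has the right lifting property against every map in $I$. Granting this, the small object argument applies because each source $T$ is a small object: by condition (ii) it has finitely many generators in each degree, and by condition (i) it is bounded above, so $T$ is $\kappa$-small for a suitable regular cardinal. This yields the desired generating cofibrations.

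First I would analyze the right lifting property against $I$. A lift against the cone inclusion $T\rt T[0,1]$ exists for all $T\in\cat{T}$ precisely when the induced map on hom-complexes $\Hom_A(T, E)\rt \Hom_A(T, F)$ is a surjective quasi-isomorphism; this is the usual reformulation of a lifting problem against a cone inclusion of a fixed object, valid because $[0,1]$ is the cone and lifting against $T\rt T[0,1]$ tests both surjectivity on cycles and acyclicity of the relative complex. So the key is to identify when $\Hom_A(T, E)\rt \Hom_A(T, F)$ is a surjective quasi-isomorphism for all $T\in\cat{T}$ with when $f$ is a trivial fibration.

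For the forward direction I would use the definition of the tame weak equivalences directly: if $f$ is a tame weak equivalence, then $\Hom_A(P,E)\rt \Hom_A(P,F)$ is a quasi-isomorphism for every graded-projective $P$, in particular for every $T\in\cat{T}$; degreewise surjectivity of $f$ combined with graded-freeness of $T$ gives the surjectivity on hom-complexes. For the converse — which I expect to be the main obstacle — I would need to promote the lifting property against the \emph{small} graded-free modules in $\cat{T}$ to a weak equivalence tested against \emph{all} graded-projective $P$, and to degreewise surjectivity of $f$ itself. The point is that an arbitrary graded-projective $P$ is a retract of a (possibly large) graded-free module, and any element of $P$ or cycle in $\Hom_A(P,F)$ is supported on a finitely-generated, bounded-above graded-free summand, i.e.\ factors through some $T\in\cat{T}$; a compactness/filtered-colimit argument then reduces testing against $P$ to testing against the $T\in\cat{T}$. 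Degreewise surjectivity of $f$ is recovered by taking $T = A$ (shifted), which lies in $\cat{T}$.

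Finally I would assemble these pieces: the right lifting property against $I$ characterizes exactly the trivial fibrations of the tame model structure, the sources in $I$ are small, and hence $I$ is a set of generating cofibrations, proving that the tame model structure is cofibrantly generated. The boundedness condition (i) is what makes the sources small (avoiding generators in arbitrarily high degree), while the finiteness condition (ii) controls the size within each degree; both are essential for the compactness argument in the converse direction, which is where the real work lies.
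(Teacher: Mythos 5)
Your overall strategy --- characterize the maps with the right lifting property against $I=\{T\rt T[0,1]\}$ as the tame trivial fibrations, note that the forward direction is immediate from the definitions, and isolate the converse (promoting the condition from the small modules $T\in\cat{T}$ to arbitrary graded-projective $P$) as the real work --- matches the paper. The gap is in the mechanism you propose for that converse. You claim that ``any cycle in $\Hom_A(P,F)$ is supported on a finitely-generated, bounded-above graded-free summand'' and that a ``compactness/filtered-colimit argument'' reduces testing against $P$ to testing against the $T\in\cat{T}$. This fails: $\Hom_A(-,F)$ is contravariant and sends colimits to \emph{limits}, so writing $P$ as a filtered union of small submodules gives $\Hom_A(P,F)=\lim\Hom_A(P_\alpha,F)$, not a colimit. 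A degree-zero cycle in $\Hom_A(P,F)$ is a chain map defined on all of $P$; when $P$ has infinitely many generators it does not factor through any finitely generated summand, and quasi-isomorphisms are not detected on such an inverse limit by naive compactness. So the reduction as stated does not go through.

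The paper's proof supplies the missing mechanism, and it is genuinely different in character. One lets $\cat{C}$ be the class of graded-projective $P$ for which $\Hom_A(P,E)\rt\Hom_A(P,F)$ is a trivial fibration, and checks that $\cat{C}$ is closed under (1) retracts, (2) extensions along cofibrations $P\rt Q$ with $P$ and $Q/P$ in $\cat{C}$ (using the degreewise split short exact sequence of hom-complexes), and (3) transfinite compositions of cofibrations (using that the resulting towers of hom-complexes consist of surjective trivial fibrations, so the inverse limit is again one). The substantive combinatorial input is then a Zorn's lemma argument: for a graded-free $Q$ with generating set $S$, a maximal $S_0\subseteq S$ spanning a sub-dg-module in $\cat{C}$ must be all of $S$, because any leftover generator $y$ of degree $n$ can be enclosed in a sub-dg-module of $Q/(A\cdot S_0)$ that is finitely generated in each degree and has no generators above degree $n$ --- i.e.\ lies in $\cat{T}$ --- contradicting maximality via closure property (2). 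This exhaustive filtration by sub-dg-modules with successive quotients in $\cat{T}$, rather than a factorization of individual cycles through small pieces, is the step your proposal is missing; without it the converse direction does not close.
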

\begin{proof}
A map $E\rt F$ has the right lifting property against all $T\rt T[0,1]$ if and only if the map $\Hom_A(T, E)\rt \Hom_A(T, F)$ is a trivial fibration for all $T\in \cat{T}$. Let $\cat{C}\subseteq \dgMod_A$ be the full subcategory of graded-projective dg-$A$-modules $P$ for which $\Hom_A(P, E)\rt \Hom_A(P, F)$ is a trivial fibration. We have to show that $\cat{C}$ contains all graded-projective dg-$A$-modules. To see this, let us make the following observations:
\begin{enumerate}[leftmargin=*]
 \item $\cat{C}$ is closed under retracts.
 \item Let $P\rt Q$ be a cofibration such that $P$ and $Q/P$ are contained in $\cat{C}$. Then $Q$ is contained in $\cat{C}$.
 \item If $\{P_\alpha\}$ is a (transfinite) sequence of cofibrations between objects in $\cat{C}$, then the colimit $\colim P_\alpha$ is contained in $\cat{C}$ as well.
\end{enumerate}
By (1), it suffices to verify that $\cat{C}$ contains all graded-free dg-$A$-modules. Suppose that $Q$ is a graded-free dg-$A$-module, with set of generators $S=\{y_i\}$. Let $\mc{B}\subseteq P(S)$ be the poset of subsets $S'\subseteq S$ which generate a sub dg-$A$-module of $Q$ that is contained in $\cat{C}$. It follows from (3) that any chain in $\mc{B}$ has an upper bound. By Zorn's lemma, $\mc{B}$ admits a maximal element $S_0$. Let $P=A\cdot S_0$ and consider the quotient $Q/P$. We claim that $Q/P$ is trivial, so that $Q\in \cat{C}$. 

Indeed, $Q/P$ has a set of generators $S-S_0$. If there exists a generator $y\in S\setminus S_0$ of degree $n$, one can (inductively) find a set $S'\subseteq S-S_0$ containing $y$ such that
\begin{itemize}[leftmargin=*]
\item[-] $A\cdot S'$ is closed under the differential.
\item[-] $S'$ contains finitely many generators in each degree $\leq n$ and no generators in degrees $>n$.
\end{itemize}
In particular, such $A\cdot S'\subseteq Q/P$ is contained in $\cat{T}$. Using (2), it follows that $A\cdot (S_0\cup S')\subseteq Q$ is contained in $\cat{C}$ as well. This contradicts maximality of $S_0$, so we conclude that $S-S_0$ is empty and $Q/P=0$.
\end{proof}
\begin{remark}\label{rem:connectivemodules}
Let $E$ be a dg-$A$-module and consider its connective cover
$$
\tau_{\sgeq 0} E= (\dots\rt E_1\rt Z_0(E)\rt 0)
$$
Then the map $\tau_{\sgeq 0} E\rt E$ is a tame weak equivalence if and only if $[T, E]=0$ for any $T\in \cat{T}^{\sleq -1}$. We will say that $E$ is a \emph{connective} tame dg-$A$-module if it satisfies these equivalent conditions. The full subcategory $\Mod_{A}^{!, \sgeq 0}$ on the connective tame dg-$A$-modules determines an accessible $t$-structure on $\Mod_A^!$.

One can easily verify that a map between connective tame dg-$A$-modules is a tame weak equivalence if and only if it is a quasi-isomorphism. It follows that the fully faithful inclusion $\Mod_A\rt \Mod_A^!$ induces an equivalence $\Mod_A^{\sgeq 0}\rt \Mod_A^{!,\sgeq 0}$ on connective objects.
\end{remark}
The remainder of this section is devoted to a proof of the following result, proven in e.g.\ \cite{nee08} in the case where $A$ is discrete:
\begin{proposition}\label{prop:tamecompactlygenerated}
Suppose that $A$ is bounded and coherent and let $\mc{K}^{\leq 0}$ be the set of (weak equivalence classes of) dg-$A$-modules $E$ such that
\begin{itemize}
\item[(i)] they are in $\cat{T}^{\sleq 0}$, i.e.\ graded-free, with generators of degree $\leq 0$ and finitely many generators in each degree.
\item[(ii)] the $A$-linear dual $E^\vee = \Hom_A(E, A)$ is eventually coconnective, i.e.\ $\pi_i(E^\vee)$ vanishes for $i\gg 0$.
\end{itemize}
Then $\cat{K}^{\leq 0}$ is a set of compact generators for $\Mod_A^!$.
\end{proposition}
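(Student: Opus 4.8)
The plan is to treat compactness and generation separately, both organized around a canonical filtration of the objects of $\mc{K}^{\leq 0}$ by perfect complexes. Throughout I use that every $E\in\mc{K}^{\leq 0}$ is graded-free, hence cofibrant, and that every object is fibrant, so that the derived mapping complex is computed on the nose by $\Hom_A(E,-)$. If $E$ is graded-free with generators in degrees $\leq 0$ and finitely many in each degree, then (since $A$ is connective) the differential strictly lowers generator-degree, so the submodule $F_{\sleq -N}E$ generated by the generators of degree $\leq -N$ is a subcomplex. The quotients $E^{(N)}:=E/F_{\sleq -(N+1)}E$ are finitely generated graded-free, hence perfect, and $E=\lim_N E^{(N)}$ is a tower of degreewise split surjections whose successive kernels are finitely generated free. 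Dually $E^\vee=\colim_N (E^{(N)})^\vee$ is a filtered colimit of perfect complexes; since $A$ is bounded and coherent this exhibits $E^\vee$ as a complex with finitely presented homology concentrated in degrees $\geq 0$, and condition (ii) says precisely that this homology is also bounded above.

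For compactness I must show $\bigsqcup_i \Hom_A(E,M_i)\to \Hom_A(E,\bigsqcup_i M_i)$ is a quasi-isomorphism for every family $\{M_i\}$. Each $E^{(N)}$ is perfect, hence $\Hom_A(E^{(N)},-)\simeq (E^{(N)})^\vee\otimes_A(-)$ commutes with coproducts. Applying $\Hom_A(-,\bigsqcup_i M_i)$ to the degreewise split short exact sequence $0\to F_{\sleq -(N+1)}E\to E\to E^{(N)}\to 0$ and comparing it, via the natural sum-to-product map, with the coproduct over $i$ of the same sequences for the individual $M_i$, the five lemma identifies the failure of compactness for $E$ with the failure for the tail $F_{\sleq -(N+1)}E$. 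The tail is concentrated in homological degrees tending to $-\infty$ as $N\to\infty$, and condition (ii) together with boundedness and coherence of $A$ forces the corresponding $\lim^1$/Mittag--Leffler obstruction to vanish; this is the point where the hypotheses on $A$ are essential, and it is the direct analogue of the discrete case treated in \cite{nee08}.

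For generation I must show that $\Hom_A(E,M)$ acyclic for all $E\in\mc{K}^{\leq 0}$ implies $M\simeq 0$. By Lemma \ref{lem:tamecofgenerated} it suffices to prove that $\Hom_A(T,M)$ is acyclic for every $T\in\cat{T}$, and after a shift we may take $T\in\cat{T}^{\sleq 0}$. For such $T$ the perfect quotients $T^{(N)}$ lie in $\mc{K}^{\leq 0}$, so the hypothesis gives $\Hom_A(T^{(N)},M)\simeq 0$; the split sequence above then yields a quasi-isomorphism $\Hom_A(T,M)\simeq \Hom_A(F_{\sleq -(N+1)}T,M)$ for every $N$. Since $F_{\sleq -(N+1)}T$ is concentrated in homological degrees $\leq c-N$ for a fixed constant $c$, when $M$ is bounded below each fixed homology group $H_n\Hom_A(T,M)=H_n\Hom_A(F_{\sleq-(N+1)}T,M)$ vanishes once $N$ is large, so $\Hom_A(T,M)\simeq 0$. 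For general $M$ I reduce to the bounded-below case using the $t$-structure of Remark \ref{rem:connectivemodules}: $M$ is the colimit of its connective covers $\tau_{\sgeq -b}M$, and the work is to check that the right-orthogonal of $\mc{K}^{\leq 0}$ is stable under these truncations, so that each $\tau_{\sgeq -b}M$ may be handled separately and then reassembled by compactness of the $E$.

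The main obstacle, in both halves, is precisely the analysis of the tail $F_{\sleq -(N+1)}(-)$ for modules $M$ that are \emph{not} bounded below: a priori the product-shaped complex $\Hom_A(F_{\sleq-(N+1)}E,M)$ can carry homology in every degree, and it is only the eventual coconnectivity of $E^\vee$ --- fed through the boundedness and coherence of $A$ --- that renders the associated pro-system Mittag--Leffler and annihilates this homology in the (co)limit. This is exactly the phenomenon that fails for a general $T\in\cat{T}^{\sleq 0}$, whose dual need not be eventually coconnective, and it is what distinguishes the compact generators $\mc{K}^{\leq 0}$ from the larger generating-cofibration set $\cat{T}$.
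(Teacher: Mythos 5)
There is a genuine gap in each half of your argument, and the one in the generation half is fatal to the strategy. You reduce generation to bounded-below modules by asserting that $M=\colim_b\tau_{\sgeq -b}M$ in $\Mod_A^!$; but the $t$-structure of Remark \ref{rem:connectivemodules} is \emph{not} right complete, and this failure is precisely the point of the tame model structure. Each connective cover $\tau_{\sgeq -b}M$ is a bounded-below tame module, hence lies in the full subcategory $\Mod_A\subseteq\Mod_A^!$ (Remark \ref{rem:connectivemodules}, shifted), which is closed under colimits; so $\colim_b\tau_{\sgeq -b}M$ always lands in $\Mod_A$, whereas $\Mod_A\subsetneq\Mod_A^!$ in general. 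Concretely, for $A=k[\epsilon]/(\epsilon^2)$ the two-sided complex $\cdots\rto{\epsilon}A\rto{\epsilon}A\rto{\epsilon}\cdots$ is acyclic, so every connective cover of it is quasi-isomorphic to $0$ and hence tame-equivalent to $0$, yet the complex itself is nonzero in $\Mod_A^!$: it is detected by the object $E=(A\rto{\epsilon}A\rto{\epsilon}\cdots)\in\mc{K}^{\sleq 0}$ with $E^\vee\simeq k$. Relatedly, your generation argument only ever tests $M$ against the \emph{perfect} quotients $T^{(N)}$, and perfect complexes generate only $\Mod_A$, not $\Mod_A^!$; and it never uses coherence of $A$, which is exactly the hypothesis the paper's Lemma \ref{lem:generate} needs in order to choose almost finite resolutions $Y_i\rt\pi_i(T^\vee)$ of the homotopy groups of the dual, resolve the Postnikov tower of $T^\vee$, and dualize back to exhibit each $T\in\cat{T}$ as an increasing union of extensions of (typically non-perfect) objects of $\mc{K}^{\sleq 0}$.

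The compactness half identifies the right difficulty but does not resolve it. Your filtration is on the source, and it reduces the comparison map for $E$ to the comparison map for the tail $F_{\sleq -(N+1)}E$; but that tail is, up to shift, again an object of the same kind (its dual is still merely eventually coconnective, with a bound that grows with $N$), so the reduction makes no progress, and for unbounded targets the complex $\Hom_A\bigl(F_{\sleq -(N+1)}E,\bigoplus_i M_i\bigr)$ genuinely has homology in degree $0$ for every $N$ (take $M_i=A[-i]$). The appeal to a ``$\lim^1$/Mittag--Leffler obstruction'' names the statement to be proved without exhibiting a pro-system or a vanishing argument. The paper's Lemma \ref{lem:gradedfree} instead filters the \emph{target}: writing $P^{(m)}$ for the quotient of $P$ by the generators of degree $<m$, one gets a tower of fibrations with $P=\lim_m P^{(m)}$ whose successive fibers contribute $\bigoplus_{I_m}E^\vee[m-1]$ to $\Hom_A(E,-)$ --- here the finiteness of $E$ in each degree is what lets $\Hom_A(E,-)$ commute with direct sums of copies of $A$, and condition (ii) is what makes these fibers $(-1)$-coconnected once $m\leq -n_0$. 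This forces $\pi_0\Hom_A(E,P)\rt\pi_0\Hom_A(E,P^{(m)})$ to be an isomorphism for $m\leq -n_0$ and pushes the whole question into a range where a finite descending induction on $m$ applies. That target-side filtration, and the identification of its layers with shifts of $E^\vee$, is the mechanism missing from your write-up; note also that coherence of $A$ plays no role in this half.
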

Before turning to the proof of Proposition \ref{prop:tamecompactlygenerated}, let us record the following consequence. Recall that for coherent $A$, an object $E\in \Mod_A$ is a \emph{coherent} $A$-module if it has finitely many nontrivial homotopy groups $\pi_i(E)$, each of which is finitely presented over $\pi_0(A)$. Equivalently \cite[Proposition 7.2.4.17]{lur16}, it is eventually coconnective and almost finitely presented, i.e.\ presentable by a graded-free dg-$A$-module with finitely many generators in each degree and no generators in degrees $\ll 0$.
\begin{corollary}\label{cor:tameasprocoh}
Let $A$ be bounded and coherent and consider the functor 
$$\smash{\xymatrix{
(-)^\vee\colon \Mod_A^!\ar[r] & \Mod_A^\op
}}$$ 
sending a tame dg-$A$-module to its $A$-linear dual. This functor restricts to an equivalence between the full subcategory $\cat{K}^{\leq 0}\subseteq \Mod_A^!$ and the opposite of the full subcategory $\cat{Coh}^{\sgeq 0}_A\subseteq \Mod_A$ of connective coherent $A$-modules. In particular, there is an equivalence of stable $\infty$-categories
$$
\Mod_A^!\simeq \mm{Ind}(\cat{Coh}_A^\op).
$$
\end{corollary}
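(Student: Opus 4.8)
The plan is to deduce the corollary from Proposition~\ref{prop:tamecompactlygenerated} in two steps. First I would prove the asserted contravariant equivalence $(-)^\vee\colon \cat{K}^{\leq 0}\xrightarrow{\ \sim\ }(\cat{Coh}^{\sgeq 0}_A)^\op$ by an explicit computation with graded-free modules; then I would upgrade it to the displayed equivalence $\Mod_A^!\simeq \mm{Ind}(\cat{Coh}_A^\op)$ by a formal compact-generation argument. The second step is the more formal one: since Proposition~\ref{prop:tamecompactlygenerated} exhibits $\cat{K}^{\leq 0}$ as a set of compact generators, $\Mod_A^!$ is compactly generated, so $\Mod_A^!\simeq \mm{Ind}\big((\Mod_A^!)^\omega\big)$, and the subcategory $(\Mod_A^!)^\omega$ of compact objects is the smallest thick (stable, idempotent-complete) subcategory containing $\cat{K}^{\leq 0}$. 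The entire content is therefore to identify this thick subcategory, via duality, with $\cat{Coh}_A^\op$.

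For the first step, let $E\in\cat{K}^{\leq 0}$, so that $E$ is graded-free with finitely many generators in each degree, all of degree $\leq 0$. Writing $E=\bigoplus_j A\cdot e_j$ one has $\Hom_A(E,A)=\prod_j \Hom_A(A\cdot e_j,A)$, and the crucial point is that because $A$ is \emph{bounded} and the generators are locally finite, in each fixed degree only finitely many factors are nonzero; hence the product is degreewise finite and $E^\vee$ is again graded-free, with one generator $e_j^\vee$ in degree $-\deg(e_j)\geq 0$ for each $j$, finitely many in each degree. In particular $E^\vee$ is connective and almost finitely presented, and by condition~(ii) in the definition of $\cat{K}^{\leq 0}$ it is eventually coconnective; by the characterization of coherent modules recalled above, $E^\vee\in\cat{Coh}^{\sgeq 0}_A$. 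The same computation, run twice, shows that the canonical map $E\to E^{\vee\vee}$ is an isomorphism, since biduality holds termwise for such locally finite graded-free modules.

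To see that $(-)^\vee$ is fully faithful, note that for $E,F\in\cat{K}^{\leq 0}$ the derived mapping complex in $\Mod_A^!$ is computed by $\Hom_A(E,F)$ (as $E$ is graded-free, $\Hom_A(E,-)$ preserves tame weak equivalences by definition of the tame structure), while the derived mapping complex from $E^\vee$ to $F^\vee$ in $(\cat{Coh}^{\sgeq 0}_A)^\op$ is computed by $\Hom_A(F^\vee,E^\vee)$ (as $F^\vee$ is graded-free, hence cofibrant). The tensor-hom adjunction together with biduality gives a strict isomorphism of chain complexes
$$
\Hom_A(F^\vee,E^\vee)\cong \Hom_A\big(E\otimes_A F^\vee,A\big)\cong \Hom_A\big(E,F^{\vee\vee}\big)\cong \Hom_A(E,F),
$$
natural in $E$ and $F$, which identifies the two mapping complexes with the action of $(-)^\vee$ on morphisms; thus $(-)^\vee$ is fully faithful. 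For essential surjectivity onto $(\cat{Coh}^{\sgeq 0}_A)^\op$, let $M$ be a connective coherent $A$-module; being almost finitely presented and connective, it admits a graded-free resolution $P\xrightarrow{\sim}M$ with finitely many generators in each degree and none in negative degrees. Then $P^\vee$ lies in $\cat{K}^{\leq 0}$ (its dual $P^{\vee\vee}\simeq P\simeq M$ is eventually coconnective, giving condition~(ii)) and $(P^\vee)^\vee\simeq M$, completing the first step.

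Finally I would globalize. Duality is exact, sending cofiber sequences to fiber sequences, so the class of objects $E$ for which $E\to E^{\vee\vee}$ is an equivalence and for which $\Hom_A(E,F)\to\Hom_A(F^\vee,E^\vee)$ is an equivalence for all $F\in\mm{Thick}(\cat{K}^{\leq 0})$ is closed under shifts, cones and retracts; being thick and containing $\cat{K}^{\leq 0}$, it is all of $(\Mod_A^!)^\omega=\mm{Thick}(\cat{K}^{\leq 0})$. Hence $(-)^\vee$ restricts to a fully faithful exact functor on $(\Mod_A^!)^\omega$ with image the thick subcategory of $\Mod_A$ generated by $\cat{Coh}^{\sgeq 0}_A$. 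Since every coherent module is bounded, it is a shift of a connective coherent module, so this thick subcategory is exactly $\cat{Coh}_A$; thus $(\Mod_A^!)^\omega\simeq \cat{Coh}_A^\op$ and $\Mod_A^!\simeq \mm{Ind}\big((\Mod_A^!)^\omega\big)\simeq \mm{Ind}(\cat{Coh}_A^\op)$. I expect the main obstacle to be the duality computation of the middle paragraphs: controlling the passage from products to sums in each degree, and the biduality isomorphism, is precisely where boundedness and coherence of $A$ and the local finiteness built into $\cat{K}^{\leq 0}$ are indispensable—without them $E^\vee$ need be neither graded-free nor coherent, and the double-dual map fails to be an equivalence.
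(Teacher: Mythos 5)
Your proposal is correct and follows essentially the same route as the paper: both identify $\cat{K}^{\leq 0}$ with $(\cat{Coh}_A^{\sgeq 0})^\op$ via the functor $(-)^\vee$, using boundedness of $A$ for the degreewise finiteness of the dual and the biduality isomorphism $E\cong E^{\vee\vee}$, and condition (ii) together with the characterization of coherent modules to pin down the essential image, before passing to $\mm{Ind}$ of the thick closure. The only presentational difference is that the paper gets full faithfulness more economically by noting that $(-)^\vee$ is a self-adjoint left Quillen functor from the tame to the (opposite of the) projective model structure, so that the derived unit being an isomorphism already yields the equivalence onto the image, whereas you carry out the tensor-hom computation and the final compact-generation step explicitly.
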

\begin{proof}
The functor $(-)^\vee\colon \dgMod_A\rt \Mod_A^{\dg, \op}$ is a left Quillen functor from the tame model structure to the projective model structure, whose right adjoint is the functor $(-)^\vee$ as well. For any (cofibrant) object $E\in\mc{K}^{\leq 0}$, the dual $E^\vee$ is graded-free, with finitely many generators in each nonnegative degree. In particular, it is cofibrant. Since $A$ is bounded, the derived unit map $E\rt E^{\vee\vee}$ is an isomorphism. The full subcategory of $\Mod_A^!$ on the compact generators is therefore equivalent to its essential image under $(-)^\vee$ in $\Mod_A^\op$. Unwinding the definitions, this essential image is exactly the opposite of $\cat{Coh}_A^{\sgeq 0}$.
\end{proof}
\begin{definition}
When $A$ is bounded coherent, we will refer to $\Mod_A^!=\mm{Ind}(\cat{Coh}_A^\op)$ as the $\infty$-category of \emph{pro-coherent sheaves} on $A$, even though this is technically its opposite category.
\end{definition}
\begin{example}
Let $f\colon A\rt B$ be a weak equivalence of bounded coherent cdgas. Then there is a Quillen equivalence $f^!\colon \dgMod_A\leftrightarrows \dgMod_B\colon f_!$ between the tame model structures, where $f^!$ sends $E$ to $B\otimes_A E$. Indeed, for every coherent $A$-module $E$, we have that $f^!(E)$ is a coherent $B$-module. Because $f^!(E^\vee) \cong f^!(E)^\vee$, the functor $f^!$ preserves compact objects and the composite
$$\xymatrix{
\cat{Coh}_A^\op\ar[r]^{(-)^\vee} & \Mod_A^{!, \omega}\ar[r]^{f^!} & \Mod_B^{!, \omega}\ar[r]^{(-)^\vee} & \cat{Coh}_B^\op
}$$
is an equivalence. This implies that $f^!\colon \Mod_A^!\rt \Mod_B^!$ is an equivalence.
\end{example}
\begin{lemma}\label{lem:gradedfree}
Let $E$ be an object in $\mc{K}^{\sleq 0}$ and let $n_0$ be an integer such $\pi_i(E^\vee)=0$ for all $i\geq n_0$. Then the following holds:
\begin{enumerate}
\item For any indexing set $I$, the map $\bigoplus_I\Hom_A(E, A)\rt \Hom_A(E, \bigoplus_I A)$ is an isomorphism.
\item Let $F$ be a dg-$A$-module which is graded-free, with generators $x_i$. Let $F^{(n)}$ be the quotient of $F$ by the sub dg-$A$-module generated by the $x_i$ of degree $<n$. Then for all $n>1$:
$$
\pi_0\Hom_A(E, F^{(n)})=0.
$$
\item For every $m\leq -n_0$, the map
$$\xymatrix{
\pi_0\Hom_A(E, F)\ar[r] & \pi_0\Hom_A(E, F^{(m)})
}$$
is an isomorphism.
\end{enumerate}
\end{lemma}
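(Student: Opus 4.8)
The first two statements are elementary degree bookkeeping, so I would dispatch them quickly. For (1) the map is injective for formal reasons, and it suffices to check surjectivity in each homological degree $d$. A degree $d$ homomorphism $E\rt\bigoplus_I A$ is determined by the images of the generators of $E$, and since $A$ is concentrated in degrees $[0,N]$ a generator of degree $j\leq 0$ can map nontrivially into $\big(\bigoplus_I A\big)_{j+d}=\bigoplus_I A_{j+d}$ only when $j\in[-d,N-d]$. As $E$ has finitely many generators in each degree, only finitely many generators contribute, and each of their images is supported on a finite subset of $I$; hence the map factors through $\bigoplus_{I_0}A$ for a finite $I_0$ and lies in the image of $\bigoplus_I\Hom_A(E,A)$. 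For (2) I would first note that the sub dg-$A$-module of $F$ generated by the generators of degree $<n$ is just their $A$-linear span, since the differential lowers degree and $A$ is connective; thus $F^{(n)}$ is graded-free on the generators of degree $\geq n$ and so concentrated in degrees $\geq n$. As $E$ lives in degrees $\leq 0$, a degree $d$ map $E\rt F^{(n)}$ is nonzero only if $d\geq n$, whence $\Hom_A(E,F^{(n)})$ is concentrated in degrees $\geq n\geq 2$ and in particular $\pi_0=0$.

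The third statement is the substantial one. Writing $F'=\ker(F\rt F^{(m)})$, the graded-free sub dg-$A$-module on the generators of degree $<m$, the short exact sequence $0\rt F'\rt F\rt F^{(m)}\rt 0$ induces a long exact sequence on $\pi_\ast\Hom_A(E,-)$, and the claim reduces to showing that $\Hom_A(E,F')$ has no homotopy in degrees $\geq -1$. My plan is to prove the stronger assertion that the natural evaluation map $E^\vee\otimes_A F'\rt \Hom_A(E,F')$ is an equivalence, with source concentrated in degrees $\leq -2$. The bound on the source is the easy half: $E^\vee$ is connective and coherent with homotopy concentrated in $[0,n_0-1]$, so its finite Postnikov tower (tensored with the flat module $F'$) exhibits $E^\vee\otimes_A F'$ as an iterated extension of the complexes $\big(\pi_a(E^\vee)\otimes_A F'\big)[a]$ with $a\in[0,n_0-1]$. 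Since $\pi_a(E^\vee)$ is discrete and $F'$ is graded-free on generators of degree $<m$, each tensor factor sits in degrees $<m$, so after the shift by $a$ everything lies in degrees $\leq m-1+(n_0-1)\leq -2$, using precisely the hypothesis $m\leq -n_0$.

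The equivalence $E^\vee\otimes_A F'\simeq\Hom_A(E,F')$ is the heart of the argument and the main obstacle, for it fails at the chain level ($E$ is not perfect) and must be established homotopically. I would present $E$ as the filtered colimit $E=\colim_r P_r$ of its perfect sub dg-$A$-modules $P_r$ generated by the generators of degree $\geq -r$, so that $\Hom_A(E,F')=\holim_r\Hom_A(P_r,F')=\holim_r\big(P_r^\vee\otimes_A F'\big)$. Dualizing the cofiber sequences $P_r\rt E\rt E/P_r$ and tensoring with the flat $F'$ yields a fiber sequence $\holim_r\big(K_r\otimes_A F'\big)\rt E^\vee\otimes_A F'\rt\Hom_A(E,F')$ with $K_r=(E/P_r)^\vee$, reducing everything to the vanishing $\holim_r(K_r\otimes_A F')\simeq 0$. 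This is where boundedness of $A$ enters essentially: from the fiber sequence $K_r\rt E^\vee\rt P_r^\vee$, together with the facts that $E^\vee$ has homotopy in $[0,n_0-1]$ and that $P_r^\vee$ is chain-concentrated in $[0,r+N]$, a short diagram chase shows that $K_r$ has homotopy concentrated in the bounded window $[r+1,r+N]$. Consequently, for $r'\geq r+N+1$ the transition $K_{r'}\rt K_r$ is a map from an $(r+N+2)$-connective complex to an $(r+N)$-coconnective one and is therefore null-homotopic. Hence the tower $\{K_r\}$, and so $\{K_r\otimes_A F'\}$, is pro-zero; its homotopy groups form a pro-zero system, so both $\lim$ and $\lim^1$ vanish and the homotopy limit is trivial. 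The desired equivalence, and with it the vanishing of $\pi_0$ and $\pi_{-1}$, follows.
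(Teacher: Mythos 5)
Your treatments of (1) and (2) are correct and essentially identical to the paper's (degree bookkeeping using that $A$ is connective and bounded and that $E$ has finitely many generators per degree). The reduction at the start of (3) is also fine: by the long exact sequence for $0\to F'\to F\to F^{(m)}\to 0$, it suffices to kill $\pi_0$ and $\pi_{-1}$ of $\Hom_A(E,F')$. Your step (b) is likewise salvageable with care: the transition maps $K_{r'}\to K_r$ are \emph{chain} null-homotopic for $r'\gg r$ (not merely null in the derived category), because $K_{r'}$ is a bounded-below complex of projectives concentrated in degrees $>r+N$ while $K_r$ has no homology there; since chain homotopies survive the (underived) functor $-\otimes_A F'$, the tower $\{K_r\otimes_A F'\}$ is indeed pro-zero on homology and its homotopy limit vanishes by the Milnor sequence.

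The genuine gap is in step (a), the claim that $E^\vee\otimes_A F'$ is concentrated in degrees $\leq -2$. Your argument tensors the finite Postnikov tower of $E^\vee$ with $F'$ and identifies the layers with the \emph{underived} tensor products $\pi_a(E^\vee)\otimes_A F'$. But $F'$ is an unbounded-below graded-free complex, hence degreewise projective but \emph{not} K-flat, so $-\otimes_A F'$ is not a derived functor and does not carry the quasi-isomorphisms and cofiber sequences of the Postnikov tower to quasi-isomorphisms and cofiber sequences. Concretely: the truncations $\tau_{\leq a}E^\vee$ are not complexes of projectives, and if one instead resolves $\pi_a(E^\vee)$ by a bounded-below complex of projectives $Q$ (which is unbounded \emph{above} in general, since coherent modules need not have finite projective dimension), the complex $Q\otimes_A F'$ carries contributions in degrees $j+i$ with $j<m$ a generator degree of $F'$ and $i\geq 0$ a resolution degree, and these reach degrees $\geq -1$. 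The filtration of $F'$ by generator degree does not rescue this: its spectral sequence fails to converge precisely because $E^\vee$ (or $Q$) is unbounded above, so that in each total degree infinitely many filtration strata contribute and $E^\vee\otimes_A F'\to \lim_t E^\vee\otimes_A F'^{(t)}$ is not an isomorphism. Any correct repair I can see amounts to abandoning the tensor description and controlling $\Hom_A(E,F')=\lim_t\Hom_A(E,F'^{(t)})$ directly, using part (1) to identify each successive fiber with $\bigoplus_I E^\vee[j]$ (whose homotopy lies in degrees $\leq j+n_0-1\leq -2$) and then the Milnor sequence for the tower of fibrations --- but that is exactly the paper's argument applied to $F'$ rather than $F$, and it renders the detour through $E^\vee\otimes_A F'$ and the $K_r$ superfluous.
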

\begin{proof}
For (1), we can write $E=\bigoplus_{n\sgeq 0} A^{\oplus i_n}[-n]$ at the level of graded $A$-modules. The map $\bigoplus_I\Hom_A(E, A)\rt \Hom_A(E, \bigoplus_I A)$ is then given by
$$\xymatrix{
\bigoplus_I \prod_{n\geq 0} A^{\oplus i_n}[n]\ar[r] & \prod_{n\geq 0} \bigoplus_I A^{\oplus i_n}[n].
}$$
This map is an isomophism in each degree, since $A$ is concentrated in nonnegative degrees.

For assertion (2), note that $F^{(n)}$ is graded-free on the generators $x_i$ of degree $\geq n$. Since $E$ has generators in degree $\leq 0$, the complex $\Hom_A(E, F^{(n)})$ is trivial in degree $0$ when $n>1$. 

For (3), consider the tower of fibrations between graded-free dg-$A$-modules
$$\xymatrix{
F\ar[r] & \dots \ar[r] & F^{(-2)}\ar[r] & F^{(-1)}\ar[r] & F^{(0)}.
}$$
The natural map $F\rt \lim_{n\leq 0} F^{(n)}$ to the (homotopy) limit of this tower can be identified with the map
$$\xymatrix{
\bigoplus_{n} \bigoplus_{I_n} A[-n]\ar[r] & \prod_{n\leq 0} \bigoplus_{I_n} A[-n].
}$$
Because $A$ is concentrated in degrees $[0, n]$, this is an isomorphism.

For every $m\leq -n_0$, the fiber of the map $\Hom_A(E, F^{(m-1)})\rt \Hom_A(E, F^{(m)})$ can be identified with
$$
\Hom_A\Big(E, \bigoplus_{I_m} A[m-1]\Big)\cong \bigoplus_{I_m} \Hom_A\big(E, A[m-1]\big).
$$
By assumption, the homotopy groups of this complex vanish in all degrees $\geq -1$. This implies that the map 
$$\xymatrix{
\pi_0\Hom_A(E, F)\cong \pi_0\lim_{n\leq 0}\Hom_A(E, F^{(n)})\ar[r] & \pi_0\Hom_A(E, F^{(m)})
}$$
is an isomorphism.
\end{proof}
\begin{lemma}\label{lem:generate}
Let $\cat{C}\subseteq \Mod_A^!$ be the smallest full subcategory of $\Mod_A^!$ which contains the objects in $\mc{K}^{\sleq 0}$ and is closed under colimits and extensions. Then $\cat{C}=\Mod_A^!$.
\end{lemma}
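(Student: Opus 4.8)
\emph{The plan is} to combine the cofibrant generation of the tame model structure (Lemma~\ref{lem:tamecofgenerated}) with the filtration of a graded-free module by its generators, and then to settle the one genuinely non-formal point through a detection (orthogonality) argument.

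First I would reduce to showing that $\cat{C}$ contains every object of $\cat{T}$. By Lemma~\ref{lem:tamecofgenerated} the cone inclusions $\{T\to T[0,1]:T\in\cat{T}\}$ are a set of generating cofibrations, so the domains $\cat{T}$ generate $\Mod_A^!$ under colimits; as $\cat{C}$ is colimit-closed, $\cat{T}\subseteq\cat{C}$ would give $\cat{C}=\Mod_A^!$. Since $\mc{K}^{\sleq 0}$ is closed under desuspension (if $E\in\mc{K}^{\sleq 0}$ then $E[-1]$ still has generators in degrees $\sleq 0$ and eventually coconnective dual), the colimit-closure $\cat{C}$ is stable under all shifts, and every object of $\cat{T}^{\sleq n}$ is an $n$-fold suspension of one in $\cat{T}^{\sleq 0}$. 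Hence it suffices to treat $T\in\cat{T}^{\sleq 0}$. For such $T$ I would use the filtration by the sub-dg-modules $P^{(n)}\subseteq T$ generated by the generators of degree $<n$ (closed under the differential, which lowers degree): the successive quotients $P^{(n+1)}/P^{(n)}$ are direct sums of copies of $A[n]$ with zero differential, and the finite quotients $F^{(n)}=T/P^{(n)}$ have generators in the finite range $[n,0]$, so both lie in $\mc{K}^{\sleq 0}$. When $T$ is bounded below this filtration is finite, exhibiting $T$ as an iterated extension of objects of $\mc{K}^{\sleq 0}$, and closure under extensions gives $T\in\cat{C}$.

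The real content is the case where $T$ has generators in arbitrarily negative degrees. Here Lemma~\ref{lem:gradedfree}(3) (which uses boundedness of $A$) identifies $T$ with the sequential \emph{limit} $\varprojlim_n F^{(n)}$ of its finite coherent quotients, so membership of $T$ in the colimit-closed $\cat{C}$ is not automatic. I would resolve this via detection: Lemma~\ref{lem:gradedfree}(1) is the key step in verifying that the objects of $\mc{K}^{\sleq 0}$ are compact, after which $\cat{C}=\Mod_A^!$ is equivalent to the vanishing of the right-orthogonal $(\mc{K}^{\sleq 0})^{\perp}$. So suppose $\Hom_A(E,X)$ is acyclic for every $E\in\mc{K}^{\sleq 0}$; I must show $\Hom_A(T,X)$ is acyclic for all $T\in\cat{T}$, i.e.\ that $X$ is tamely trivial. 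The degreewise-split short exact sequence $0\to\Hom_A(F^{(n)},X)\to\Hom_A(T,X)\to\Hom_A(P^{(n)},X)\to 0$, together with acyclicity of $\Hom_A(F^{(n)},X)$ (as $F^{(n)}\in\mc{K}^{\sleq 0}$), yields $\Hom_A(T,X)\simeq\Hom_A(P^{(n)},X)$ for every $n$.

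The hard part, which I expect to be the main obstacle, is the convergence that promotes this to acyclicity: one must show that $H_j\Hom_A(P^{(n)},X)$ vanishes for $n$ sufficiently negative, for each fixed $j$. This is exactly where boundedness of $A$ is essential — it guarantees that in each fixed homological degree only finitely many generators of $T$ contribute, so that the tower $\{F^{(n)}\}$ stabilizes degreewise and the tail terms $\Hom_A(P^{(n)},X)$ are pushed into ever higher degrees as $n\to-\infty$. Combined with Lemma~\ref{lem:gradedfree} (and, I expect, a preliminary reduction of $X$ to a bounded-below complex using the $t$-structure of Remark~\ref{rem:connectivemodules}), this forces $\Hom_A(T,X)$ to be acyclic and hence $\cat{C}=\Mod_A^!$. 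The delicate point is thus not conceptual but the careful bookkeeping of this limiting estimate, reconciling the pro-object $T=\varprojlim_nF^{(n)}$ with the colimit-generated category $\cat{C}$.
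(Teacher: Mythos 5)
Your first two steps --- reducing to $T\in\cat{T}$ via Lemma \ref{lem:tamecofgenerated} and handling bounded-below $T$ by the decreasing filtration $P^{(n)}\subseteq T$ with subquotients in $\mc{K}^{\sleq 0}$ --- are correct and agree with the paper's opening move. The genuine gap is exactly where you place ``the hard part''. For $T$ with generators in arbitrarily negative degrees you switch to the orthogonality criterion: assuming $\Hom_A(E,X)$ acyclic for all $E\in\mc{K}^{\sleq 0}$, you must show $H_j\Hom_A(P^{(n)},X)=0$ for $n\ll 0$. The only mechanism you offer is ``a preliminary reduction of $X$ to a bounded-below complex using the $t$-structure of Remark \ref{rem:connectivemodules}'', and this step is unjustified and is where all the content lies. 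Since $\Hom_A(P^{(n)},X)_j=\prod_{|x_i|<n}X_{j+|x_i|}$, the vanishing you want is essentially the statement that $X$ may be taken bounded below, i.e.\ that the truncations of the tame $t$-structure converge to $X$. No such completeness is established in the paper, and it cannot be a formality: the whole point of the contraderived category is that it contains nonzero objects invisible to homology (for $A=k[\epsilon]/\epsilon^2$ the unbounded complex $\cdots\rt A\rt A\rt\cdots$ with differential $\epsilon$ is acyclic yet nonzero in $\Mod_A^!$), so any argument that truncates $X$ must grapple with exactly these objects. Your appeal to compactness of $\mc{K}^{\sleq 0}$ is not circular (the paper proves compactness in Proposition \ref{prop:tamecompactlygenerated} independently of this lemma), but it does not close this hole.

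The paper closes the unbounded case by a dualization that you are missing, and which converts your inverse limit into a colimit. Since $A$ is bounded and $T$ has finitely many generators in each degree, $T\rt T^{\vee\vee}$ is an isomorphism and $T^\vee$ is a bounded-below, almost finitely presented complex; its Postnikov tower therefore converges, and coherence of $A$ guarantees that each $\pi_i(T^\vee)$ is coherent and admits an almost finite graded-free resolution $Y_i$. Assembling these into a degreewise-stationary resolution of the Postnikov tower gives $T^\vee\simeq\bigoplus_iY_i[i]$ (with a twisted differential), and dualizing back exhibits $T$ as the colimit of an increasing sequence of cofibrations $P_0^\vee\rt P_1^\vee\rt\cdots$ whose subquotients $Y_i[i]^\vee$ lie in $\mc{K}$. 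This lands $T$ in $\cat{C}$ using only closure under colimits and extensions, with no orthogonality criterion and no $t$-structure convergence. If you want to rescue your route, you would have to prove the vanishing of $(\mc{K}^{\sleq 0})^{\perp}$ for genuinely unbounded $X$ directly, which in practice amounts to reproducing this dual Postnikov argument.
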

\begin{proof}
By Lemma \ref{lem:tamecofgenerated}, it suffices to show that $\cat{C}$ contains all objects $T\in \cat{T}$, which are of the form $\bigoplus_{n\geq n_0} A^{\oplus k_n}[-n]$ without differential. The dual of such a $T$ is given by the (projecively) cofibrant dg-$A$-module $\bigoplus_{n\geq n_0} A^{\oplus k_n}[n]$ and the map $T\rt T^{\vee\vee}$ is an isomorphism. Consider the Postnikov tower of $T^\vee$
$$\xymatrix{
T^\vee\ar[r] & \dots\ar[r] & \tau_{\sleq n}(T^\vee)\ar[r] & \tau_{\sleq n-1}(T^\vee)\ar[r] & \cdots \ar[r] & \pi_0(T^\vee).
}$$
Because $A$ is coherent, each $\pi_i(T^\vee)$ is a coherent $A$-module and admits an almost finite resolution $Y_i\rto{\sim} \pi_i(T^\vee)$ \cite[Proposition 7.2.4.17]{lur16}. One can use these resolutions to resolve the entire Postnikov tower by
$$\smash{\xymatrix{
T^\vee\ar[r] & \dots \ar[r] & P_n\ar[r] & P_{n-1}\ar[r] & \dots \ar[r] & P_0
}}$$
where $P_n=\bigoplus_{i=0}^n Y_i[i]$, equipped with a certain differential. The sequence of $P_n$ becomes stationary in each individual degree, so that there is a homotopy equivalence $T^\vee\rt P_\infty = \bigoplus_{i} Y_i[i]$. Taking the dual, one finds that $T=T^{\vee\vee}$ is homotopy equivalent to the colimit of the sequence
$$\smash{\xymatrix{
P_0^\vee\ar[r] & P_1^\vee\ar[r] & \dots.
}}$$
This is a sequence of cofibrations whose associated graded consists of the $Y_i[i]^\vee$. Each $Y_i[i]^\vee$ is contained in $\mc{K}$, because its dual is $Y_i[i]\simeq \pi_i(T^\vee)[i]$. This implies that the (homotopy) colimit $T$ is contained in the category $\cat{C}$.
\end{proof}
\begin{proof}[Proof (of Proposition \ref{prop:tamecompactlygenerated})]
By Lemma \ref{lem:generate}, the objects of $\mc{K}^{\sleq 0}$ generate $\Mod_A^!$. To prove that any $E\in \mc{K}^{\sleq 0}$ is compact, it suffices to show that for any set $S$ and any collection of graded-free dg-$A$-modules $\{P_\alpha\}_{\alpha\in S}$, the map
$$\xymatrix{
\bigoplus_\alpha \pi_0\Hom_A(E, P_\alpha)\ar[r] & \pi_0\Hom_A\Big(E, \bigoplus_\alpha P_\alpha\Big)
}$$
is an isomorphism. Using the filtration of Lemma \ref{lem:gradedfree}, we obtain commuting squares
$$\xymatrix@R=1.7pc{
\bigoplus_\alpha \pi_0\Hom_A(E, P_\alpha)\ar[d]\ar[r] & \pi_0\Hom_A\Big(E, \bigoplus_\alpha P_\alpha\Big)\ar[d]\\
\bigoplus_\alpha \pi_0\Hom_A(E, P^{(n)}_\alpha)\ar[r]^{\phi_n} & \pi_0\Hom_A\Big(E, \bigoplus_\alpha P^{(n)}_\alpha\Big)
}$$
for all $n\in \field{Z}$. We will prove by decreasing induction on $n$ that $\phi_n$ is an isomorphism; this proves that the top map is an isomorphism, because the vertical maps become isomorphisms for all $m$ smaller than a certain $n_0$, by Lemma \ref{lem:gradedfree}. We can start the induction at $n=2$, where both objects are zero by Lemma \ref{lem:gradedfree}.

For the inductive step, suppose that $\phi_n$ is an isomorphism. To prove that $\phi_{n-1}$ is an isomorphism as well, let $F_\alpha$ denote the fiber of the map $P_\alpha^{(n-1)}\rt P_\alpha^{(n)}$. It suffices to check that the map
$$\smash{\xymatrix{
\bigoplus_\alpha \Hom_A(E, F_\alpha)\ar[r] & \Hom_A\Big(E, \bigoplus F_\alpha\Big)
 }}$$ 
is a quasi-isomorphism. But each $F_\alpha$ is just given by a direct sum $\bigoplus A[n-1]$, so the result follows from (a shift of) Lemma \ref{lem:gradedfree}.
\end{proof}

\subsection{Tame dg-Lie algebroids}\label{sec:tameliealgebroids}
The tame model structure on dg-$A$-modules can be transferred to a semi-model structure on dg-Lie algebroids, by \cite[Remark 4.25]{nui17b}:
\begin{proposition}\label{prop:monadicitytame}
The category of dg-Lie algebroids over $A$ carries the \emph{tame} semi-model structure, in which a map is a weak equivalence (fibration) if and only if it is a tame weak equivalence (fibration). The forgetful functor $\dgLieAlgd_A\rt \dgMod_A/T_A$ is a right Quillen functor to the tame model structure, which preserves all sifted homotopy colimits.
\end{proposition}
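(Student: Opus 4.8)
The plan is to obtain the tame semi-model structure exactly as in the projective case (Proposition \ref{prop:monadicity}), by transferring the tame model structure on $\dgMod_A/T_A$ along the free--forgetful adjunction
$$\xymatrix{
F\colon \dgMod_A/T_A\ar@<1ex>[r] & \dgLieAlgd_A\ar@<1ex>[l] \colon U,
}$$
declaring a map of dg-Lie algebroids to be a weak equivalence (fibration) precisely when $U$ sends it to a tame weak equivalence (degreewise surjection). Since the tame model structure is cofibrantly generated (Lemma \ref{lem:tamecofgenerated}), I would take as generating (trivial) cofibrations the images under $F$ of the generating (trivial) cofibrations of $\dgMod_A/T_A$; in particular the generating trivial cofibrations are the maps $F(0\rt T_A)\rt F\big(T[0,1]\rt T_A\big)$ freely attaching a contractible cone $T[0,1]$ with $T\in\cat{T}$. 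Because the forgetful functor $U$ preserves filtered colimits, Quillen's small object argument then produces the functorial factorizations.

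The essential content is the acyclicity condition: one must show that any transfinite composite of pushouts of generating trivial cofibrations, taken out of a \emph{cofibrant} dg-Lie algebroid, is a tame weak equivalence. As tame weak equivalences are closed under transfinite composition, this reduces to a single pushout
$$\xymatrix{
F(0\rt T_A)\ar[d]\ar[r] & \mf{g}\ar[d]\\
F\big(T[0,1]\rt T_A\big)\ar[r] & \mf{g}'.
}$$
As in \cite[Section 5]{nui17b}, such a pushout carries a PBW-type filtration whose associated graded is built out of $\mf{g}$ (respectively its enveloping algebra $\mc{U}(\mf{g})$) tensored over $A$ with the symmetric powers $\mm{Sym}^{\sgeq 1}_A\big(T[0,1]\big)$ of the adjoined cone. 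The cone $T[0,1]$ is tame-contractible, and the tame model structure is stable and symmetric monoidal, so tensoring a tame-acyclic cofibration with the cofibrant object $\mf{g}$ again yields a tame-acyclic map; since we are in characteristic zero, each symmetric power is a functorial retract of the corresponding tensor power and therefore inherits this acyclicity. Hence every graded piece, and so the map $\mf{g}\rt\mf{g}'$, is a tame weak equivalence. The appeal to cofibrancy of $\mf{g}$ (needed to keep the filtration quotients cofibrant, so that monoidality of the tame structure applies) is exactly what restricts the conclusion to a \emph{semi}-model structure, just as in the projective case.

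Once the semi-model structure exists, $U$ is right Quillen by construction, and it remains to see that $U$ preserves sifted homotopy colimits. This is proven for the projective structure in \cite{nui17b}, and the identical argument applies here: sifted homotopy colimits of dg-Lie algebroids are computed on underlying modules because the free Lie algebroid monad is assembled from the tensor and symmetric power functors, which commute with sifted (in particular reflexive and filtered) colimits, and this assembly does not see whether the module-level weak equivalences are quasi-isomorphisms or tame weak equivalences. Concretely, one may reuse the analysis of $\mc{U}$ from \cite[Theorem 4.19]{nui17b}, since $\Mod_A^!$ is again stable and symmetric monoidal with the forgetful functor to $\dgMod_A$ detecting and preserving the relevant colimits.

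The main obstacle, and the only step requiring genuine verification beyond citing \cite{nui17b}, is the acyclicity paragraph: one must confirm that the filtration argument underlying the projective transfer survives verbatim once quasi-isomorphisms are replaced by the subtler tame weak equivalences, which are detected by mapping complexes out of graded-projectives rather than on homology. I expect this to go through precisely because the tame structure is symmetric monoidal and the adjoined cones $T[0,1]$ remain tame-contractible, so that the graded pieces $\mf{g}\otimes_A \mm{Sym}^{\sgeq 1}_A\big(T[0,1]\big)$ are tame-acyclic for the same formal reasons as in the projective setting.
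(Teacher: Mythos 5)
Your proposal is correct and follows essentially the same route as the paper, which offers no argument of its own but simply invokes \cite[Remark 4.25]{nui17b}, where the transfer and PBW-type filtration argument establishing the projective semi-model structure of Proposition \ref{prop:monadicity} is observed to carry over to the tame setting because the attached cones $T[0,1]$, $T\in\cat{T}$, remain tame-contractible and the associated graded pieces stay cofibrant. One small rephrasing: the reduction to a single cell attachment should appeal to closure of tame \emph{trivial cofibrations} (not arbitrary tame weak equivalences) under transfinite composition, which is what your filtration actually produces and is all that is needed.
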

\begin{definition}\label{def:tameliealgebroids}
Let $A$ be a cofibrant connective cdga over $k$. The $\infty$-category of \emph{tame} dg-Lie algebroids over $A$ is the $\infty$-categorical localization
$$
\LieAlgd^!_A := \dgLieAlgd_A\Big[\{\text{tame w.e.}\}^{-1}\Big].
$$
This is a locally presentable $\infty$-category by Remark \ref{rem:equivalenttoquillen}. When $A$ is bounded coherent, one can also think of $\LieAlgd_A^!$ as the $\infty$-category of \emph{pro-coherent Lie algebroids} over $A$.
\end{definition}
The projective model semi-model structure on dg-Lie algebroids is a right Bousfield localization of the tame semi-model structure, so that there is a fully faithful left adjoint functor
$$\xymatrix{
\LieAlgd_A\ar@{^{(}->}[r] & \LieAlgd_A^!.
}$$
The discussion of the previous sections carries over verbatim to the tame case. For example, every (tame) $A$-cofibrant dg-Lie algebroid has an explicit cofibrant replacement $Q(\mf{g})$, which is graded-free on $\big(\mm{Sym}_A\mf{g}[1]\big)[-1]$. Furthermore, we have the following:
\begin{lemma}\label{lem:propertiesoftamereps}
The category $\Rep_\mf{g}^\dg$ carries a model structure transferred from the tame model structure on $\dgMod_A$, whose associated $\infty$-category will be denoted $\Rep_\mf{g}^!$. This has the following properties:
\begin{enumerate}[leftmargin=*]
\item For every $f\colon \mf{g}\rt \mf{h}$, restriction and induction form a Quillen pair between tame model structures
$$\xymatrix{
f_*=\mc{U}(\mf{g})\otimes_{\mc{U}(\mf{h})} -\colon \Rep_\mf{g}^\dg\ar@<1ex>[r] & \dgRep_\mf{h}\ar@<1ex>[l]\colon f^!
}$$
which is a Quillen equivalence when $f$ is a tame weak equivalence between (tame) $A$-cofibrant dg-Lie algebroids.
\item $\Rep_A^!$ is a closed symmetric monoidal category.
\item The induced functor of $\infty$-categories
$$\xymatrix{
\Rep^!\colon \LieAlgd_A^{!, \op}\ar[r] & \cat{Pr^\mm{L}_{sym. mon.}}/\Mod_A^!
}$$
preserves all limits.
\end{enumerate}
\end{lemma}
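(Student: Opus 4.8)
The plan is to observe that the three assertions are the tame analogues of Lemma \ref{lem:pbw}, Remark \ref{rem:monoidalstructureonreps} and Lemma \ref{fm:lem:algrepspreservessiftedcolim}, and to transpose those arguments essentially word for word, replacing the projective model structure on $\dgMod_A$ by the tame one throughout. The only input that does not transfer formally is the statement that the symmetric algebra functor preserves weak equivalences between $A$-cofibrant objects; securing this tame refinement is the crux, and everything else is bookkeeping.

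First I would establish part (1). The tame model structure on $\dgMod_A$ is cofibrantly generated (Lemma \ref{lem:tamecofgenerated}), and left modules over $\mc{U}(\mf{g})$ are the algebras for the free--forgetful adjunction $\dgMod_A\leftrightarrows \Rep_\mf{g}^\dg$, so the transferred model structure on $\Rep_\mf{g}^\dg$ exists by the standard transfer argument for modules over an algebra. A map $f\colon \mf{g}\rt\mf{h}$ induces a map of enveloping algebras $\mc{U}(\mf{g})\rt\mc{U}(\mf{h})$, hence the Quillen pair $(f_*, f^!)$ exactly as in Lemma \ref{lem:pbw}; since extension and restriction of scalars along a weak equivalence of algebras in a symmetric monoidal model category form a Quillen equivalence, it suffices to show that $\mc{U}(\mf{g})\rt\mc{U}(\mf{h})$ is a tame weak equivalence whenever $f$ is one and $\mf{g},\mf{h}$ are $A$-cofibrant. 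As in Lemma \ref{lem:pbw} this is checked on the PBW-associated graded, where the map becomes $\mm{Sym}_A\mf{g}\rt\mm{Sym}_A\mf{h}$; because the PBW filtration is exhaustive and bounded below in weight, a tame weak equivalence on the associated graded propagates to a tame weak equivalence of enveloping algebras after applying any $\Hom_A(P,-)$.

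The main obstacle is therefore to prove that $\mm{Sym}_A(-)$ carries a tame weak equivalence between $A$-cofibrant (equivalently graded-projective, equivalently tame-cofibrant) dg-$A$-modules to a tame weak equivalence. Here I would use that the tame model structure is symmetric monoidal: the pushout--product axiom together with Ken Brown's lemma gives that $\otimes_A$ preserves tame weak equivalences between cofibrant objects, so $\mf{g}^{\otimes n}\rt\mf{h}^{\otimes n}$ is a tame weak equivalence for every $n$. Since $\field{Q}\subseteq k$, the symmetrization idempotent exhibits $\mm{Sym}_A^n$ as a retract of the $n$-fold tensor power, and retracts of tame weak equivalences are tame weak equivalences; finally, taking the (cofibrant) direct sum over $n$ preserves equivalences in $\Mod_A^!$, yielding the claim for $\mm{Sym}_A=\bigoplus_n\mm{Sym}_A^n$.

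Finally, parts (2) and (3) follow the earlier arguments verbatim. For (2), the tensor product $\otimes_A$ with its diagonal $\mf{g}$-action and the internal hom $\Hom_A$ with its conjugation action descend to $\Rep_\mf{g}^!$: by the PBW theorem every cofibrant $\mc{U}(\mf{g})$-module is $A$-cofibrant, so tensoring with it preserves tame weak equivalences, and the resulting symmetric monoidal structure is closed because the forgetful functor $\Rep_\mf{g}^!\rt\Mod_A^!$ detects equivalences and preserves all (co)limits (cf.\ \cite[Proposition 4.1.7.4]{lur16}). For (3), I would factor $\Rep^!$ as $\LieAlgd_A^{!,\op}\xrightarrow{\mc{U}}\cat{Alg}(\cat{BiMod}_A^!)^\op\xrightarrow{\mm{LMod}}\cat{Pr^R}/\Mod_A^!$, use that $\mm{LMod}$ preserves limits by \cite[Theorem 4.8.5.11]{lur16}, and check that $\mc{U}$ preserves limits: sifted limits because the forgetful functor to bimodules preserves sifted colimits (Proposition \ref{prop:monadicitytame}, \cite[Theorem 4.19]{nui17b}), and finite products because $\LieAlgd_A^!$ is generated under sifted colimits by free Lie algebroids $F(V)$ with $\mc{U}(F(V))\simeq T_A(V)$ the free algebra on the symmetric $A$-bimodule $V$, which manifestly preserves coproducts.
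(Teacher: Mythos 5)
Your proposal is correct and follows essentially the same route as the paper, which likewise disposes of all three parts by transposing Lemma \ref{lem:pbw}, Remark \ref{rem:monoidalstructureonreps} and Lemma \ref{fm:lem:algrepspreservessiftedcolim} to the tame setting (the paper simply asserts that ``the PBW filtration also applies'' tamely, whereas you supply the justification via the pushout--product axiom, the characteristic-zero symmetrization retract, and homotopy colimits along the weight filtration). No gaps; your added detail on why $\mm{Sym}_A$ preserves tame weak equivalences between $A$-cofibrant modules is exactly the point the paper leaves implicit.
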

\begin{proof}
The PBW filtration also applies to dg-Lie algebroids which are $A$-cofibrant for the tame model structure on dg-$A$-modules. The proof of Lemma \ref{lem:pbw} can therefore be applied to show (1). Assertion (2) follows from Remark \ref{rem:monoidalstructureonreps}.

For (3), one applies the same proof as in Lemma \ref{fm:lem:algrepspreservessiftedcolim}. Indeed, let $\cat{BiMod}^!_A$ be the $\infty$-category associated to the category of dg-$A$-bimodules (over $\field{Q}$), endowed with the model structure transferred from the \emph{tame} model structure on left dg-$A$-modules. This is a cofibrantly generated monoidal model category, with generating cofibrations with cofibrant domains. By \cite[Theorem 4.1.8.4]{lur16}, the $\infty$-category $\cat{Alg}(\cat{BiMod}^!_A)$ can be modeled by the transferred model structure on associative algebras in this monoidal model category. Forgetting the symmetric monoidal structure, the functor $\Rep$ now decomposes as
$$\xymatrix@C=3pc{
\LieAlgd_A^\op\ar[r]^-{\mc{U}} & \cat{Alg}\big(\cat{BiMod}^!_A\big)^\op \ar[r]^-{\mm{LMod}} & \cat{Pr^{R}}/\Mod_A.
}$$
The second functor preserves limits by \cite[Theorem 4.8.5.11]{lur16}. To see that the first functor preserves sifted limits, it suffices to show that $\mc{U}\colon \LieAlgd_A\rt \cat{BiMod}^!_A$ preserves sifted colimits. This follows from \cite[Theorem 4.19, Remark 4.23]{nui17b}. As in the proof of Lemma \ref{fm:lem:algrepspreservessiftedcolim}, the fact that $\mc{U}$ also preserves finite products follows from the fact that $\mc{U}(F(V))\cong T_A(V)$ for any cofibrant dg-$A$-module.
\end{proof}

\section{Lie algebroid cohomology}\label{sec:ce}
The purpose of this section is to prove the following:
\begin{proposition}\label{prop:deformationadjunction}
Let $A$ be a cofibrant commutative dg-$k$-algebra. There is an adjunction of $\infty$-categories
\begin{equation}\label{diag:deformationadjunction}\vcenter{\xymatrix{
C^*\colon \LieAlgd^!_A\ar@<1ex>[r] & \big(\CAlg_k/A\big)^\op\colon \mf{D}\ar@<1ex>[l]
}}\end{equation}
between the $\infty$-category of tame dg-Lie algebroids over $A$ and the $\infty$-category of unbounded commutative $k$-algebras over $A$ (i.e.\ cdgas up to quasi-isomorphism). The right adjoint sends $B\rt A$ to the dual in $\Mod_A^!$ of the map between cotangent complexes $L_A\rt L_{A/B}$ (over $k$).
\end{proposition}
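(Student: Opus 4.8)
The plan is to read $C^*$ as Lie algebroid cohomology and to produce $\mf{D}$ as its right adjoint by a representability argument, only afterwards identifying the representing object with the cotangent dual. Concretely, I would define $C^*(\mf{g})$ to be the Chevalley--Eilenberg cochain cdga, which on an $A$-cofibrant model $Q(\mf{g})$ (Definition \ref{def:Q}) is $\mm{Sym}_A\big(\mf{g}^\vee[-1]\big)$ with its cohomological differential, augmented to $A$ by projection onto $\mm{Sym}^0$. It is precisely to make the dual $\mf{g}^\vee=\Hom_A(Q(\mf{g}),A)$ and its symmetric powers homotopically meaningful that one works in the tame world: tame weak equivalences are detected by $\Hom_A(P,-)$ for graded-projective $P$, so dualizing a graded-projective algebroid is homotopically well behaved, and cofibrancy of $A$ guarantees that $\mf{g}^\vee$ sees the correct $T_A$ and $L_A$ (Remark \ref{rem:derivedtangent}). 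Equivalently, and more usefully for what follows, $C^*(\mf{g})\simeq\Map_{\Rep^!_\mf{g}}(A,A)$ is the derived $\mf{g}$-invariants of the unit representation $A\in\Rep^!_\mf{g}$, an $E_\infty$-algebra over $A$ because the invariants functor $\Map_{\Rep^!_\mf{g}}(A,-)$ is lax symmetric monoidal and $A$ is the unit commutative algebra. This makes $C^*\colon\LieAlgd^!_A\rt(\CAlg_k/A)^\op$ a functor, via the functoriality of $\Rep^!$ recorded in Lemma \ref{lem:propertiesoftamereps}.

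The first real step is to show that $C^*$ preserves colimits, i.e.\ sends colimits of Lie algebroids to limits of augmented cdgas. Here I would use the invariants description together with Lemma \ref{lem:propertiesoftamereps}(3): if $\mf{g}=\colim_i\mf{g}_i$, then $\Rep^!_\mf{g}\simeq\lim_i\Rep^!_{\mf{g}_i}$ as symmetric monoidal $\infty$-categories over $\Mod_A^!$, compatibly with the unit objects, whence $C^*(\mf{g})=\Map(A,A)\simeq\lim_i\Map(A,A)=\lim_i C^*(\mf{g}_i)$. Thus for every $B$ the functor $\mf{g}\mapsto\Map_{\CAlg_k/A}\big(B,C^*(\mf{g})\big)$ carries colimits of algebroids to limits of spaces, and it is accessible. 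Since $\LieAlgd^!_A$ is presentable (Definition \ref{def:tameliealgebroids}), this functor is representable; denote the representing object by $\mf{D}(B)$, so that $\Map_{\LieAlgd^!_A}\big(\mf{g},\mf{D}(B)\big)\simeq\Map_{\CAlg_k/A}\big(B,C^*(\mf{g})\big)$ naturally in $\mf{g}$. Assembling the $\mf{D}(B)$ into a functor by naturality in $B$ produces the right adjoint and the adjunction \eqref{diag:deformationadjunction}; note that this route never requires $(\CAlg_k/A)^\op$ itself to be presentable, which it is not.

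It then remains to identify $\mf{D}(B)$ with the dual of $L_A\rt L_{A/B}$. I would pin down the underlying $A$-module-with-anchor of $\mf{D}(B)$ by testing against free Lie algebroids $F(V)$ on maps $V\rt T_A$ of $A$-modules, which generate $\LieAlgd^!_A$ under colimits: the free--forgetful adjunction gives
\[
\Map_{\Mod_A^!/T_A}\big(V,\mf{D}(B)\big)\simeq\Map_{\LieAlgd^!_A}\big(F(V),\mf{D}(B)\big)\simeq\Map_{\CAlg_k/A}\big(B,C^*(F(V))\big).
\]
Now $C^*(F(V))$ is a Koszul-type cdga, free on $V^\vee[-1]$ with differential determined by the anchor, and a deformation-theoretic analysis of maps of augmented cdgas into it should identify the right-hand side with the space of lifts of the anchor $V\rt T_A=(L_A)^\vee$ through the map $(L_{A/B})^\vee\rt(L_A)^\vee$ dual to $L_A\rt L_{A/B}$, i.e.\ with $\Map_{\Mod_A^!/T_A}\big(V,(L_{A/B})^\vee\big)$. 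By the Yoneda lemma this exhibits $\mf{D}(B)\simeq(L_{A/B})^\vee$ in $\Mod_A^!/T_A$, carrying the Lie algebroid structure of relative derivations $T_{A/B}$.

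The main obstacle is exactly this last identification: the computation of $\Map_{\CAlg_k/A}(B,C^*(F(V)))$ and its matching with the relative tangent complex is where the genuine Koszul-duality content sits (equivalently, it is the statement that the counit $\mf{g}\rt\mf{D}\,C^*(\mf{g})$ is controlled by the cotangent complex). I would expect to carry it out by analysing maps of augmented cdgas into the free-type algebra $\mm{Sym}_A(V^\vee[-1])$, where such maps are governed by derivations and hence by $L_{A/B}$ via standard deformation theory, and then bootstrapping. A secondary but real point to check is that the two descriptions of $C^*$ agree and that $C^*(\mf{g})=\Map_{\Rep^!_\mf{g}}(A,A)$ really carries the claimed $E_\infty$-structure over $A$ (using $\field{Q}\subseteq k$), and that the representing objects $\mf{D}(B)$ of the second step are natural in $B$ rather than merely defined objectwise.
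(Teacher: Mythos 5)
Your global architecture matches the paper's: show $C^*$ preserves colimits, invoke the adjoint functor theorem (via pointwise representability, which correctly sidesteps the non-presentability of $(\CAlg_k/A)^\op$), and then identify the underlying tame $A$-module of $\mf{D}(B)$ by testing against free Lie algebroids. Your colimit step takes a genuinely different route: you compute $C^*(\mf{g})$ as $\mm{End}_{\Rep^!_\mf{g}}(A)$ and use that $\Rep^!$ carries colimits of Lie algebroids to limits of symmetric monoidal $\infty$-categories (Lemma \ref{lem:propertiesoftamereps}), whereas the paper identifies $\ker\circ C^*$ with a composite of left Quillen functors through the Lie algebroid cotangent complex and the Koszul complex (Proposition \ref{prop:cotangentofliealgebroid}), which simultaneously shows that $C^*$ preserves tame weak equivalences and hence descends to the $\infty$-category. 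Your route is viable and conceptually clean---the identification $C^*(\mf{g},E)\simeq \Hom_{\mc{U}(\mf{g})}(A,E)$ with its lax symmetric monoidal structure is exactly Remark \ref{rem:ceasext}, and endomorphisms of the unit commute with limits of symmetric monoidal $\infty$-categories---but you still owe the argument that the chain-level $C^*$ computes these derived endomorphisms functorially and multiplicatively, which is where the resolution $K(\mf{g})\rt A$ and the PBW filtration enter.

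The genuine gap is in the final identification, which you yourself flag as the main obstacle. You assert that $C^*(F(V))$ is the free cdga on $V^\vee[-1]$ with a differential twisted by the anchor. It is not: the Chevalley--Eilenberg complex is the \emph{completed} symmetric algebra $\Hom_A(\mm{Sym}_A F(V)[1],A)$ on the dual of the whole free algebroid, and the crucial, nontrivial fact is that it collapses up to weak equivalence onto the \emph{square-zero extension} $A^V=A\oplus_{\rho^\vee}V[1]^\vee$ (Corollary \ref{cor:ceoffree}). That collapse is proved by comparing the two models of the cotangent complex of a free Lie algebroid---$\mc{U}(F(V))\otimes_A V$ from the free--forgetful adjunction (Example \ref{fm:ex:cotangentcomplexoffreealgebroid}) versus the cobar model of Proposition \ref{prop:cotangentofliealgebroid}---via the canonical section of $Q(F(V))\rt F(V)$, as in \eqref{diag:comparisonofcotangent}; this comparison is the actual Koszul-duality input and is absent from your plan. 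Moreover, your proposed mechanism, that maps of augmented cdgas into $\mm{Sym}_A(V^\vee[-1])$ are ``governed by derivations,'' is false for a free symmetric algebra: that is the defining property of a square-zero extension. So the deformation-theoretic analysis you invoke has nothing to latch onto until you first prove $C^*(F(V))\simeq A^V$; once that is in place, $\Map_{\CAlg_k/A}(B,A^V)$ is identified with lifts of the anchor through $(L_{A/B})^\vee\rt T_A$ exactly as you predict, and the Yoneda argument completes the proof.
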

\begin{remark}
Composing with the fully faithful left adjoint $\LieAlgd_A\rt \LieAlgd_A^!$, one obtains an adjunction $\smash{C^*\colon \LieAlgd_A\leftrightarrows \big(\CAlg_k/A\big)^\op\colon \mf{D}}$ as well. The right adjoint takes the dual of $L_A\rt L_{A/B}$ inside $\Mod_A\subseteq \Mod_A^!$.
\end{remark}
The left adjoint in \eqref{diag:deformationadjunction} is given by the \emph{Chevalley-Eilenberg complex} (with trivial coefficients). Recall that for any representation $E$ of a dg-Lie algebroid $\mf{g}$, this complex (with coefficients in $E$) is given by the graded vector space
$$
C^*(\mf{g}, E) := \Hom_A\Big(\mm{Sym}_A\mf{g}[1], E\Big)
$$
with the usual Chevalley-Eilenberg (or de Rham) differential (without Koszul signs)
\begin{align}\label{fm:eq:cediff}
 (\dau\alpha)(X_1, \dots, X_n) & =  \dau_E\big(\alpha(X_1, \dots, X_n)\big) - \sum_{i} \hspace{2pt} \alpha(X_1, \dots, \dau X_i, \dots, X_n) \nonumber\\
 &+ \sum_{i} \hspace{2pt} \nabla_{X_{i}}\Big(\alpha(X_1, \dots, X_n)\Big) - \sum_{i<j} \hspace{2pt} \alpha\Big([X_i, X_j], X_1, \dots, X_n\Big).
\end{align}
There is a natural $k$-linear augmentation map $C^*(\mf{g}, E)\rt E$, evaluating at the unit element of $\mm{Sym}_A\mf{g}[1]$. 
\begin{remark}\label{rem:celaxmonoidal}
The shuffle product of forms defines a lax symmetric monoidal structure on $C^*(\mf{g}, -)$, which is compatible with the augmentation in the sense that there is a commuting square
$$\xymatrix{
C^*(\mf{g}, E)\otimes_k C^*(\mf{g}, F)\ar[r]^-{\times}\ar[d] & C^*(\mf{g}, E\otimes_A F)\ar[d]\\
E\otimes_k F\ar[r] & E\otimes_A F.
}$$
In particular, taking coefficients with values in the commutative algebra $A$, one obtains a functor
$$\xymatrix{
C^*\colon \dgLieAlgd_A\ar[r] & \Big(\dgCAlg_k/A\Big)^\op; \hspace{4pt} \mf{g}\ar@{|->}[r] & C^*(\mf{g}):=C^*(\mf{g}, A).
}$$
For every dg-Lie algebroid $\mf{g}$, the Chevalley-Eilenberg complex yields a lax symmetric monoidal functor $C^*(\mf{g}, -)\colon \dgRep_\mf{g}\rt \dgMod_{C^*(\mf{g})}$.
\end{remark}
The proof of Proposition \ref{prop:deformationadjunction} is given at the very end of this section and uses some formal properties of the Chevalley-Eilenberg complex. To understand these properties, it will be useful to first give a slighty more model-categorical characterization of the Chevalley-Eilenberg complex.

\subsection{The cotangent complex of a Lie algebroid}
Consider the right Quillen functor 
$$\xymatrix{
\mf{g}\oplus (-)\colon \dgRep_\mf{g}\ar[r] & \dgLieAlgd_A/\mf{g}
}$$
sending a $\mf{g}$-representation $E$ to the square zero extension of $\mf{g}$ by $E$. If we denote the value of the left adjoint functor on $\mf{g}$ itself by $\Upsilon_\mf{g}$, then the other values of the left adjoint are given by
\begin{equation}\label{diag:kaehler}\xymatrix{
\Big(f\colon \mf{h}\rt \mf{g}\Big)\ar@{|->}[r] & f_*\Upsilon_\mf{h} = \mc{U}(\mf{g})\otimes_{\mc{U}(\mf{h})}\Upsilon_{\mf{h}}.
}\end{equation}
\begin{definition}\label{fm:def:cotangentcomplexofliealgebroid}
Let $\mf{g}$ be an $A$-cofibrant dg-Lie algebroid. The \emph{cotangent complex} $L_\mf{g}$ of $\mf{g}$ is the value of the left derived functor of \eqref{diag:kaehler} on the identity map of $\mf{g}$. In other words, it is the universal $\mf{g}$-representation classified by
$$
\Map_{\mc{U}(\mf{g})}(L_\mf{g}, E)\simeq \Map_{/\mf{g}}\big(\mf{g}, \mf{g}\oplus E\big).
$$
\end{definition}
\begin{example}\label{fm:ex:cotangentcomplexoffreealgebroid}
Let $\mf{g}=F(V)$ be the free dg-Lie algebroid generated by an $A$-linear map $V\rt T_A$ whose domain is cofibrant. Then $\mf{g}$ is a cofibrant dg-Lie algebroid and for any $\mf{g}$-representation $E$, there is a natural bijection between sections $\mf{g}\rt \mf{g}\oplus E$ and $A$-linear maps $V\rt E$. It follows that $L_\mf{g}=\mc{U}(\mf{g})\otimes_A V$ is the free $\mf{g}$-representation generated by the dg-$A$-module $V$.
\end{example}
More generally, one can use the cofibrant replacement $Q(\mf{g})$ of Definition \ref{def:Q} to compute the cotangent complex:
\begin{proposition}\label{prop:cotangentofliealgebroid}
Let $\mf{g}$ be an $A$-cofibrant dg-Lie algebroid. Then the cotangent complex $L_\mf{g}$ can be modeled by the cofibrant left $\mc{U}(\mf{g})$-module
\begin{equation}\label{eq:cotangentbycobar}
L_\mf{g} = \mc{U}(\mf{g})\otimes_A \left(\mm{Sym}_A^{\sgeq 1}\mf{g}[1]\right)[-1]
\end{equation}
with differential given (modulo Koszul signs) by
\begin{align}\label{fm:eq:koszuldifferential}
\dau(u\otimes X_1\dots X_n) &= (\dau u)\otimes X_1\dots X_n + \sum_i u\otimes X_1\dots \dau(X_i)\dots X_n \nonumber\\
& \smash{\stackrel{(n>1)}{+}} \sum_{i} u\cdot X_k\otimes X_{1}\dots X_{n} + \sum_{i<j} u \otimes [X_i, X_j]X_1\dots X_n.
\end{align}
The first term in the second row only applies when $n>1$.
\end{proposition}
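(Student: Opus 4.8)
The plan is to compute $L_\mf{g}$ by means of the explicit cofibrant replacement $Q(\mf{g})\rto{\sim}\mf{g}$ of Definition \ref{def:Q}. Since the cotangent complex is the left derived functor of K\"ahler differentials evaluated at the identity, a cofibrant replacement of the identity object $\mf{g}=\mf{g}$ in $\dgLieAlgd_A/\mf{g}$ is precisely $Q(\mf{g})\to\mf{g}$, so formula \eqref{diag:kaehler} gives $L_\mf{g}\simeq \mc{U}(\mf{g})\otimes_{\mc{U}(Q(\mf{g}))}\Upsilon_{Q(\mf{g})}$, where $\Upsilon_{Q(\mf{g})}=L_{Q(\mf{g})}$ is the underived module of K\"ahler differentials of the cofibrant object $Q(\mf{g})$. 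As a graded $A$-linear Lie algebroid, $Q(\mf{g})$ is free on the map $V:=(\mm{Sym}^{\sgeq 1}_A\mf{g}[1])[-1]\to T_A$, so the computation of Example \ref{fm:ex:cotangentcomplexoffreealgebroid} identifies the underlying graded $\mc{U}(Q(\mf{g}))$-module of $\Upsilon_{Q(\mf{g})}$ with the free module $\mc{U}(Q(\mf{g}))\otimes_A V$, the universal derivation sending a generator $X_1\cdots X_n$ to $1\otimes X_1\cdots X_n$. Base changing along $\mc{U}(Q(\mf{g}))\to\mc{U}(\mf{g})$ then yields the asserted underlying graded module $\mc{U}(\mf{g})\otimes_A(\mm{Sym}^{\sgeq 1}_A\mf{g}[1])[-1]$, which is cofibrant since it is free over $\mc{U}(\mf{g})$ on a graded-free $A$-module.

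It remains to identify the differential, which is forced by the requirement that the universal derivation $d\colon Q(\mf{g})\to\Upsilon_{Q(\mf{g})}$ be a chain map: for a generator $X=X_1\cdots X_n$ one computes $\dau(1\otimes X)=d(\dau_{Q(\mf{g})}X)$ via the Leibniz rule. The bar--cobar differential on $Q(\mf{g})$ (Definition \ref{def:Q}) splits into the internal differential inherited from $\mf{g}$, the Lie-bracket part $\sum_{i<j}\pm[X_i,X_j]\,X_1\cdots\widehat{X_i}\cdots\widehat{X_j}\cdots X_n$, and the cobar part, which for each decomposition of the symmetric word into two nonempty subwords produces a bracket of the corresponding generators of $Q(\mf{g})$. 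Applying $d$ and base changing to $\mc{U}(\mf{g})$, the internal differential yields the first two terms of \eqref{fm:eq:koszuldifferential} and the bracket part yields the final term. The crucial observation is that the Leibniz rule moves one subword of each cobar decomposition into the enveloping algebra, and that every generator of $Q(\mf{g})$ of symmetric length $\geq 2$ maps to zero under $\mc{U}(Q(\mf{g}))\to\mc{U}(\mf{g})$, since the quasi-isomorphism $Q(\mf{g})\to\mf{g}$ is (up to the anchor) the projection onto the length-one part. Hence only those decompositions absorbing a \emph{single} factor $X_i$ survive, producing exactly the terms $\sum_i u\cdot X_i\otimes X_1\cdots\widehat{X_i}\cdots X_n$, which vanish when $n=1$.

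The step I expect to be the main obstacle is the precise Koszul-sign bookkeeping: one must unwind the explicit form of the bar--cobar differential on $Q(\mf{g})$ from \cite{nui17b}, track the signs introduced by the universal derivation and the Leibniz rule, and confirm that the surviving contributions recombine into precisely \eqref{fm:eq:koszuldifferential} with the stated signs. A secondary point to verify carefully is that the identification is one of dg-$\mc{U}(\mf{g})$-modules and not merely of graded modules, i.e.\ that no extra differential contributions arise from the higher free-Lie brackets present in $Q(\mf{g})$ but absent in $\mf{g}$ itself; this is exactly what is controlled by the length-grading argument above.
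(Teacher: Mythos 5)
Your argument is correct, and it rests on the same underlying mechanism as the paper's proof --- evaluating the left Quillen functor \eqref{diag:kaehler} on the explicit cofibrant replacement $Q(\mf{g})$ and exploiting that $Q(\mf{g})$ is free as a \emph{graded} Lie algebroid on $(\mm{Sym}_A^{\sgeq 1}\mf{g}[1])[-1]$ --- but the execution is genuinely different. The paper argues contravariantly: it quotes Corollary 6.13 of \cite{nui17b}, which identifies maps $Q(\mf{g})\rt \mf{g}\oplus E$ over $\mf{g}$ with $0$-cycles in the kernel of the augmentation $C^*(\mf{g}, E[1])\rt E[1]$, and then observes that $\Hom_{\mc{U}(\mf{g})}(L_\mf{g}, E)$ for the complex \eqref{eq:cotangentbycobar} is exactly that kernel, so the differential is read off by duality from the already-established Chevalley--Eilenberg differential \eqref{fm:eq:cediff}. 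Your route instead computes $\Upsilon_{Q(\mf{g})}$ covariantly and base changes along $\mc{U}(Q(\mf{g}))\rt \mc{U}(\mf{g})$; your length-grading observation --- that generators of symmetric length $\geq 2$ die in $\mc{U}(\mf{g})$, so only the cobar decompositions splitting off a single factor survive the Leibniz rule --- is precisely the correct explanation for why the third term of \eqref{fm:eq:koszuldifferential} arises and why it is absent for $n=1$. What the paper's route buys is that the Koszul-sign bookkeeping you flag as the main obstacle is entirely absorbed into the cited result and the CE differential; what your route buys is a self-contained derivation that does not invoke Corollary 6.13 and makes the module structure of $L_\mf{g}$ visible directly. (A minor point in your favour: the displayed third term of \eqref{fm:eq:koszuldifferential} should read $\sum_i u\cdot X_i\otimes X_1\cdots\widehat{X_i}\cdots X_n$, which is the form you wrote.)
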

\begin{proof}
Since $\mf{g}$ is $A$-cofibrant, it suffices to find a $\mf{g}$-representation corepresenting the functor
$$\xymatrix{
\dgRep_\mf{g}\ar[r] & \Set; \hspace{4pt} E\ar@{|->}[r] & \Hom_{/\mf{g}}(Q(\mf{g}), E).
}$$
By \cite[Corollary 6.13]{nui17b}, there is a natural bijection between maps $Q(\mf{g})\rt \mf{g}\oplus E$ over $\mf{g}$ and $0$-cycles in the kernel of the augmentation map $C^*(\mf{g}, E[1])\rt E[1]$. On the other hand, unwinding the definition of the complex $L_\mf{g}$ in \eqref{eq:cotangentbycobar}, one sees that there is a short exact sequence
\begin{equation}\label{eq:ceismapsfromcot}\vcenter{\xymatrix{
\Hom{\hspace{1pt}}_{\mc{U}(\mf{g})}\big(L_\mf{g}, E\big)\ar[r] & C^*(\mf{g}, E[1])\ar[r] & E[1].
}}\end{equation}
Passing to $0$-cycles, one finds that the complex $L_\mf{g}$ given in \eqref{eq:cotangentbycobar} indeed represents maps $Q(\mf{g})\rt \mf{g}\oplus E$ over $\mf{g}$.
\end{proof}
\begin{remark}\label{fm:rem:cofibersequenceoncotangent}
The cotangent complex \eqref{eq:cotangentbycobar} comes with a $\mc{U}(\mf{g})$-linear map
$$\xymatrix@C=1.8pc{
L_\mf{g} = \mc{U}(\mf{g})\otimes_A \big(\mm{Sym}_A^{\sgeq 1}\mf{g}[1]\big)[-1]\ar[r] & \mc{U}(\mf{g})
}$$
sending $u\otimes X_1\dots X_n$ to zero when $n>1$ and to $u\cdot X_1$ when $n=1$. The \emph{Koszul complex} $K(\mf{g})$ of $\mf{g}$ is the mapping cone of this map. It fits onto a cofiber sequence
$$\xymatrix{
L_\mf{g}\ar[r] & \mc{U}(\mf{g})\ar[r] & K(\mf{g}).
}$$
Unraveling the definitions, $K(\mf{g})$ can be identified with $\mc{U}(\mf{g})\otimes_A \mm{Sym}_A\mf{g}[1]$, with differential given by formula \eqref{fm:eq:koszuldifferential}, but where the term in the second line is also included when $n=1$. The Chevalley-Eilenberg complex $C^*(\mf{g}, E)$ can be identified with $\Hom\hspace{1pt}_{\mc{U}(\mf{g})}(K(\mf{g}), E)$, so that the above cofiber sequence induces (as shift of) the fiber sequence \eqref{eq:ceismapsfromcot} on mapping complexes.
\end{remark}
\begin{remark}\label{rem:ceasext}
The composite map
\begin{equation}\label{fm:diag:cofiberoncotangent}\vcenter{\xymatrix@C=3pc{
L_\mf{g}=\mc{U}(\mf{g})\otimes_A \big(\mm{Sym}_A^{\sgeq 1}\mf{g}[1]\big)[-1]\ar[r] & \mc{U}(\mf{g})\ar[r]^{u\mapsto u\cdot 1} & A
}}\end{equation}
is equal to zero, so that there is a $\mc{U}(\mf{g})$-linear map $K(\mf{g})\rt A$. When $\mf{g}$ is $A$-cofibrant, this map is a weak equivalence. Indeed, the PBW filtration on $\mc{U}(\mf{g})$ (see the proof of Lemma \ref{lem:pbw}) and the filtration on $\mm{Sym}_A\mf{g}[1]$ by polynomial degree determine a total filtration on the Koszul complex $\mc{K}(\mf{g})$. The map on the associated graded is the obvious projection
$$\xymatrix@C=1.4pc{
\mm{Sym}_A(\mf{g}[0, 1])=\mm{Sym}_A\mf{g}\otimes_A \mm{Sym}_A \mf{g}[1] \ar[r] & A
}$$
from the symmetric algebra on the cone $\mf{g}[0, 1]$, which is a weak equivalence.

In other words, $C^*(\mf{g}, E)$ is a model for the derived mapping space $\Hom_{\mc{U}(\mf{g})}(A, E)$. The lax symmetric monoidal structure on $C^*(\mf{g}, -)$ arises from the fact that $A$ is a cocommutative coalgebra in $\dgRep_\mf{g}$.
\end{remark}

\subsection{The free case}\label{fm:sec:ceoffree}
When $\mf{g}=F(V)$ is the free dg-Lie algebroid generated by a cofibrant dg-$A$-module over $T_A$, we now have two different (but weakly equivalent) descriptions of the cotangent complex $L_\mf{g}$: Example \ref{fm:ex:cotangentcomplexoffreealgebroid} simply evaluates the left Quillen functor \eqref{diag:kaehler} on $\mf{g}$ itself, while Proposition \ref{prop:cotangentofliealgebroid} computes the value of \eqref{diag:kaehler} on the `cobar' resolution $Q(\mf{g})$. Of course, the first description is significantly smaller than the second.

When $\mf{g}=F(V)$ is a free Lie algebroid, there is a canonical section of the map $Q(\mf{g})\rto{\sim} \mf{g}$, induced by the canonical inclusion
$$\xymatrix{
V\ar@{^{(}->}[r] & \mf{g}\ar[r] & \Big(\mm{Sym}_A\mf{g}[1]\Big)[-1]\subseteq Q(\mf{g}).
}$$
Applying \eqref{diag:kaehler} to this section produces a weak equivalence between the two models for the cotangent complex $L_\mf{g}$, given by the $\mc{U}(\mf{g})$-linear extension of the above inclusion
\begin{equation}\label{diag:comparisonofcotangent}\smash{\xymatrix{
\mc{U}(\mf{g})\otimes_A V\ar[r]^-{\sim} & \mc{U}(\mf{g})\otimes_A \Big(\mm{Sym}^{\sgeq 1}_A \mf{g}[1]\Big)[-1].
}}\end{equation}
Dually, restriction along this map induces a weak equivalence map to a significantly smaller complex
$$\xymatrix{
\kappa\colon C^*(\mf{g}, A)\ar[r]^-\simeq & A^V = A\oplus_{\rho^\vee} \Hom_A\big(V[1], A\big); \hspace{4pt} \alpha\ar@{|->}[r] & \Big(\alpha(1), \alpha\big|_{V[1]}\Big).
}$$
The codomain $A^V$ has the natural structure of a commutative dg-algebra, the \emph{square zero extension} of $A$ by $V[1]^\vee=\Hom_A(V[1], A)$ classified by the map
$$\xymatrix{
\rho^\vee\colon \Omega_A\ar[r] & V^\vee; \hspace{4pt} d_{\dR}a\ar@{|->}[r] & \Big(v\mapsto \rho(v)(a)\Big).
}$$
In other words, $A^V$ fits into a (homotopy) pullback square of cdgas
\begin{equation}\label{diag:squarezero}\vcenter{\xymatrix{
A^V= A\oplus_{\rho^\vee} V[1]^\vee\ar[r] \ar[d] & A\oplus V[0, 1]^\vee\ar[d]\\
A\ar[r]_{\rho^\vee} & A\oplus V^\vee.
}}\end{equation}
Using this pullback square, one can easily check that the functor
$$\xymatrix{
A^{(-)}\colon \dgMod_A/T_A\ar[r] & \big(\dgCAlg_k/A\big)^\op;  V\ar@{|->}[r] & A^V= A\oplus_{\rho^\vee} V[1]^\vee.
}$$
is a left Quillen functor from the tame model structure on dg-$A$-modules to the usual (projective) model structure on cdgas over $A$. Its right adjoint sends $B\rt A$ to the mapping fiber of the map 
$$\xymatrix{
T_A = \mm{Der}_k(A, A)\ar[r] & \mm{Der}_k(B, A),
}$$
together with its natural projection to $T_A$. We may therefore summarize the previous discussion by the following result:
\begin{corollary}\label{cor:ceoffree}
There is a natural transformation to a right Quillen functor
$$\xymatrix@C=4pc{
\dgMod_A/T_A \ar@/^1.5pc/[r]^{C^*\circ F}_{}="s"\ar@/_1.5pc/[r]_{A^{(-)}}^{}="t" \ar@{=>}"s";"t"^\kappa & \big(\dgCAlg_k/A\big)^\op
}$$
which is a weak equivalence when restricted to cofibrant dg-$A$-modules over $T_A$.
\end{corollary}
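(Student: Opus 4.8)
The statement bundles together three assertions: that the restriction maps $\kappa_V$ assemble into a natural transformation $C^*\circ F \Rightarrow A^{(-)}$ of functors $\dgMod_A/T_A \to \big(\dgCAlg_k/A\big)^\op$; that $A^{(-)}$ is a right Quillen functor with the mapping-fibre adjoint recorded above; and that $\kappa_V$ is a weak equivalence whenever $V\rt T_A$ is cofibrant as a dg-$A$-module. Since the analytic content of each has already been isolated in the preceding subsections, the plan is simply to record how the pieces fit, treating the three assertions in turn.

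For the first, I would define $\kappa_V\colon C^*(F(V),A)\rt A^V$ on a cochain $\alpha$ by $\alpha\mapsto\big(\alpha(1),\alpha|_{V[1]}\big)$, i.e.\ as the augmentation paired with restriction along the inclusion $V[1]\hookrightarrow \mm{Sym}_A F(V)[1]$. Two checks are needed: that $\kappa_V$ is a map of cdgas over $A$, and that it is natural in $V$. Naturality is immediate, since both the augmentation and the restriction to the linear part are functorial in $A$-linear maps over $T_A$. Multiplicativity is the point at which I would be most careful: the source carries the shuffle product of Remark \ref{rem:celaxmonoidal}, while $A^V=A\oplus_{\rho^\vee}V[1]^\vee$ is a square-zero extension in which products of two elements of $V[1]^\vee$ vanish. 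Because elements of $V[1]$ are primitive for the coproduct on $\mm{Sym}_A F(V)[1]$, the linear part of a shuffle product is $(\alpha\beta)|_{V[1]}=\alpha(1)\,\beta|_{V[1]}+\beta(1)\,\alpha|_{V[1]}$, which is exactly the product rule in $A^V$; compatibility with differentials then reduces to checking that the anchor-dependent terms of the Chevalley--Eilenberg differential \eqref{fm:eq:cediff} reproduce the twist $\rho^\vee$ encoded by the pullback square \eqref{diag:squarezero}.

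For the Quillen property I would use the pullback square \eqref{diag:squarezero} directly, which presents $A^V$ as a homotopy pullback of the trivial square-zero extensions of $V^\vee$ and $V[0,1]^\vee$ along $\rho^\vee$. The dual functor $(-)^\vee$ is left Quillen from the tame to the projective model structure on $\dgMod_A$ (as used in the proof of Corollary \ref{cor:tameasprocoh}), forming a trivial square-zero extension $M\mapsto A\oplus M$ preserves fibrations and trivial fibrations of cdgas, and pulling back along a fibration preserves these classes as well. Assembling these observations shows that $A^{(-)}$ sends tame (trivial) cofibrations to (trivial) fibrations of cdgas over $A$, which is precisely the assertion that it is a right Quillen functor, with contravariant right adjoint the mapping-fibre functor described above.

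The remaining claim is the dual of the comparison \eqref{diag:comparisonofcotangent}. For cofibrant $V$ the free Lie algebroid $\mf{g}=F(V)$ is cofibrant, and by Remark \ref{fm:rem:cofibersequenceoncotangent} the complex $C^*(\mf{g},A)$ is $\Hom_{\mc{U}(\mf{g})}(K(\mf{g}),A)$ for the Koszul complex sitting in the cofibre sequence $L_\mf{g}\rt \mc{U}(\mf{g})\rt K(\mf{g})$. The map \eqref{diag:comparisonofcotangent} is a weak equivalence between the large cofibrant model of $L_\mf{g}$ (Proposition \ref{prop:cotangentofliealgebroid}) and the small one $\mc{U}(\mf{g})\otimes_A V$ (Example \ref{fm:ex:cotangentcomplexoffreealgebroid}); extending it over the cofibre sequences and applying $\Hom_{\mc{U}(\mf{g})}(-,A)$ yields a weak equivalence which I would identify with $\kappa_V$, the small model of $K(\mf{g})$ giving back $A^V$. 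I expect the main obstacle to be precisely this last identification: matching the $\mc{U}(\mf{g})$-linear dual of \eqref{diag:comparisonofcotangent} on the nose with the restriction map $\alpha\mapsto\big(\alpha(1),\alpha|_{V[1]}\big)$, keeping track of the suspension shifts and Koszul signs relating $V^\vee$, $V[1]^\vee$ and the augmentation ideal of $C^*(\mf{g},A)$. This bookkeeping, together with the multiplicativity verification of the first step, is where the real work lies; the rest is formal.
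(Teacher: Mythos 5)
Your proposal is correct and follows essentially the same route as the paper, which likewise defines $\kappa$ by the formula $\alpha\mapsto(\alpha(1),\alpha|_{V[1]})$, verifies the Quillen property of $A^{(-)}$ via the pullback square \eqref{diag:squarezero}, and deduces the weak equivalence by dualizing the comparison \eqref{diag:comparisonofcotangent} between the two cofibrant models of $L_{F(V)}$ over the cofiber sequence defining the Koszul complex. You are merely more explicit than the paper about the multiplicativity and naturality checks, which the paper leaves implicit.
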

Let us finally turn to the proof of Proposition \ref{prop:deformationadjunction}:
\begin{proof}[Proof (of Proposition \ref{prop:deformationadjunction})]
Consider the functor 
$$\xymatrix{
\dgLieAlgd_A\ar[r]^-{C^*} & \big(\dgCAlg_k/A\big)^\op\ar[r]^-\ker & \big(\dgMod_k\big)^\op
}$$
sending a dg-Lie algebroid to the kernel of the map $C^*(\mf{g})\rt A$. By Proposition \ref{prop:cotangentofliealgebroid}, this functor can be identified with the composite
$$\xymatrix{
\dgLieAlgd_A\ar[r]^-{Q} & \dgLieAlgd_A\ar[r]^-{\Upsilon} & \dgRep_{T_A}\ar[rr]^-{\Hom_{\mc{U}(T_A)}(-, A)} & & \big(\dgMod_k\big)^\op.
}$$
The functor $Q$ sends an $A$-cofibrant dg-Lie algebroid to its cofibrant replacement, the functor $\Upsilon$ is the functor \eqref{diag:kaehler} sending $\mf{g}$ to $\mc{U}(T_A)\otimes_{\mc{U}(\mf{g})} \Upsilon_\mf{g}$ and the last functor takes the maps from a $T_A$-representation to $A$. Since the last two functors are left Quillen functors, it follows that $C^*$ preserves weak equivalences between $A$-cofibrant dg-Lie algebroids. We therefore obtain functors of $\infty$-categories
$$\xymatrix{
\LieAlgd_A^!\ar[r]^-{C^*} & \big(\CAlg_k/A\big)^\op\ar[r]^-\ker & \big(\Mod_k\big)^\op.
}$$
The composition of these two functor preserves all colimits. Since the functor $\ker\colon \CAlg_k/A\rt \Mod_k$ detects equivalences and preserves all limits, it follows that $C^*$ preserves all colimits. By the adjoint functor theorem, it follows that $C^*$ admits a right adjoint $\mf{D}\colon \big(\CAlg_k/A\big)^\op\rt \LieAlgd_A^!$.

It remains to describe this right adjoint $\mf{D}$, at least at the level of the underlying tame $A$-modules. To this end, observe that the composite
\begin{equation}\label{diag:forgetafterkoszul}\xymatrix{
\big(\CAlg_k/A\big)^\op\ar[r]^-{\mf{D}} & \LieAlgd_A^!\ar[r]^{U} & \Mod_A^!/T_A
}\end{equation}
is right adjoint to the functor $C^*\circ F\colon \Mod_A^!/T_A\rt \big(\CAlg_k/A\big)^\op$. Corollary \ref{cor:ceoffree} provides a natural equivalence $\kappa\colon C^*\circ F\rt A^{(-)}$, so that $U\circ \mf{D}$ is equivalent to the derived right adjoint to the left Quillen functor $A^{(-)}$. Because $A$ is cofibrant over $k$, the discussion preceding Corollary \ref{cor:ceoffree} shows that this derived functor sends $B\rt A$ to the $A$-linear dual of $L_A\rt L_{A/B}$.
\end{proof}
\begin{remark}\label{fm:rem:dualityfunctor}
The functor $\mf{D}$ does not admit a straightforward point-set description. However, when $B\rt A$ is a \emph{cofibration} of commutative dg-algebras, its image under $\mf{D}$ can be identified with the sub-dg-Lie algebroid $T_{A/B}:=\mm{Der}_B(A, A)\rt T_A$ of $B$-linear derivations of $A$. To see this, note that there is a natural diagram of commutative dg-$k$-algebras over $A$
$$\xymatrix{
B\ar[r]^-{f} \ar[rd]_-\phi & C^*(T_{A/B})\ar[r]\ar[d] & C^*(\tilde{T}_{A/B})\ar[ld]\\
& A &
}$$
where $\tilde{T}_{A/B}\rt T_{A/B}$ is a cofibrant replacement. For any $b\in B$, its image under $f$ is the map
$$\xymatrix@C=4pc{
\mm{Sym}_A\big(T_{A/B}[1]\big)\ar[r]^-{T_{A/B}\mapsto 0} & A\ar[r]^{\phi(b)\cdot (-)} & A
}$$
The map $f$ respects the differential because the derivations in $T_{A/B}$ are $B$-linear. The composition $B\rt C^*(\tilde{T}_{A/B})$ is adjoint to a map $\tilde{T}_{A/B}\rt \mf{D}(B\rt A)$. At the level of the underlying (tame) $A$-modules, this map is simply given by the composite map
$$\xymatrix{
\tilde{T}_{A/B}\ar[r]^\sim & T_{A/B}\ar[r] & L_{A/B}^\vee
}$$
Because $B\rt A$ was a cofibration, the second map is a weak equivalence. It follows that the dg-Lie algebroid $\tilde{T}_{A/B}$ (and therefore $T_{A/B}$) is weakly equivalent to $\mf{D}(B\rt A)$.
\end{remark}

\section{Koszul duality}\label{sec:koszulduality}
The purpose of this section is to show that the adjunction from Proposition \ref{prop:deformationadjunction}
$$\xymatrix{
C^*\colon \LieAlgd^!_A\ar@<1ex>[r] & \big(\CAlg_k/A\big)^\op\colon \mf{D}\ar@<1ex>[l]
}$$
is not too far from being an equivalence: when restricted to suitably \emph{free} tame dg-Lie algebroids, the functor $C^*$ is fully faithful.
\begin{proposition}\label{fm:prop:koszulduality}
Let $A$ be a cofibrant connective commutative dg-$k$-algebra and let $\mf{g}$ be a cofibrant dg-Lie algebroid in the tame model structure. Suppose $\mf{g}$ satisfies the following conditions:
\begin{itemize}
\item[(i)] As a graded $A$-module, $\mf{g}$ is free on a set of generators $x_i$.
\item[(ii)] There are finitely many $x_i$ in each single degree, and no generators of (homological) degree $\geq 0$.
\end{itemize}
Then the unit map $\mf{g}\rt \mf{D}C^*(\mf{g})$ can be identified with the map of dg-$A$-modules
$$\xymatrix{
\mf{g}\ar[r] & \mf{g}^{\vee\vee}
}$$
from $\mf{g}$ into its $A$-linear bidual.
\end{proposition}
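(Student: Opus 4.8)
The plan is to reduce the statement to a computation of underlying tame $A$-modules. Since the forgetful functor $U\colon \LieAlgd_A^!\rt \Mod_A^!/T_A$ is conservative and preserves limits, the unit $\mf{g}\rt \mf{D}C^*(\mf{g})$ is determined by its underlying map of tame $A$-modules together with the anchor, so it suffices to identify $U$ of the unit. By the proof of Proposition \ref{prop:deformationadjunction} and Corollary \ref{cor:ceoffree}, the composite $U\circ\mf{D}$ is the derived right adjoint of the left Quillen functor $A^{(-)}\simeq C^*\circ F$, and $U\mf{D}(B\rt A)$ is the $A$-linear dual $L_{A/B}^\vee$ of the relative cotangent complex. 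Thus the two tasks are to compute $L_{A/C^*(\mf{g})}$ (equivalently, $U\mf{D}C^*(\mf{g})$) and to track the unit through these adjunctions.

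First I would unwind $C^*(\mf{g})$. Since $\mf{g}$ is graded-free with finitely many generators $x_i$ in each degree and none in degree $\geq 0$, the module $\mm{Sym}_A\mf{g}[1]$ is graded-free with finitely many generators in each degree, and dualizing identifies $C^*(\mf{g})=\Hom_A(\mm{Sym}_A\mf{g}[1], A)$, as an augmented graded-commutative $A$-algebra, with the free graded-commutative algebra on the dual $\mf{g}^\vee[-1]$ (completed in degree zero), equipped with the Chevalley--Eilenberg differential \eqref{fm:eq:cediff}. Using the retraction $A\rt C^*(\mf{g})\rt A$ and the transitivity cofiber sequence, one obtains $L_{A/C^*(\mf{g})}\simeq \big(A\otimes_{C^*(\mf{g})}L_{C^*(\mf{g})/A}\big)[1]$, and the cotangent complex of this free algebra at its augmentation is the module of generators $\mf{g}^\vee[-1]$; hence $L_{A/C^*(\mf{g})}\simeq \mf{g}^\vee$ and dualizing gives $U\mf{D}C^*(\mf{g})\simeq \mf{g}^{\vee\vee}$. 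Equivalently, one can avoid cotangent complexes and compute directly that a map of augmented $A$-algebras $C^*(\mf{g})\rt A^E$ into the square-zero extension $A^E=A\oplus_{\rho^\vee}E[1]^\vee$ is governed by its linear part $\mf{g}^\vee[-1]\rt E[1]^\vee$, giving $\Map_{\CAlg_k/A}(C^*(\mf{g}), A^E)\simeq \Map_A(\mf{g}^\vee[-1], E[1]^\vee)\simeq \Map_A(E, \mf{g}^{\vee\vee})$, which exhibits $\mf{g}^{\vee\vee}$ as $U\mf{D}C^*(\mf{g})$ by corepresentability.

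Finally I would identify the unit itself. By naturality (Yoneda), the unit $\mf{g}\rt \mf{D}C^*(\mf{g})$ corresponds to $\mm{id}_{C^*(\mf{g})}$ under the chain of natural equivalences above, and unwinding the evaluation pairing $\mf{g}^\vee\otimes_A\mf{g}\rt A$ shows that on underlying modules this is exactly the canonical biduality map $\mf{g}\rt \mf{g}^{\vee\vee}$; the twist $\rho^\vee$ in $A^E$ is what makes the identification compatible with the anchor maps to $T_A$. I expect the main obstacle to be precisely this last matching step: pinning down the unit as the \emph{canonical} biduality map rather than merely producing some abstract equivalence $U\mf{D}C^*(\mf{g})\simeq \mf{g}^{\vee\vee}$. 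A secondary technical point to handle is that when $\mf{g}$ has generators in degree $-1$ the algebra $C^*(\mf{g})$ acquires a completed polynomial part in degree $0$, so it is not literally semi-free; one must verify that its cotangent complex at the augmentation is still the module of generators $\mf{g}^\vee[-1]$ (completion does not affect the cotangent fiber at the basepoint), and that the twist $\rho^\vee$ is tracked correctly throughout.
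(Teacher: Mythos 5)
Your setup is correct and matches the paper: the reduction to the underlying tame $A$-module via $U\mf{D}(B\rt A)\simeq L_{A/B}^\vee$ and the identification of the unit with the map adjoint to $c\colon C^*(\mf{g})\rt A^{\mf{g}}$ is exactly Lemma \ref{lem:identificationunderadjunction}. The gap is in the step you label a ``secondary technical point'': the computation $L_{A/C^*(\mf{g})}\simeq \mf{g}^\vee$. You treat $C^*(\mf{g})=\Hom_A(\mm{Sym}_A\mf{g}[1],A)$ as if it were (semi-)free on $\mf{g}[1]^\vee$, but whenever $\mf{g}$ has generators in degree $-1$ this algebra is a completed (power-series) algebra, hence not cofibrant, and its derived cotangent complex cannot be read off from generators. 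The parenthetical assertion that ``completion does not affect the cotangent fiber at the basepoint'' is precisely the statement that needs proof --- it is the entire content of Lemma \ref{fm:lem:polynomialintopower} in the paper, and it is not formal: already classically, K\"ahler differentials of power-series rings are badly behaved, and here the Chevalley--Eilenberg differential further mixes the polynomial filtration. The paper handles this by introducing the genuinely cofibrant polynomial subalgebra $C^*_\mm{poly}(\mf{g})=\mm{Sym}_A(\mf{g}[1]^\vee)$, computing its K\"ahler differentials directly (Lemma \ref{fm:lem:cotangentforglobalce}), and then proving that $C^*_\mm{poly}(\mf{g})\rt C^*(\mf{g})$ induces an equivalence on $L_{(-)}\otimes A$ by a nontrivial trick: one maps the contractible cofibrant Lie algebroid $\mf{h}=F(\mf{g}[0,-1])$ onto $\mf{g}$, shows via the finiteness hypotheses (Lemma \ref{fm:lem:addingvariablestopowerseries}) that the resulting square of cdgas is a homotopy pushout, and uses that $C^*(\mf{h})\rt A$ is an equivalence while $\ker(\Omega_A\oplus\mf{h}[1]^\vee\rt\Omega_A)$ is contractible. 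None of this is supplied or even gestured at by your argument.

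Your proposed alternative --- computing $\Map_{\CAlg_k/A}(C^*(\mf{g}),A^E)$ directly by its ``linear part'' --- fails for the same reason: since $C^*(\mf{g})$ is not cofibrant, the derived mapping space is not computed by strict algebra maps out of $C^*(\mf{g})$, and replacing it cofibrantly reintroduces exactly the difficulty you are trying to avoid. The final matching of the unit with the canonical biduality map, which you flag as the main obstacle, is in fact the easy part once $L_{A/C^*_\mm{poly}(\mf{g})}\rt\mf{g}^\vee$ is computed explicitly as the cone of the projection $\Omega_A\oplus\mf{g}[1]^\vee\rt\Omega_A$; the real obstacle is the polynomial-versus-power-series comparison.
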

\begin{corollary}\label{cor:koszuldualitycoherent}
Let $A$ be a cofibrant commutative dg-$k$-algebra which is bounded. Then $C^*\colon \LieAlgd_A^!\rt \big(\CAlg_k/A\big)^\op$ is fully faithful on all tame dg-Lie algebroids satisfying conditions (i) and (ii) in Proposition \ref{fm:prop:koszulduality}.
\end{corollary}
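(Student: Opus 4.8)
The plan is to leverage the adjunction $C^*\colon \LieAlgd_A^!\leftrightarrows \big(\CAlg_k/A\big)^\op\colon \mf{D}$ from Proposition \ref{prop:deformationadjunction}, together with the identification of its unit provided by Proposition \ref{fm:prop:koszulduality}. Recall the standard fact that a left adjoint is fully faithful on a full subcategory as soon as its unit is an equivalence there. Concretely, for tame dg-Lie algebroids $\mf{g}, \mf{h}$ satisfying conditions (i) and (ii), the comparison map induced by $C^*$ fits into
$$\Map_{\LieAlgd_A^!}(\mf{g}, \mf{h})\rt \Map_{(\CAlg_k/A)^\op}\big(C^*\mf{g}, C^*\mf{h}\big)\simeq \Map_{\LieAlgd_A^!}\big(\mf{g}, \mf{D}C^*\mf{h}\big),$$
where the last equivalence is the adjunction and the composite is postcomposition with the unit $\eta_{\mf{h}}\colon \mf{h}\rt \mf{D}C^*\mf{h}$. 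Hence it suffices to show that $\eta_{\mf{h}}$ is an equivalence for every $\mf{h}$ satisfying (i) and (ii).

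By Proposition \ref{fm:prop:koszulduality}, the unit $\eta_{\mf{h}}$ is identified with the $A$-linear biduality map $\mf{h}\rt \mf{h}^{\vee\vee}$ in $\Mod_A^!$. Since the forgetful functor $\LieAlgd_A^!\rt \Mod_A^!$ detects equivalences, the whole statement reduces to checking that this biduality map is an equivalence of tame $A$-modules. This is precisely the step at which the hypothesis that $A$ is bounded is used (it is absent from Proposition \ref{fm:prop:koszulduality}, which only assumes $A$ cofibrant and connective).

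To verify the biduality equivalence I would argue exactly as in the proof of Corollary \ref{cor:tameasprocoh}. Writing $\mf{h}=\bigoplus_i A\cdot x_i$ as a graded $A$-module, with the $x_i$ in degrees $d_i<0$ and finitely many in each degree, the dual is $\mf{h}^\vee=\prod_i A[-d_i]$. Because $A$ is concentrated in degrees $[0,N]$ and there are finitely many $x_i$ per degree, only finitely many factors are nonzero in each individual degree, so this product agrees with the direct sum $\bigoplus_i A[-d_i]$; thus $\mf{h}^\vee$ is again graded-free with finitely many generators in each degree, now in positive degrees. Iterating gives $\mf{h}^{\vee\vee}\cong\bigoplus_i A[d_i]$, and the canonical evaluation map $\mf{h}\rt\mf{h}^{\vee\vee}$ is a point-set isomorphism. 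Since $\mf{h}$ is graded-free it is cofibrant in the tame model structure, and $\Hom_A(-,A)$ is left Quillen from the tame to the projective model structure (as noted in the proof of Corollary \ref{cor:tameasprocoh}), so the point-set dual computes the derived dual and this point-set isomorphism represents the derived biduality equivalence. This shows $\eta_{\mf{h}}$ is an equivalence and hence that $C^*$ is fully faithful on the subcategory of tame dg-Lie algebroids satisfying (i) and (ii).

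The genuinely substantial input is Proposition \ref{fm:prop:koszulduality}, which I am free to assume; given it, the corollary is essentially formal. The only remaining obstacle is bookkeeping with the iterated duals: one must ensure the products defining $\mf{h}^\vee$ and $\mf{h}^{\vee\vee}$ are degreewise finite, so that they coincide with the corresponding coproducts and the evaluation map is invertible. This is exactly where the three hypotheses conspire\,---\,``finitely many generators per degree'' together with boundedness of $A$ force the degreewise finiteness, while the condition that no generators sit in degrees $\geq 0$ keeps $\mf{h}$ within the class to which Proposition \ref{fm:prop:koszulduality} applies.
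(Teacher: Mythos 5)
Your proposal is correct and follows essentially the same route as the paper: both reduce full faithfulness to the unit being an equivalence, invoke Proposition \ref{fm:prop:koszulduality} to identify the unit with the $A$-linear biduality map, and then use boundedness of $A$ together with the degreewise finiteness of the generators to see that $\bigoplus_{n<0}A^{\oplus k_n}[n]\rt\prod_{n<0}A^{\oplus k_n}[n]$ is an isomorphism. Your extra care in checking that $\mf{h}^\vee$ is again degreewise finitely generated (so that the iterated dual is computed correctly) is a point the paper's two-line proof leaves implicit, but the argument is the same.
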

\begin{proof}
Forgetting about the differential, we can write $\mf{g} = \bigoplus_{n<0} A^{\oplus k_n}[n]$. The unit map $\mf{g}\rt \mf{g}^{\vee\vee}$ can then be identified with the map
$$\xymatrix{
\bigoplus_{n<0} A^{\oplus k_n}[n]\ar[r] & \prod_{n<0} A^{\oplus k_n}[n].
}$$
When $A$ is bounded, this map is an isomorphism.
\end{proof}
\begin{corollary}\label{cor:koszuldualityperfect}
Let $A$ be a cofibrant commutative dg-$k$-algebra which is eventually coconnective. Then $C^*\colon \LieAlgd_A\rt \big(\CAlg_k/A\big)^\op$ is fully faithful on all Lie algebroids that can be modeled by \emph{projectively} cofibrant dg-Lie algebroids satisfying conditions (i) and (ii) of Proposition \ref{fm:prop:koszulduality}.
\end{corollary}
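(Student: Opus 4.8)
The plan is to reduce the statement, exactly as in Corollary~\ref{cor:koszuldualitycoherent}, to a biduality computation, and then to exploit eventual coconnectivity in place of strict boundedness. Since the forgetful functor $\LieAlgd_A\to \Mod_A/T_A$ detects equivalences, it suffices to show that for each such $\mf{g}$ the unit map $\mf{g}\to \mf{D}C^*(\mf{g})$ is an equivalence of underlying $A$-modules. I would run the same computation as in the proof of Proposition~\ref{fm:prop:koszulduality}, but now with the $A$-linear dual taken inside $\Mod_A\subseteq\Mod_A^!$ --- as prescribed by the projective right adjoint $\mf{D}$ in the remark following Proposition~\ref{prop:deformationadjunction}. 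This identifies the unit with the biduality map $\mf{g}\to \mf{g}^{\vee\vee}$, where now $(-)^\vee=\Hom_A(-,A)$ denotes the derived dual in $\Mod_A$; the hypothesis that $\mf{g}$ be \emph{projectively} cofibrant is precisely what makes this identification valid at the point-set level. I am thus reduced to proving that $\mf{g}\to\mf{g}^{\vee\vee}$ is an equivalence whenever $A$ is eventually coconnective and $\mf{g}$ is graded-free satisfying (i) and (ii).

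To prove this, I forget the differential and write $\mf{g}=\bigoplus_{n<0}A^{\oplus k_n}[n]$ with each $k_n$ finite, and filter $\mf{g}$ by the finite sub-coproducts $F_p\mf{g}=\bigoplus_{-p\leq n<0}A^{\oplus k_n}[n]$, so that $\mf{g}=\colim_p F_p\mf{g}$. Each $F_p\mf{g}$ is a finite coproduct of shifts of $A$, hence a dualizable (perfect) $A$-module, so the finite-stage biduality maps $F_p\mf{g}\to (F_p\mf{g})^{\vee\vee}$ are equivalences. Dualizing the cofiber sequence $F_p\mf{g}\to \mf{g}\to \mf{g}/F_p\mf{g}$, with $\mf{g}/F_p\mf{g}=\bigoplus_{n<-p}A^{\oplus k_n}[n]$, twice and using naturality of biduality produces a fiber sequence
$$
(F_p\mf{g})^{\vee\vee}\to \mf{g}^{\vee\vee}\to \Hom_A\big((\mf{g}/F_p\mf{g})^\vee, A\big)
$$
that is compatible with the biduality maps out of $F_p\mf{g}$ and $\mf{g}$. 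It therefore suffices to control the last term in each fixed homotopy degree.

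The key estimate is a connectivity bound supplied by eventual coconnectivity. Say $\pi_iA=0$ for $i>N$. The module $(\mf{g}/F_p\mf{g})^\vee=\prod_{n<-p}A^{\oplus k_n}[-n]$ is $(p+1)$-connective, and for any $c$-connective $M$ one has $[M,A[d]]=0$ as soon as $d<c-N$, since then $A[d]$ is $(c-1)$-coconnective and maps from a $c$-connective object into a $(c-1)$-coconnective object vanish. Hence $\Hom_A\big((\mf{g}/F_p\mf{g})^\vee,A\big)$ is $(N-p-1)$-coconnective, so in any fixed degree $d$ its homotopy vanishes once $p\gg 0$. Combined with the fact that $\mf{g}/F_p\mf{g}$ is concentrated in degrees $\leq N-p-1$, the fiber sequence and its domain sequence $F_p\mf{g}\to\mf{g}\to\mf{g}/F_p\mf{g}$ give, for $p$ large, isomorphisms $\pi_d\mf{g}\cong\pi_dF_p\mf{g}\cong\pi_d(F_p\mf{g})^{\vee\vee}\cong\pi_d\mf{g}^{\vee\vee}$; chasing the biduality map through these shows $\mf{g}\to\mf{g}^{\vee\vee}$ is an equivalence.

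The main obstacle is exactly the convergence phenomenon flagged in the introduction: the dual $\mf{g}^\vee$ is a genuine infinite product, and dualizing does not commute with the colimit defining $\mf{g}$, so biduality can genuinely fail. The whole point of the argument is that eventual coconnectivity forces the tail $(\mf{g}/F_p\mf{g})^\vee$ to become arbitrarily highly connective and its dual arbitrarily highly coconnective, so the obstruction disappears in each fixed degree --- this is what replaces the strict boundedness used in Corollary~\ref{cor:koszuldualitycoherent}, at the cost of passing to the \emph{projective} duality functor.
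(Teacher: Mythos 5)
Your opening reduction is the same as the paper's: by the triangle identities it suffices to show the unit $\mf{g}\to\mf{D}C^*(\mf{g})$ is an equivalence, and by Proposition \ref{fm:prop:koszulduality} this is the biduality map $\mf{g}\to\mf{g}^{\vee\vee}$. From there you diverge, and the divergence contains one bookkeeping error and one genuine gap. The bookkeeping error: since $A$ is connective and the grading is homological, the differential of a generator of degree $n$ is an $A$-linear combination of generators of degree $<n$. Hence the span of the generators in degrees $[-p,-1]$ is \emph{not} a dg-submodule; it is the finitely generated \emph{quotient} of $\mf{g}$, and the tail (generators in degrees $<-p$) is the submodule. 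So $F_p\mf{g}$ is not a subobject, $\mf{g}$ is not $\colim_p F_p\mf{g}$, and your cofiber sequence has the roles of the perfect piece and the tail interchanged. This part is repairable (the short exact sequence is degreewise split, so both duals remain short exact, and a five-lemma comparison in a fixed degree is still available with the roles swapped).

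The genuine gap is the claim that the tail has homotopy concentrated in degrees $\leq N-p-1$. For a cofibrant cdga $A$ that is merely eventually coconnective, the underlying \emph{complex} of $A$ is typically unbounded above (see Warning \ref{war:boundecoh}: cofibrancy and boundedness conflict). The tail is then a graded-free dg-$A$-module with infinitely many generators in degrees tending to $-\infty$; it admits no exhaustive cell filtration from below, so it is not semi-free and its homology cannot be computed by base change, and the naive ``kill the leading term'' induction does not terminate (correcting the top generator introduces generators of lower degree, with no lower bound along the induction). This is precisely the convergence phenomenon you flag for $\mf{g}^\vee$, and it afflicts the homology of the tail itself, not only its bidual; without this vanishing your five lemma does not close in the degrees $d\leq N$ where all the interesting homotopy of $\mf{g}$ lives. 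The paper's proof sidesteps the issue entirely: projective cofibrancy of $\mf{g}$ (together with the observation that $\mf{g}^\vee$ is degreewise a finite sum, hence graded-free on generators in degrees $\geq 1$ and again cofibrant) shows that $\mf{g}^{\vee\vee}$ computes the \emph{derived} bidual, which may be checked after base change along a quasi-isomorphism $A\to A'$ to a bounded cdga --- such an $A'$ exists exactly because $A$ is eventually coconnective --- and over $A'$ the map $\mf{g}\otimes_AA'\to(\mf{g}\otimes_AA')^{\vee\vee}$ is a degreewise isomorphism as in Corollary \ref{cor:koszuldualitycoherent}. If you want to salvage your filtration, you would have to perform this base change first, at which point the filtration becomes unnecessary.
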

\begin{proof}
When $\dgLieAlgd_A$ is equipped with the projective model structure, it suffices to verify that the map $\mf{g}\rt \mf{g}^{\vee\vee}$ is a quasi-isomorphism for any projectively cofibrant dg-Lie algebroid satisfying conditions (i) and (ii). Since $\mf{g}$ is a projectively cofibrant dg-$A$-module,  $\mf{g}^{\vee\vee}$ is a model for the derived bidual of $\mf{g}$. It therefore suffices to pick a weak equivalence $A\rt A'$ to a bounded cdga and verify that the map $\mf{g}\otimes_A A'\rt (\mf{g}\otimes_A A')^{\vee\vee}$ is a quasi-isomorphism. But because $A'$ is bounded, this map is an isomorphism, as in the previous proof.
\end{proof}
To prove Proposition \ref{fm:prop:koszulduality}, let us start by considering the map of commutative dg-algebras over $A$
\begin{equation}\label{diag:classifyingmapforunit}\xymatrix{
c\colon C^*(\mf{g})\ar[r] & A^{\mf{g}} = A\oplus_{\rho^\vee} \mf{g}[1]^\vee
}\end{equation}
which sends $\alpha\colon \mm{Sym}_A\mf{g}[1]\rt A$ to $\big(\alpha(1), \alpha\big|_{\mf{g}[1]}\big)$. In the proof of Proposition \ref{prop:deformationadjunction}, we have seen that the functor $\mf{g}\mapsto A^\mf{g}$ was a right Quillen functor, whose derived left adjoint sent
$$\xymatrix{
\big(B\rt A\big)\ar@{|->}[r] & \big(L_{A/B}^\vee\rt L_A^\vee=T_A\big).
}$$
\begin{lemma}\label{lem:identificationunderadjunction}
Let $\mf{g}$ be a tamely cofibrant dg-Lie algebroid over $A$ and consider the $A$-linear map $\mf{g}\rt L_{A/C^*(\mf{g})}^\vee$ adjoint to \eqref{diag:classifyingmapforunit}. This map is equivalent to the $A$-linear map underlying the unit map $\mf{g}\rt \mf{D}C^*(\mf{g})$.
\end{lemma}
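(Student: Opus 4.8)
The plan is to recognize the comparison map $c$ as being induced by the counit of the free--forgetful adjunction, and then to reduce the statement to a purely formal manipulation of adjunction units. Write $F\dashv U$ for the free--forgetful adjunction between $\Mod_A^!/T_A$ and $\LieAlgd_A^!$, with counit $\varepsilon_\mf{g}\colon F(U\mf{g})\rt \mf{g}$ and unit $\iota_V\colon V\rt UF(V)$, and write $\eta\colon \mm{id}\Rightarrow \mf{D}C^*$ for the unit of the adjunction $C^*\dashv \mf{D}$ from Proposition \ref{prop:deformationadjunction}. Since $\mf{g}$ is tamely cofibrant it is $A$-cofibrant, so $U\mf{g}$ is a cofibrant object of $\Mod_A^!/T_A$ and $F\mf{g}:=F(U\mf{g})$ is a cofibrant dg-Lie algebroid; in particular $C^*(\mf{g})$ and $C^*(F\mf{g})$ compute the derived Chevalley--Eilenberg complexes, and the comparison $\kappa_\mf{g}\colon C^*(F\mf{g})\rto{\sim} A^{\mf{g}}$ of Corollary \ref{cor:ceoffree} (for $V=U\mf{g}$) is an equivalence.

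The first step is the point-set identity $c = \kappa_{\mf{g}}\circ C^*(\varepsilon_\mf{g})$ in $\dgCAlg_k/A$. Unwinding the definitions, $C^*(\varepsilon_\mf{g})$ sends $\alpha\colon \mm{Sym}_A\mf{g}[1]\rt A$ to $\alpha\circ \mm{Sym}_A(\varepsilon_\mf{g}[1])$, and $\kappa_\mf{g}$ then records the value of this at the unit together with its restriction along the generator inclusion $\iota_{U\mf{g}}[1]\colon \mf{g}[1]\rt F\mf{g}[1]$. The value at the unit is $\alpha(1)$, while the triangle identity $\varepsilon_\mf{g}\circ \iota_{U\mf{g}} = \mm{id}$ for $F\dashv U$ forces the restriction to be $\alpha|_{\mf{g}[1]}$; hence $\kappa_\mf{g}\big(C^*(\varepsilon_\mf{g})(\alpha)\big) = \big(\alpha(1), \alpha|_{\mf{g}[1]}\big) = c(\alpha)$. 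Consequently, after transporting along $\kappa$---which identifies the functor $C^*\circ F$ with $A^{(-)}$, and hence identifies the adjunction $A^{(-)}\dashv U\mf{D}$ (established in the proof of Proposition \ref{prop:deformationadjunction}) with $C^*F\dashv U\mf{D}$---the $A$-linear map adjoint to $c$ agrees with the map adjoint to $C^*(\varepsilon_\mf{g})$ under the composite adjunction $C^*F\dashv U\mf{D}$.

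The remainder is a two-step diagram chase, since $C^*F\dashv U\mf{D}$ is the composite of $F\dashv U$ and $C^*\dashv \mf{D}$. Transposing $C^*(\varepsilon_\mf{g})\colon C^*(F\mf{g})\rt C^*(\mf{g})$ across $C^*\dashv \mf{D}$ gives $\mf{D}C^*(\varepsilon_\mf{g})\circ \eta_{F\mf{g}}\colon F\mf{g}\rt \mf{D}C^*(\mf{g})$; by naturality of $\eta$ applied to the morphism $\varepsilon_\mf{g}\colon F\mf{g}\rt \mf{g}$ this equals $\eta_\mf{g}\circ \varepsilon_\mf{g}$. Transposing once more across $F\dashv U$ yields $U\big(\eta_\mf{g}\circ \varepsilon_\mf{g}\big)\circ \iota_{U\mf{g}} = U\eta_\mf{g}\circ \big(U\varepsilon_\mf{g}\circ \iota_{U\mf{g}}\big)$, and the triangle identity collapses the parenthesized composite to $\mm{id}_{U\mf{g}}$. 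Thus the map adjoint to $c$ is exactly $U\eta_\mf{g}$, the $A$-linear map underlying the unit $\mf{g}\rt \mf{D}C^*(\mf{g})$, as claimed.

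The only genuinely computational ingredient is the point-set identity of the first step, and even this reduces to the triangle identity for $F\dashv U$ (which, pleasantly, reappears in the final collapse); everything else is formal $2$-categorical bookkeeping. The point requiring the most care is therefore the interface between the strictly defined cdga map $c$ and the $\infty$-categorical adjunctions: one must check that $c$ descends to the localizations (guaranteed by cofibrancy of $\mf{g}$) and that $\kappa$ is a genuine equivalence of functors on the relevant cofibrant objects, so that the identification of $A^{(-)}\dashv U\mf{D}$ with $C^*F\dashv U\mf{D}$ is compatible with $c$ and $C^*(\varepsilon_\mf{g})$ as required above.
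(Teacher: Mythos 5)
Your proposal is correct and follows essentially the same route as the paper: both identify the classifying map $c$ with $\kappa\circ C^*(\varepsilon_\mf{g})$ and then trace the unit $\eta_\mf{g}$ through the composite adjunction $C^*F\dashv U\mf{D}$, using $\kappa$ to transport to $A^{(-)}\dashv L_{A/(-)}^\vee$. You merely make explicit the point-set verification of $c=\kappa\circ C^*(\varepsilon_\mf{g})$ and the triangle-identity bookkeeping that the paper leaves implicit.
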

\begin{proof}
Let $F\colon \Mod_A^!/T_A\leftrightarrows \LieAlgd_A^!\colon U$ be the free-forgetful adjunction. The $A$-linear map $U(\mf{g})\rt U\mf{D}C^*(\mf{g})$ in $\Mod_A^!/T_A$ corresponds by adjunction to the map
$$\xymatrix{
C^*(\mf{g})\ar[r] & C^*(F(\mf{g}))
}$$
in $\CAlg/A$. The composition of this map with the equivalence $\kappa\colon C^*(F(\mf{g}))\rt A^\mf{g}$ from Corollary \ref{cor:ceoffree} is exactly the map \eqref{diag:classifyingmapforunit}. This means that the maps 
$$
\mf{g}\rt L_{A/C^*(\mf{g})}^\vee \quad \text{and} \quad U(\mf{g})\rt U\mf{D}C^*(\mf{g})
$$
are identified under the adjoint equivalence between $U\mf{D}$ and the functor sending $B\rt A$ to $L_{A/B}^\vee$.
\end{proof}
To use Lemma \ref{lem:identificationunderadjunction}, we will have to compute the relative cotangent complex of the map $C^*(\mf{g})\rt A$. Unfortunately, $C^*(\mf{g})$ has the structure of a power series algebra, which means that $C^*(\mf{g})$ is not cofibrant and computing its cotangent complex requires some effort. Let us therefore introduce the following `global' variant of the Chevalley-Eilenberg complex:
\begin{construction}\label{fm:cons:globalizedCE}
Let $A$ be a cofibrant commutative dg-$k$-algebra and let $\mf{g}$ be a (tamely) cofibrant dg-Lie algebroid over $A$ satisfying conditions (i) and (ii) of Proposition \ref{fm:prop:koszulduality}. Let
$$
C^*_\mm{poly}(\mf{g}) := \mm{Sym}_A \big(\mf{g}[1]^\vee\big) \subseteq C^*(\mf{g})
$$
be the graded-subalgebra of $C^*(\mf{g})$ consisting of graded $A$-linear maps $\mm{Sym}_A\mf{g}[1] \rt A$ that vanish on some $\mm{Sym}^{\sgeq n}_A \mf{g}[1]$. This graded subalgebra of $C^*(\mf{g})$ is closed under the Chevalley-Eilenberg differential of $C^*(\mf{g})$, which sends a function vanishing on $\mm{Sym}^{\sgeq n}_A \mf{g}[1]$ to a function vanishing on $\mm{Sym}^{\sgeq n+1}_A \mf{g}[1]$.
\end{construction}
\begin{example}\label{fm:ex:polynomialintopowerseries}
Let $\mf{g}=A^{\oplus n}[-1]$ be the trivial dg-Lie algebroid on $n$ generators of degree $-1$. Then $C^*(\mf{g})$ is isomorphic to the ring of power-series $A[[x_1, ..., x_n]]$ and the inclusion $C^*_\mm{poly}(\mf{g})\subseteq C^*(\mf{g})$ is the inclusion of the polynomial algebra $A[x_1, ..., x_n]\subseteq A[[x_1, ..., x_n]]$.
\end{example}
\begin{warning}
The commutative dg-algebra $C^*_\mm{poly}(\mf{g})$ is not a homotopy invariant.
\end{warning}
\begin{lemma}\label{fm:lem:cotangentforglobalce}
Let $A$ be a cofibrant commutative dg-$k$-algebra and let $\mf{g}$ be a dg-Lie algebroid over $A$ such that $\mf{g}\cong \bigoplus_{n<0} A^{\oplus k_n}[n]$ as a graded $A$-module. Then the following hold:
\begin{enumerate}[leftmargin=*]
 \item $C^*_\mm{poly}(\mf{g})$ is a cofibrant commutative dg-$k$-algebra.
 \item the map on K\"ahler differentials (relative to the base cdga $k$)
 $$\xymatrix{
 \Omega_{C^*_\mm{poly}(\mf{g})}\otimes_{C^*_\mm{poly}(\mf{g})} A\ar[r] &  \Omega_A
 }$$
 can be identified with the projection map $\Omega_A\oplus \mf{g}[1]^\vee\rt \Omega_A$. Here $\Omega_A\oplus \mf{g}[1]^\vee$ is the mapping fiber of the dual of the anchor map, i.e.\ it has differential
 $$\xymatrix{
 \dau\big(d_\mm{dR}(a), \alpha\big) = \Big(d_\mm{dR}(\dau_Aa), \dau_{\mf{g}[1]^\vee}(\alpha) + \rho^\vee(d_\mm{dR}a)\Big)
 }$$
 where $\rho^\vee\colon \Omega_A \rt \mf{g}^\vee$ is the adjoint of the anchor map $\mf{g}\rt T_A$.
\end{enumerate}
\end{lemma}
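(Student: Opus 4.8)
The plan is to treat the two assertions separately, in both cases exploiting that, as a \emph{graded} commutative $A$-algebra, $B := C^*_\mm{poly}(\mf{g}) = \mm{Sym}_A(W)$ is freely generated by $W := \mf{g}[1]^\vee$, where the hypothesis $\mf{g}\cong\bigoplus_{n<0}A^{\oplus k_n}[n]$ guarantees that $W$ is graded-free on generators $\xi_i$ situated in (homological) degrees $\geq 0$.

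For part (1) I would first record the shape of the Chevalley--Eilenberg differential on the generators. Since $\partial$ increases the symmetric weight by at most one and never decreases it, and a generator $\xi_i$ has weight $1$, the differential $\partial\xi_i$ has no weight-$0$ (purely $A$-valued) component: it is the sum of a weight-$1$ term, which is $A$-linear and given by the dual of the internal differential of $\mf{g}$, and a weight-$2$ term lying in $\mm{Sym}^2_A W$. The latter combines the dual of the bracket with the anchor contribution; although each of these separately fails to be $A$-bilinear, their sum is $A$-bilinear precisely because the two Leibniz corrections of the form $\rho(X)(a)\,\xi(Y)$ cancel, which is exactly the closedness of $C^*_\mm{poly}(\mf{g})$ under $\partial$ asserted in Construction \ref{fm:cons:globalizedCE}. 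Consequently $\partial\xi_i$ lies in the $A$-subalgebra of $B$ generated by the $\xi_j$ of degree $<\deg(\xi_i)$: the linear term uses a generator of degree $\deg(\xi_i)-1$, and each monomial $\xi_j\xi_k$ of the quadratic term has $\deg(\xi_j)+\deg(\xi_k)=\deg(\xi_i)-1$ with both factors of nonnegative degree. I would then build $B$ as the sequential composite, over the degrees $d=0,1,2,\dots$, of the pushouts that freely adjoin over $A$ the generators of degree $d$ along the (already $\partial$-closed) attaching data living in the previous stage; each step is a pushout of a coproduct of generating cofibrations of cdgas, so $A\to B$ is a cofibration. As $A$ is cofibrant over $k$ by hypothesis, it follows that $k\to B$ is a cofibration as well.

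For part (2) I would use that $B$ is graded-free over $A$ on $W$ to split the K\"ahler differentials as graded $B$-modules, $\Omega_{B/k}\cong \big(B\otimes_A\Omega_{A/k}\big)\oplus\big(B\otimes_A W\big)$, the first summand spanned by the $d_{\dR}a$ for $a\in A$ and the second by the $d_{\dR}\xi$. Base changing along the augmentation $B\to A$ (which kills the generators) yields a natural isomorphism of graded $A$-modules $\Omega_{B/k}\otimes_B A\cong \Omega_A\oplus W=\Omega_A\oplus\mf{g}[1]^\vee$, under which the map to $\Omega_A$ induced by $B\to A$ becomes the projection onto the first factor, since $d_{\dR}a\mapsto d_{\dR}a$ while $d_{\dR}\xi\mapsto 0$. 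To match the differentials I would use that $d_{\dR}$ commutes with $\partial_B$. On a generator, $\partial(d_{\dR}\xi)=d_{\dR}(\partial_B\xi)$; by the description above the quadratic part contributes only monomials of the form $(d_{\dR}\xi_j)\,\xi_k\pm\xi_j\,(d_{\dR}\xi_k)$, each of which retains a generator as coefficient and so dies after base change to $A$, whereas the linear internal part yields exactly $\partial_{\mf{g}[1]^\vee}\xi$. On $d_{\dR}a$ with $a\in A$, the point is that the Chevalley--Eilenberg differential of the weight-$0$ cochain $a$ is $\partial_B a=\partial_A a+\rho^\vee(d_{\dR}a)$, its weight-$1$ anchor term being the functional $X\mapsto\rho(X)(a)$, i.e.\ $\rho^\vee(d_{\dR}a)\in\mf{g}^\vee$; applying $d_{\dR}$ and reducing modulo the generators leaves $d_{\dR}(\partial_A a)$ in the $\Omega_A$-summand together with $\rho^\vee(d_{\dR}a)$ in the $\mf{g}[1]^\vee$-summand. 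This is exactly the claimed formula $\partial(d_{\dR}a,\alpha)=\big(d_{\dR}(\partial_A a),\,\partial_{\mf{g}[1]^\vee}\alpha+\rho^\vee(d_{\dR}a)\big)$.

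The main obstacle is the bookkeeping of the Chevalley--Eilenberg differential rather than any conceptual difficulty. For part (1) one must verify that the weight-raising part of $\partial$ on the generators is genuinely $A$-multilinear, so that the cell filtration by degree is well defined; for part (2) one must correctly isolate the single surviving cross-term $\rho^\vee(d_{\dR}a)$ among the terms of $d_{\dR}\partial_B$ after base change. Both reduce to the explicit formula \eqref{fm:eq:cediff} together with the defining relation $\rho^\vee(d_{\dR}a)=\big(X\mapsto\rho(X)(a)\big)$ for the adjoint of the anchor.
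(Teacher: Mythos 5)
Your proposal is correct and follows essentially the same route as the paper: part (1) rests on $C^*_\mm{poly}(\mf{g})$ being quasi-free over the cofibrant base $A$ on generators of nonnegative degree (you build the cell structure degree by degree, the paper phrases it as a retract of a connective graded polynomial ring over $k$, but the underlying point is identical), and part (2) identifies $\Omega_{C^*_\mm{poly}(\mf{g})}\otimes A$ with $\Omega_A\oplus\mf{g}[1]^\vee$ as graded modules and then reads off the differential from the weight $\leq 1$ part of the Chevalley--Eilenberg differential, exactly as in the paper. Your extra bookkeeping (the weight decomposition of $\dau\xi_i$ and the tracking of which cross-terms die after base change) is a more explicit rendering of the same computation, not a different argument.
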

\begin{proof}
Since $\mf{g}$ is given by $\bigoplus_{i<0} A^{\oplus k_i}[i]$ as a graded $A$-module, $C^*_\mm{poly}(\mf{g})$ is a polynomial algebra over $A$, generated by the free module $\mf{g}[1]^\vee$. This module has generators in degrees $\geq 0$ and $A$ is cofibrant, so that $C^*_\mm{poly}(\mf{g})$ is the retract of a connective graded polynomial ring over $k$. This implies that $C^*_\mm{poly}(\mf{g})$ is cofibrant.

For the second assertion, note that $C^*_\mm{poly}(\mf{g})$ is freely generated over $A$ by the graded $A$-module $\mf{g}[1]^\vee$. It follows that 
$$
\Omega_{C^*_\mm{poly}(\mf{g})}\otimes_{C^*_\mm{poly}(\mf{g})} A\cong \Omega_A\oplus \mf{g}[1]^\vee
$$
as a graded $A$-module. The map $C^*_\mm{poly}(\mf{g})\rt A$ sends all $\mf{g}[1]^\vee$ to zero, so that the induced map on K\"ahler differentials is just the projection $\Omega_A\oplus \mf{g}[1]^\vee\rt \Omega_A$. Furthermore, the Chevalley-Eilenberg differential is given by
\begin{align*}
A\ni a &\; \mapsto \dau_A a + \rho^\vee(d_\mm{dR}a)\\
\mf{g}[1]^\vee \ni \alpha &\; \mapsto \dau_{\mf{g}[1]^\vee}\alpha \quad \mod \big(\mf{g}[1]^\vee\big)^2.
\end{align*}
This shows that the differential on $ \Omega_A\oplus \mf{g}[1]^\vee$ is given as in the lemma.
\end{proof}
\begin{lemma}\label{fm:lem:addingvariablestopowerseries}
Let $A$ be a nonnegatively graded commutative $\field{Q}$-algebra, let $V$ be a finite dimensional $\field{Q}$-vector space and let $W$ be a degreewise finite-dimensional graded $\field{Q}$-vector space, concentrated in degrees $<0$. Then there is a natural isomorphism of graded-commutative $A$-algebras
$$\xymatrix{
\Hom_\field{Q}\big(\mm{Sym}_\field{Q}V, A\big)\otimes_\field{Q} \mm{Sym}_\field{Q}(W^\vee)\ar[r] & \Hom_\field{Q}\big(\mm{Sym}_\field{Q}(V\oplus W), A\big)
}$$ 
where $\mm{Sym}_\field{Q} W^\vee$ is the graded polynomial algebra on the dual vector space of $W$.
\end{lemma}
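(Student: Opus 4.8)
The plan is to deduce the statement from the exponential property of the symmetric algebra together with a sum-versus-product comparison that is controlled by the grading hypotheses. First I would invoke the natural isomorphism of graded-commutative $\field{Q}$-algebras $\mm{Sym}_\field{Q}(V\oplus W)\cong \mm{Sym}_\field{Q}V\otimes_\field{Q}\mm{Sym}_\field{Q}W$ and rewrite the target, using the tensor--hom adjunction for the closed symmetric monoidal category of graded $\field{Q}$-vector spaces, as
\[
\Hom_\field{Q}\big(\mm{Sym}_\field{Q}(V\oplus W), A\big)\cong \Hom_\field{Q}\big(\mm{Sym}_\field{Q}W,\ B\big), \qquad B:=\Hom_\field{Q}\big(\mm{Sym}_\field{Q}V, A\big).
\]
This reduces the lemma to producing a natural isomorphism $B\otimes_\field{Q}\mm{Sym}_\field{Q}(W^\vee)\cong \Hom_\field{Q}(\mm{Sym}_\field{Q}W, B)$.

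Next I would record the two grading estimates that make this work. Since $W$ is concentrated in negative degrees and is degreewise finite-dimensional, each $\mm{Sym}^n_\field{Q}W$ lies in degrees $\leq -n$ and is finite-dimensional, so $\mm{Sym}_\field{Q}W$ is degreewise finite-dimensional and concentrated in degrees $\leq 0$. Dually, because $A$ is nonnegatively graded and $\mm{Sym}_\field{Q}V$ sits in degree $0$, the graded vector space $B$ is concentrated in nonnegative degrees. Working in characteristic zero, the perfect pairing between symmetric powers gives $\mm{Sym}^n_\field{Q}(W^\vee)\cong (\mm{Sym}^n_\field{Q}W)^\vee$ degreewise, and the same degree bound (each $(\mm{Sym}^n_\field{Q}W)^\vee$ lives in degrees $\geq n$) shows that the direct sum $\bigoplus_n(\mm{Sym}^n_\field{Q}W)^\vee$ agrees, in every fixed degree, with the product; hence $\mm{Sym}_\field{Q}(W^\vee)\cong (\mm{Sym}_\field{Q}W)^\vee$ as graded vector spaces.

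With these in hand, the final step is the canonical comparison map $B\otimes_\field{Q}(\mm{Sym}_\field{Q}W)^\vee\rt \Hom_\field{Q}(\mm{Sym}_\field{Q}W, B)$, sending $b\otimes\xi$ to $w\mapsto \xi(w)\,b$ (up to the usual Koszul sign). Computing degreewise with $Q:=\mm{Sym}_\field{Q}W$, one finds $\Hom_\field{Q}(Q,B)_n=\prod_d (Q_d)^\vee\otimes B_{n+d}$, while the source is the corresponding direct sum $\bigoplus_d (Q_d)^\vee\otimes B_{n+d}$; the two coincide because only the finitely many $d$ with $-n\leq d\leq 0$ contribute, using $Q_d=0$ for $d>0$ and $B_{n+d}=0$ for $d<-n$. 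Chaining the identifications of the first two steps then yields the asserted isomorphism, and tracing through the construction shows it is the multiplicative extension sending $\phi\otimes p$, with $p\in\mm{Sym}^m_\field{Q}(W^\vee)$, to the functional on $\mm{Sym}_\field{Q}(V\oplus W)$ that restricts to $\phi$ on the $\mm{Sym}_\field{Q}V$-factor, pairs with $p$ on the $\mm{Sym}^m_\field{Q}W$-factor, and vanishes on the remaining symmetric powers of $W$ --- which recovers the map named in the statement.

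The main obstacle is precisely the interchange of direct sum and direct product in this last step: the comparison map is injective for formal reasons (a finite sum $\sum b_i\otimes\xi_i$ with the $\xi_i$ independent is detected by evaluating on a dual family in $Q$), but surjectivity, and hence bijectivity, genuinely fails for unbounded gradings. Verifying that the finiteness forced by ``$A$ nonnegatively graded, $W$ strictly negatively graded'' makes the map a degreewise bijection is the real content, and it is exactly the place where both grading hypotheses are used. The remaining points --- that the resulting map respects the $A$-algebra structure and is natural in $(V,W)$ --- I expect to be routine once the underlying graded isomorphism is established, since each intermediate equivalence (the exponential law, the tensor--hom adjunction, and the symmetric-power duality in characteristic zero) is manifestly multiplicative and natural.
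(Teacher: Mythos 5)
Your proof is correct and follows essentially the same route as the paper's: both rest on the exponential isomorphism $\mm{Sym}_\field{Q}(V\oplus W)\cong \mm{Sym}_\field{Q}V\otimes_\field{Q}\mm{Sym}_\field{Q}W$ and on identifying $\mm{Sym}_\field{Q}(W^\vee)$ with the full linear dual $\Hom_\field{Q}(\mm{Sym}_\field{Q}W,\field{Q})$ using that $W$ is degreewise finite-dimensional and negatively graded. The only difference is that you make explicit the degreewise sum-versus-product check (using that $A$ is nonnegatively graded) which the paper leaves implicit in the phrase ``it follows that $\mu$ is an isomorphism.''
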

\begin{proof}
Observe that there is an isomorphism of graded cocommutative coalgebras $\mm{Sym}_\field{Q}(V\oplus W)\cong \mm{Sym}_\field{Q}V\otimes_{\field{Q}} \mm{Sym}_\field{Q}W$. There is a natural map of graded-commutative algebras
$$\xymatrix{
\Hom_\field{Q}\big(\mm{Sym}_\field{Q}V, A\big)\otimes_\field{Q} \Hom_\field{Q}(\mm{Sym}_\field{Q} W, \field{Q})\ar[r]^-{\mu} & \Hom_\field{Q}\big(\mm{Sym}_\field{Q}V\otimes_{\field{Q}} \mm{Sym}_\field{Q} W, A\big).
}$$
sending two maps $\alpha\colon \mm{Sym}_\field{Q}V\rt A$ and $\beta\colon \mm{Sym}_\field{Q} W\rt \field{Q}$ to $\alpha\otimes \beta$. Using that $\mm{Sym}_\field{Q} W$ is free on generators of degrees $<0$, with finitely many generators in each degree, one can identify $\mm{Sym}_\field{Q}(W^\vee)\simeq \Hom_\field{Q}(\mm{Sym}_\field{Q} W, \field{Q})$. Using this, it follows that $\mu$ is an isomorphism.
\end{proof}
\begin{lemma}\label{fm:lem:polynomialintopower}
Let $A$ be a cofibrant connective commutative dg-$k$-algebra and let $\mf{g}$ be as in Proposition \ref{fm:prop:koszulduality}. Then the map $C^*_\mm{poly}(\mf{g})\rt C^*(\mf{g})$ induces an equivalence on cotangent complexes over $k$
$$\xymatrix{
L_{C^*_\mm{poly}(\mf{g})}\otimes_{C^*_\mm{poly}(\mf{g})} A\ar[r]^-\simeq & L_{C^*(\mf{g})}\otimes_{C^*(\mf{g})} A.
}$$
\end{lemma}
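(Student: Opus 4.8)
The plan is to compare the two cotangent complexes through the transitivity sequence for the maps $k\rt C^*_\mm{poly}(\mf{g})\rt C^*(\mf{g})$. Base-changing this sequence along the augmentation $C^*(\mf{g})\rt A$ produces a cofiber sequence
$$\xymatrix{
L_{C^*_\mm{poly}(\mf{g})}\otimes_{C^*_\mm{poly}(\mf{g})} A\ar[r] & L_{C^*(\mf{g})}\otimes_{C^*(\mf{g})} A\ar[r] & L_{C^*(\mf{g})/C^*_\mm{poly}(\mf{g})}\otimes_{C^*(\mf{g})} A
}$$
whose first map is precisely the comparison map in the statement. It therefore suffices to show that the relative term $L_{C^*(\mf{g})/C^*_\mm{poly}(\mf{g})}\otimes_{C^*(\mf{g})} A$ vanishes. (Here I use that $C^*_\mm{poly}(\mf{g})$ is cofibrant by Lemma \ref{fm:lem:cotangentforglobalce}, so its cotangent complex is modelled by K\"ahler differentials.)

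To prove this vanishing I would invoke base change for the relative cotangent complex along the augmentation $C^*_\mm{poly}(\mf{g})\rt A$. The key point is the computation of the derived tensor product $C^*(\mf{g})\otimes^{L}_{C^*_\mm{poly}(\mf{g})} A$. At the underived level this is simply $A$: the augmentation ideal of $C^*_\mm{poly}(\mf{g})=\mm{Sym}_A\big(\mf{g}[1]^\vee\big)$ is generated by $\mf{g}[1]^\vee$, and its extension along $C^*_\mm{poly}(\mf{g})\rt C^*(\mf{g})$ is exactly the augmentation ideal of $C^*(\mf{g})$ (using that there are only finitely many degree-$0$ generators). If this underived pushout is also a homotopy pushout, i.e.\ if $C^*(\mf{g})\otimes^{L}_{C^*_\mm{poly}(\mf{g})} A\simeq A$, then base change identifies the relative term with $L_{A/A}\simeq 0$, which finishes the argument. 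Everything thus reduces to the Tor-independence statement $\mm{Tor}^{C^*_\mm{poly}(\mf{g})}_{>0}\big(C^*(\mf{g}),A\big)=0$.

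To verify this, I would use Lemma \ref{fm:lem:addingvariablestopowerseries} to split $\mf{g}[1]$ into its degree-$0$ part and its negative-degree part. On underlying graded algebras the map $C^*_\mm{poly}(\mf{g})\rt C^*(\mf{g})$ is then the completion of the polynomial algebra $\mm{Sym}_A\big(\mf{g}[1]^\vee\big)$ in the finitely many degree-$0$ variables, the positive-degree variables contributing honest polynomial factors on both sides (cf.\ Example \ref{fm:ex:polynomialintopowerseries}). Resolving $A$ over $C^*_\mm{poly}(\mf{g})$ by the Koszul complex on the generators $\mf{g}[1]^\vee$ and tensoring with $C^*(\mf{g})$, the desired Tor-vanishing becomes the assertion that these generators act as a regular sequence on the (completed) symmetric algebra $C^*(\mf{g})$.

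The main obstacle is this regularity/flatness claim, and it is exactly where the finiteness hypotheses are used: $A$ is bounded and coherent and $\mf{g}$ has finitely many generators in each degree and none in degrees $\geq 0$, so that only finitely many Koszul generators are relevant in each homological degree and the completion is taken in finitely many degree-$0$ directions. I would control the Chevalley--Eilenberg differential by filtering $C^*(\mf{g})$ by polynomial (weight) degree; the associated graded carries the split differential, which reduces the Tor-computation to the underlying graded comparison of a polynomial algebra with its completion, where regularity of the variables is classical. Boundedness of $A$ together with the degreewise finiteness of the generators guarantees that the weight filtration converges and that the Koszul complex is degreewise finite, so that vanishing on the associated graded propagates back to $C^*(\mf{g})$ itself.
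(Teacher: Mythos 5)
Your reduction is sound and genuinely different from the paper's argument: the transitivity cofiber sequence for $k\rt C^*_\mm{poly}(\mf{g})\rt C^*(\mf{g})$, together with base change, does reduce the lemma to the single claim that $C^*(\mf{g})\otimes^{\mathbf{L}}_{C^*_\mm{poly}(\mf{g})} A\simeq A$, and that claim is in fact true. The problem is that your proposed proof of it does not close the one gap that constitutes the entire difficulty of this lemma. The weight filtration on $C^*(\mf{g})$ (by functions vanishing on $\mm{Sym}^{\sgeq p}_A\mf{g}[1]$) is a \emph{decreasing, complete} filtration which is \emph{not} degreewise bounded: whenever $\mf{g}$ has generators in degree $-1$, the corresponding degree-zero variables contribute to every filtration stage in every fixed total degree --- this is exactly the power-series phenomenon. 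For such filtrations, a quasi-isomorphism on the associated graded does not automatically propagate to the filtered objects; one needs a convergence argument (conditional convergence plus control of derived limits, and completeness of the Koszul-type complex you tensor up, which is a direct sum of completed pieces and need not itself be complete). So ``vanishing on the associated graded propagates back to $C^*(\mf{g})$ itself'' is precisely the assertion that needs proof, and it is not supplied. Moreover, you invoke boundedness and coherence of $A$, which are \emph{not} hypotheses of this lemma: it is stated for an arbitrary cofibrant connective $A$ and is used in that generality in Corollary \ref{cor:koszuldualityperfect}, where $A$ is only eventually coconnective. Even granting boundedness, the filtration still fails to be degreewise bounded in the degree-zero directions, so the appeal to boundedness does not repair the convergence step.

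The paper's proof is engineered to avoid ever running a spectral sequence against the completion. It factors $0\rt\mf{g}$ through the contractible free Lie algebroid $\mf{h}=F(\mf{g}[0,-1])$; the extra generators $W$ of $\mf{h}$ sit in degrees $<-1$, so $W[1]^\vee$ sits in degrees $\geq 1$ and the extension $C^*(\mf{g})\rt C^*(\mf{h})$ is an honest polynomial extension by Lemma \ref{fm:lem:addingvariablestopowerseries}; hence the square comparing $C^*_\mm{poly}$ with $C^*$ for $\mf{g}$ and $\mf{h}$ is an underived, and therefore homotopy, pushout, and the lemma follows by applying $L_{(-)}\otimes_{(-)}A$ and checking the $\mf{h}$-column directly. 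If you want to keep your reduction, the cleanest repair is to extract the Tor-independence from that same square: $C^*_\mm{poly}(\mf{h})\rt A$ is a quasi-isomorphism (it is $\mm{Sym}_A$ of an acyclic cofibrant module, carrying an \emph{increasing, exhaustive} weight filtration for which convergence is automatic), whence $C^*(\mf{g})\otimes^{\mathbf{L}}_{C^*_\mm{poly}(\mf{g})} A\simeq C^*(\mf{h})\simeq A$. But at that point you have reproduced the paper's construction.
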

\begin{proof}
Consider the trivial cofibration, followed by a fibration
$$\xymatrix{
0\ar[r]^-{\sim} & \mf{h}=F(\mf{g}[0,-1])\ar@{->>}[r] & \mf{g}
}$$
where $\mf{h}$ is the free dg-Lie algebroid on the map $\mf{g}[0, -1]\rt \mf{g}\rt T_A$ from the path space of $\mf{g}$. Let $V$ be the free graded $\field{Q}$-vector space spanned by the generators $x_i$ of $\mf{g}$, so that $\mf{g}=A\otimes_\field{Q} V$. As a graded Lie algebroid, $\mf{h}$ is then freely generated by the graded $\field{Q}$-vector space $V[0, -1]$. Consequently, the map $\mf{h}\rt \mf{g}$ is given without differentials by the $A$-linear extenion of a map from the free Lie algebra
$$\xymatrix{
\mf{h}=A\otimes\mm{Lie}\big(V[0, -1]\big)\ar[r] & A\otimes V= \mf{g}
}$$
which sends $V$ to the generators of $\mf{g}$ and $V[-1]$ to zero. This map has a splitting, induced by the inclusion $V\rt \mm{Lie}(V[0, -1])$, so that $\mf{h}\rt \mf{g}$ can be identified with
$$\xymatrix{
\mf{h} = A\otimes (V\oplus W)\ar[r]^-{(\mm{id}, 0)} & A\otimes V = \mf{g}.
}$$
Here $W$ is a graded $\field{Q}$-vector space isomorphic to $\mm{Lie}(V[0, -1])/V$, which is degreewise finite dimensional and concentrated in degrees $<-1$. Indeed, $V$ is degreewise finite dimensional and concentrated in degrees $<0$, so that $\mm{Lie}(V[0, -1])$ has these properties as well and $V\rt \mm{Lie}(V[0, -1])$ is an isomorphism in degree $-1$.

Let us now consider the commutative diagram of cdgas associated to $\mf{h}\rt \mf{g}$
\begin{equation}\label{fm:diag:thecrucialpushout}\vcenter{\xymatrix{
C^*_\mm{poly}(\mf{g})\ar[d]\ar[r] & C^*_\mm{poly}(\mf{h})\ar[d]\ar[r] & A\ar[d]\\
C^*(\mf{g})\ar[r] & C^*(\mf{h})\ar[r]_-\sim & A.
}}\end{equation}
The right bottom map is a weak equivalence since $\mf{h}$ is cofibrant and weakly contractible. The map $C^*_\mm{poly}(\mf{g})\rt C^*_\mm{poly}(\mf{h})$ can be identified with a map of polynomial algebras
$$\xymatrix{
A\otimes_\field{Q} \mm{Sym}_\field{Q}(V[1]^\vee)\ar[r] & A\otimes_\field{Q} \mm{Sym}_\field{Q}\big((V\oplus W)[1]^\vee).
}$$
It follows that $C^*_\mm{poly}(\mf{h})$ is freely generated over $C^*_\mm{poly}(\mf{g})$ by $W[1]^\vee$, which is degreewise finite dimensional and concentrated in degrees $\geq 1$. In particular, $C^*_\mm{poly}(\mf{g})\rt C^*_\mm{poly}(\mf{h})$ is a cofibration of cdgas.

On the other hand, the map $C^*(\mf{g})\rt C^*(\mf{h})$ is given without differentials by the natural map
$$\smash{\xymatrix{
\Hom_\field{Q}\Big(\mm{Sym}_\field{Q} V[1], A\Big)\ar[r] & \Hom_\field{Q}\Big(\mm{Sym}_\field{Q} (V[1]\oplus W[1]), A\Big).
}}$$
It now follows from Lemma \ref{fm:lem:addingvariablestopowerseries} that the left square in \eqref{fm:diag:thecrucialpushout} is a (homotopy) pushout square of cdgas. Its image under $L_{(-)}\otimes_{(-)} A$
\begin{equation}\label{fm:diag:pushoutofcotcomplexes}\vcenter{\xymatrix{
L_{C^*_\mm{poly}(\mf{g})}\otimes_{C^*_\mm{poly}(\mf{g})} A\ar[d]\ar[r] & L_{C^*_\mm{poly}(\mf{h})}\otimes_{C^*_\mm{poly}(\mf{h})} A\ar[d]\\
L_{C^*(\mf{g})}\otimes_{C^*(\mf{g})} A\ar[r] & L_{C^*(\mf{h})}\otimes_{C^*(\mf{h})} A
}}\end{equation}
is a homotopy pushout square as well. Since $C^*(\mf{h})\rt A$ is a weak equivalence, the map $L_{C^*(\mf{h})}\rt L_A$ is a weak equivalence. 
On the other hand, the map $L_{C^*_\mm{poly}(\mf{h})}\otimes_{C^*_\mm{poly}(\mf{h})} A\rt L_A$ is identified with the projection map
$$\xymatrix{
\Omega_A\oplus \mf{h}[1]^\vee\ar[r] & \Omega_A
}$$
by Lemma \ref{fm:lem:cotangentforglobalce}. The kernel of this map is contractible, since $\mf{h}$ is a cofibrant contractible dg-$A$-module. It follows that the right vertical map in Diagram \eqref{fm:diag:pushoutofcotcomplexes} is an equivalence, so that the left map is an equivalence as well.
\end{proof}
\begin{proof}[Proof (of Proposition \ref{fm:prop:koszulduality})]
By Lemma \ref{lem:identificationunderadjunction}, it suffices to show that the map $\mf{g}\rt L_{A/C^*}^\vee$ is adjoint to a weak equivalence $L_{A/C^*(\mf{g})}\rt \mf{g}^\vee$. This map fits into a sequence of maps
$$\xymatrix{
L_{A/C^*_\mm{poly}(\mf{g})} \ar[r] & L_{A/C^*(\mf{g})}  \ar[r] &  \mf{g}^\vee
}$$
classifying the composite map of commutative dg-algebras over $A$
$$\xymatrix{
C^*_\mm{poly}(\mf{g}) \ar[r] & C^*(\mf{g})\ar[r]^c & A^\mf{g}
}$$
where $c$ is as in \eqref{diag:classifyingmapforunit}. The map $L_{A/C^*_\mm{poly}(\mf{g})}\rt L_{A/C^*(\mf{g})}$ is an equivalence by Lemma \ref{fm:lem:polynomialintopower}, so it suffices to show that $L_{A/C^*_\mm{poly}(\mf{g})} \rt  \mf{g}^\vee$ is an equivalence. This map can be computed explicitly: the map 
$$\xymatrix{
C_\mm{poly}^*(\mf{g})\ar[r] & A^\mf{g}=A\oplus_{\rho^\vee} \mf{g}[1]^\vee
}$$
is simply the quotient of $C^*_\mm{poly}(\mf{g})$ by the augmentation ideal $(\mf{g}[1]^\vee)^2$. Unwinding the definitions, e.g.\ using the pullback square \eqref{diag:squarezero}, one finds the following description of the classifying map $L_{A/C^*_\mm{poly}(\mf{g})} \rt \mf{g}^\vee$: it is  the canonical map from the mapping cone of
$$\xymatrix{
\Omega_{C^*_\mm{poly}(\mf{g})}\otimes_{C^*_\mm{poly}(\mf{g})} A \cong \Omega_A\oplus \mf{g}[1]^\vee\ar[r] & \Omega_A
}$$
to $\mf{g}^\vee$. This map is a weak equivalence, which concludes the proof.
\end{proof}

\section{Formal moduli problems}\label{sec:formmod}
We will now use the results of Section \ref{sec:koszulduality} to establish an equivalence between Lie algebroids and formal moduli problems (Definition \ref{def:formalmoduli}):
\begin{theorem}\label{thm:formalmoduliperfect}
Let $A\in \CAlg^{\dg, \sgeq 0}_k$ be cofibrant. Then there is an adjoint pair of functors
$$\xymatrix{
\mm{MC}\colon \LieAlgd_A\ar@<1ex>[r] & \cat{FormMod}_A\ar@<1ex>[l] \colon T_{A/}
}$$
which is an equivalence whenever $A$ is eventually coconnective.
\end{theorem}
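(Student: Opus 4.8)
The approach follows \cite{lur11X}: realise $\mm{MC}$ as the Koszul dual of the Chevalley--Eilenberg functor $C^*$ from Proposition \ref{prop:deformationadjunction}, and prove the equivalence by checking it on the tangent complex. First I would define
$$
\mm{MC}_\mf{g}\colon B\longmapsto \Map_{\CAlg_k/A}\big(C^*(\mf{g}),B\big),\qquad B\in\CAlg_k^{\mm{sm}}/A .
$$
Condition (a) of Definition \ref{def:formalmoduli} holds since $A$ is terminal in $\CAlg_k/A$, and condition (b) is automatic because $\Map_{\CAlg_k/A}(C^*(\mf{g}),-)$ preserves all limits and the square \eqref{diag:squarezerointro} is a pullback in $\CAlg_k/A$; hence $\mm{MC}$ lands in $\cat{FormMod}_A$. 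For the adjoint, I would introduce the \emph{relative tangent complex} functor $\mathbb{T}\colon\cat{FormMod}_A\to\Mod_A$, $\mathbb{T}(X)=\mathrm{fib}\big(T_A\to T_X\big)$ with $T_X[n]=X(A\oplus A[n])$, and let $T_{A/X}=\mf{D}(\mathcal{O}_X)$ be the tangent Lie algebroid, where $\mathcal{O}_X\in\CAlg_k/A$ pro-corepresents $X$ and $\mf{D}$ is the right adjoint of Proposition \ref{prop:deformationadjunction}; then $U\circ T_{A/}\simeq\mathbb{T}$ for the forgetful functor $U\colon\LieAlgd_A\to\Mod_A$. Since $T_X$ is computed pointwise, $\mathbb{T}$ preserves limits; as $U$ is conservative and continuous, $T_{A/}$ preserves limits and is accessible, so the adjoint functor theorem produces a left adjoint to $T_{A/}$, which agrees with the Chevalley--Eilenberg formula above on free Lie algebroids by Corollary \ref{cor:ceoffree}; I continue to denote it $\mm{MC}$.

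The plan is then to reduce the equivalence to a computation on generators. The key structural fact is that $\mathbb{T}$ is \emph{conservative}: by condition (b), every $B\in\CAlg_k^{\mm{sm}}/A$ is built from $A$ by finitely many square-zero extensions by shifts $A[n]$, so $X(B)$ is assembled by iterated pullbacks from the values $X(A\oplus A[n])=\mathbb{T}(X)[n]$, and a map of formal moduli problems inducing an equivalence on $\mathbb{T}$ is therefore an equivalence. Moreover $\mathbb{T}$ preserves sifted colimits (these are reflected on tangent complexes, and $\mathrm{fib}(T_A\to-)$ is a finite limit, which commutes with sifted colimits in the stable category $\Mod_A$). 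Combined with the fact that $U$ is conservative and preserves sifted colimits (Proposition \ref{prop:monadicity}), this shows $T_{A/}$ preserves sifted colimits; since $\mm{MC}$ is a left adjoint it preserves all colimits. As $\LieAlgd_A$ is generated under sifted colimits by the free Lie algebroids $F(V)$ on perfect modules $V\to T_A$, it suffices to verify that the unit $\mf{g}\to T_{A/\mm{MC}_\mf{g}}$ is an equivalence on such $F(V)$, and dually that the counit $\mm{MC}_{T_{A/X}}\to X$ is an equivalence, both of which can be read off after applying the conservative functors $U$ and $\mathbb{T}$.

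On a free Lie algebroid the computation is explicit. By Corollary \ref{cor:ceoffree}, $C^*(F(V))$ is equivalent to the square-zero extension $A^V=A\oplus_{\rho^\vee}V[1]^\vee$, so $\mm{MC}_{F(V)}\simeq\Map_{\CAlg_k/A}(A^V,-)$ and the relative cotangent complex is $L_{A/C^*(F(V))}\simeq V^\vee$. Via Lemma \ref{lem:identificationunderadjunction} the unit is identified with the Koszul-duality unit $\mf{g}\to\mf{D}C^*(\mf{g})$, which by Proposition \ref{fm:prop:koszulduality} is the canonical biduality map $\mf{g}\to\mf{g}^{\vee\vee}$; on the generators it becomes $V\to V^{\vee\vee}$. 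When $A$ is eventually coconnective this map is an equivalence for every perfect $V$ (Corollary \ref{cor:koszuldualityperfect}), so the unit is an equivalence of Lie algebroids and $\mm{MC}$ is fully faithful. For essential surjectivity I would invoke that the square-zero formal moduli problems $\Map_{\CAlg_k/A}(A^V,-)$ lie in the image and generate $\cat{FormMod}_A$ under sifted colimits; since the essential image of a fully faithful left adjoint is closed under colimits, $\mm{MC}$ is then essentially surjective, hence an equivalence.

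I expect the main obstacle to be the convergence phenomenon encoded in the biduality map. For a general connective $A$ the unit is only $\mf{g}\to\mf{g}^{\vee\vee}$, which is precisely why one obtains a bare adjunction rather than an equivalence; eventual coconnectivity is exactly the hypothesis forcing $V\to V^{\vee\vee}$ to be invertible on perfect modules, and it is here that the boundedness arguments of Corollaries \ref{cor:koszuldualitycoherent}--\ref{cor:koszuldualityperfect} enter. A second, more technical difficulty---already handled in Section \ref{sec:koszulduality}---is that $C^*(\mf{g})$ is a power-series (hence non-cofibrant) algebra, so the identification $L_{A/C^*(\mf{g})}\simeq\mf{g}^\vee$ cannot be read off naively; I would transport the cotangent computation through the polynomial model $C^*_{\mm{poly}}(\mf{g})$ using Lemma \ref{fm:lem:polynomialintopower}. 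Finally, the generation statement underlying essential surjectivity rests on the obstruction-theoretic structure of formal moduli problems, and is the step most sensitive to the hypotheses on $A$.
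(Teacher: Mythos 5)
Your overall strategy---realize both functors through the Koszul duality adjunction of Proposition \ref{prop:deformationadjunction} and reduce the unit/counit check to a class of generators---is the right one in spirit, but the choice of generators breaks the argument. You reduce to free Lie algebroids $F(V)$ on \emph{perfect} modules and then claim the unit is the biduality map $V\rt V^{\vee\vee}$, which is an equivalence for perfect $V$ \emph{unconditionally}; if this reduction were valid, the theorem would hold without any coconnectivity hypothesis on $A$, which it does not. The generators actually forced by the definition of $\CAlg_k^{\mm{sm}}/A$ are the \emph{good} Lie algebroids, built from cells $F(A[n]\rt T_A)\rt 0$ with $n\leq -2$ (so that $C^*$ of each cell is a small extension $A\rt A\oplus A[-1-n]$ with $-1-n\geq 1$). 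These are finite cell complexes as Lie algebroids, but their underlying graded $A$-modules are infinite sums $\bigoplus_{n<0}A^{\oplus k_n}[n]$ (Lemma \ref{fm:lem:verygoodlralgebroids}); the biduality map is then $\bigoplus_{n<0}A^{\oplus k_n}[n]\rt \prod_{n<0}A^{\oplus k_n}[n]$, and it is exactly here---not for perfect modules---that eventual coconnectivity is needed (Corollary \ref{cor:koszuldualityperfect}). Moreover, for $V$ perfect with generators in degrees $\geq -1$, the cdga $A^V=A\oplus_{\rho^\vee}V[1]^\vee$ is not connective, hence not a small extension, so $\mm{MC}_{F(V)}$ is not corepresentable on $\CAlg_k^{\mm{sm}}/A$ and Lemma \ref{lem:identificationunderadjunction} no longer identifies the unit of $(\mm{MC},T_{A/})$ with the unit of $(C^*,\mf{D})$: the functor $T_{A/}$ only remembers the restriction of $\mm{MC}_\mf{g}$ to small extensions, which is a genuinely lossy operation unless $C^*(\mf{g})$ is itself small.

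Two further points. First, your explicit formula $\mm{MC}_\mf{g}(B)=\Map_{\CAlg_k/A}(C^*(\mf{g}),B)$ is not the left adjoint of $T_{A/}$ in general: unwinding the adjunction it computes $\Map_{\LieAlgd_A}(\mf{D}(B),\mf{D}C^*(\mf{g}))$ rather than $\Map_{\LieAlgd_A}(\mf{D}(B),\mf{g})$ (Remark \ref{rem:mcintermsofD}), and these differ whenever the unit $\mf{g}\rt\mf{D}C^*(\mf{g})$ is not an equivalence. Second, your essential surjectivity step assumes that the corepresentable formal moduli problems generate $\cat{FormMod}_A$ under sifted colimits, and your claim that the tangent complex preserves sifted colimits assumes these colimits are computed pointwise; neither is automatic (condition (b) involves pullbacks, which do not commute with sifted colimits of spaces), and in the paper both facts are consequences of, not inputs to, the identification of both sides with the category of presheaves on good Lie algebroids satisfying (a) and (b) (Proposition \ref{prop:luriedefthy}). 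That presheaf-category identification is precisely what lets the paper avoid any unit/counit check on general objects: one only needs that $C^*\colon\LieAlgd_A^{\mm{good}}\rt(\CAlg_k^{\mm{sm}}/A)^{\op}$ is an equivalence of small $\infty$-categories, which is where Corollary \ref{cor:koszuldualityperfect} and the degree bookkeeping enter.
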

\begin{theorem}\label{thm:formalmodulicoherent}
Suppose that $A\in \CAlg^{\dg, \sgeq 0}_k$ is cofibrant, bounded and coherent. Then there is an equivalence of $\infty$-categories
$$\xymatrix{
\mm{MC}\colon \LieAlgd^!_A\ar@<1ex>[r] & \cat{FormMod}^!_A\ar@<1ex>[l] \colon T_{A/}
}$$
between the $\infty$-category of pro-coherent Lie algebroids over $A$ and the $\infty$-category of pro-coherent formal moduli problems $\CAlg^{\mm{sm, coh}}/A\rt \sS$ (Section \ref{sec:intro}).
\end{theorem}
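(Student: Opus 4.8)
The plan is to run the proof of Theorem \ref{thm:formalmoduliperfect} in the pro-coherent setting, replacing the role of eventual coconnectivity (and Corollary \ref{cor:koszuldualityperfect}) by boundedness and coherence (and Corollary \ref{cor:koszuldualitycoherent}), and exploiting the compact generation of the tame theory. First I would produce the functors from the Koszul duality adjunction $C^*\dashv\mf{D}$ of Proposition \ref{prop:deformationadjunction}, defining the \emph{Maurer--Cartan} functor on a small coherent extension $B\in \CAlg^{\mm{sm,coh}}_k/A$ by
$$
\mm{MC}(\mf{g})(B) = \Map_{\LieAlgd^!_A}\big(\mf{D}(B), \mf{g}\big),
$$
with $T_{A/}$ the inverse. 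The first task is to verify that $\mm{MC}(\mf{g})$ is a pro-coherent formal moduli problem in the sense of Definition \ref{def:formalmoduli}. Condition (a) is immediate, since $\mf{D}(A)$ is the initial Lie algebroid (corresponding to $L_{A/A}=0$), so that $\mm{MC}(\mf{g})(A)\simeq \Map_{\LieAlgd^!_A}(0,\mf{g})\simeq \ast$.

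Condition (b) is the substantive point. Since $\Map_{\LieAlgd^!_A}(-,\mf{g})$ carries colimits to limits, it suffices to show that $\mf{D}$ sends each Schlessinger pullback square of (coherent) small extensions to a \emph{pushout} square of Lie algebroids. This is precisely a Koszul-duality computation: $\mf{D}$ carries an elementary square-zero extension $A\oplus M$ to a free Lie algebroid on a shift of $M^\vee$, and the defining pullback square \eqref{diag:squarezerointro} then becomes the corresponding pushout of free Lie algebroids. Identifying $\mf{D}$ on such extensions, and checking that these Lie algebroids are genuinely free and biduality-dualizable at the pro-coherent level, is where Proposition \ref{fm:prop:koszulduality} and Corollary \ref{cor:koszuldualitycoherent} enter (boundedness of $A$ is exactly what makes the biduality map $\mf{g}\rt \mf{g}^{\vee\vee}$ an equivalence here). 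Granting this, $\mm{MC}$ indeed lands in $\cat{FormMod}^!_A$.

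Next I would reduce the equivalence to a statement on generators. Because $A$ is bounded and coherent, $\Mod^!_A\simeq \mm{Ind}(\cat{Coh}_A^\op)$ is compactly generated (Proposition \ref{prop:tamecompactlygenerated}, Corollary \ref{cor:tameasprocoh}), hence so is $\LieAlgd^!_A$, by the free Lie algebroids $F(V)$ on the compact generators $V\in \cat{K}^{\sleq 0}$; after placing the generators in strictly negative degrees these $F(V)$ satisfy conditions (i)--(ii) of Proposition \ref{fm:prop:koszulduality}. By Corollary \ref{cor:ceoffree}, $C^*(F(V))\simeq A^V$ is an elementary small coherent extension and $\mm{MC}(F(V))$ is the formal moduli problem it corepresents. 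Corollary \ref{cor:koszuldualitycoherent} then shows that $C^*$, and hence $\mf{D}$ and $\mm{MC}$, is fully faithful on these free algebroids, while their images form a generating family of $\cat{FormMod}^!_A$. Since $\mm{MC}$ is compatible with the relevant colimits, the usual presentable-category argument upgrades full faithfulness on a generating family (whose images again generate) to an equivalence $\mm{MC}\colon \LieAlgd^!_A\simeq \cat{FormMod}^!_A$ with inverse $T_{A/}$.

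\textbf{The main obstacle} is the d\'evissage underlying the last step: proving that every pro-coherent formal moduli problem is built under colimits --- as a geometric realization of corepresentables $\mm{MC}(F(V))$ --- out of the Schlessinger conditions, i.e. the pro-coherent analogue of Lurie's inductive reconstruction of a moduli problem from its restriction to elementary square-zero extensions. The boundedness and coherence hypotheses on $A$ (flagged in Warning \ref{war:boundecoh}) are exactly what make the compact generation of Proposition \ref{prop:tamecompactlygenerated} available, so that this reconstruction can be carried out at the level of pro-coherent sheaves; without them the tame theory need not present $\mm{Ind}(\cat{Coh}_A^\op)$ and the dévissage fails. A secondary delicate point, already isolated above, is the pushout-of-free-algebroids computation of $\mf{D}$ that secures condition (b).
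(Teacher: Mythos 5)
Your overall strategy (Koszul duality $C^*\dashv\mf{D}$, full faithfulness on suitable cofibrant objects via Corollary \ref{cor:koszuldualitycoherent}, compact generation of $\Mod_A^!$ via Proposition \ref{prop:tamecompactlygenerated}) matches the paper's, but the step you flag as ``the main obstacle'' is left genuinely unresolved, and the shortcut you offer in its place does not work. A colimit-preserving functor that is fully faithful on a generating family whose images again generate is \emph{not} in general an equivalence of presentable $\infty$-categories; this is not ``the usual presentable-category argument,'' and $\cat{FormMod}^!_A$ is not presented as an Ind-completion to which such an argument could apply. The paper closes this gap by invoking the abstract recognition theorem it has already recorded as Proposition \ref{prop:luriedefthy} (Lurie's Theorem 1.3.12), applied to $\Xi=\LieAlgd_A^!$ equipped with the family of functors $\mf{g}\mapsto \Hom_A(K_\alpha,\ker U(\mf{g}))$ indexed by the compact generators $K_\alpha\in\mc{K}^{\sleq 0}$ of $\Mod_A^!$. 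The only things to verify are that these functors preserve sifted colimits (Proposition \ref{prop:monadicitytame}) and jointly detect equivalences (Proposition \ref{prop:tamecompactlygenerated}); the entire d\'evissage you worry about is then contained in Proposition \ref{prop:luriedefthy}, which yields $\LieAlgd_A^!\simeq\cat{E}$, the category of presheaves on the good objects satisfying conditions (a) and (b). It then remains only to identify $\LieAlgd_A^{!,\mm{good}}$ with $(\CAlg_k^{\mm{sm,coh}}/A)^\op$ via $C^*$, using Corollary \ref{cor:koszuldualitycoherent} together with Lemma \ref{fm:lem:verygoodlralgebroids}, so that $\cat{E}\simeq\cat{FormMod}_A^!$.

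Two smaller corrections. First, the relevant generators are not the free Lie algebroids $F(V)$ but the \emph{good} objects, i.e.\ finite iterated pushouts along the maps \eqref{fm:rem:verygoodtameliealgebroids} with $K\in\mc{K}$; these need not be free (they can have nontrivial attaching maps and anchor maps), and the corepresentable moduli problems one must account for are indexed by all of them --- equivalently by all small coherent extensions, not just the elementary ones $A^V$. Second, your verification of condition (b) is close to circular as stated: in the Schlessinger square \eqref{diag:squarezerointro} only two corners are elementary extensions, while $B$ and $B_\eta$ are arbitrary small extensions, so showing that $\mf{D}$ carries this square to a pushout already presupposes control of $\mf{D}$ on all small extensions. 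The paper's order of argument avoids this entirely: since $C^*$ is a colimit-preserving fully faithful functor on good objects whose essential image is closed under the relevant pullbacks (computed dually as pushouts of good Lie algebroids), one proves that $T_{A/}$, i.e.\ restriction along $C^*$, is the equivalence, and never needs to verify the Schlessinger conditions for $\mm{MC}$ by hand.
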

These theorems follow formally from Corollary \ref{cor:koszuldualityperfect} and \ref{cor:koszuldualitycoherent}, by means of a general procedure due to Lurie \cite{lur11X} that we will briefly recall.

\subsection{Generators}
Categories of chain complexes or spectra endowed with a certain algebraic structure often admit a presentation in terms of generators and relations.
\begin{definition}\label{def:goodobjects}
Let $\Xi$ be a locally presentable $\infty$-category equipped with a collection of right adjoint functors
$$\smash{\xymatrix{
e_\alpha\colon \Xi\ar[r] & \cat{Sp}
}}$$
to the $\infty$-category of spectra. The left adjoint to $e_\alpha$ sends the map $\field{S}^n\rt 0$ in $\cat{Sp}$ to a map in $\Xi$ that we will denote by $K_{\alpha, n}\rt \emptyset$. We will say that an object $X\in \Xi$ is \emph{good} if it admits a finite filtration
$$\smash{\xymatrix{
\emptyset=X^{(0)}\ar[r] & X^{(1)}\ar[r] & \dots\ar[r] & X^{(n)}
}}$$
where for each $i$, there is an $\alpha$ and a $n\leq -2$, together with a pushout square
\begin{equation}\label{diag:addingcell}\vcenter{\xymatrix{
K_{\alpha, n}\ar[r]\ar[d] & X^{(i-1)}\ar[d]\\
\emptyset\ar[r] & X^{(i)}.
}}\end{equation}
Let $\Xi^\mm{good}\subseteq \Xi$ be the full subcategory on the good objects; it is the smallest subcategory of $\Xi$ which contains $\emptyset$ and is closed under pushouts along the maps $K_{\alpha, n}\rt \emptyset$ with $n\leq -2$.
\end{definition}
\begin{proposition}[{\cite[Theorem 1.3.12]{lur11X}}]\label{prop:luriedefthy}
Let $(\Xi, e_\alpha)$ be as in Definition \ref{def:goodobjects}. Suppose that each $e_\alpha$ preserves small sifted homotopy colimits and that a map $f$ in $\Xi$ is an equivalence if and only if each $e_\alpha(f)$ is an equivalence of spectra. Then the right adjoint functor
$$\xymatrix{
\Psi\colon \Xi\ar[r] & \mm{PSh}(\Xi^\mm{good}); \hspace{4pt} X\ar@{|->}[r] & \Map_{\Xi}(-, X)
}$$
is fully faithful, with essential image consisting of those (space-valued) presheaves $F$ satisfying the following two conditions:
\begin{itemize}
\item[(a)] $F(\emptyset)$ is contractible.
\item[(b)] For any $\alpha$ and $n\leq -2$, $F$ sends a pushout square of the form \eqref{diag:addingcell} to a pullback square of spaces.
\end{itemize}
\end{proposition}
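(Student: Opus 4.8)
The plan is to prove the two assertions --- full faithfulness of $\Psi$ and the identification of its essential image --- by exhibiting $\Psi$ as the restricted Yoneda embedding along $j\colon\Xi^{\mm{good}}\hookrightarrow\Xi$ and analyzing it through its left adjoint $j_!$ (left Kan extension), which sends a presheaf to the colimit of the tautological diagram of good objects indexed by it. The linchpin of the whole argument is the claim that $\Psi$ \emph{preserves sifted colimits}; granting this, both statements reduce to a single generation lemma, namely that $\Xi$ is generated under sifted colimits by $\Xi^{\mm{good}}$.

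To see that $\Psi$ preserves sifted colimits it suffices to check that $\Map_\Xi(h,-)$ does so for every good $h$, since colimits of presheaves are computed objectwise. For a single cell this is immediate from the hypotheses: one has $\Map_\Xi(K_{\alpha,n},-)\simeq\Omega^\infty\Sigma^{-n}e_\alpha(-)$, and $e_\alpha$ preserves sifted colimits by assumption while $\Omega^\infty$ and $\Sigma^{-n}$ always do. For a general good object I would induct along its finite cell filtration: attaching a cell replaces $\Map_\Xi(X^{(i-1)},-)$ by its fibre product with $\Map_\Xi(\emptyset,-)\simeq\ast$ over $\Map_\Xi(K_{\alpha,n},-)$, and since sifted colimits commute with finite limits in $\sS$, the class of functors preserving sifted colimits is closed under such pullbacks.

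The main work is the generation lemma, where I would invoke the $\infty$-categorical Barr--Beck theorem. The combined functor $E=(e_\alpha)\colon\Xi\to\prod_\alpha\cat{Sp}$ is conservative and preserves sifted colimits by hypothesis, and its left adjoint is assembled from the $f_\alpha$, so $\Xi$ is monadic over $\prod_\alpha\cat{Sp}$ \cite{lur16}. Consequently every object of $\Xi$ is a geometric realization of free objects, and each free object is a colimit of the cells $K_{\alpha,n}$; the point is to show that these cells, and hence all free objects, lie in the closure of $\Xi^{\mm{good}}$ under sifted colimits. The key mechanism is the identity $\emptyset\sqcup_{K_{\alpha,n}}\emptyset\simeq K_{\alpha,n+1}$, obtained by applying the colimit-preserving functor $f_\alpha$ to the suspension square $0\leftarrow\field{S}^n\to 0$: a single cell attachment to $\emptyset$ raises the index by one, so that iterating realizes every $K_{\alpha,m}$ with $m$ large enough as a good object, while the degree-raising behaviour of geometric realization recovers the remaining cells. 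Once the lemma is in hand, full faithfulness is formal: writing $X\simeq\colim_i h_i$ as a sifted colimit of good objects, using that $\Psi$ preserves it, that $\mm{ev}_h\simeq\Map_{\mm{PSh}}(\Psi h,-)$ preserves all colimits by Yoneda, and that $\Psi|_{\Xi^{\mm{good}}}$ is fully faithful, one computes $\Map_{\mm{PSh}}(\Psi X,\Psi Y)\simeq\lim_i(\Psi Y)(h_i)\simeq\lim_i\Map_\Xi(h_i,Y)\simeq\Map_\Xi(X,Y)$.

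For the essential image, the inclusion ``$\Psi(X)$ satisfies (a) and (b)'' is immediate, as $\Map_\Xi(-,X)$ carries $\emptyset$ to a point and every pushout square in $\Xi$ --- in particular a cell-attachment square --- to a pullback of spaces. For the converse I would show that the full subcategory of presheaves satisfying (a) and (b) is itself generated under sifted colimits by the representables $\Psi(h)$, $h\in\Xi^{\mm{good}}$; this is the Schlessinger-type heart of the matter and is exactly what conditions (a)--(b) are designed to encode. Both the unit map $F\to\Psi j_!F$ and the identity preserve sifted colimits (using that $j_!$ preserves all colimits and $\Psi$ preserves sifted ones) and agree on representables, where $j_!\Psi(h)\simeq h$, so they agree throughout, identifying the essential image with the (a)--(b) presheaves. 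I expect the generation lemma --- controlling the interaction between the cell-index bound $n\le -2$ and the degree shifts produced by geometric realization --- to be the principal obstacle; the sifted-colimit bookkeeping in the two density arguments is the only genuinely delicate input, everything else being formal manipulation of adjunctions and the Yoneda lemma.
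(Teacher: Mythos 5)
The paper offers no proof of this statement at all: it is imported verbatim from Lurie's \cite[Theorem 1.3.12]{lur11X} after replacing $\Upsilon^{\mm{sm}}$ by $(\Xi^{\mm{good}})^{\op}$. Your attempt at a self-contained argument is therefore welcome in principle, but it breaks at its foundational step, and not reparably within the architecture you chose. The claim that $\Psi$ preserves sifted colimits is false. You justify it for a single cell by saying that $\Map_\Xi(K_{\alpha,n},-)\simeq\Omega^{\infty-n}\circ e_\alpha$ preserves sifted colimits because ``$\Omega^\infty$ always does''; but $\Omega^\infty\colon\cat{Sp}\rt\sS$ preserves filtered colimits and geometric realizations of \emph{connective} simplicial spectra only --- on unbounded spectra it does not commute with geometric realization. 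Likewise ``sifted colimits commute with finite limits in $\sS$'' is false: they commute with finite products, not with pullbacks. Concretely, take $\Xi=\cat{Sp}$ with the single functor $e=\mm{id}$, which satisfies every hypothesis of the proposition. Then $\field{S}^{-1}=\emptyset\sqcup_{K_{-2}}\emptyset$ is a good object, and $\Map(\field{S}^{-1},Y)=\Omega^\infty(Y[1])$. Choosing a simplicial spectrum $Y_\bullet$ with each $Y_m$ an Eilenberg--MacLane spectrum concentrated in degree $-2$ and with $H_1(\pi_{-2}Y_\bullet)\neq 0$, one gets $\Omega^\infty(Y_m[1])\simeq\ast$ for every $m$ while $\pi_0\Omega^\infty(|Y_\bullet|[1])=\pi_{-1}|Y_\bullet|\neq 0$. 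So even $\Map(K,-)$ for a one-cell good object fails to preserve geometric realizations.

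Since both your full-faithfulness argument and your essential-image argument are routed through this one claim, the proof collapses. Indeed the claim is incompatible with the conclusion you are trying to prove: if $\Psi$ preserved sifted colimits and had the stated essential image, the presheaves satisfying (a)--(b) would be closed under objectwise geometric realization inside $\mm{PSh}(\Xi^{\mm{good}})$, and the failure of exactly that closure is the whole point of the theory (colimits of formal moduli problems are not computed objectwise). This is why Lurie's actual proof does not run a density argument: it constructs the adjoint, exploits that $e$ on one side and the tangent-complex functor on the other detect equivalences, and handles the cell-attachment pullbacks by connectivity inductions in which the hypothesis $n\leq -2$ (so that the base $\Map(K_{\alpha,n},-)$ is an at-least-double delooping) is used in an essential, non-formal way. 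Your generation lemma --- monadicity over $\prod_\alpha\cat{Sp}$ and the identity $\emptyset\sqcup_{K_{\alpha,n}}\emptyset\simeq K_{\alpha,n+1}$ --- is sound and is genuinely an ingredient of the real argument, but it cannot carry the theorem without a correct substitute for the sifted-colimit claim.
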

This is exactly \cite[Theorem 1.3.12]{lur11X}, replacing the category $\Upsilon^\mm{sm}$ from loc.\ cit.\ by the opposite of $\Xi^\mm{good}$.
\begin{example}\label{ex:obvious1}
Let $\Xi$ be a compactly generated stable $\infty$-category. The collection of functors $\Hom_{\Xi}(K, -)\colon \Xi\rt \cat{Sp}$, for all compact objects $K$, satisfies the condition of Proposition \ref{prop:luriedefthy}. The good objects are exactly the compact objects and a functor $F\colon \Xi^{\omega, \op}\rt \sS$ satisfies conditions (a) and (b) if and only if it is left exact. Proposition \ref{prop:luriedefthy} then reproduces the equivalence
$$
\Xi\simeq \mm{Ind}(\Xi^\omega).
$$
\end{example}
\begin{example}\label{ex:obvious2}
Let $A$ be a commutative dg-algebra (over a field of characteristic zero) and let $\Mod_A$ be the $\infty$-category of $A$-modules. The single functor $e\colon \Mod_A\rt \cat{Sp}$, forgetting the $A$-module structure, satisfies the conditions of Proposition \ref{prop:luriedefthy}. In this case, the good $A$-modules can be presented by the dg-$A$-modules whose underlying graded $A$-module is free on finitely many generators $x_i$ of degree $<0$.

There is an equivalence of $\infty$-categories 
$$\xymatrix{
\Mod_A^\mm{good, op}\ar[r] & \Mod_A^{\mm{f.p.}, \sgeq 0}; \hspace{4pt} E\ar[r] & E[1]^\vee
}$$
to the $\infty$-category of finitely presented connective $A$-modules, i.e.\ dg-$A$-modules generated by finitely many generators of degree $\geq 0$. Combining this equivalence with Proposition \ref{prop:luriedefthy}, one finds that $\Mod_A$ is equivalent to the $\infty$-category of functors
$$\xymatrix{
F\colon \Mod_A^{\mm{f.p.}, \sgeq 0}\ar[r] & \sS
}$$
that send $0$ to a contractible space and preserve pullbacks along the maps $0\rt A[n]$ with $n\geq 1$.
\end{example}

\subsection{Proof of Theorem \ref{thm:formalmoduliperfect}}
Let $A$ be a cofibrant commutative dg-$k$-algebra. By Proposition \ref{prop:monadicity} and Example \ref{ex:obvious2}, the composite forgetful functor
$$\xymatrix{
\LieAlgd_A\ar[r]^U & \Mod_A/T_A\ar[r]^-{\ker} & \Mod_A\ar[r]^-{e} & \cat{Sp}
}$$
preserves small limits and sifted colimits, and detects equivalences. 

The corresponding notion of a \emph{good Lie algebroid} can be described in terms of the (projective) model structure on dg-Lie algebroids as follows: let us say that a dg-Lie algebroid $\mf{g}$ is \emph{very good} if it admits a finite sequence of cofibrations 
$$\xymatrix{
0=\mf{g}^{(0)}\ar[r] & \cdots \ar[r] & \mf{g}^{(n)}=\mf{g},
}$$
each of which is the pushout of a (generating) cofibration with $n_i\leq -2$
\begin{equation}\label{fm:rem:verygoodliealgebroids}\xymatrix{
\mm{Free}\big(\dau\phi\colon A[n_i] \rt T_A\big)\ar[r] & \mm{Free}\big(\phi\colon A[n_i, n_i+1]\rt T_A\big).
}\end{equation}
Here $\phi$ is a map from the cone of $A[n_i]$ to $T_A$, which is determined uniquely by a degree $(n_i+1)$ element of $T_A$. Then the good Lie algebroids can be presented by the very good dg-Lie algebroids over $A$. 
\begin{remark}
Good Lie algebroids may have nontrivial anchor maps, even though the Lie algebroids in \eqref{fm:rem:verygoodliealgebroids} have null-homotopic anchor maps.
\end{remark}
\begin{lemma}\label{fm:lem:verygoodlralgebroids}
Let $\mf{g}$ be a very good dg-Lie algebroid over $A$. Then the following hold:
\begin{enumerate}[leftmargin=*]
\item $\mf{g}$ has a cofibrant underlying dg-$A$-module.
\item Without the differential, $\mf{g}$ is freely generated by a negatively graded finite-dimensional vector space over $T_A$.
\item $\mf{g}$ is isomorphic as a graded $A$-module to $\bigoplus_{n<0} A^{\oplus k_n}[n]$ for some sequence of $k_n\in \field{N}_{\geq 0}$.
\end{enumerate}
\end{lemma}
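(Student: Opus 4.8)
\emph{Proof plan.} The three statements have rather different flavors, and I would treat (1) separately from (2)--(3). For (1) the plan is simply to observe that a very good dg-Lie algebroid is, in particular, \emph{cofibrant} in the projective (semi-)model structure of Proposition \ref{prop:monadicity}. Each generating map in \eqref{fm:rem:verygoodliealgebroids} is the image under the left Quillen functor $\mm{Free}$ of the generating cofibration $A[n_i]\rt A[n_i,n_i+1]$ of dg-$A$-modules, hence is itself a cofibration; since cofibrations are stable under pushout and finite composition and $\mf{g}^{(0)}=0$ is the (cofibrant) initial object, the map $0\rt\mf{g}$ is a cofibration. Claim (1) is then immediate from the fact recalled in Definition \ref{def:Q} (see \cite{nui17b}) that every cofibrant dg-Lie algebroid is $A$-cofibrant, i.e.\ cofibrant as a dg-$A$-module. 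One could instead try to deduce (1) from the graded description in (3), but since $\mf{g}$ is typically unbounded below this route is considerably less convenient.

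For (2) I would induct on the number of cells, working after applying the functor that forgets the differential into graded Lie algebroids over $A$. Two inputs are needed. First, this forgetful functor preserves colimits, because colimits of chain complexes are computed on the underlying graded modules and the monad whose algebras are dg-Lie algebroids lifts the analogous monad on graded modules. Second, for a graded-free module $A\otimes_\field{Q}\ol{V}$ the free Lie algebroid $\mm{Free}(A\otimes_\field{Q}\ol{V})$ has underlying graded $A$-module $A\otimes_\field{Q}\mm{Lie}_\field{Q}(\ol{V})$, regardless of the anchor --- exactly as was already used in the proofs of Lemma \ref{fm:lem:polynomialintopower} and Lemma \ref{fm:lem:algrepspreservessiftedcolim}. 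Forgetting the differential turns the cone $A[n_i,n_i+1]$ into the graded module $A[n_i]\oplus A[n_i+1]$, so the $i$-th defining pushout becomes
$$
\ol{\mf{g}}^{(i)}\;\cong\;\ol{\mf{g}}^{(i-1)}\sqcup_{\mm{Free}(A[n_i])}\mm{Free}\big(A[n_i]\oplus A[n_i+1]\big)\;\cong\;\ol{\mf{g}}^{(i-1)}\sqcup\mm{Free}\big(A[n_i+1]\big),
$$
where the second identification uses that $\mm{Free}$ sends direct sums to coproducts. As coproducts of free graded Lie algebroids are free on the direct sum of the generators, this gives inductively $\ol{\mf{g}}\cong\mm{Free}(U)$ with $U=\bigoplus_i\field{Q}[n_i+1]$ a finite-dimensional graded vector space concentrated in degrees $n_i+1\leq -1$, equipped with its evident anchor to $T_A$. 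I expect the computation of the underlying graded module of a free Lie algebroid, together with the verification that the differential-forgetting functor preserves these particular pushouts, to be the main technical point: the cone does \emph{not} split off a summand at the dg-level, so one genuinely has to descend to the graded category before decomposing.

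Finally, (3) follows by combining (2) with the same description of the underlying module, giving $\mf{g}\cong A\otimes_\field{Q}\mm{Lie}_\field{Q}(U)$, so it suffices to analyze $\mm{Lie}_\field{Q}(U)$ as a graded vector space. Using the inclusion $\mm{Lie}_\field{Q}(U)\subseteq T_\field{Q}(U)=\bigoplus_{m\geq 1}U^{\otimes m}$ and the fact that $U$ is finite-dimensional and concentrated in degrees $\leq -1$, each $U^{\otimes m}$ is finite-dimensional and concentrated in degrees $\leq -m$; hence for every fixed degree $d<0$ only the finitely many tensor powers with $m\leq -d$ contribute, and $\mm{Lie}_\field{Q}(U)$ is concentrated in negative degrees and finite-dimensional in each of them. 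Writing $\mm{Lie}_\field{Q}(U)\cong\bigoplus_{n<0}\field{Q}^{\oplus k_n}[n]$ with each $k_n$ finite and tensoring with $A$ then yields the desired isomorphism $\mf{g}\cong\bigoplus_{n<0}A^{\oplus k_n}[n]$ of graded $A$-modules.
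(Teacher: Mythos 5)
Your proposal is correct and follows the same route as the paper, whose proof is only a few lines: (1) is dismissed as obvious (your cofibrancy argument is the intended one, via the fact from Definition \ref{def:Q} that cofibrant dg-Lie algebroids are $A$-cofibrant), (2) is justified exactly by your observation that each cell attachment freely adds a single generator of degree $<0$ at the level of graded Lie algebroids, and (3) is stated to follow immediately from (2) via the same description $\mm{Free}(U)\cong A\otimes_\field{Q}\mm{Lie}_\field{Q}(U)$ of the underlying graded module that the paper also invokes in Lemma \ref{fm:lem:polynomialintopower}. You have simply supplied the details (splitting the cone after forgetting the differential, cancellation of the pushout, degreewise finiteness of the free Lie algebra) that the paper leaves implicit.
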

\begin{proof}
Assertion (1) is obvious and (3) follows immediately from (2). For (2), note that each pushout along a map \eqref{fm:rem:verygoodliealgebroids} freely adds a single generator of degree $<0$ at the level of graded Lie algebroids.
\end{proof}
\begin{proof}[Proof (of Theorem \ref{thm:formalmoduliperfect})]
Let us denote the $\infty$-category of presheaves satisfying conditions (a) and (b) of Proposition \ref{prop:luriedefthy} by
$$
\cat{E}\subseteq \mm{PSh}(\LieAlgd_A^\mm{good}).
$$
We then have an equivalence $\LieAlgd_A\simeq \cat{E}$, so that it suffices to produce the required adjunction (equivalence) between $\cat{E}$ and the $\infty$-category of formal moduli problems. To this end, recall that the good Lie algebroids form the smallest subcategory of $\LieAlgd_A$ that contains $0$ and is closed under pushouts against the maps 
\begin{equation}\label{diag:generatinggoodmaps}\xymatrix{
F(0\colon A[n]\rt T_A)\ar[r] & 0 & n\leq -2.
}\end{equation}
The functor $C^*$ preserves colimits (Proposition \ref{prop:deformationadjunction}) and sends the above maps to the maps $(\mm{id}, 0)\colon A\rt A\oplus A[-1-n]$, for $n\leq -2$. It follows that $C^*$ restricts to a functor
\begin{equation}\label{diag:cerestrictedtogood}\xymatrix{
C^*\colon \LieAlgd_A^\mm{good}\ar[r] & \Big(\CAlg^\mm{sm}_k/A\Big)^\op.
}\end{equation}
The restriction of a formal moduli problem along $C^*$ is a presheaf contained in $\cat{E}$. We therefore obtain a right adjoint functor
$$\xymatrix{
T_{A/}\colon \cat{FormMod}_A\ar[r]^-{(C^*)^*} & \cat{E}\ar[r]^-{\simeq} & \LieAlgd_A.
}$$
If $A$ is eventually coconnective, Corollary \ref{cor:koszuldualityperfect} and Lemma \ref{fm:lem:verygoodlralgebroids} show that \eqref{diag:cerestrictedtogood} is fully faithful. The essential image of $C^*$ contains $A$ and is closed under pullbacks along the maps $(\mm{id}, 0)\colon A\rt A\oplus A[n]$ for $n\geq 1$. Indeed, such pullbacks can dually be computed as pushouts along the images of the maps \eqref{diag:generatinggoodmaps}. The small extension of $A$ form the smallest subcategory of $\CAlg_k/A$ with these two closure properties, so \eqref{diag:cerestrictedtogood} is essentially surjective as well. This implies that the functor $T_{A/}$ is an equivalence.
\end{proof}
\begin{example}\label{ex:tangenttomapofaffines}
Let $f\colon B\rt A$ be a map of connective commutative $k$-algebras and let 
$$\xymatrix{
\spec(B)^\wedge\colon \CAlg_k^\mm{sm}/A \ar[r] & \sS; \hspace{4pt} \tilde{A}\ar@{|->}[r] & \Map(B, \tilde{A})\times_{\Map(B, A)} \{f\}
}$$
be the formal completion of $\spec(B)$ at $A$. This is a formal moduli problem over $A$. Unwinding the definitions, one sees that the associated Lie algebroid is given by $\mf{D}(B)$, which can be identified with the Lie algebroid $T_{A/B}\rt T_A$ of (derived) $B$-linear derivations of $A$ by Remark \ref{fm:rem:dualityfunctor}.
\end{example}
\begin{remark}\label{rem:mcintermsofD}
Suppose that $A$ is eventually coconnective, so that $\CAlg_k^\mm{sm}/A$ is equivalent to the $\infty$-category of good Lie algebroids over $A$. If $\mf{g}$ is a Lie algebroid over $A$, then the formal moduli problem $\mm{MC}_\mf{g}$ is given (up to a natural equivalence) by
$$
\mm{MC}_\mf{g}(B) = \Map_{\LieAlgd_A}\big(\mf{D}(B), \mf{g}\big).
$$
By Remark \ref{ex:tangenttomapofaffines}, one can think of this as the space of flat $\mf{g}$-valued connections on the fiberwise tangent bundle of $A$ over $B$.
\end{remark}

\subsection{Proof of Theorem \ref{thm:formalmodulicoherent}}\label{sec:tamecase}
Theorem \ref{thm:formalmodulicoherent} is proven in exactly the same way: let $\mc{K}^{\sleq 0}\subseteq \Mod_A^!$ be the full subcategory on the compact generators provided by Proposition \ref{prop:tamecompactlygenerated}. For each $K_\alpha\in \mc{K}$, there is a functor
$$\xymatrix{
\LieAlgd_A^!\ar[r]^U & \Mod_A^!/T_A\ar[r]^-\ker & \Mod_A^!\ar[rr]^-{\Hom_A(K_\alpha, -)} & & \cat{Sp}
}$$
which preserves limits and sifted colimits by Proposition \ref{prop:monadicitytame}. All these functor collectively detect equivalences between tame Lie algebroids.

The good tame Lie algebroids then have the following point-set description: they are given by those dg-Lie algebroids obtained from finitely many pushouts along
\begin{equation}\label{fm:rem:verygoodtameliealgebroids}\xymatrix{
\mm{Free}\big(\dau\phi\colon K[-2] \rt T_A\big)\ar[r] & \mm{Free}\big(\phi\colon K[-2, -1]\rt T_A\big)
}\end{equation}
with $K\in\mc{K}$. The assertions of Lemma \ref{fm:lem:verygoodlralgebroids} apply to such dg-Lie algebroids as well, so that Corollary \ref{cor:koszuldualitycoherent} provides fully faithful functor
$$\xymatrix{
C^*\colon \LieAlgd_A^{!, \mm{good}}\ar[r] & \Big(\CAlg_k/A\Big)^\op.
}$$
The images of the maps \eqref{fm:rem:verygoodliealgebroids} under $C^*$ are exactly the maps $A\rt A\oplus E[1]$, where $E$ is a connective coherent $A$-module. Arguing as in the proof of Theorem \ref{thm:formalmoduliperfect}, it follows that $C^*$ induces an equivalence between good tame Lie algebroids over $A$ and the $\infty$-category $\CAlg_k^\mm{sm, coh}/A$ of small extensions of $A$ by coherent $A$-modules.

Restriction along this equivalence provides an equivalence between $\cat{FormMod}_A^!$ and the $\infty$-category of presheaves on $\LieAlgd_A^{!, \mm{good}}$ satisfying conditions (a) and (b) from Proposition \ref{prop:luriedefthy}. In turn, the $\infty$-category of such presheaves is equivalent to $\LieAlgd_A^!$ by Proposition \ref{prop:luriedefthy}.

\section{Representations and quasi-coherent sheaves}\label{sec:representations}
In the previous section, we have seen that there is an equivalence between formal moduli problems over $A$ and Lie algebroids over $A$. Geometrically, one can think of a formal moduli problem as a map of stacks $x\colon \spec(A)\rt X$ that realizes $X$ as an infinitesimal thickening of $\spec(A)$. Any such stack $X$ gives rise to an $\infty$-category of \emph{quasi-coherent sheaves} on $X$ in the usual way (Definition \ref{def:qc}): a quasi-coherent sheaf $F$ is given by a collection of $B$-modules $F_y$ for every $B\in \CAlg_k^\mm{sm}/A$ and every $y\in X(B)$, together with a coherent family of equivalences
$$
F_{\alpha(y)}\simeq B'\otimes_B F_y
$$
for every $\alpha\colon B\rt B'$. In particular, a quasi-coherent sheaf $F$ on $X$ determines an $A$-module $F_x$, by restricting to the canonical point $x\in X(A)$. We will see that $F_x$ carries a representation of $T_{A/X}$. In fact, we will prove the following:
\begin{theorem}\label{fm:thm:quasi-coherentvsrepresentations}
Let $A$ be eventually coconnective and let $X$ be a formal moduli problem over $A$ with associated Lie algebroid $T_{A/X}$. Then there is a fully faithful, symmetric monoidal left adjoint functor
$$\xymatrix{
\Psi_X\colon \QC(X)\ar[r] & \Rep_{T_{A/X}}
}$$
where $\Psi_X(F)$ has underlying $A$-module given by the restriction $F_x$ to the canonical basepoint $x\in X(A)$. Furthermore, the functor $\Psi_X$ induces an equivalence 
$$
\Mod(X)^{\sgeq 0}\simeq \Rep_{T_{A/X}}^{\sgeq 0}
$$
between the connective quasi-coherent sheaves on $X$ and the $T_{A/X}$-representations whose underlying $A$-module is connective.
\end{theorem}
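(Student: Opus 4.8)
The plan is to realise $\Psi_X$ as the value at $X$ of a natural transformation between two \emph{limit-preserving} functors on $\cat{FormMod}_A^\op$, and then reduce each of its asserted properties to the corepresentable case. The two functors are $\QC$, which by Definition \ref{def:qc} sends a formal moduli problem to the limit $\lim\Mod_R$ over its $\infty$-category of points, and $\Rep_{T_{A/-}}$, obtained by composing the equivalence $T_{A/}$ of Theorem \ref{thm:formalmoduliperfect} with $\Rep$. Writing $X$ as the colimit of its corepresentables $\spec(R)^\wedge$ (co-Yoneda, indexed by the category of points $\int X$), and using that $\spec(R)^\wedge$ has associated Lie algebroid $\mf{D}(R)\simeq T_{A/R}$ (Example \ref{ex:tangenttomapofaffines}), that $T_{A/}$ preserves colimits, and that $\Rep$ preserves all limits (Lemma \ref{fm:lem:algrepspreservessiftedcolim}), I obtain compatible presentations
$$\QC(X)\simeq\lim_{(R,\eta)\in\int X}\Mod_R,\qquad \Rep_{T_{A/X}}\simeq\lim_{(R,\eta)\in\int X}\Rep_{T_{A/R}}$$
as limits over one and the same indexing category.

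The comparison is then built one point at a time. For a point $(R,\eta)$ I take $\Psi_R:=A\otimes_R(-)\colon\Mod_R\rt\Rep_{T_{A/R}}$, where $T_{A/R}=\mm{Der}_R(A,A)$ acts on $A\otimes_R M$ by $v\cdot(a\otimes m)=v(a)\otimes m$; this is well defined because $v$ is $R$-linear, and the identities \eqref{eq:reps} are immediate. Each $\Psi_R$ is a symmetric monoidal left adjoint and is natural in $(R,\eta)$, so the family $\{\Psi_R\}$ is a natural transformation of diagrams valued in presentable symmetric monoidal $\infty$-categories over $\Mod_A$; passing to the limit over $\int X$ produces the required symmetric monoidal left adjoint $\Psi_X\colon\QC(X)\rt\Rep_{T_{A/X}}$. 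Its underlying $A$-module is computed at the terminal object $(A,x)\in\int X$ --- terminal because $X(A)\simeq\ast$ --- where $T_{A/A}\simeq 0$ and $\Psi_A=\mathrm{id}$, so the underlying module of $\Psi_X(F)$ is $F_x$, as required.

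With the presentations in hand, both remaining assertions localise over $\int X$. Because mapping spaces in a limit of $\infty$-categories are the limits of the mapping spaces, $\Psi_X$ is fully faithful as soon as every $\Psi_R$ is. Because the transition functors on both sides preserve connectivity (base change and restriction are right $t$-exact) and the two $t$-structures are detected on underlying modules, the connective objects are precisely the levelwise connective families, so $\Psi_X$ restricts to an equivalence $\QC(X)^{\sgeq 0}\simeq\Rep_{T_{A/X}}^{\sgeq 0}$ as soon as every $\Psi_R$ does. The whole statement has thus been reduced to its affine instance, for the corepresentables $\spec(R)^\wedge$.

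The main obstacle is therefore this affine instance: for a small extension $R\rt A$ with $A$ eventually coconnective, $\Psi_R\colon\Mod_R\rt\Rep_{T_{A/R}}$ is fully faithful and an equivalence on connective objects. I would prove it by another application of Lurie's Proposition \ref{prop:luriedefthy}, presenting $\Mod_R$ as in Example \ref{ex:obvious2} and $\Rep_{T_{A/R}}$ through its forgetful generators, and checking that $\Psi_R$ carries the generating good $R$-modules to good $T_{A/R}$-representations in a way that matches the two presentations. The identification at the level of generators is exactly the Koszul-duality equivalence $C^*(\mf{D}R)\simeq R$ of Corollary \ref{cor:koszuldualityperfect}, transported from algebras to their modules via the Chevalley--Eilenberg functor of Remark \ref{rem:celaxmonoidal}; concretely, full faithfulness amounts to the unit $M\rt \Gamma\Psi_R M$ of the adjunction $\Psi_R\dashv\Gamma$ being an equivalence, which one checks first for $M=R$ (where it is the above Koszul identity) and then bootstraps along the cellular filtration of a finitely presented, and hence of an arbitrary connective, $R$-module. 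The delicate point --- and the reason $\Psi_X$ is only fully faithful rather than an equivalence --- is the behaviour of these Chevalley--Eilenberg computations on unbounded modules, where convergence of the relevant filtration fails; controlling this is where the eventual coconnectivity of $A$ and the boundedness afforded by connectivity are used.
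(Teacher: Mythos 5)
Your global strategy coincides with the paper's: realise $\Psi_X$ as a map between two limit presentations indexed by the corepresentables, and reduce every assertion to the affine case $\Psi_R\colon \Mod_R\rt \Rep_{T_{A/R}}$, which after the Koszul identification $R\simeq C^*(\mf{D}R)$ of Corollary \ref{cor:koszuldualityperfect} becomes the functor $\Phi_{\mf{g}}=A\otimes_{C^*(\mf{g})}(-)$ for a good Lie algebroid $\mf{g}$. The reduction itself is sound (the paper phrases it via sifted colimits of corepresentables rather than the full category of points, but Lemma \ref{fm:lem:algrepspreservessiftedcolim} gives preservation of all limits, so either indexing works), with one caveat: the coherent naturality of the family $\{\Psi_R\}$, which you dispose of in a single sentence, is a genuine construction occupying the whole ``Naturality'' subsection of the paper (Construction \ref{fm:con:smcatofdgmodules} through Lemma \ref{fm:lem:functorialleftadjoint}); the naive point-set formula $v\cdot(a\otimes m)=v(a)\otimes m$ only describes the underived shadow of a functor that must be built from the Koszul bimodule $K(\mf{g})$.

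The genuine gaps are both in the affine case. First, your bootstrap of the unit $M\rt\Gamma\Psi_R M$ from $M=R$ along cell attachments only reaches the compact $R$-modules; to pass to all of $\Mod_R\simeq\mm{Ind}(\Mod_R^{\omega})$ you must know that the right adjoint $\Gamma=C^*(\mf{g},-)$ preserves filtered colimits, i.e.\ that $A\simeq\Phi_{\mf{g}}(R)$ is a \emph{compact} object of $\Rep_{T_{A/R}}$. This is the content of Lemma \ref{fm:lem:fullyfaithfulonqcoh}, proved via the cofiber sequence $L_\mf{g}\rt\mc{U}(\mf{g})\rt A$ and compactness of the cotangent complex; nothing in your write-up supplies it, and without it full faithfulness does not extend beyond compact sheaves. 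Second, the harder half of the connectivity statement --- that every representation with connective underlying $A$-module lies in the essential image --- is not addressed. Your appeal to Proposition \ref{prop:luriedefthy} cannot do this job: matching two such presentations of $\Mod_R$ and $\Rep_{T_{A/R}}$ would force $\Psi_R$ to be an equivalence outright, which is false (this is exactly the convergence failure that makes $\Psi_X$ only fully faithful). The paper's argument (Lemma \ref{fm:lem:generatorsforconnectivemodules} and Corollary \ref{fm:cor:equivonconnmodules}) instead builds each connective representation as an iterated extension of sums of copies of $A$, using that $[L_\mf{g},E]=0$ for connective $E$ because $L_\mf{g}$ has cells only in degrees $\leq -1$ when $\mf{g}$ is good; some such cell-counting argument is indispensable and is precisely where goodness and the eventual coconnectivity of $A$ enter.
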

Our proof closely follows the discussion in \cite[Section 2.4]{lur11X}. We will begin by considering representations of good Lie algebroids in Section \ref{fm:sec:representationsofgood}. In fact, for later purposes it will be convenient to work with the tame model structure on $\mf{g}$-representations. Theorem \ref{fm:thm:quasi-coherentvsrepresentations} then follows from a formal argument described in Section \ref{fm:sec:qcoh}.

\subsection{Representations of good Lie algebroids}\label{fm:sec:representationsofgood}
Recall from Remark \ref{rem:celaxmonoidal} that there is a lax monoidal functor sending a $\mf{g}$-representation $E$ to its Chevalley-Eilenberg complex $C^*(\mf{g}, E)$. This functor is part of an adjoint pair
\begin{equation}\label{diag:koszulquillenpair}\xymatrix{
K(\mf{g})\otimes_{C^*(\mf{g})} (-)\colon \dgMod_{C^*(\mf{g})} \ar@<1ex>[r] & \dgRep_\mf{g}\colon C^*(\mf{g}, -)\ar@<1ex>[l]
}\end{equation}
were $K(\mf{g})$ is the Koszul complex of $\mf{g}$ (see Remark \ref{fm:rem:cofibersequenceoncotangent}). 

When $\mf{g}$ has a graded-projective underlying dg-$A$-module, this adjoint pair is a Quillen pair between the \emph{projective} model structure on dg-$C^*(\mf{g})$-modules and the \emph{tame} model structure on $\mf{g}$-representations, transferred from the tame model structure on dg-$A$-modules. We let $\Phi_\mf{g}\colon \Mod_{C^*(\mf{g})}\rt \Rep^!_\mf{g}$ denote the left derived functor between $\infty$-categories. One can think of $\Phi_\mf{g}$ as the functor $F\mapsto A\otimes_{C^*(\mf{g})} F$, since $K(\mf{g})$ is equivalent to the $\mf{g}$-representation $A$.
\begin{remark}
When the underlying dg-$A$-module of $\mf{g}$ is projectively cofibrant, the adjoint pair \eqref{diag:koszulquillenpair} is also a Quillen pair for the projective model structure on dg-$A$-modules. In terms of $\infty$-categories, this means that the functor $\Phi_\mf{g}$ factors as
$$\xymatrix{
\Phi_\mf{g}\colon \Mod_{C^*(\mf{g})}\ar[r] & \Rep_\mf{g}\ar@{^{(}->}[r]& \Rep_\mf{g}^!.
}$$
\end{remark}
\begin{lemma}\label{fm:lem:fullyfaithfulonqcoh}
Suppose that $\mf{g}\in \LieAlgd_A^!$ is compact. Then $\Phi_\mf{g}\colon \Mod_{C^*(\mf{g})}\rt \Rep^!_{\mf{g}}$ is fully faithful.
\end{lemma}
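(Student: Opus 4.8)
The plan is to prove that the left adjoint $\Phi_\mf{g}$ from \eqref{diag:koszulquillenpair} is fully faithful by showing that the unit $\eta\colon \mm{id}\rt C^*(\mf{g}, \Phi_\mf{g}(-))$ is an equivalence. Since $\Phi_\mf{g}$ preserves colimits and $\Mod_{C^*(\mf{g})}$ is generated under colimits by the free module $C^*(\mf{g})$, it is enough to check that $\eta_{C^*(\mf{g})}$ is an equivalence and that the functor $C^*(\mf{g}, \Phi_\mf{g}(-))$ preserves colimits. The first point is a direct computation: $\Phi_\mf{g}(C^*(\mf{g}))\simeq K(\mf{g})\simeq A$ as a $\mf{g}$-representation (Remark \ref{rem:ceasext}), and $C^*(\mf{g}, A)=C^*(\mf{g})$ by definition, so that $\eta_{C^*(\mf{g})}$ is the resulting identification. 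For the second point, recall from Remark \ref{rem:ceasext} that $C^*(\mf{g}, -)\simeq \Map_{\Rep^!_\mf{g}}(A, -)$; as this functor is exact and $\Phi_\mf{g}$ already preserves colimits, everything reduces to showing that $A\simeq K(\mf{g})$ is a \emph{compact} object of $\Rep^!_\mf{g}$ (an exact functor preserves all colimits once it preserves filtered ones, and $\Map_{\Rep^!_\mf{g}}(A,-)$ does so precisely when $A$ is compact).

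The heart of the argument is therefore the compactness of $A$, which I would read off from the cofiber sequence of $\mf{g}$-representations
$$ L_\mf{g}\rt \mc{U}(\mf{g})\rt K(\mf{g})\simeq A $$
of Remark \ref{fm:rem:cofibersequenceoncotangent}. The middle term is the free representation $\mc{U}(\mf{g})\otimes_A A$ on the compact generator $A\in\mc{K}^{\sleq 0}$ (Proposition \ref{prop:tamecompactlygenerated}); since the forgetful functor $\Rep^!_\mf{g}\rt \Mod^!_A$ preserves colimits, its left adjoint $\mc{U}(\mf{g})\otimes_A(-)$ preserves compact objects, and $\mc{U}(\mf{g})$ is compact. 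It then remains to prove that the cotangent complex $L_\mf{g}$ is compact, for then $A$ is compact as the cofiber of a map of compact objects. For this I would use that a compact $\mf{g}$ is presented by a \emph{very good} dg-Lie algebroid, built from finitely many pushouts along the cell maps \eqref{fm:rem:verygoodtameliealgebroids}, each of which attaches a copy of $K[-1]$ for some $K\in\mc{K}$. Because the cotangent complex carries pushouts of Lie algebroids to cofiber sequences of representations (base-changed along the induction functors), and because the relative cotangent complex of such a cell is the free representation on the cofiber $K[-2,-1]/K[-2]\simeq K[-1]$ (Example \ref{fm:ex:cotangentcomplexoffreealgebroid}), an induction over the finitely many cells exhibits $L_\mf{g}$ as a finite iterated extension of free representations $\mc{U}(\mf{g})\otimes_A K[\ast]$ on shifts of the compact objects $K\in\mc{K}$. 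Each of these is compact and finite extensions of compact objects are compact, so $L_\mf{g}$ is compact.

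Granting this, $A$ is compact, the functor $C^*(\mf{g}, \Phi_\mf{g}(-))$ preserves colimits, and $\eta$ is an equivalence on the generator $C^*(\mf{g})$; hence $\eta$ is an equivalence throughout $\Mod_{C^*(\mf{g})}$ and $\Phi_\mf{g}$ is fully faithful. I expect the main obstacle to be precisely the compactness of $L_\mf{g}$: one must make the cellular behaviour of the Lie-algebroid cotangent complex precise (the transitivity cofiber sequence for each cell attachment, correctly base-changed along $\mc{U}(\mf{g}^{(i-1)})\rt \mc{U}(\mf{g})$), and thereby see that the manifestly infinite model $\mc{U}(\mf{g})\otimes_A(\mm{Sym}^{\sgeq 1}_A\mf{g}[1])[-1]$ of Proposition \ref{prop:cotangentofliealgebroid} nevertheless has a small, compact homotopy type. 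A secondary point is that a general compact object of $\LieAlgd^!_A$ is only a retract of a very good one, so one should either check that compactness of $A$ is inherited by retracts or, more cleanly, argue that the assignment $\mf{g}\mapsto L_\mf{g}$ itself preserves compact objects.
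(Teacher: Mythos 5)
Your proposal is correct and follows essentially the same route as the paper: reduce to the unit being an equivalence on the generator $C^*(\mf{g})$, then to compactness of $A\simeq K(\mf{g})$ in $\Rep^!_\mf{g}$ via the cofiber sequence $L_\mf{g}\rt \mc{U}(\mf{g})\rt A$, and finally to compactness of $L_\mf{g}$. For that last step the paper uses exactly the ``cleaner'' alternative you mention at the end --- the cotangent complex functor preserves compact objects because its right adjoint (square-zero extension) preserves filtered colimits, computed at the level of tame dg-$A$-modules --- thereby sidestepping both the cellular induction on $L_\mf{g}$ and the retract issue you flag.
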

\begin{proof}
Let $\mc{C}$ be the class of objects in $\Mod_{C^*(\mf{g})}$ for which the derived unit map is an equivalence. Then $\mc{C}$ is closed under finite colimits and retracts and contains $C^*(\mf{g})$ because $\Phi_\mf{g}(C^*(\mf{g}))\simeq A$. It follows that the unit map is an equivalence for all compact $C^*(\mf{g})$-modules. Since $\Mod_{C^*(\mf{g})}\simeq \mm{Ind}(\Mod_{C^*(\mf{g})}^\omega)$ is the ind-completion of the category of compact $C^*(\mf{g})$-modules, it suffices to show that the right adjoint $C^*(\mf{g}, -)$ preserves filtered colimits. Equivalently, it suffices to verify that $\Phi_\mf{g}(C^*(\mf{g}))\simeq A$ is a compact object of $\Rep_\mf{g}$.

By Remark \ref{fm:rem:cofibersequenceoncotangent}, $A$ fits into a cofiber sequence of $\mc{U}(\mf{g})$-modules
$$\xymatrix{
L_\mf{g}\ar[r] & \mc{U}(\mf{g})\ar[r] & A.
}$$
It suffices to show that $L_\mf{g}$ is compact. This follows from the fact that the cotangent complex functor preserves compact objects, since its right adjoint (taking square zero extensions) preserves filtered colimits, which are computed at the level of tame dg-$A$-modules (Proposition \ref{prop:monadicitytame}).
\end{proof}
\begin{lemma}\label{fm:lem:generatorsforconnectivemodules}
Suppose that $\mf{g}\in \LieAlgd_A^!$ admits a finite filtration 
$$\xymatrix{
0=\mf{g}^{(0)}\ar[r] & \dots \ar[r] & \mf{g}^{(n)}=\mf{g}
}$$ 
with the following property: for each $i$, there is a dg-$A$-module $V$ contained in $\cat{T}^{\sleq -2}$ (see Lemma \ref{lem:tamecofgenerated}), such that $\mf{g}^{(i-1)}\rt \mf{g}^{(i)}$ is a pushout of the map $F(V)\rt 0$ from the free dg-Lie algebroid on $0\colon V\rt T_A$.

Let $E$ be a dg-$\mf{g}$-representation whose underlying tame dg-$A$-module is connective (Remark \ref{rem:connectivemodules}). Then there exists a map $\bigoplus_\alpha A \rt E$ in the $\infty$-category $\Rep^!_\mf{g}$ which induces a surjection on $\pi_0$.
\end{lemma}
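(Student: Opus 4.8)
The plan is to produce the required maps $A\rt E$ one generator of $\pi_0 E$ at a time, trading morphisms of representations for classes in Lie algebroid cohomology via Koszul duality. Since $\mf{g}$ is built from the initial object by pushouts along (generating) cofibrations it is tamely cofibrant, hence $A$-cofibrant, so Remark \ref{rem:ceasext} identifies the trivial representation $A$ with $K(\mf{g})=\Phi_\mf{g}(C^*(\mf{g}))$. The adjunction \eqref{diag:koszulquillenpair} then gives a natural equivalence $\Map_{\Rep^!_\mf{g}}(A,E)\simeq C^*(\mf{g},E)$, so that $\pi_0\Map_{\Rep^!_\mf{g}}(A,E)\cong H_0\, C^*(\mf{g},E)$. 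Unwinding this identification, the map sending a morphism $A\rt E$ to the class of its underlying element in $\pi_0 E$ is induced by the augmentation $C^*(\mf{g},E)\rt E$. Consequently, once I know that this augmentation is surjective on $\pi_0$, I may choose for each element of a generating set of $\pi_0 E$ (as a $\pi_0 A$-module) a morphism $A\rt E$ realizing it and assemble these into $\bigoplus_\alpha A\rt E$; since the forgetful functor to $\Mod^!_A$ and the functor $\pi_0$ both preserve coproducts (Remark \ref{rem:monoidalstructureonreps}), this assembled map is surjective on $\pi_0$.

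It remains to show the augmentation is surjective on $\pi_0$. Let $\ol{C}^*(\mf{g},E)=\Hom_A\big(\mm{Sym}_A^{\sgeq 1}\mf{g}[1],E\big)$ be its fiber; it suffices to prove $\pi_{-1}\ol{C}^*(\mf{g},E)=0$. I will filter $\ol{C}^*(\mf{g},E)$ by polynomial degree. Dualizing the Koszul differential \eqref{fm:eq:koszuldifferential}, the Chevalley--Eilenberg differential does not lower the number of arguments, so $F^p=\Hom_A\big(\mm{Sym}_A^{\sgeq p}\mf{g}[1],E\big)$ is a descending, complete filtration with associated graded $\mm{gr}^p=\Hom_A\big(\mm{Sym}^p_A\mf{g}[1],E\big)$, carrying only the internal differentials of $E$ and $\mf{g}$. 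By the hypotheses and the argument of Lemma \ref{fm:lem:verygoodlralgebroids}, $\mf{g}$ is graded-free with finitely many generators in each degree, all of degree $\leq -1$, so $\mm{Sym}^p_A\mf{g}[1]$ is graded-free with finitely many generators in each degree, all of degree $\leq 0$. The crucial point is that $\mm{gr}^p$ depends on $E$ only through its underlying tame $A$-module: as $\mm{Sym}^p_A\mf{g}[1]$ is graded-free, the functor $\Hom_A(\mm{Sym}^p_A\mf{g}[1],-)$ preserves tame weak equivalences, so I may replace $E$ by its connective cover $\tau_{\sgeq 0}E$ (Remark \ref{rem:connectivemodules}). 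Since $\tau_{\sgeq 0}E$ is concentrated in non-negative degrees and $\mm{Sym}^p_A\mf{g}[1]$ in degrees $\leq 0$, each $\mm{gr}^p$ is concentrated in non-negative degrees, hence connective.

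Finally I transport this connectivity back through the filtration. The Milnor sequence for the complete filtration reads
\[ 0\rt \lim\nolimits^1_p\, \pi_0(\ol{C}^*/F^p)\rt \pi_{-1}\ol{C}^*(\mf{g},E)\rt \lim\nolimits_p\, \pi_{-1}(\ol{C}^*/F^p)\rt 0. \]
Each quotient $\ol{C}^*/F^p$ is a finite extension of the connective complexes $\mm{gr}^q$, hence connective, so the right-hand term vanishes; and because $\pi_{-1}\mm{gr}^p=0$, the fiber sequences $\mm{gr}^p\rt \ol{C}^*/F^{p+1}\rt \ol{C}^*/F^p$ show the transition maps $\pi_0(\ol{C}^*/F^{p+1})\rt \pi_0(\ol{C}^*/F^p)$ are surjective, so that tower is Mittag-Leffler and its $\lim\nolimits^1$ vanishes. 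Thus $\pi_{-1}\ol{C}^*(\mf{g},E)=0$, giving the desired surjectivity. I expect the main obstacle to be exactly this last connectivity bookkeeping: because $E$ is only \emph{tamely} connective rather than concentrated in non-negative degrees, the estimate cannot be read off the underlying graded module of $\ol{C}^*(\mf{g},E)$ directly, but must be extracted at the level of the associated graded---where the representation structure disappears and the connective-cover replacement is legitimate---and then reassembled using completeness of the filtration together with the Mittag-Leffler property. Note also that generators of $\mf{g}$ in degree $-1$ force $\mm{Sym}^p_A\mf{g}[1]$ to reach degree $0$, so one obtains only surjectivity on $\pi_0$ (not an isomorphism), which is precisely what the statement requires.
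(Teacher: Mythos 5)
Your argument is correct, and it reaches the same intermediate target as the paper --- the vanishing of $\pi_0\Map_{\mc{U}(\mf{g})}(L_\mf{g},E)$, which via the cofiber sequence $L_\mf{g}\rt\mc{U}(\mf{g})\rt A$ of Remark \ref{fm:rem:cofibersequenceoncotangent} is precisely the statement that the augmentation $C^*(\mf{g},E)\rt E$ is surjective on $H_0$ --- but it gets there by a different filtration. The paper filters $L_\mf{g}$ \emph{cellularly}: since the cotangent complex is a left adjoint, each pushout $\mf{g}^{(i-1)}\rt\mf{g}^{(i)}$ along $F(V)\rt 0$ contributes a graded piece $\mc{U}(\mf{g})\otimes_A V[1]$ with $V[1]\in\cat{T}^{\sleq -1}$, so $[L_\mf{g},E]=0$ follows from finitely many applications of Remark \ref{rem:connectivemodules} and no convergence question arises. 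You instead filter the explicit cobar model $\mc{U}(\mf{g})\otimes_A\big(\mm{Sym}^{\sgeq 1}_A\mf{g}[1]\big)[-1]$ (equivalently the reduced Chevalley--Eilenberg complex) by polynomial degree; this filtration is infinite, so you must pay for it with completeness and a Mittag--Leffler argument, but in exchange you only use the consequence that $\mf{g}$ is graded-free on generators of degree $\leq -1$ (as in Lemma \ref{fm:lem:verygoodlralgebroids}) rather than the cell structure itself. Your handling of the one genuinely delicate point --- that $E$ is only \emph{tamely} connective, so the connectivity of the associated graded must be extracted after replacing $E$ by $\tau_{\sgeq 0}E$ inside $\Hom_A(\mm{Sym}^p_A\mf{g}[1],-)$, where the representation structure is no longer present --- is sound, since $\mm{Sym}^p_A\mf{g}[1]$ is graded-projective and such Hom-functors preserve tame weak equivalences by definition. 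The paper's route is shorter and avoids the $\lim^1$ bookkeeping; yours is self-contained at the level of the explicit complex, and both correctly explain why only surjectivity (not bijectivity) on $\pi_0$ can be expected.
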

\begin{proof}
Pick representatives $e_\alpha\in E$ for the generators of $\pi_0(E)$ and consider the associated map of $\mf{g}$-representations $\bigoplus_\alpha \mc{U}(\mf{g})\rt E$. This map is clearly surjective on $\pi_0$, so it suffices to prove that it factors (up to homotopy) as
$$\xymatrix{
\bigoplus_\alpha \mc{U}(\mf{g})\ar[r] & \bigoplus_\alpha A\ar[r] & E. 
}$$
Using the cofiber sequence $L_\mf{g}\rt \mc{U}(\mf{g})\rt A$, we therefore have to provide a null-homotopy of each composite map
$$\xymatrix{
L_\mf{g}\ar[r] & \mc{U}(\mf{g})\ar[r]^{e_\alpha} & E.
}$$
By the assumption on $\mf{g}$, the cotangent complex $L_\mf{g}$ admits a filtration by $\mc{U}(\mf{g})$-modules
$$\xymatrix{
0=L_\mf{g}^{(0)}\ar[r] & \dots \ar[r] & L_\mf{g}^{(n)}=L_\mf{g}
}$$
where each $L_\mf{g}^{(i-1)}\rt L_\mf{g}^{(i)}$ has cofiber of the form $\mc{U}(\mf{g})\otimes_A V$, with $V\in \cat{T}^{\sleq -1}$. An inductive application of Remark \ref{rem:connectivemodules} now shows that any map $L_\mf{g}\rt E$ to a connective $\mf{g}$-representation is null-homotopic, which concludes the proof.
\end{proof}
\begin{corollary}\label{fm:cor:equivonconnmodules}
Suppose that $A$ is eventually coconnective and let $\mf{g}\in\LieAlgd_A$ be good. Then $\Phi_\mf{g}\colon\Mod_{C^*(\mf{g})}\rt \Rep_\mf{g}$ is fully faithful and induces an equivalence
$$\xymatrix{
\Phi_\mf{g}\colon \Mod_{C^*(\mf{g})}^{\sgeq 0} \ar[r] & \Rep_{\mf{g}}^{\sgeq 0}
}$$
between the full subcategories consisting of connective $C^*(\mf{g})$-modules and representations whose underlying $A$-module is connective.
\end{corollary}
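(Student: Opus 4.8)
The plan is to deduce both assertions from Lemmas \ref{fm:lem:fullyfaithfulonqcoh} and \ref{fm:lem:generatorsforconnectivemodules}, passing through the tame model for representations. First I would record that a good Lie algebroid $\mf{g}$ is compact in $\LieAlgd_A^!$: by definition it is built from finitely many pushouts of the maps $F(A[n]\to T_A)\to 0$ with $n\leq -2$, and each $F(A[n]\to T_A)$ is compact because $A[n]$ is a compact object of $\Mod_A^!$ (coproducts there are computed as direct sums of graded-projective complexes, on which $\Hom_A(A,-)$ acts as the identity) and the forgetful functor $\LieAlgd_A^!\to\Mod_A^!/T_A$ preserves filtered colimits (Proposition \ref{prop:monadicitytame}). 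Lemma \ref{fm:lem:fullyfaithfulonqcoh} then shows that $\Phi_\mf{g}\colon\Mod_{C^*(\mf{g})}\to\Rep^!_\mf{g}$ is fully faithful. Since a good $\mf{g}$ is $A$-cofibrant (Lemma \ref{fm:lem:verygoodlralgebroids}), the functor $\Phi_\mf{g}$ factors through the fully faithful inclusion $\Rep_\mf{g}\hookrightarrow\Rep_\mf{g}^!$; as this inclusion is itself fully faithful, $\Phi_\mf{g}\colon\Mod_{C^*(\mf{g})}\to\Rep_\mf{g}$ is fully faithful as well, settling the first claim.

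For the statement about connective objects, I would first check that $\Phi_\mf{g}$ and its right adjoint $C^*(\mf{g},-)$ both respect connectivity. Since $C^*(\mf{g})$ is connective, $\Phi_\mf{g}(C^*(\mf{g}))\simeq A$ is connective, and $\Phi_\mf{g}$ preserves colimits, so it carries connective $C^*(\mf{g})$-modules to connective representations. For the other direction I would use the fiber sequence $C^*(\mf{g},E)\to E\to \Hom_{\mc{U}(\mf{g})}(L_\mf{g},E)$ induced by the cofiber sequence $L_\mf{g}\to\mc{U}(\mf{g})\to A$ of Remark \ref{fm:rem:cofibersequenceoncotangent}. The filtration of $L_\mf{g}$ with associated graded of the form $\mc{U}(\mf{g})\otimes_A V$, $V\in\cat{T}^{\sleq -1}$ (as in the proof of Lemma \ref{fm:lem:generatorsforconnectivemodules}), shows that $\Hom_{\mc{U}(\mf{g})}(L_\mf{g},E)$ is $1$-connective whenever $E$ is connective, so the fiber sequence forces $C^*(\mf{g},E)$ to be connective.

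With these preparations, essential surjectivity onto $\Rep_\mf{g}^{\sgeq 0}$ can be obtained by analyzing the counit. Given a connective representation $E$, the counit $\Phi_\mf{g}(C^*(\mf{g},E))\to E$ has connective source (by the previous paragraph) and connective target, so its cofiber $C$ is connective; moreover full faithfulness of $\Phi_\mf{g}$ gives $C^*(\mf{g},C)\simeq 0$. I would then argue by induction that $C$ vanishes. The null-homotopy statement established in the proof of Lemma \ref{fm:lem:generatorsforconnectivemodules} shows $\pi_0\Hom_{\mc{U}(\mf{g})}(L_\mf{g},C)=0$ for connective $C$, so the fiber sequence above yields $\pi_0 C^*(\mf{g},C)\twoheadrightarrow \pi_0 C$; as $C^*(\mf{g},C)\simeq 0$ this forces $\pi_0 C=0$. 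Having shown $C$ to be $n$-connective, applying the same argument to the connective module $C[-n]$ and using that $C^*(\mf{g},-)$ commutes with shifts gives $\pi_n C=0$, so by induction $C\simeq 0$. Thus the counit is an equivalence on connective objects, $E\simeq \Phi_\mf{g}(C^*(\mf{g},E))$ with $C^*(\mf{g},E)$ connective, and combined with full faithfulness this exhibits the equivalence $\Mod_{C^*(\mf{g})}^{\sgeq 0}\simeq\Rep_\mf{g}^{\sgeq 0}$.

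I expect the main obstacle to be connectivity bookkeeping rather than any deep new input. One must make sure that the notion of connectivity in $\Rep_\mf{g}$ (underlying $A$-module connective) agrees with the tame notion in $\Rep_\mf{g}^!$ under the inclusion (Remark \ref{rem:connectivemodules}), and carefully track the connectivity of $\Hom_{\mc{U}(\mf{g})}(L_\mf{g},E)$ through the $L_\mf{g}$-filtration. It is precisely there that the hypotheses on $\mf{g}$ (generators in degrees $\leq -2$, so that $L_\mf{g}$ is built from $\cat{T}^{\sleq -1}$-cells) enter, together with the eventual coconnectivity of $A$, which guarantees the compactness underlying the applicability of Lemma \ref{fm:lem:fullyfaithfulonqcoh}.
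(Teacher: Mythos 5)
Your proposal is correct, and its first half (full faithfulness via compactness of good Lie algebroids and Lemma \ref{fm:lem:fullyfaithfulonqcoh}) coincides with the paper's argument. For the equivalence on connective objects the two arguments diverge in organization, though both are ultimately powered by the same input from Lemma \ref{fm:lem:generatorsforconnectivemodules}. The paper characterizes $\Phi_\mf{g}\big(\Mod_{C^*(\mf{g})}^{\sgeq 0}\big)$ as the smallest subcategory of $\Rep_\mf{g}$ closed under colimits and extensions containing $A$, and then shows every connective representation $E$ lies in it by an explicit cell-attachment induction: using Lemma \ref{fm:lem:generatorsforconnectivemodules} it builds a sequence $E^{(n)}\rt E$ of objects in that subcategory with increasingly connective fibers. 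You instead prove that the counit $\Phi_\mf{g}C^*(\mf{g},E)\rt E$ is an equivalence for connective $E$, by showing (i) both $\Phi_\mf{g}$ and $C^*(\mf{g},-)$ preserve connectivity, the latter via the fiber sequence $C^*(\mf{g},E)\rt E\rt \Hom_{\mc{U}(\mf{g})}(L_\mf{g},E)$ and the $\cat{T}^{\sleq -1}$-cell filtration of $L_\mf{g}$, and (ii) $C^*(\mf{g},-)$ detects zero objects among connective representations, since the null-homotopy statement $\pi_0\Hom_{\mc{U}(\mf{g})}(L_\mf{g},C)=0$ forces $\pi_0C^*(\mf{g},C)\twoheadrightarrow\pi_0C$, after which a shift-and-induct argument kills the cofiber of the counit. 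Your route costs the extra verification that $C^*(\mf{g},-)$ preserves connectivity (which the paper gets for free a posteriori, as the inverse of an equivalence), but in exchange replaces the transfinite cell construction by a short vanishing argument; both are legitimate Postnikov-style inductions resting on the same $\pi_0$-generation statement. One small point of bookkeeping: for this corollary the relevant compactness is that of $\mf{g}$ in $\LieAlgd_A$ (so that $A$ is compact in $\Rep_\mf{g}$, not just in $\Rep^!_\mf{g}$); your justification via $\Mod_A^!$ transposes verbatim to $\Mod_A$, where $A[n]$ is evidently compact, so this is cosmetic rather than a gap.
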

\begin{corollary}\label{fm:cor:equivonconnprocoherent}
Suppose that $A$ is bounded coherent and let $\mf{g}\in\LieAlgd_A^!$ be good. Then $\Phi_\mf{g}\colon\Mod_{C^*(\mf{g})}\rt \Rep_\mf{g}$ is fully faithful and induces an equivalence
$$\xymatrix{
\Phi_\mf{g}\colon \Mod_{C^*(\mf{g})}^{\sgeq 0} \ar[r] & \Rep_{\mf{g}}^{!,\sgeq 0}
}$$
between the full subcategories consisting of connective $C^*(\mf{g})$-modules and representations whose underlying pro-coherent sheaf over $A$ is connective.
\end{corollary}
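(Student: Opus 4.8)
The plan is to run the formal argument that proves Corollary~\ref{fm:cor:equivonconnmodules}, now using the tame model structure throughout and interpreting ``connective'' via the $t$-structure of Remark~\ref{rem:connectivemodules}. First I would observe that a good tame Lie algebroid $\mf{g}$ is compact in $\LieAlgd_A^!$. Since $A$ is bounded coherent, $\Mod_A^!$ is compactly generated (Proposition~\ref{prop:tamecompactlygenerated}), and each generating cell $K_{\alpha,n}\simeq F(K_\alpha[n])$, with $K_\alpha\in\mc{K}^{\sleq 0}$, is the value on the compact spectrum $\field{S}^n$ of the left adjoint to $e_\alpha$. As each $e_\alpha$ preserves filtered colimits (Section~\ref{sec:tamecase}), this left adjoint preserves compact objects, so $K_{\alpha,n}$ is compact and hence so is the finite cell complex $\mf{g}$. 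Lemma~\ref{fm:lem:fullyfaithfulonqcoh} then gives that $\Phi_\mf{g}\colon\Mod_{C^*(\mf{g})}\rt\Rep_\mf{g}^!$ is fully faithful.

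Next I would verify that $\Phi_\mf{g}$ preserves connectivity. By (the tame analogue of) Lemma~\ref{fm:lem:verygoodlralgebroids}, $\mf{g}$ is concentrated in negative degrees; since $A$ is bounded, its dual $C^*(\mf{g})=\Hom_A\big(\mm{Sym}_A\mf{g}[1],A\big)$ is then concentrated in nonnegative degrees, so $C^*(\mf{g})$ is a connective cdga. The connective $C^*(\mf{g})$-modules are generated under colimits by the shifts $C^*(\mf{g})[m]$ with $m\geq 0$, and $\Phi_\mf{g}$ is colimit-preserving with $\Phi_\mf{g}(C^*(\mf{g}))\simeq A\in\Rep_\mf{g}^{!,\sgeq 0}$. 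As the connective objects of $\Rep_\mf{g}^!$ are pulled back from the $t$-structure on $\Mod_A^!$ and hence closed under colimits, $\Phi_\mf{g}$ carries $\Mod_{C^*(\mf{g})}^{\sgeq 0}$ into $\Rep_\mf{g}^{!,\sgeq 0}$.

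The main point is essential surjectivity. The essential image $\mc{I}$ of the fully faithful, colimit-preserving $\Phi_\mf{g}$ is closed under all colimits and contains every coproduct $\bigoplus_\alpha A\simeq\Phi_\mf{g}\big(\bigoplus_\alpha C^*(\mf{g})\big)$. Because $\mf{g}$ is good, its defining filtration meets the hypothesis of Lemma~\ref{fm:lem:generatorsforconnectivemodules}: each cell attaches along a map $F(V)\rt 0$ with $V=K_\alpha[n]\in\cat{T}^{\sleq -2}$. Thus every $E\in\Rep_\mf{g}^{!,\sgeq 0}$ admits a map $\bigoplus_\alpha A\rt E$ surjective on $\pi_0$, and a short long-exact-sequence computation in the tame $t$-structure shows its fiber is again connective. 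Iterating produces a bar-type resolution of $E$ by coproducts of copies of $A$, exhibiting $E$ as a colimit of objects of $\mc{I}$; hence $E\in\mc{I}$. Combined with the previous paragraph, this shows $\Phi_\mf{g}$ restricts to an equivalence $\Mod_{C^*(\mf{g})}^{\sgeq 0}\simeq\Rep_\mf{g}^{!,\sgeq 0}$.

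I expect the only real obstacle to be bookkeeping with the pro-coherent $t$-structure: one must consistently read ``connective'' through Remark~\ref{rem:connectivemodules} so that the free functor lands the cells in $\cat{T}^{\sleq -2}$, that $\Phi_\mf{g}$ respects this notion of connectivity, and that the fiber of the chosen $\pi_0$-surjection stays connective --- which is exactly what makes the resolution terminate degreewise. Granting Lemma~\ref{fm:lem:generatorsforconnectivemodules}, the rest is the same formal manipulation as in Corollary~\ref{fm:cor:equivonconnmodules}.
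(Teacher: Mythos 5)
Your proposal follows the paper's own proof: full faithfulness via compactness of good objects and Lemma \ref{fm:lem:fullyfaithfulonqcoh}, and essential surjectivity onto connective representations via Lemma \ref{fm:lem:generatorsforconnectivemodules} and an iterated cell-attachment argument. Your verification that a good pro-coherent Lie algebroid satisfies the hypothesis of Lemma \ref{fm:lem:generatorsforconnectivemodules} (the cells are free on objects of $\cat{T}^{\sleq -2}$) and that $\mf{g}$ is compact are both correct and slightly more explicit than the paper's.

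Two points need tightening. First, in the surjectivity step you show that a connective $E$ lies in $\mc{I}=\Phi_\mf{g}\big(\Mod_{C^*(\mf{g})}\big)$, the essential image of the \emph{whole} module category; this only gives $E\simeq \Phi_\mf{g}(M)$ for some $M$, and you still owe the reader the fact that $M$ can be taken connective. The paper avoids this by working from the start with $\cat{C}:=\Phi_\mf{g}\big(\Mod^{\sgeq 0}_{C^*(\mf{g})}\big)$, which (since $\Phi_\mf{g}$ is fully faithful, exact and colimit-preserving) is the smallest subcategory of $\Rep^!_\mf{g}$ containing $A$ and closed under colimits \emph{and extensions} --- mirroring the standard characterization of $\Mod^{\sgeq 0}_{C^*(\mf{g})}$ over the connective cdga $C^*(\mf{g})$. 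Your generation claim ``under colimits by the shifts $C^*(\mf{g})[m]$'' should be replaced by this colimits-and-extensions statement, both because it is the form actually used and because closure under extensions is what guarantees that each stage of the resolution stays in $\cat{C}$. Second, ``iterating produces a bar-type resolution \dots exhibiting $E$ as a colimit'' compresses the real work: the paper builds an explicit tower $0=E^{(-1)}\rt E^{(0)}\rt\cdots$ in $\cat{C}$, with $E^{(n)}$ the cofiber of a map $\bigoplus_\alpha A[m]\rt \mm{fib}\big(E^{(n-1)}\rt E\big)\rt E^{(n-1)}$ supplied by a shift of Lemma \ref{fm:lem:generatorsforconnectivemodules}, and uses the five lemma to see that $E^{(n)}\rt E$ becomes increasingly connected, so that $\colim_n E^{(n)}\rt E$ is an equivalence. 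Your outline contains the right ingredients, but the convergence of the resolution and the membership of each stage in $\cat{C}$ are exactly the two things that must be checked, not merely asserted.
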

\begin{proof}
Both assertions are proven in the same way. Since $\mf{g}$ is good, it is compact in $\LieAlgd_A^!$ (or $\LieAlgd_A$), so that $\Phi_\mf{g}$ is fully faithful by Lemma \ref{fm:lem:fullyfaithfulonqcoh}.

The subcategory $\Mod_{C^*(\mf{g})}^{\sgeq 0}\subseteq \Mod_{C^*(\mf{g})}$ is the smallest subcategory of $\Mod_{C^*(\mf{g})}$ which is closed under colimits and extensions and which contains $C^*(\mf{g})$. As a consequence, the essential image 
$$
\cat{C}:= \Phi_\mf{g}\Big(\Mod_{C^*(\mf{g})}^{\sgeq 0}\Big)\subseteq \Rep_\mf{g}^!
$$ under $\Phi_\mf{g}$ is the smallest subcategory of $\Rep^!_\mf{g}$ which is closed under colimits and extensions and which contains $A$. Clearly $\cat{C}$ is contained in $\Rep_\mf{g}^{!, \sgeq 0}$, so it suffices to prove the reverse inclusion. 

To this end, let $E$ be a left $\mc{U}(\mf{g})$-module whose underlying (tame) dg-$A$-module is connective. We will inductively construct a sequence of left $\mc{U}(\mf{g})$-modules $0=E^{(-1)}\rt E^{(0)}\rt \cdots \rt E$ such that each $E^{(n)}\in \cat{C}$ and such that each map $E^{(n)}\rt E$ induces an isomorphism on homotopy groups in degrees $<n$ and a surjection on $\pi_n$. It follows that the map $\colim E^{(n)}\rt E$ is a weak equivalence, so that $E\in \cat{C}$. 

To construct this sequence, suppose we have constructed $E^{(n-1)}$ and let $F$ be the fiber of the map $E^{(n-1)}\rt E$. Then $F$ is a left $\mc{U}(\mf{g})$-module whose underlying (tame) dg-$A$-module is $(n-2)$-connective. Since $\mf{g}$ is good, a shift of Lemma \ref{fm:lem:generatorsforconnectivemodules} shows that there exists a map $\bigoplus_\alpha A[n-2]\rt F$ which induces a surjection on $\pi_{n-2}$. Now let $E^{(n)}$ be the cofiber of the map $\bigoplus_\alpha A[n-2]\rt F\rt E^{(n-1)}$. This cofiber is contained in $\cat{C}$ and the five lemma shows that $E^{(n)}\rt E$ induces an isomorphism on homotopy groups in degrees $<n$ and a surjection on $\pi_n$.
\end{proof}

\subsection{Naturality}
Let us now address the functoriality of $\Phi_\mf{g}$ in the dg-Lie algebroid $\mf{g}$. This is somewhat delicate, because the Quillen adjunction
$$\xymatrix{
K(\mf{g})\otimes_{C^*(\mf{g})} (-)\colon \dgMod_{C^*(\mf{g})} \ar@<1ex>[r] & \dgRep_\mf{g}\colon C^*(\mf{g}, -)\ar@<1ex>[l]
}$$
does not strictly intertwine restriction of $\mf{g}$-representations with induction of $C^*(\mf{g})$-modules. However, this does become true at the level of $\infty$-categories. To see this, let us make the following definitions:
\begin{construction}\label{fm:con:smcatofdgmodules}
Let $\Mod^{\dg, \otimes}$ be the category in which
\begin{smitemize}
 \item an object is a tuple $(B, M_1, \dots, M_m)$, where $B$ is a cdga and each $M_i$ is a dg-$B$-module.
 \item a morphism $(B, M_1, \dots, M_m)\rt (C, N_1, \dots, N_n)$ is a map of finite pointed sets $\alpha\colon \big<m\big>\rt \big<n\big>$, a map of cdgas $B\rt C$ and $B$-multinear maps $\bigotimes^{\alpha(i)=j} M_i\rt N_j$.
\end{smitemize}
Similarly, let $\Rep^{\dg, \otimes}$ be the category in which
\begin{smitemize}
 \item an object is a tuple $(\mf{g}, E_1, \dots, E_m)$, where $\mf{g}$ is a (tame) $A$-cofibrant dg-Lie algebroid and each $E_i$ is a $\mf{g}$-representation.
 \item a morphism $(\mf{g}, E_1, \dots, E_m)\rt (\mf{h}, F_1, \dots, F_n)$ is a map of finite pointed sets $\alpha\colon \big<m\big>\rt \big<n\big>$, a map of dg-Lie algebroids $\mf{h}\rt \mf{g}$ and maps of $\mf{h}$-representations $\bigotimes^{\alpha(i)=j}_A E_i\rt F_j$.
 \end{smitemize}
These categories fit into a commuting square
\begin{equation}\label{diag:ceatdglevel}\vcenter{\xymatrix@R=1.7pc{
\Rep^{\dg, \otimes}\ar[d]\ar[r]^-{C^*} & \Mod^{\dg, \otimes}\ar[d]\\
\left(\LieAlgd_A^{\dg, A-\mm{cof}}\right)^\op\times \Gamma^\op\ar[r]_-{C^*} & \dgCAlg_k\times \Gamma^\op.
}}\end{equation}
The vertical functors are the obvious projections, which are both cocartesian fibrations. The top functor sends $(\mf{g}, E_1, \dots, E_m)$ to $\big(C^*(\mf{g}), C^*(\mf{g}, E_1), \dots, C^*(\mf{g}, E_m)\big)$. 
\end{construction}
\begin{lemma}
After inverting the tame weak equivalences on the left and quasi-isomorphisms on the right of \eqref{diag:ceatdglevel}, one obtains a commuting square of $\infty$-categories
$$\xymatrix@R=1.7pc{
\Rep^{!, \otimes}\ar[r]^{C^*}\ar[d] & \Mod^\otimes\ar[d]\\
\big(\LieAlgd_A^{!}\big)^{\op}\times\Gamma^\op\ar[r]_-{C^*} & \CAlg_k\times \Gamma^\op.
}$$
in which the vertical functors are cocartesian fibrations.
\end{lemma}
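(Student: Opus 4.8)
The plan is to deduce the statement from a general result on localizing cocartesian fibrations of relative categories: if $p\colon \mc{E}\rt \mc{B}$ is a cocartesian fibration of ordinary categories, and both $\mc{E}$ and $\mc{B}$ carry classes of weak equivalences that $p$ preserves, the fibrewise weak equivalences make each fibre $\mc{E}_b$ a relative category, and every cocartesian pushforward descends to the localized fibres (for instance because it is homotopical, or because it is left derivable), then the induced functor $\mc{E}[W_\mc{E}^{-1}]\rt \mc{B}[W_\mc{B}^{-1}]$ is again a cocartesian fibration, whose fibre over $b$ is the localization $\mc{E}_b[(W_\mc{E}\cap \mc{E}_b)^{-1}]$ and whose pushforward functors are the (derived) localized pushforwards. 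I would apply this to each of the two vertical functors in \eqref{diag:ceatdglevel}, which are cocartesian fibrations at the point-set level by Construction \ref{fm:con:smcatofdgmodules}.

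Three things then need to be checked. First, the bases localize correctly: the factor $\Gamma^\op$ carries no nontrivial weak equivalences and is unaffected; $\dgCAlg_k$ localizes to $\CAlg_k$ by definition; and $(\LieAlgd_A^{\dg, A-\mm{cof}})^\op$ localizes to $(\LieAlgd_A^!)^\op$ because every dg-Lie algebroid admits an $A$-cofibrant (indeed cofibrant) replacement such as $Q(\mf{g})$, so the full subcategory of $A$-cofibrant objects already computes the localization $\LieAlgd_A^!$ of Definition \ref{def:tameliealgebroids}. Second, the fibres localize to the expected $\infty$-categories: over $(\mf{g},\langle m\rangle)$ the fibre of the left-hand functor is $(\dgRep_\mf{g})^{\times m}$ with the tame weak equivalences, localizing to $(\Rep_\mf{g}^!)^{\times m}$, while over $(B,\langle m\rangle)$ the fibre of the right-hand functor is $(\dgMod_B)^{\times m}$, localizing to $(\Mod_B)^{\times m}$. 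This identifies the localized total categories with $\Rep^{!,\otimes}$ and $\Mod^\otimes$.

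Third, and this is the crux, the cocartesian pushforwards must descend to the localized fibres. The only strictly homotopical pushforward is restriction: along a morphism $\mf{h}\rt \mf{g}$ of Lie algebroids the left-hand pushforward is $f^!$, which preserves all tame weak equivalences since these are detected on underlying $A$-modules and $f^!$ is the identity there. All the remaining pushforwards are merely left derivable and must be treated as derived functors. On the left, the tensor products $\otimes_A$ in the $\Gamma^\op$-direction descend by Remark \ref{rem:monoidalstructureonreps} and Lemma \ref{lem:propertiesoftamereps}, using $A$-cofibrancy of $\mf{g}$; on the right, the pushforward along $B\rt C$ is the induction $C\otimes_B(-)$, a left Quillen functor whose left derived functor is the derived base change serving as pushforward, exactly as in the standard construction of the symmetric monoidal cocartesian fibration $\Mod^\otimes\rt \CAlg_k\times\Gamma^\op$ \cite{lur16}. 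The main obstacle is therefore to invoke a localization theorem for cocartesian fibrations phrased so as to accommodate left-derivable (rather than strictly homotopical) pushforwards, and to verify that each such pushforward really does descend to an equivalence-invariant functor between the localized fibres.

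Finally, commutativity of the localized square is automatic once all four functors are seen to be homotopical: the point-set square \eqref{diag:ceatdglevel} commutes strictly, so functoriality of localization yields the commuting square of $\infty$-categories. The bottom $C^*$ preserves weak equivalences by the argument in the proof of Proposition \ref{prop:deformationadjunction} (it preserves weak equivalences between $A$-cofibrant dg-Lie algebroids), and the top $C^*$ does so because, $\mf{g}$ being $A$-cofibrant, $C^*(\mf{g},-)$ is a right Quillen functor and every object of the tame model structure on representations is fibrant, so $C^*(\mf{g},-)$ preserves all tame weak equivalences. Together with the second and third steps this exhibits the localized vertical functors as cocartesian fibrations fitting into the asserted commuting square.
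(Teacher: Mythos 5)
Your outline matches the paper's strategy -- localize a point-set cocartesian fibration fibrewise and invoke a localization theorem for cocartesian fibrations -- but you stop exactly at the step that carries the content. You correctly observe that the pushforwards involving $\otimes_A$ (and base change on the module side) are only left derivable, not homotopical, on all of $\Rep^{\dg,\otimes}$, and then you write that "the main obstacle is to invoke a localization theorem \dots phrased so as to accommodate left-derivable pushforwards." No such theorem is invoked in the paper, and hoping for one leaves the proof incomplete: the localization result actually used (Hinich, Proposition 2.1.4 of \cite{hin16}) requires the cocartesian pushforwards to preserve weak equivalences on the nose, plus the additional hypothesis -- which you do not mention -- that pushforward along a weak equivalence in the base (and a bijection in $\Gamma^\op$) induces an \emph{equivalence} of localized fibres.

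The missing idea is the paper's reduction to the homotopical case: replace $\Rep^{\dg,\otimes}$ by the full subcategory $\cat{C}$ of tuples $(\mf{g}, E_1,\dots,E_m)$ in which each $E_i$ is tamely cofibrant as a dg-$A$-module. The inclusion $\cat{C}\subseteq \Rep^{\dg,\otimes}$ induces an equivalence on localizations (cofibrant replacement provides an inverse), the projection from $\cat{C}$ is still a cocartesian fibration, and on $\cat{C}$ the fibre transition functors $(E_j)_j\mapsto \big(\bigotimes_{\alpha(j)=i} f^* E_j\big)_i$ genuinely preserve all weak equivalences -- this is precisely where the $A$-cofibrancy of the $E_i$ (not just of $\mf{g}$) enters, via Remark \ref{rem:monoidalstructureonreps}. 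One then checks that these functors are equivalences when $\alpha$ is a bijection and $f$ is a tame weak equivalence, and Hinich's result applies directly; the same device handles $\Mod^\otimes$. Your commutativity argument and your identification of the localized bases and fibres are fine, but without the passage to $\cat{C}$ the core assertion that the localized projections are cocartesian fibrations is not established.
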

\begin{proof}
All functors in \eqref{diag:ceatdglevel} preserve weak equivalences, so that they descend to functors between localizations. It suffices to verify that the vertical projections remain cocartesian fibrations after inverting the quasi-isomorphisms. To see this, let $\cat{C}\subseteq \Rep^{\dg, \otimes}$ be the full subcategory on $(\mf{g}, E_1, \dots, E_m)$ where each $E_i$ is tamely cofibrant as a dg-$A$-module. The inclusion $\cat{C}\subseteq \Rep^{\dg, \otimes}$ induces an equivalence on localizations, with inverse provided by a cofibrant replacement functor. The projection
$$\xymatrix{
\cat{C}\ar[r] & \Big(\LieAlgd_A^{\dg, A-\mm{cof}}\Big)^\op\times \Gamma^\op
}$$
is a cocartesian fibration. For any map $\alpha\colon \big<m\big>\rt \big<n\big>$ and any $f\colon \mf{h}\rt \mf{g}$, the induced functor between fibers is given by
$$\xymatrix{
\left(\Rep_\mf{g}^{\dg, A-\mm{cof}}\right)^{\times m}\ar[r] & \left(\Rep_\mf{h}^{\dg, A-\mm{cof}}\right)^{\times n}; \hspace{4pt} (E_j)_{j\leq m}\ar[r] & \left(\bigotimes_{\alpha(j)=i} f^*E_j\right)_{i\leq m}.
}$$
This functor preserves all quasi-isomorphisms and induces an equivalence of $\infty$-categories whenever $\alpha$ is a bijection and $f$ is a quasi-isomorphism. It follows from \cite[Proposition 2.1.4]{hin16} that the induced functor of $\infty$-categories
$$\smash{\xymatrix{
\Rep^\otimes\ar[r] & \LieAlgd_A^\op\times\Gamma^\op
}}$$
is a cocartesian fibration. A similar argument shows that $\Mod^\otimes\rt \CAlg_k\times\Gamma^\op$ is a cocartesian fibration.
\end{proof}
\begin{remark}\label{rem:cocartesianedges}
Fix a map $\alpha\colon \big<m\big>\rt \big<n\big>$, a map $f\colon \mf{h}\rt \mf{g}$ and a collection $E_1, \dots, E_m$ of $\mf{g}$-representations that are tamely cofibrant dg-$A$-modules. By \cite[Proposition 2.1.4]{hin16}, the cocartesian lift of $(f, \alpha)$ in $\Rep^\otimes$ with domain $(\mf{g}, E_1, \dots, E_m)$ is the image of
$$\xymatrix{
(\mf{g}, E_1, \dots, E_m)\ar[r] & \left(\mf{h}, \bigotimes_{\alpha(j)=1} f^*E_j, \dots, \bigotimes_{\alpha(j)=n} f^*E_j\right)
}$$
in the $\infty$-categorical localization of $\Rep_\mf{g}^{\dg, \otimes}$.
\end{remark}
\begin{lemma}\label{fm:lem:functorialleftadjoint}
Let $\smash{\Mod^\otimes_{C^*}\rt \big(\LieAlgd^!_A\big){}^\op\times\Gamma^\op}$ denote the base change of the projection $\Mod^\otimes \rt \CAlg_k\times\Gamma^\op$ along the functor $C^*$, so that there is a functor of cocartesian fibrations
$$\xymatrix@C=1.6pc@R=1.6pc{
\Rep^{!, \otimes}\ar[rr]^{C^*}\ar[rd] & & \Mod^\otimes_{C^*}\ar[ld]\\
& \big(\LieAlgd^!_A\big)^\op\times\Gamma^\op. & 
}$$
This functor admits a left adjoint $\Phi$, which preserves cocartesian edges.
\end{lemma}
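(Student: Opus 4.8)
The functor $C^*$ does not itself preserve cocartesian edges: over $\Gamma^\op$ it is only lax symmetric monoidal (Remark \ref{rem:celaxmonoidal}), and over $\big(\LieAlgd_A^!\big)^\op$ it does not commute with base change, which is exactly the delicacy flagged before the lemma. The plan is therefore to build the left adjoint $\Phi$ directly from its fibers and to verify that \emph{it} preserves cocartesian edges. Write $B = \big(\LieAlgd_A^!\big)^\op\times\Gamma^\op$. Over an object $(\mf{g}, \langle m\rangle)$ of $B$ the functor $C^*$ is the $m$-fold power of the Chevalley--Eilenberg functor $C^*(\mf{g}, -)\colon \Rep_\mf{g}^!\rt \Mod_{C^*(\mf{g})}$, which by the derived Quillen pair \eqref{diag:koszulquillenpair} admits the left adjoint $\Phi_\mf{g} = K(\mf{g})\otimes_{C^*(\mf{g})}(-)$. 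Since every fiber of $C^*$ admits a left adjoint, the theory of relative adjunctions \cite[Section 7.3.2]{lur16} produces a left adjoint $\Phi$ to $C^*$ relative to $B$ whose fibers are the $\Phi_\mf{g}$; moreover $\Phi$ preserves cocartesian edges if and only if, for every edge of $B$, the canonical Beck--Chevalley transformation relating $\Phi$ to the two cocartesian pushforward functors is an equivalence. As $B$ is a product it suffices to treat the edges of $\Gamma^\op$ (with $\mf{g}$ fixed) and the edges of $\big(\LieAlgd_A^!\big)^\op$ (with $\langle m\rangle$ fixed) separately.

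The key input for both checks is that $K(\mf{g})\rt A$ is a weak equivalence of $\mf{g}$-representations (Remark \ref{rem:ceasext}), so that $\Phi_\mf{g}$ is identified with extension of scalars $A\otimes_{C^*(\mf{g})}(-)$ along the augmentation $C^*(\mf{g})\rt A$. Over $\Gamma^\op$ this gives the required compatibility at once: extension of scalars along a map of commutative algebras is strong symmetric monoidal, so that $\Phi_\mf{g}\big(M_1\otimes_{C^*(\mf{g})} M_2\big)\simeq \Phi_\mf{g}(M_1)\otimes_A \Phi_\mf{g}(M_2)$ and $\Phi_\mf{g}(C^*(\mf{g}))\simeq A$. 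Since the fibrewise tensor products encoded by $\Rep^{!,\otimes}$ and $\Mod^\otimes_{C^*}$ are $\otimes_A$ and $\otimes_{C^*(\mf{g})}$, this is precisely the statement that $\Phi$ carries $\Gamma^\op$-cocartesian edges to cocartesian edges, i.e.\ that each $\Phi_\mf{g}$ is the strong monoidal adjoint of the lax monoidal functor $C^*(\mf{g}, -)$.

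Over $\big(\LieAlgd_A^!\big)^\op$ I would argue as follows. For $f\colon \mf{h}\rt \mf{g}$ the two relevant cocartesian pushforwards are restriction $f^!\colon \Rep_\mf{g}^!\rt \Rep_\mf{h}^!$ and base change $C^*(\mf{h})\otimes_{C^*(\mf{g})}(-)$ induced by $C^*(\mf{g})\rt C^*(\mf{h})$. For a $C^*(\mf{g})$-module $N$ the two composites are
$$\Phi_\mf{h}\big(C^*(\mf{h})\otimes_{C^*(\mf{g})} N\big)\simeq K(\mf{h})\otimes_{C^*(\mf{g})} N \qquad\text{and}\qquad f^!\,\Phi_\mf{g}(N)\simeq \big(f^!K(\mf{g})\big)\otimes_{C^*(\mf{g})} N,$$
each carrying its evident $\mf{h}$-representation. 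The Beck--Chevalley transformation is the canonical comparison covering the identity of $A$; since $K(\mf{h})\simeq A$ and $f^!K(\mf{g})\simeq f^!A\simeq A$ as $\mf{h}$-representations, it is an equivalence. This is the main point, and it is where the flagged subtlety is resolved: although the strict Quillen functors do not intertwine restriction with base change, the identification of $\Phi_\mf{g}$ with $A\otimes_{C^*(\mf{g})}(-)$ forces the two $\infty$-categorical composites to agree.

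The remaining work is only to organise the fiberwise adjunctions and the Beck--Chevalley equivalences into coherent functorial data, which is exactly what \cite[Section 7.3.2]{lur16} (together with the cocartesian-fibration input of \cite[Proposition 2.1.4]{hin16} used in the preceding lemmas) is designed to supply. I expect the verification of the restriction-direction Beck--Chevalley condition to be the only genuine obstacle, the $\Gamma^\op$-direction being a formal consequence of $\Phi_\mf{g}$ being base change along the augmentation $C^*(\mf{g})\rt A$.
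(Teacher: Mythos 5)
Your proposal is correct and follows essentially the same route as the paper: fiberwise left adjoints from the Quillen pair \eqref{diag:koszulquillenpair}, Lurie's relative adjunction criterion \cite[Proposition 7.3.2.11]{lur16} to globalize, and a Beck--Chevalley check that reduces to the fact that $K(\mf{g})\simeq A$, i.e.\ that $\Phi_\mf{g}$ is base change along the augmentation $C^*(\mf{g})\rt A$. The only (harmless) organizational difference is that you verify the two directions of the base separately, while the paper checks a single combined comparison map and reduces to free modules $M_i\simeq C^*(\mf{g})$ by colimit preservation, where it becomes the equivalence $K(\mf{h})\rt K(\mf{g})\otimes_A\dots\otimes_A K(\mf{g})$ between resolutions of $A$.
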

\begin{proof}
For each $\big<m\big>$ and each dg-Lie algebroid $\mf{g}$, the functor between the fibers 
$$\smash{\xymatrix{
C^*\colon \big(\Rep^!_\mf{g}\big){}^{\times m}\ar[r] & \Mod^{\times m}_{C^*(\mf{g})}
}}$$
admits a left adjoint $\Phi_\mf{g}$: this is just the $m$-fold product of the left derived functor of the Quillen pair \eqref{diag:koszulquillenpair}. By Remark \ref{rem:cocartesianedges} and \cite[Proposition 7.3.2.11]{lur16}, the existence of the global left adjoint $\Phi\colon \Mod_{C^*}\rt \Rep$, as well as the fact that it preserves cocartesian edges, follows once we verify the following: for any map of dg-Lie algebroids $f\colon \mf{h}\rt \mf{g}$ and any collection of $C^*(\mf{g})$-modules $M_i$, the natural map
$$\xymatrix{
\Phi_\mf{h}\big(C^*(\mf{h})\otimes_{C^*(\mf{g})}M_1\otimes_{C^*(\mf{g})}\dots\otimes_{C^*(\mf{g})} M_m\big)\ar[r] & \Phi_{\mf{g}}(M_1)\otimes_A \dots \otimes_A \Phi_\mf{g}(M_m)
}$$
is an equivalence. Since both functors preserve colimits of modules in each variable, we can reduce to the case where each $M_i$ is equivalent to $C^*(\mf{g})$. In that case, the map can be identified with a map
$$\smash{\xymatrix{
K(\mf{h})\ar[r] & K(\mf{g})\otimes_A \dots \otimes_A K(\mf{g})
}}$$
between Koszul complexes. Since both $K(\mf{g})$ and $K(\mf{h})$ were resolutions of the canonical representation $A$, the result follows.
\end{proof}
In other words, the functors $\Phi_\mf{g}$ from Section \ref{fm:sec:representationsofgood} determine a natural (symmetric monoidal) transformation between diagrams of symmetric monoidal $\infty$-categories.

\subsection{Quasicoherent sheaves}\label{fm:sec:qcoh}
When restricted to $\LieAlgd_A\subseteq \LieAlgd_A^!$, the left adjoint $\Phi$ of Lemma \ref{fm:lem:functorialleftadjoint} corresponds under straightening to a natural transformation
$$\xymatrix@C=3pc{
\LieAlgd_A^\op\ar@/^1.5pc/[r]^{\Mod_{C^*}}_{}="s" \ar@/_1.5pc/[r]_{\Rep}^{}="t" & \hspace{50pt}\cat{Pr_{sym. mon.}^\mm{L}}=\CAlg(\cat{Pr^L}) \ar@{=>}"s";"t"^{\Phi}
}$$
between diagrams of presentable (closed) symmetric monoidal $\infty$-categories, with symmetric monoidal left adjoint functors between them. This natural transformation is given pointwise by the left derived functor $\Phi_\mf{g}$ of \eqref{diag:koszulquillenpair}.

We can precompose with the duality functor $\mf{D}$ \eqref{diag:deformationadjunction} and obtain a natural transformation of functors
\begin{equation}\label{fm:diag:psinattransformation}\vcenter{\xymatrix{
\CAlg_k^\mm{sm}/A\ar[d]\ar[rr] \ar[d] & & \CAlg_k\ar[d]^{\Mod}\\
\Fun(\CAlg_k^\mm{sm}/A, \sS)^\op\ar[r]_-{\mf{D}_!} & \LieAlgd_A^\op\ar[r]_-{\Rep} & \cat{Pr_{sym. mon.}^\mm{L}} \ar@{=>}[-1,0]+<-50pt, -10pt>;[0, -2]+<50pt, 15pt>^\Psi
}}\end{equation}
Here $\mf{D}_!$ is the unique limit-preserving functor which restricts to $\mf{D}$ on the corepresentable functors. For each $B$ in $\CAlg_k^\mm{sm}/A$, the functor $\Psi_B$ is the composite
\begin{equation}\label{fm:diag:compositeformodules}\vcenter{\xymatrix{
\Psi_{B}\colon \Mod_{B}\ar[r] & \Mod_{C^*\mf{D}(B)} \ar[r]^{\Phi_{\mf{D}(B)}} & \Rep_{\mf{D}(B)}
}}\end{equation}
where the first functor arises from the unit map $B\rt C^*\mf{D}(B)$.

\begin{definition}\label{def:qc}
The presentable, symmetric monoidal $\infty$-category $\QC(X)$ of \emph{quasi-coherent sheaves} on a functor $X\colon \CAlg_k^\mm{sm}/A\rt \sS$ is the value on $X$ of the right Kan extension
$$\xymatrix{
\CAlg_k^\mm{sm}/A\ar[r]^\Mod\ar[d] & \cat{Pr_{sym. mon.}^\mm{L}}\\
\Fun(\CAlg_k^\mm{sm}/A, \sS)^\op\ar@{..>}[ru]_{\QC}
}$$
This right Kan extension exists by \cite[Lemma 5.1.5.5]{lur09}, since $ \cat{Pr_{sym. mon.}^\mm{L}}$ has all small limits \cite[Proposition 4.8.1.15]{lur16}. 
\end{definition}
By the universal property of $\QC$, we obtain a natural transformation of presentable symmetric monoidal $\infty$-categories $\Psi_X\colon\QC(X)\rt \Rep_{\mf{D}_!(X)}$. When $X$ is corepresentable, this is simply the functor $\Psi$ from \eqref{fm:diag:psinattransformation}. If $A$ is eventually coconnective and $X$ is a formal moduli problem over $A$, then $\mf{D}_!(X)$ is naturally equivalent to $T_{A/X}$, by Remark \ref{rem:mcintermsofD}. We thus obtain a natural symmetric monoidal functor
\begin{equation}\label{fm:diag:psi_X}\vcenter{\xymatrix{
\Psi_X\colon \QC(X)\ar[r] & \Rep_{T_{A/X}}
}}\end{equation}
for any formal moduli problem $X$. By naturality, the composition
$$\smash{\xymatrix{
\QC(X)\ar[r]^{\Psi_X} \ar[r] & \Rep_{T_{A/X}} \ar[r] & \Rep_0 = \Mod_A 
}}$$
is naturally equivalent to the functor $x^*\colon \QC(X)\rt \Mod_A$ that restricts $F$ to the basepoint $x\in X(A)$.
\begin{proof}[Proof (of Theorem \ref{fm:thm:quasi-coherentvsrepresentations})]
We have to prove that for any formal moduli problem $X$, $\Psi_X$ \eqref{fm:diag:psi_X} is fully faithful and restricts to an equivalence between $\QC^{\sgeq 0}(X)$ and $\Rep_{T_{A/X}}^{\sgeq 0}$. When $X$ is representable by an object $B\in \cat{CAlg}_k^\mm{sm}/A$, this functor is given by the composite \eqref{fm:diag:compositeformodules}. The first functor is an equivalence by Theorem \ref{thm:formalmoduliperfect}, so that the result follows from Corollary \ref{fm:cor:equivonconnmodules}.
 
The functor $\QC$ sends sifted colimits in $\cat{Fun}(\cat{CAlg}^\mm{sm}/A, \sS)$ to limits of symmetric monoidal $\infty$-categories by construction. Lemma \ref{fm:lem:algrepspreservessiftedcolim} implies that the same assertion holds for $X\mapsto \Rep_{\mf{D}_!(X)}$. Every formal moduli problem is a sifted colimit of representable functors, since every Lie algebroid is a sifted colimit of good Lie algebroids. It follows that $\Psi_X$ is fully faithful, being a limit of fully faithful functors. 

It remains to identify the essential image of $\QC(X)^{\sgeq 0}$. Recall that a quasi-coherent sheaf $F$ is connective if and only if $f^*F\in \Mod_{B}$ is connective for any $f\in X(B)$. In terms of its image $\Psi_X(E)$, this means for any map $f\colon \mf{D}(B)\rt T_{A/X}$, the restricted representation $f^!\Psi_X(E)$ is the image of a connective $B$-module. By Corollary \ref{fm:cor:equivonconnmodules}, this is equivalent to $\Psi_X(E)$ being a connective $T_{A/X}$-representation.
\end{proof}
\begin{corollary}
Let $A$ be eventually coconnective and let $X$ be a formal moduli problem such that $T_{A/X}$ is connective. Then there is an equivalence
$$\xymatrix{
\Psi_X\colon \QC(X)\ar[r] & \Rep_{T_{A/X}}.
}$$
\end{corollary}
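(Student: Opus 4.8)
The plan is to deduce this from Theorem \ref{fm:thm:quasi-coherentvsrepresentations}, which already provides that $\Psi_X$ is fully faithful and restricts to an equivalence $\QC(X)^{\sgeq 0}\simeq \Rep_{T_{A/X}}^{\sgeq 0}$ on connective objects. Since a fully faithful functor is an equivalence as soon as it is essentially surjective, the only thing left to check is that the essential image of $\Psi_X$ is all of $\Rep_{T_{A/X}}$. First I would record that $\Psi_X$, being a symmetric monoidal \emph{left} adjoint between stable $\infty$-categories, preserves all colimits and is exact; hence its essential image $\cat{C}\subseteq \Rep_{T_{A/X}}$ is a localizing subcategory, i.e.\ closed under all colimits and under shifts in both directions.

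The key input is the hypothesis that $T_{A/X}$ is connective, which I would use to show that the free rank-one representation --- the universal enveloping algebra $\mc{U}(T_{A/X})$, regarded as a module over itself --- is connective. To see this, model $T_{A/X}$ by a connective, $A$-cofibrant dg-Lie algebroid $\mf{g}$. By the PBW theorem recalled in the proof of Lemma \ref{lem:pbw}, the PBW filtration on $\mc{U}(\mf{g})$ has associated graded $\mm{Sym}_A\mf{g}$. Since $A$ and $\mf{g}$ are both connective, $\mm{Sym}_A\mf{g}$ is connective, and a comparison of the filtrations then shows that $\mc{U}(\mf{g})$ is connective as an $A$-module. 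In other words, $\mc{U}(T_{A/X})$ lies in $\Rep_{T_{A/X}}^{\sgeq 0}$.

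With this in hand the argument concludes formally. By Theorem \ref{fm:thm:quasi-coherentvsrepresentations} the essential image $\cat{C}$ contains every connective representation, so in particular it contains $\mc{U}(T_{A/X})$. Under the identification $\Rep_{T_{A/X}}\simeq \mm{LMod}_{\mc{U}(T_{A/X})}$, the free module $\mc{U}(T_{A/X})$ is a compact generator, so the smallest localizing subcategory containing it is the whole category. As $\cat{C}$ is localizing and contains $\mc{U}(T_{A/X})$, this forces $\cat{C}=\Rep_{T_{A/X}}$, so that $\Psi_X$ is essentially surjective and therefore an equivalence. The one nontrivial point --- and the place where the connectivity assumption on $T_{A/X}$ is genuinely used --- is the connectivity of $\mc{U}(T_{A/X})$; everything else is formal manipulation of the localizing subcategory generated by a compact generator.
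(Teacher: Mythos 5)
Your argument is correct, but it proceeds along a genuinely different route from the paper's. The paper also begins with the observation that connectivity of $T_{A/X}$ forces $\mc{U}(T_{A/X})$ to be connective, but then uses this to equip $\Rep_{T_{A/X}}$ with a \emph{right complete} $t$-structure (via \cite[Example 2.2.1.3]{lur16}); it asserts the analogous right completeness for $\QC(X)$, writes both $\infty$-categories as colimits in $\cat{Pr^L}$ of the sequences $\cat{C}^{\sgeq 0}\rt \cat{C}^{\sgeq -1}\rt\cdots$, and concludes because $\Psi_X$ restricts to an equivalence on each stage by (a shift of) Theorem \ref{fm:thm:quasi-coherentvsrepresentations}. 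You instead work entirely on the representation side: the essential image of the fully faithful, colimit-preserving, exact functor $\Psi_X$ is a localizing subcategory, it contains the connective object $\mc{U}(T_{A/X})$ by the equivalence on connective parts, and the localizing subcategory generated by this compact generator of $\mm{LMod}_{\mc{U}(T_{A/X})}\simeq \Rep_{T_{A/X}}$ is everything. Both proofs hinge on the same key input (connectivity of the enveloping algebra, which as you note follows from the PBW filtration with associated graded $\mm{Sym}_A\mf{g}$); your version has the small advantage of not needing to verify right completeness of the $t$-structure on $\QC(X)$, which is defined as a limit of module categories, while the paper's version treats the two sides symmetrically. One cosmetic remark: you do not actually need to choose a dg-Lie algebroid model that is literally concentrated in nonnegative degrees --- it suffices that $\mf{g}$ is connective as an object of $\Mod_A$, since $\mm{Sym}_A$ preserves connectivity at the derived level and connectivity of $\mc{U}(\mf{g})$ can be read off from the associated graded of the PBW filtration.
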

\begin{proof}
Since $T_{A/X}$ is connective, its enveloping algebra $\mc{U}(T_{A/X})$ is connective as well. It follows that $T_A$-representations carry a \emph{right complete} $t$-structure, where $\Rep^{\sgeq 0}_{T_{A/X}}$ consists of $\mf{g}$-representations whose underlying chain complex is connective \cite[Example 2.2.1.3]{lur16}. Similarly, $\QC(X)$ carries a right complete $t$-structure where $F\in \QC(X)^{\sgeq 0}$ if and only if $f^*F$ is a connective chain complex for all $f\in X(B)$. 

The functor $\Psi_X$ fits into a sequence of locally presentable $\infty$-categories and left adjoint functors between them
$$\xymatrix{
\QC(X)^{\sgeq 0}\ar[d]_{\Psi_X^{\sgeq 0}}\ar[r] & \QC(X)^{\sgeq -1}\ar[r]\ar[d]_{\Psi_X^{\sgeq -1}} & \dots \ar[r] & \QC(X)\ar[d]^{\Psi_X}\\
\Rep_{T_{A/X}}^{\sgeq 0}\ar[r] & \Rep_{T_{A/X}}^{\sgeq -1}\ar[r] & \dots \ar[r] &  \Rep_{T_{A/X}}.
}$$
Since $\QC(X)$ and $\Rep_{T_{A/X}}$ are right complete, the horizontal sequences are colimit diagrams, so that the result follows Theorem \ref{fm:thm:quasi-coherentvsrepresentations}.
\end{proof}

\subsection{Deformations of algebras}
Theorem \ref{fm:thm:quasi-coherentvsrepresentations} can be used to study the deformation theory of connective (commutative) $A$-algebras. Suppose that $R$ is a cofibrant commutative dg-algebra in $\Mod^{\dg, \sgeq 0}_A$, so that $R$ determines an object in the $\infty$-category $\CAlg(\Mod^{\sgeq 0}_A)$. Consider the functor
$$\xymatrix{
\mm{Def}_R\colon \cat{FormMod}_A^\op\ar[r] & \widehat{\cat{Cat}}_\infty; \hspace{4pt} X\ar@{|->}[r] & \CAlg(\QC(X))\times_{\CAlg(\Mod_A)}\{R\}
}$$
sending each formal moduli problem $X$ to the (locally small) $\infty$-category of commutative algebras in $\QC(X)$, equipped with an equivalence between $R$ and their restriction to the canonical basepoint $x\in X(A)$. One can think of a point in $\mm{Def}_R(X)$ as a \emph{deformation} of the commutative algebra along the map of stacks $\spec(A)\rt X$.

Every such deformation is necessarily connective, since $R$ itself is connective. It follows from Theorem \ref{thm:formalmoduliperfect} and Theorem \ref{fm:thm:quasi-coherentvsrepresentations} that this functor can be identified with the functor
$$\smash{\xymatrix{
\mm{Act}_R\colon \LieAlgd_A^\op\ar[r] & \widehat{\cat{Cat}}_\infty; \hspace{4pt} \mf{g}\ar@{|->}[r] & \CAlg(\Rep_\mf{g})\times_{\CAlg(\Mod_A)} \{R\}
}}$$
sending each Lie algebroid $\mf{g}$ to the category of commutative algebras in $\Rep_\mf{g}$ whose underlying commutative $A$-algebra is equivalent to $R$. In light of Example \ref{ex:algebrainrep}, one an think of $\mm{Act}_R(\mf{g})$ as the category of action of $\mf{g}$ on $R$ by derivations.

It follows from Lemma \ref{fm:lem:algrepspreservessiftedcolim} that $\mm{Act}_R$, and therefore $\mm{Def}_R$, preserves all limits. In particular, the restriction of $\mm{Def}_R$ along the Yoneda embedding gives rise to a formal moduli problem
$$\xymatrix{
\mm{Def}_R\colon \CAlg_k^\mm{sm}/A\ar[r] & \sS; \hspace{4pt} B\ar@{|->}[r] & \CAlg(\Mod_B)\times_{\CAlg(\Mod_A)} \{R\}.
}$$
This takes values in (small) spaces because $\mm{Def}_R(A\oplus A[n])\simeq \Omega \mm{Def}_R(A\oplus A[n+1])$ and all $\infty$-categories involved are locally small. In particular, $\mm{Def}_R$ is classified by a certain Lie algebroid $\mf{h}$.

Recall from Example \ref{ex:atiyah2} that associated to the commutative dg-algebra $R$ in $\dgMod_A$ is a dg-Lie algebroid $\mm{At}_{\field{E}_\infty}(R)$ of compatible derivations of $A$ and $R$. Since $R$ is a cofibrant commutative dg-algebra, $\mm{At}_{\field{E}_\infty}(R)$ is a fibrant dg-Lie algebroid. There is an obvious representation of $\mm{At}_{\field{E}_\infty}(R)$ on $R$ itself by means of derivations, which is classified by a map
$$\smash{\xymatrix{
\phi\colon \mm{At}_{\field{E}_\infty}(R)\ar[r] & \mf{h}.
}}$$
We have the following (folklore) result, cf.\ \cite{toe16} (see also \cite{hin04}):
\begin{proposition}\label{prop:defofalgebras}
The map $\phi$ is an equivalence for any cofibrant connective dg-$A$-algebra $R$. In other words, $\mm{At}_{\field{E}_\infty}(R)$ classifies the formal moduli problem $\mm{Def}_R$.
\end{proposition}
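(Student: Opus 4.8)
The plan is to show that $\mm{At}_{\field{E}_\infty}(R)$ and $\mf{h}$ both represent one and the same space-valued presheaf on Lie algebroids, namely
$$\mm{Act}_R\colon \LieAlgd_A^\op\rt \sS,$$
and that $\phi$ is exactly the comparison of representing objects supplied by the Yoneda lemma. Granting this, the tautological action of $\mm{At}_{\field{E}_\infty}(R)$ on $R$ corresponds to the identity under the first representation and to $\phi$ under the second, so the composite natural equivalence $\Map_{\LieAlgd_A}(-, \mm{At}_{\field{E}_\infty}(R))\simeq \mm{Act}_R\simeq \Map_{\LieAlgd_A}(-, \mf{h})$ is postcomposition with $\phi$. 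It is therefore enough to establish the two representations; full faithfulness of $\Psi$ (Proposition \ref{prop:luriedefthy}) then lets one even check the relevant equivalences only on good Lie algebroids.

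First I would identify $\mm{Act}_R$ with the functor represented by $\mm{At}_{\field{E}_\infty}(R)$. By Example \ref{ex:algebrainrep}, a commutative algebra structure on a $\mf{g}$-representation whose underlying commutative $A$-algebra is $R$ is precisely a map of dg-Lie algebroids $\mf{g}\rt \mm{At}_{\field{E}_\infty}(R)$. Since $R$ is a cofibrant commutative dg-$A$-algebra, $\mm{At}_{\field{E}_\infty}(R)$ is fibrant, so evaluating this strict bijection on a cofibrant model of $\mf{g}$ computes the derived mapping space and yields a natural equivalence
$$\mm{Act}_R(\mf{g})\simeq \Map_{\LieAlgd_A}\big(\mf{g}, \mm{At}_{\field{E}_\infty}(R)\big).$$
The tautological action is the universal element of the right-hand functor, so the map $\phi$ it classifies is exactly the Yoneda map attached to this equivalence.

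On the other side, $\mf{h}$ represents $\mm{Act}_R$ by construction. Indeed, $\mf{h}$ classifies the formal moduli problem $\mm{Def}_R$, and under the identification $\mm{Def}_R\simeq \mm{Act}_R$ together with Remark \ref{rem:mcintermsofD} this reads
$$\Map_{\LieAlgd_A}\big(\mf{D}(B), \mf{h}\big)=\mm{MC}_\mf{h}(B)\simeq \mm{Def}_R(B)\simeq \mm{Act}_R\big(\mf{D}(B)\big)$$
for every small extension $B$, i.e. on every good Lie algebroid $\mf{D}(B)$. Both $\mm{Act}_R$ (Lemma \ref{fm:lem:algrepspreservessiftedcolim}) and $\Map_{\LieAlgd_A}(-, \mf{h})$ send colimits of Lie algebroids to limits of spaces, and every Lie algebroid is a sifted colimit of good ones, so agreement on good objects propagates to an equivalence $\mm{Act}_R\simeq \Map_{\LieAlgd_A}(-, \mf{h})$ of presheaves. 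Combined with the previous paragraph, $\phi$ induces an equivalence on all mapping spaces and is thus an equivalence.

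The main obstacle is the first identification: upgrading the strict bijection of Example \ref{ex:algebrainrep} to a natural equivalence of mapping \emph{spaces}. One must verify that, after fixing the cofibrant underlying algebra $R$, the fiber $\CAlg(\Rep_\mf{g})\times_{\CAlg(\Mod_A)}\{R\}$ is genuinely the derived space of dg-Lie algebroid maps into the fibrant object $\mm{At}_{\field{E}_\infty}(R)$ — in particular that the morphisms of this fiber record no more than the homotopies between such maps, so that $\mm{Act}_R$ is indeed space-valued and the equivalence is natural in $\mf{g}$. Everything else is a formal consequence of representability and the Yoneda lemma.
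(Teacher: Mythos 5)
Your overall strategy (both objects corepresent $\mm{Act}_R$, and $\phi$ is the Yoneda comparison) is sound in outline, and the second half --- that $\mf{h}$ corepresents $\mm{Act}_R$ because $\mm{Act}_R$ sends colimits of Lie algebroids to limits and every Lie algebroid is a sifted colimit of good ones --- is correct and is also how the paper sets the problem up. But the step you yourself flag as ``the main obstacle'' is a genuine gap, and it is not a routine verification: it is essentially the content of the proposition. The issue is that $\mm{Act}_R(\mf{g})=\CAlg(\Rep_\mf{g})\times_{\CAlg(\Mod_A)}\{R\}$ is a \emph{homotopy} fiber. Its objects are (strict models of) commutative algebras $S$ in $\Rep_\mf{g}$ together with an \emph{equivalence} in $\CAlg(\Mod_A)$ between the underlying algebra of $S$ and $R$; there is no reason such an $S$ is a strict $\mf{g}$-action on the chosen cofibrant model $R$ itself. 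Example \ref{ex:algebrainrep} only identifies the \emph{set} of strict actions on $R$ with the \emph{set} of strict maps $\mf{g}\rt\mm{At}_{\field{E}_\infty}(R)$; promoting this to a natural equivalence of spaces $\mm{Act}_R(\mf{g})\simeq\Map_{\LieAlgd_A}(\mf{g},\mm{At}_{\field{E}_\infty}(R))$ is a rectification statement (every homotopy-coherent algebra structure lifting $R$ strictifies to an action on $R$ by derivations, naturally and coherently in $\mf{g}$), and fibrancy of $\mm{At}_{\field{E}_\infty}(R)$ alone does not deliver it.

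The paper avoids having to prove this global rectification. It observes that a map of corepresented functors on $\LieAlgd_A$ is an equivalence once it induces bijections on $\pi_0\Map(\mf{s}_{n+1},-)$ and $\pi_1\Map(\mf{s}_n,-)$ for the generating free Lie algebroids $\mf{s}_n=F(0\colon A[n]\rt T_A)$, and then computes \emph{both} sides by hand for these specific objects: $\pi_1\Map(\mf{s}_n,\mm{At}_{\field{E}_\infty}(R))\cong\pi_{n+1}\mm{Der}_A(R,R)$ using fibrancy of the Atiyah algebroid, and $\pi_1\mm{Act}_R(\mf{s}_n)\cong\pi_{n+1}\mm{Der}_A(R,R)$ using the adjunctions $(r^!,r_!)$, $(i^!,i_!)$ and the identification $C^*(\mf{s}_n,r^!R)\simeq R\oplus R[-n-1]$ from Section \ref{fm:sec:ceoffree}, so that $M_1$ becomes the set of sections of a square-zero extension. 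The remaining (and essential) work is checking that $\phi_*$ intertwines these two identifications, which the paper does by tracing the generator of $\mf{s}_{n+1}$ through $\psi$. If you want to keep your Yoneda-style argument, you would need to supply the strictification result for general $\mf{g}$; otherwise you should follow the paper and reduce to the generators $\mf{s}_n$, where $\mm{Act}_R(\mf{s}_n)$ can be computed without rectifying anything.
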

\begin{proof}
Let $\mf{s}_n$ be the free Lie algebroid on the map $0\colon A[n]\rt T_A$, with natural maps $i\colon 0\rt \mf{s}_n$ and $r\colon \mf{s}_n\rt 0$. It suffices to verify that the maps $\phi_*$ in the following commuting diagram are bijections for all $n$:
$$\xymatrix@R=1.9pc{
\pi_1\Map\big(\mf{s}_{n}, \mm{At}_{\field{E}_\infty}(R)\big) \ar[r]^{\phi_*}\ar[d]^{\cong} & \pi_1\Map\big(\mf{s}_{n}, \mf{h}\big) \ar[r]^-{\cong}\ar[d]^{\cong} & \pi_1\mm{Act}_R(\mf{s}_{n})=: M_1\ar[d]_\psi^{\cong}\\
\pi_0\Map(\mf{s}_{n+1}, \mm{At}_{\field{E}_\infty}(R))\ar[r]_{\phi_*} & \pi_0\Map\big(\mf{s}_{n+1}, \mf{h}\big) \ar[r]_-{\cong} & \pi_0\mm{Act}_R(\mf{s}_{n+1})=: M_0.
}$$
The dg-Lie algebroid $\mm{At}(R)$ is fibrant and the kernel of its anchor map is the Lie algebra of $A$-linear derivations of $R$. The abelian group $\pi_1\Map\big(\mf{s}_{n}, \mm{At}_{\field{E}_\infty}(R)\big)$ can therefore be identified with $\pi_{n+1}\mm{Der}_A(R, R)$.

On the other hand, $M_1$ can \emph{also} be identified with $\pi_{n+1}\mm{Der}_A(R, R)$. To see this, let $r^!R\in \CAlg(\Rep_{\mf{s}_n})$ be the trivial representation of $\mf{s}_{n}$ on $R$. We can identify $M_1$ with the set of homotopy classes of maps $\alpha\colon r^!R\rt r^!R$ such that the restriction $i^!(\alpha)$ is the identity map in $\CAlg(\Mod_A)$. Using that the restriction functors $r^!$ and $i^!$ have a right adjoints $r_!$ and $i_!$, such maps can be identified with sections of
$$\xymatrix{
r_!r^!(R)\ar[r]^-{r_!\eta r^!} & r_!i_!i^!r^!(R) = R
}$$
in the $\infty$-category $\CAlg(\Mod_A)$. The right adjoint $r_!$ can be computed as
$$
r_!(r^!R) \simeq \Hom^\field{R}_{\mc{U}(\mf{s}_n)}(A, r^!R) \simeq C^*\big(\mf{s}_n, r^!R\big).
$$
In fact, since $\mf{s}_n$ is a free Lie algebroid, the discussion of Section \ref{fm:sec:ceoffree} shows that restriction along $A[n]\rt\mf{s}_n$ induces a weak equivalence of commutative dg-$A$-algebras
$$\xymatrix{
C^*(\mf{s}_n, r^!R)\ar[r]^-\sim & R\oplus \Hom_A\big(A[n+1], r^!R\big) \cong R\oplus R[-n-1]
}$$
to the \emph{square zero extension} of $R$ by a shifted copy of itself. We conclude that $M_1$ is isomorphic to the set of homotopy classes of sections in $\CAlg(\Mod_A)$ of
$$\smash{\xymatrix{
R\oplus R[-n-1]\ar[r] & R.
}}$$
The set of such sections is indeed isomorphic to $\pi_{n+1}\mm{Der}_A(R, R)$, as asserted. 

Consider an element $v\in\mm{Der}_A(R, R)\cong M_1$, corresponding to a map $v\colon r^!R\rt r^!R$ in $\CAlg(\Rep_{\mf{s}_n})$. Unwinding the definitions, the image $\psi(v)\in M_0$ is given by (the equivalence class of) the $\mf{s}_{n+1}$-action on $R$ where the generator of $\mf{s}_{n+1}$ acts by $v$. This representation is exactly the image of the map $v\colon A[n+1]\rt \mf{s}_{n+1}\rt \mm{At}(R)$ under the map $\phi_*$. We conclude that $\phi_*$ is indeed an isomorphism.
\end{proof}
\begin{remark}
The same proof shows that for any (simplicial, coloured) operad $\ope{P}$ and a cofibrant $\ope{P}$-algebra $R$, the formal moduli problem $\mm{Def}_R$ of deformations of $R$ is classified by the Atiyah Lie algebroid $\mm{At}_{\ope{P}}(R)$ of Example \ref{ex:atiyah}. For example, this provides an explicit dg-Lie algebroid classifying the deformations of modules, associative algebras or diagrams thereof.
\end{remark}

\section{Representations and pro-coherent sheaves}\label{fm:sec:indcoh}
Let $X$ be a formal moduli problem over $A$ with associated Lie algebroid $T_{A/X}$. We have seen in the previous section that the $\infty$-category of $T_{A/X}$-representation is an \emph{extension} of the $\infty$-category of quasi-coherent sheaves on $X$. The purpose of this section is to provide a geometric description of this extension when $A$ is bounded coherent, by identifying it with the $\infty$-category of \emph{pro-coherent sheaves} on $X$ (Definition \ref{def:procohonformmod}).
\begin{theorem}\label{fm:thm:ind-coherentvsrepresentations}
Let $A$ be a cofibrant, bounded and coherent commutative dg-$k$-algebra and let $X$ be a pro-coherent formal moduli problem over $A$. Then there is an equivalence
$$\xymatrix{
\Psi_X\colon \QC^!(X)\ar[r]^-\sim & \Rep^!_{T_{A/X}}
}$$
between the $\infty$-categories of \emph{pro-coherent sheaves} on $X$ and pro-coherent $T_{A/X}$-representations. For any map $f\colon X\rt Y$ of pro-coherent formal moduli problems, this equivalence identifies the functor $f^!\colon \QC^!(Y)\rt \QC^!(X)$ with the restriction functor from $T_{A/Y}$-representations to $T_{A/X}$-representations.
\end{theorem}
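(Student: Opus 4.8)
The plan is to reduce the statement to the representable case and there to recognize $\Psi_X$ as a Morita equivalence. Exactly as in Section~\ref{fm:sec:qcoh}, the functor $\Psi_X$ is obtained by right Kan extending the natural transformation $\Phi\colon \Mod_{C^*}\Rightarrow \Rep^!$ of Lemma~\ref{fm:lem:functorialleftadjoint} along the Yoneda embedding and precomposing with $\mf{D}_!$; on a representable pro-coherent formal moduli problem, corresponding under Theorem~\ref{thm:formalmodulicoherent} to a good tame Lie algebroid $\mf{g}$ with $C^*(\mf{g})\simeq B$, it is the functor $\Phi_\mf{g}\colon \Mod_{C^*(\mf{g})}\rt \Rep^!_\mf{g}$. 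Both $\QC^!$ and $X\mapsto \Rep^!_{T_{A/X}}$ carry sifted colimits of pro-coherent formal moduli problems to limits of $\infty$-categories: for the target this is Lemma~\ref{lem:propertiesoftamereps}(3) together with the equivalence $T_{A/}$ of Theorem~\ref{thm:formalmodulicoherent}, and for the source it is built into the right Kan extension defining $\QC^!$. Since every pro-coherent formal moduli problem is a sifted colimit of representables, it therefore suffices to prove that $\Phi_\mf{g}$ is an equivalence for every good $\mf{g}$; the naturality encoded by Lemma~\ref{fm:lem:functorialleftadjoint} (that $\Phi$ preserves cocartesian edges) then simultaneously identifies $f^!$ with restriction of representations, giving the second assertion for free.

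So I would fix a good $\mf{g}$. Being good, $\mf{g}$ is compact in $\LieAlgd^!_A$, so $\Phi_\mf{g}$ is fully faithful by Lemma~\ref{fm:lem:fullyfaithfulonqcoh}, and $A\simeq \Phi_\mf{g}(C^*(\mf{g}))$ is a compact object of $\Rep^!_\mf{g}$ whose endomorphism algebra is $\Hom_{\mc{U}(\mf{g})}(A,A)\simeq C^*(\mf{g})$. By the Schwede--Shipley/Morita recognition theorem, $\Phi_\mf{g}$ is then an equivalence as soon as $A$ is a \emph{generator} of $\Rep^!_\mf{g}$, equivalently as soon as the right adjoint $C^*(\mf{g},-)$ is conservative, equivalently as soon as the essential image of $\Phi_\mf{g}$ --- which, $\Phi_\mf{g}$ being a fully faithful left adjoint, is the localizing subcategory generated by $A$ --- is all of $\Rep^!_\mf{g}$. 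Corollary~\ref{fm:cor:equivonconnprocoherent} already shows that this essential image contains the connective tame representations $\Rep^{!,\sgeq 0}_\mf{g}$, and hence, being stable under shifts, every bounded-below tame representation.

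The main obstacle is to pass from bounded-below representations to arbitrary ones, i.e.\ to prove the conservativity of $C^*(\mf{g},-)$. This is precisely the step that uses the hypothesis that $A$ is bounded and coherent, and it is exactly what fails in the setting of Theorem~\ref{fm:thm:quasi-coherentvsrepresentations}, where $A$ is not a generator of $\Rep_\mf{g}$ and $\Psi_X$ is correspondingly only fully faithful. I would establish it by analyzing the polynomial (symmetric-power) filtration of $C^*(\mf{g},E)=\Hom_{\mc{U}(\mf{g})}(A,E)$, as in Construction~\ref{fm:cons:globalizedCE}, whose associated graded is $\prod_n \Hom_A\big(\mm{Sym}^n_A\mf{g}[1], E\big)$ with only the internal differential. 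Since $\mf{g}$ is good it is, without differential, of the form $\bigoplus_{n<0}A^{\oplus k_n}[n]$ with finitely many generators, so each $\mm{Sym}^n_A\mf{g}[1]$ is a finite free tame $A$-module; over bounded $A$ the biduality map is an isomorphism (Corollary~\ref{cor:koszuldualitycoherent}, Proposition~\ref{prop:tamecompactlygenerated}), which is what makes the associated filtration complete and convergent, so that vanishing of the total complex propagates to its $\mm{Sym}^0$-summand, which is $E$ itself. I expect the cleanest way to organize this is by induction over the finite cell structure of $\mf{g}$, reducing to a single generating cell $\mm{Free}(K[-2]\rt T_A)$ with $K\in\mc{K}^{\sleq 0}$ and invoking the tame connectivity computations of Remark~\ref{rem:connectivemodules} and Lemma~\ref{fm:lem:generatorsforconnectivemodules}.

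Granting conservativity, $\Phi_\mf{g}\colon \Mod_{C^*(\mf{g})}\rt \Rep^!_\mf{g}$ is an equivalence for every good $\mf{g}$. Assembling these equivalences over the representables as a limit, by the reduction of the first paragraph, then yields the asserted equivalence $\Psi_X\colon \QC^!(X)\rt \Rep^!_{T_{A/X}}$ for every pro-coherent formal moduli problem $X$, with the identification of $f^!$ with the restriction functor following from the same naturality.
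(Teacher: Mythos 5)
There is a genuine gap, and it is located at the very first step: you identify $\QC^!$ of a representable pro-coherent formal moduli problem with $\Mod_{C^*(\mf{g})}$ and then try to show that $\Phi_\mf{g}\colon \Mod_{C^*(\mf{g})}\rt \Rep^!_\mf{g}$ is an equivalence by proving conservativity of $C^*(\mf{g},-)$. But by Definition \ref{def:procohonformmod}, $\QC^!$ is the \emph{left} Kan extension of $B\mapsto \mm{Ind}(\cat{Coh}_B^\op)$, so on a representable its value is $\mm{Ind}(\cat{Coh}_{C^*(\mf{g})}^\op)$, which is not $\Mod_{C^*(\mf{g})}$ in general. Correspondingly, the statement you set out to prove is false: $A$ is \emph{not} a generator of $\Rep^!_\mf{g}$. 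Already for $\mf{g}=0$ (which is good) one has $C^*(\mf{g})=A$ and $\Rep^!_0=\Mod_A^!$, and your $\Phi_0$ is the canonical fully faithful embedding $\Mod_A\hookrightarrow \Mod_A^!=\mm{Ind}(\cat{Coh}_A^\op)$, which fails to be essentially surjective whenever $\cat{Coh}_A\neq \Perf_A$ (e.g.\ $A=k[x]/x^2$); the localizing subcategory of $\Mod_A^!$ generated by $A$ is exactly $\Mod_A$, whereas the compact generators of $\Mod_A^!$ are the preduals of all connective coherent $A$-modules (Proposition \ref{prop:tamecompactlygenerated}). So no amount of analysis of the symmetric-power filtration will make $C^*(\mf{g},-)$ conservative on $\Rep^!_\mf{g}$; this is precisely the same obstruction that makes $\Psi_X$ only fully faithful in the quasi-coherent setting, and it does not disappear by assuming $A$ bounded and coherent.

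The actual proof must therefore work with a different comparison functor on representables. The paper's route is the duality functor $\mu\colon E\mapsto C^*(\mf{g}, E^\vee)$, which (Proposition \ref{prop:dualityforreps}) restricts to an equivalence $\Rep_\mf{g}^{!,\omega}\simeq \cat{Coh}_{C^*(\mf{g})}^\op$ between \emph{compact} pro-coherent representations and (the opposite of) \emph{coherent} $C^*(\mf{g})$-modules; this uses Lemma \ref{lem:coherentoversmallext} to identify the thick subcategory generated by the $f_*F$ with $\cat{Coh}_{C^*(\mf{g})}$, and a biduality computation on free representations $\mc{U}(\mf{g})\otimes_A F$ rather than on $A$ alone. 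Ind-completing and assembling over the sifted colimit of representables (using that $\Rep^!_*$ preserves colimits, Lemma \ref{lem:propertiesoftamereps}) then gives the theorem. Your outer reduction to the representable case and your use of sifted colimits of good Lie algebroids are in the right spirit, but the inner Morita-theoretic argument with the single compact object $A$ cannot be repaired without replacing $\Mod_{C^*(\mf{g})}$ by $\mm{Ind}(\cat{Coh}_{C^*(\mf{g})}^\op)$ and enlarging the set of generators accordingly.
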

\begin{remark}
Suppose that $A$ is coherent and eventually coconnective and let $X\in \cat{FormMod}_A$ be an ordinary formal moduli problem over $A$, with Lie algebroid $T_{A/X}$. The $\infty$-category of (quasi-coherent) $T_{A/X}$-representations can then be identified with the full subcategory of pro-coherent sheaves on $X$ whose restriction to $A$ is quasi-coherent.
\end{remark}
We will begin by discussing pro-coherent representations over good pro-coherent Lie algebroids in Section \ref{sec:koszulformodules}. In Section \ref{sec:indcohonformmod} we use this to prove Theorem \ref{fm:thm:ind-coherentvsrepresentations}.

\subsection{Koszul duality for modules}\label{sec:koszulformodules}
Recall that the left adjoint $\Phi$ from Lemma \ref{fm:lem:functorialleftadjoint} corresponds under straightening to a natural transformation
between diagrams
$$\smash{\xymatrix{
\Rep^!, \Mod_{C^*}\colon \LieAlgd_A^{!, \op}\ar[r] & \cat{Pr}^\mm{L}
}}$$
of locally presentable $\infty$-categories and left adjoint functors between them. When evaluated on an arrow $f\colon \mf{g}\rt \mf{h}$, this natural transformation is given by the commuting square of left adjoints
$$\hspace{-60pt}\xymatrix{
\Mod_{C^*(\mf{h})}\ar[d]_{f^*:=C^*(\mf{g})\otimes_{C^*(\mf{h})}-} \ar[r]^-\Phi & \Rep^!_\mf{h}\ar[d]^{f^!}\\
\Mod_{C^*(\mf{g})}\ar[r]_-\Phi & \Rep^!_\mf{g}. 
 }$$ 
Let us pass to the associated diagram of right adjoint functors and next take opposite categories. The natural transformation $\Phi$ then determines a natural transformation between two diagrams of large $\infty$-categories and left adjoint functors between them
$$\smash{\xymatrix{
\Rep^{!, \op}\colon \LieAlgd_A^{!, \op}\ar[r] & \widehat{\cat{Cat}}{}^\mm{L}_\infty & \Mod_{C^*}^\op\colon \LieAlgd_A^{!, \op}\ar[r] & \widehat{\cat{Cat}}{}^\mm{L}_\infty.
}}$$ 
The value of this natural transformation on an arrow $f\colon \mf{g}\rt \mf{h}$ is given by
\begin{equation}\label{diag:coinduction}\vcenter{\xymatrix@C=3pc{
\Rep^{!,\op}_\mf{g}\ar[d]_{f_!}\ar[r]^-{C^*(\mf{g}, -)} & \Mod_{C^*(\mf{g})}^{\op}\ar[d]^{f_*}\\
\Rep^{!,\op}_\mf{h}\ar[r]_-{C^*(\mf{h}, -)} & \Mod_{C^*(\mf{h})}^{\op}
}}\end{equation}
where $f_!$ is the right adjoint to $f^!$, given by coinduction. For every $A$-cofibrant dg-Lie algebroid $\mf{g}$, taking the $A$-linear dual of a $\mf{g}$-representation determines a left Quillen functor between the tame model structures on dg-$\mf{g}$-representations
$$\smash{\xymatrix{
(-)^\vee\colon \Rep_\mf{g}^{\dg}\ar[r] & \Rep_\mf{g}^{\dg, \op}
}}$$
whose right adjoint is given by $(-)^\vee$ as well. For every map $f\colon \mf{g}\rt \mf{h}$, this functor intertwines induction and coinduction, i.e.\ there is a natural commuting diagram
\begin{equation}\label{diag:inductionandcoinduction}\vcenter{\xymatrix@C=2.5pc{
\Rep^!_\mf{g}\ar[r]^-{(-)^\vee}\ar[d]_{f_*} & \Rep_\mf{g}^{!, \op}\ar[d]^{f_!}\\
\Rep^!_\mf{h}\ar[r]_-{(-)^\vee} & \Rep_\mf{h}^{!, \op}
}}\end{equation} 
at the level of $\infty$-categories. We therefore obtain a natural transformation
\begin{equation}\label{diag:mu}\xymatrix{
\mu\colon \Rep^!_* \ar[r]^-{(-)^\vee} & \Rep^{!, \op}\ar[r]^{C^*} & \Mod^{\op}_{C^*}
}\end{equation}
between functors $\LieAlgd^!_A\rt \widehat{\cat{Cat}}{}^\mm{L}_\infty$. Here $\Rep^!_*$ denotes the functor sending a map of Lie algebroids $f\colon \mf{g}\rt \mf{h}$ to the induction functor $f_*\colon \Rep^!_\mf{g}\rt \Rep^!_\mf{h}$.
\begin{proposition}\label{prop:dualityforreps}
Let $\mf{g}\in \LieAlgd_A^!$ be a good pro-coherent Lie algebroid over $A$. Then the left adjoint functor $\mu$ \eqref{diag:mu} restricts to an equivalence
$$\smash{\xymatrix{
\mu\colon \Rep_\mf{g}^{!,\omega}\ar[r] & \cat{Coh}_{C^*(\mf{g})}^\op
}}$$
between the $\infty$-category of compact pro-coherent $\mf{g}$-representations and the opposite of the $\infty$-category of coherent $C^*(\mf{g})$-modules.
\end{proposition}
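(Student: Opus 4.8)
The plan is to make the functor $\mu_\mf{g}$ completely explicit and then verify that it is fully faithful and has essential image exactly $\cat{Coh}^\op_{C^*(\mf{g})}$ by testing everything on a set of compact generators.

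First I would unwind \eqref{diag:mu}: evaluated on a single good $\mf{g}$, the functor is $E\mapsto C^*(\mf{g}, E^\vee)$, the Chevalley--Eilenberg complex of the $A$-linear dual, regarded as a $C^*(\mf{g})$-module through the lax monoidal structure of Remark \ref{rem:celaxmonoidal}. Using the Koszul resolution $K(\mf{g})\simeq A$ of Remark \ref{fm:rem:cofibersequenceoncotangent} and the identity $C^*(\mf{g},-)=\Hom_{\mc{U}(\mf{g})}(A,-)$ of Remark \ref{rem:ceasext}, this rewrites as $\mu_\mf{g}(E)\simeq\big(\mm{Sym}_A\mf{g}[1]\otimes_A E\big)^\vee=\big(A\otimes^{\mathbb{L}}_{\mc{U}(\mf{g})}E\big)^\vee$. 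Since $\mf{g}$ is good, $\Rep^!_\mf{g}=\mm{LMod}_{\mc{U}(\mf{g})}(\Mod^!_A)$ is compactly generated by the free representations $\mc{U}(\mf{g})\otimes_A K$ with $K\in\mc{K}^{\sleq 0}$, and a short manipulation of the coinduction adjunction shows $\mu_\mf{g}(\mc{U}(\mf{g})\otimes_A K)\simeq K^\vee$, the connective coherent $A$-module restricted along the augmentation $C^*(\mf{g})\to A$. As restriction of scalars preserves homotopy groups and $K^\vee$ has finitely many of them, each finitely presented over $\pi_0(A)$, these images lie in $\cat{Coh}_{C^*(\mf{g})}$; thus $\mu_\mf{g}$ carries compact representations into $\cat{Coh}^\op_{C^*(\mf{g})}$.

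Because $\mu_\mf{g}$ is a left adjoint it preserves colimits, and $\Rep^{!,\omega}_\mf{g}$ is the closure of the generators $\mc{U}(\mf{g})\otimes_A K$ under finite colimits and retracts; fully faithfulness therefore reduces to comparing mapping spectra between generators. On the representation side the free--forgetful adjunction gives $\Map_{\Rep^!_\mf{g}}(\mc{U}(\mf{g})\otimes_A K,\mc{U}(\mf{g})\otimes_A K')\simeq\Map_{\Mod^!_A}(K,\mc{U}(\mf{g})\otimes_A K')$, while the module side is $\Map_{\Mod_{C^*(\mf{g})}}(K'^\vee,K^\vee)$. Both can be computed from the mutually dual (co)bar resolutions relating $\mc{U}(\mf{g})$ and $C^*(\mf{g})$ developed in Sections \ref{sec:ce}--\ref{sec:koszulduality}, together with the self-duality $\Mod^{!,\omega}_A\simeq\cat{Coh}^\op_A$ of Corollary \ref{cor:tameasprocoh}. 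The content of the matching is the Koszul-duality equivalence $\Map_{\Mod_{C^*(\mf{g})}}(A,A)\simeq\mc{U}(\mf{g})$ of the algebras acting on the two sides; here boundedness of $A$ is essential, since it forces the products computing $\Map_{\Mod_{C^*(\mf{g})}}(A,-)$ to agree degreewise with the direct sums computing induction from $A$, so that no completion discrepancy appears. I expect this comparison to be the main obstacle.

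For essential surjectivity onto $\cat{Coh}^\op_{C^*(\mf{g})}$ I would argue by d\'evissage along the cell structure of $\mf{g}$. A good $\mf{g}$ is built from $0$ by finitely many pushouts along $\mm{Free}(K[-2]\to T_A)\to 0$, and under $C^*$ this exhibits $C^*(\mf{g})\to A$ as a finite tower of square-zero extensions by connective coherent modules $K^\vee[1]$ (Theorem \ref{thm:formalmodulicoherent}). Every coherent $C^*(\mf{g})$-module has a finite Postnikov filtration whose graded pieces are shifts of finitely presented $\pi_0(C^*(\mf{g}))$-modules, so it suffices to place each such piece in the stable subcategory generated by the restrictions of connective coherent $A$-modules. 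Inducting over the square-zero tower, and using the cofiber sequences relating a module to its restriction across a single square-zero extension, reduces this to the base case $\mf{g}=0$, where $\mu_0$ is $(-)^\vee$ and the claim is precisely Corollary \ref{cor:tameasprocoh}. Since $\mu_\mf{g}$ preserves finite colimits and retracts and the target subcategory is stable, the d\'evissage places every coherent $C^*(\mf{g})$-module in the essential image, and together with fully faithfulness this yields the asserted equivalence $\mu_\mf{g}\colon\Rep^{!,\omega}_\mf{g}\xrightarrow{\sim}\cat{Coh}^\op_{C^*(\mf{g})}$.
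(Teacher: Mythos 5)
Your computation of $\mu$ on the free generators (namely $\mu(\mc{U}(\mf{g})\otimes_A K)\simeq K^\vee$, restricted along $C^*(\mf{g})\rt A$) and your essential-surjectivity d\'evissage are both sound and agree with the paper: the latter is in substance Lemma \ref{lem:coherentoversmallext}, which inducts over the square-zero tower $\pi_0(C^*(\mf{g}))=R_n\rt\dots\rt R_0=\pi_0(A)$ using the exact sequence $I\otimes_{R_0}(R_0\otimes_{R_i}E)\rt E\rt R_{i-1}\otimes_{R_i}E\rt 0$. (Your phrasing of the base case as ``$\mf{g}=0$'' conflates the induction on $\mf{g}$ with the induction on the tower, but the substance is right.)

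The genuine gap is full faithfulness, which you explicitly defer (``I expect this comparison to be the main obstacle''). The direct comparison of mapping spectra is not carried out, and the statement you isolate as its content, $\Map_{\Mod_{C^*(\mf{g})}}(A,A)\simeq\mc{U}(\mf{g})$, is not obviously the one needed: by adjunction $\Map_{C^*(\mf{g})}(f_*K'^\vee, f_*K^\vee)\simeq\Map_A\big((A\otimes_{C^*(\mf{g})}A)\otimes_A K'^\vee,\, K^\vee\big)$, so what must be controlled is the two-sided bar construction $A\otimes_{C^*(\mf{g})}A$ together with its interaction with the contravariant dualities on both sides --- precisely where a completion discrepancy could enter. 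The paper takes a different and complete route: it shows the unit $E\rt\nu\mu(E)$ of the adjunction $(\mu,\nu)$ is an equivalence on the free generators $f_*F$ by factoring it as the biduality map $f_*F\rt(f_*F)^{\vee\vee}$ followed by the dual of the adjunction map of $(\Phi,C^*)$ at $(f_*F)^\vee=f_!(F^\vee)\cong\prod_n(F^\vee)^{\oplus k_n}[-n]$. The first map is an isomorphism because PBW gives $\mc{U}(\mf{g})\cong\bigoplus_n A^{\oplus k_n}[-n]$ and $A$ is bounded with $F$ compact, so the sum-versus-product discrepancy vanishes degreewise; the second is an equivalence because $(f_*F)^\vee$ is eventually connective and (a shift of) Corollary \ref{fm:cor:equivonconnprocoherent} applies. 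To complete your argument you should either carry out this unit computation or actually establish the bar-construction identification you allude to; as written, the central step of the proposition is asserted rather than proven.
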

\begin{lemma}\label{lem:coherentoversmallext}
Let $\mf{g}$ be a good pro-coherent Lie algebroid over $A$. Consider the thick subcategory $\cat{C}\subseteq \Mod_{C^*(\mf{g})}$ generated by the modules of the form $f_*F$, where $F\in \cat{Coh}_A$ is a coherent $A$-module and $f\colon C^*(\mf{g})\rt A$ is the canonical map. Then $\cat{C}$ coincides with $\cat{Coh}_{C^*(\mf{g})}$.
\end{lemma}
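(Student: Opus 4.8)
The plan is to exploit the fact that, by the discussion in Section \ref{sec:formmod}, the augmented commutative dg-algebra $B := C^*(\mf{g})$ is a \emph{finite} tower of square-zero extensions of $A$ by connective coherent $A$-modules. Indeed, since $\mf{g}$ is good it is built from finitely many cells of the form \eqref{fm:rem:verygoodtameliealgebroids}, and in the proof of Theorem \ref{thm:formalmodulicoherent} these cells are carried by $C^*$ to the square-zero extensions $A\rt A\oplus E[1]$ with $E$ a connective coherent $A$-module. Consequently $B$ is a connective coherent cdga, the augmentation $f\colon B\rt A$ admits an $A$-algebra section, and on $\pi_0$ it induces a split surjection $\pi_0(B)\rt \pi_0(A)$ whose kernel $J$ is a finitely generated ideal which is nilpotent, being a finite composite of square-zero extensions at the level of $\pi_0$. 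The inclusion $\cat{C}\subseteq \cat{Coh}_{C^*(\mf{g})}$ is then immediate: for $F\in\cat{Coh}_A$ the homotopy groups of $f_*F$ coincide with those of $F$, and a finitely presented $\pi_0(A)$-module is finitely presented over $\pi_0(B)$ because $\pi_0(A)=\pi_0(B)/J$ is finitely presented over the coherent ring $\pi_0(B)$. As $\cat{Coh}_{C^*(\mf{g})}$ is thick, it contains all of $\cat{C}$.

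For the reverse inclusion I would argue by d\'evissage. First, any $M\in \cat{Coh}_{C^*(\mf{g})}$ is bounded, so its finite Postnikov tower exhibits it as an iterated extension of the shifted discrete modules $\pi_j(M)[j]$. Since $\cat{C}$ is thick, it therefore suffices to show that every discrete coherent $B$-module, that is, every finitely presented $\pi_0(B)$-module $N$ regarded as a $B$-module concentrated in degree $0$, lies in $\cat{C}$. Here the nilpotence of $J$ is decisive: the $J$-adic filtration $N\supseteq JN\supseteq \dots\supseteq J^{r}N=0$ is finite (with $J^r=0$), and each associated graded piece $J^iN/J^{i+1}N$ is annihilated by $J$, hence is a module over $\pi_0(A)=\pi_0(B)/J$. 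Using coherence of $\pi_0(B)$ one checks that these graded pieces are finitely presented over $\pi_0(A)$, so they are discrete coherent $A$-modules, and their images under $f_*$ are precisely the generators of $\cat{C}$. As $N$ is a finite iterated extension of these pieces (a finite sequence of cofiber sequences of discrete $B$-modules), thickness of $\cat{C}$ gives $N\in\cat{C}$; the Postnikov reduction then yields $\cat{Coh}_{C^*(\mf{g})}\subseteq \cat{C}$.

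The genuinely substantive point, and the step I expect to require the most care, is the coherence bookkeeping underlying both inclusions: namely that $B=C^*(\mf{g})$ is a coherent cdga and that passing to $J$-adic graded pieces preserves finite presentation over $\pi_0(A)$. This is exactly where the hypotheses that $\mf{g}$ is a \emph{good} pro-coherent Lie algebroid and that $A$ is bounded coherent are essential, since together they guarantee both the finiteness of the square-zero tower and the coherence of each square-zero ideal. I would isolate as a preliminary observation (or cite) the stability of coherence under finite square-zero extensions by finitely presented modules, and propagate coherence up the tower $A\rt \dots \rt B$; once this is in hand, the d\'evissage above is purely formal. I note that one should \emph{not} attempt to prove the reverse inclusion by tensoring along the tower, since the derived tensor products of coherent modules occurring there need not be bounded (already $A\otimes_A^{\mathbb{L}}A$ can be unbounded when $A$ is non-regular), which is precisely why the argument is routed through the $t$-structure and the nilpotent ideal $J$ rather than through the module-theoretic fiber sequences of the square-zero extensions.
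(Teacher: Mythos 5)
Your proposal is correct and follows essentially the same route as the paper: the forward inclusion via surjectivity of $\pi_0(C^*(\mf{g}))\rt \pi_0(A)$ and coherence of these rings, and the reverse inclusion by reducing along Postnikov towers to discrete finitely presented $\pi_0(C^*(\mf{g}))$-modules and then exploiting the nilpotence of the augmentation ideal coming from the finite tower of square-zero extensions. The only (cosmetic) difference is that you filter a discrete module by powers of the full ideal $J$ at once, whereas the paper inducts one square-zero stage at a time using the sequence $IE\rt E\rt E/IE$; the coherence bookkeeping you flag as the substantive point is needed in the same form in both versions.
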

\begin{proof}
To see that $\cat{C}\subseteq \cat{Coh}_{C^*(\mf{g})}$, we have to show that for every coherent $A$-module $F$, $f_*F$ is a coherent $C^*(\mf{g})$-module. Since $C^*(\mf{g})\rt A$ is a small extension, the map $\pi_0(C^*(\mf{g}))\rt \pi_0(A)$ is a surjection of coherent commutative rings. It follows that any finitely generated $\pi_0(A)$-module, such as $\pi_n(F)$, is also finitely generated over $\pi_0(C^*(\mf{g}))$. This means that $f_*F$ is indeed a coherent $C^*(\mf{g})$-module.

It remains to be verified that $\cat{C}$ exhausts $\cat{Coh}_{C^*(\mf{g})}$. Using Postnikov towers, one sees that $\cat{Coh}_{C^*(\mf{g})}$ is the thick subcategory generated by the (discrete) finitely presented $\pi_0(C^*(\mf{g}))$-modules. It therefore suffices to show that $\cat{C}$ contains any such a finitely generated $\pi_0(C^*(\mf{g}))$-module $E$. Since $C^*(\mf{g})\rt A$ is a small extension, there is a finite sequence of square zero extensions by finitely generated $\pi_0(A)$-modules
$$\xymatrix{
\pi_0(C^*(\mf{g}))= R_n\ar[r] & R_{n-1}\ar[r] & \dots \ar[r] & R_0=\pi_0(A).
}$$
We prove by induction that all (discrete) finite generated $R_i$-modules are contained in $\cat{C}$. This clearly holds for finitely generated $\pi_0(A)$-modules, which are coherent $A$-modules. Let $R_i\rt R_{i-1}$ be a square zero extension of $R_{i-1}$ by a finitely generated $\pi_0(A)$-module $I$ and let $E$ be a finitely generated $R_i$-module. Then there is an exact sequence 
$$\xymatrix{
I\otimes_{R_{0}} (R_0\otimes_{R_i} E) \ar[r] & E\ar[r] & R_{i-1}\otimes_{R_n} E\ar[r] & 0.
}$$
Since $I\otimes_{R_{0}} (R_0\otimes_{R_i} E)$ and $R_{i-1}\otimes_{R_n} E$ are contained in $\cat{C}$, so is $E$.
\end{proof}
\begin{proof}[Proof (of Proposition \ref{prop:dualityforreps})]
The right adjoint of the functor $\mu$ is given by
$$\xymatrix{
\nu\colon \Mod_{C^*(\mf{g})}^\op\ar[r] & \Rep_\mf{g}; \hspace{4pt} M\ar[r] & \Phi(M)^\vee.
}$$
We will first show that for any compact $\mf{g}$-representation $E$, the unit map $E\rt \nu\mu(E)$ is an equivalence. To this end, let $f\colon 0\rt \mf{g}$ be the initial map and note that the forgetful functor $f^!\colon \Rep_\mf{g}\rt \Mod_A$ detects equivalences and preserves filtered colimits. It follows that $\Rep_\mf{g}^\omega\subseteq \Rep_\mf{g}$ is the thick subcategory generated by the $\mf{g}$-representations 
$$
f_*(F) = \mc{U}(\mf{g})\otimes_A F
$$
where $F\in \Mod_A^{!, \omega}$ is a compact object in the $\infty$-category of pro-coherent sheaves on $A$. It therefore suffices to check that the unit map $f_*F\rt \nu\mu(f_*F)$ is an equivalence for any such free $\mf{g}$-representation $f_*F$. To see this, note that the unit map is the composite
$$\xymatrix{
f_*F \ar[r] & \big((f_*F)^{\vee}\big){}^\vee\ar[r]^-{\epsilon} & \Phi\big(C^*(\mf{g}, (f_*F)^\vee)\big)^\vee
}$$
where $\epsilon$ is the unit of the adjoint pair $(\Phi, C^*)$. The PBW theorem and Lemma \ref{fm:lem:verygoodlralgebroids} imply that $\mc{U}(\mf{g})\cong \bigoplus_{n\sgeq 0} A^{\oplus k_n}[-n]$ as a graded $A$-module. By Diagram \ref{diag:inductionandcoinduction}, we find that 
$$
(f_*F)^\vee = f_!(F^\vee) = \Hom_A\big(\mc{U}(\mf{g}), F^\vee\big) =  \prod_{n\sgeq 0} (F^\vee)^{\oplus k_n}[-n].
$$
Since $F^\vee\in \Mod^!_A$ is a coherent $A$-module, hence in particular eventually connective, $(f_*F)^\vee$ is eventually connective as well. The map $\epsilon$ is then an equivalence by Corollary \ref{fm:cor:equivonconnprocoherent}. Furthermore, the map $f_*F\rt (f_*F)^{\vee\vee}$ can be identified with
$$\xymatrix{
f_*F = \bigoplus_{n\sgeq 0} F^{\oplus k_n}[-n]\ar[r] & (f_!F^\vee){}^\vee = \prod_{n\sgeq 0} (F^{\vee\vee})^{\oplus k_n}[-n].
}$$
The description of the compact objects from Proposition \ref{prop:tamecompactlygenerated} shows that $F\rt F^{\vee\vee}$ is an isomorphism. Since $A$ is a bounded cdga, $F$ is trivial in sufficiently high (homological) degrees and the above map is an isomorphism. 

We conclude that $\mu$ is fully faithful on the compact objects of $\Rep_\mf{g}^!$. The essential image of the compact objects is the smallest thick subcategory of $\Mod_{C^*(\mf{g})}$ containing the images $\mu(f_*F)$. But
$$
\mu(f_*F) \simeq C^*(\mf{g}, (f_*F)^\vee) \simeq C^*(\mf{g}, f_!(F^\vee)) \simeq f_*(F^\vee)
$$
by the commuting diagrams \eqref{diag:coinduction} and \eqref{diag:inductionandcoinduction}. The compact objects of $\Mod_A^!$ are precisely the preduals of the coherent $A$-modules, so that Lemma \ref{lem:coherentoversmallext} shows that the essential image of $\Mod_A^{!, \omega}$ under $\mu$ coincides with the coherent $C^*(\mf{g})$-modules.
\end{proof}

\subsection{Pro-coherent sheaves}\label{sec:indcohonformmod}
We will now deduce Theorem \ref{fm:thm:ind-coherentvsrepresentations} from the functoriality of the equivalence provided by Proposition \ref{prop:dualityforreps}.
\begin{construction}\label{cons:indcohonformalmoduli}
It follows from Lemma \ref{lem:coherentoversmallext} that for any map $f\colon B\rt B'$ between small extensions of $A$ (by coherent $A$-modules), the functor
$$\xymatrix{
f_*\colon \Mod_{B'}\ar[r] & \Mod_B
}$$
preserves coherent modules. Consequently, there is a functor 
$$\xymatrix{
\cat{Coh}\colon \big(\CAlg^{\mm{sm, coh}}/A\big)^{\op}\ar[r] & \cat{StCat}^{\mm{ex}}_\infty
}$$
sending each small extension $B\rt A$ to the stable $\infty$-category of coherent $B$-modules, and each map $f\colon B\rt B'$ to the exact functor $f_*$. Consider the composite functor
$$\xymatrix{
\big(\CAlg_k^\mm{sm, coh}/A\big)^\op\ar[r]^-{\cat{Coh}^\op} & \cat{Cat}_\infty\ar[r]^{\mm{Ind}} & \cat{Pr}^\mm{L}
}$$
sending each small extension $B\rt A$ to the locally presentable stable $\infty$-category $\mm{Ind}(\cat{Coh}^\op_B)$ of pro-coherent sheaves on $B$. Every map $f\colon B\rt B'$ of small extensions over $A$ induces an adjoint pair
$$\xymatrix{
f_* = \mm{Ind}(f_*)\colon \mm{Ind}(\cat{Coh}^\op_{B'})\ar@<1ex>[r] & \mm{Ind}(\cat{Coh}_{B}^\op) \ar@<1ex>[l] \colon f^!.
}$$
By construction, the functor $f_*$ preserves compact objects. It follows that the right adjoint $f^!$ preserves all colimits and admits a further right adjoint $f_!$ (this is discussed in much greater generality in \cite{gai13}).
\end{construction}
\begin{definition}\label{def:procohonformmod}
For any functor $X\colon \CAlg_k^\mm{sm, coh}/A\rt \sS$, define the stable presentable $\infty$-category $\QC^!(X)$ of \emph{pro-coherent sheaves} on $X$ to be the value on $X$ of the left Kan extension
$$\xymatrix@C=3pc{
\big(\CAlg_k^\mm{sm, coh}/A\big)^\op\ar[r]^-{\mm{Ind}(\cat{Coh}^\op)}\ar[d] & \cat{Pr^\mm{L}}\\
\Fun(\CAlg_k^\mm{sm, coh}/A, \sS)\ar@{..>}[ru]_-{\QC^!}
}$$
This exists by \cite[Lemma 5.1.5.5]{lur09}, since $ \cat{Pr^\mm{L}}$ has all small colimits \cite{lur09}. Informally, a pro-coherent sheaf on $X$ is given by a collection of pro-coherent sheaves $F_y$ over $B\in \CAlg_k^\mm{sm, coh}/A$ for every $y\in X(B)$, together with a coherent family of equivalences for every $f\colon B\rt B'$
$$
F_{f(y)}\simeq f^!F_y.
$$
\end{definition}
\begin{proof}[Proof (of Theorem \ref{fm:thm:ind-coherentvsrepresentations})]
By Proposition \ref{prop:dualityforreps}, the natural transformation $\mu$ \eqref{diag:mu} induces a natural equivalence
$$\xymatrix@C=3pc{
\LieAlgd^{!, \mm{good}}_A\ar@/^1.5pc/[r]^{\Rep_\ast^{!,\omega}}_{}="s" \ar@/_1.5pc/[r]_{\cat{Coh}^{\op} \circ C^*}^{}="t" & \cat{Cat}_\infty \ar@{=>}"s";"t"^{\mu}
}$$
to the $\infty$-category of small $\infty$-categories. Here $\Rep_\ast^{!, \omega}$ sends a good pro-coherent Lie algebroid $\mf{g}$ to the $\infty$-category of compact $\mf{g}$-representations and a map $f\colon \mf{g}\rt \mf{h}$ to the induction functor $f_*$. Since each $\Rep^!_\mf{g}$ is a compactly generated stable $\infty$-category, we obtain an induced natural equivalence
$$\xymatrix@C=3pc{
\LieAlgd^{!, \mm{good}}_A\ar@/^1.5pc/[r]^{\Rep^!_\ast}_{}="s" \ar@/_1.5pc/[r]_{\mm{Ind}(\cat{Coh}^{\op}) \circ C^*}^{}="t" & \cat{Pr}^\mm{L} \ar@{=>}"s";"t"^{\mu}
}$$
between Ind-completions. Precomposing with the duality functor $\mf{D}$ \eqref{diag:deformationadjunction}, we obtain a natural diagram
$$\vcenter{\xymatrix{
\big(\CAlg_k^\mm{sm, coh}/A\big)^{\op}\ar[d]\ar@/^1.5pc/[rrd]^-{\mm{Ind}(\cat{Coh}^\op)}_{}="s" \ar[d] & & \\
\Fun(\CAlg_k^\mm{sm, coh}/A, \sS)\ar[r]_-{\mf{D}_!} & \LieAlgd^!_A\ar[r]_-{\Rep^!_\ast} & \cat{Pr^\mm{L}} \ar@{=>}"s"+<-5pt, -5pt>;[0, -2]+<50pt, 15pt>^\Psi
}}$$
commuting up a natural equivalence $\Psi$, given by \emph{inverse} of the composite
$$\vcenter{\xymatrix{
\Rep^!_{\mf{D}(B)}\ar[r]^-{\mu} &  \mm{Ind}(\cat{Coh}^\op_{B}) \ar[r] & \mm{Ind}(\cat{Coh}^\op_{C^*\mf{D}(B)})
}}$$
where the second functor arises from the unit map $B\rt C^*\mf{D}(B)$. By the universal property of $\QC^!(X)$, we obtain a natural transformation
$$\xymatrix{
\Psi_X\colon \QC^!(X)\ar[r] & \Rep_{\mf{D}_!(X)}
}$$
for any $X\colon \CAlg_k^\mm{sm}/A\rt \sS$. The proof of Theorem \ref{fm:thm:quasi-coherentvsrepresentations} can now be repeated verbatim: when $X$ is a formal moduli problem, $\mf{D}_!(X)$ is equivalent to $T_{A/X}$. Since $T_{A/X}$ is a sifted colimit of good pro-coherent Lie algebroids, $X$ is a sifted colimit of corepresentable functors. The functor $\Rep_*\colon \LieAlgd_A\rt \cat{Pr}^\mm{L}$ preserves colimits by Lemma \ref{lem:propertiesoftamereps}, so that $\Psi_X$ is a sifted colimit of equivalences and hence an equivalence itself.
\end{proof}

\bibliographystyle{abbrv}
\bibliography{../bibliography_thesis}

\end{document}